\newcommand*{\centerfloat}{%
	\parindent \z@
	\leftskip \z@ \@plus 1fil \@minus \textwidth
	\rightskip\leftskip
	\parfillskip \z@skip}
\numberwithin{equation}{section}
\newtheorem{thm}[equation]{Theorem}
\newtheorem{lem}[equation]{Lemma}
\newtheorem{prop}[equation]{Proposition}
\newtheorem{cor}[equation]{Corollary}
\newtheorem{defn}[equation]{Definition}
\theoremstyle{remark}
\newtheorem{rmk}[equation]{Remark}
\newtheorem{ex}[equation]{Example}
\renewcommand{\emptyset}{\font\cmsy = cmsy10 at 10pt
	\hbox{\cmsy \char 59}
}
\definecolor{burntsienna}{rgb}{0.91, 0.45, 0.32}
\definecolor{sapgreen}{rgb}{0.31, 0.49, 0.16}
\definecolor{teal}{rgb}{0.0, 0.5, 0.5}
\definecolor{blue}{rgb}{0.38, 0.51, 0.71} 
\definecolor{darkblue}{RGB}{17, 42, 60} 
\definecolor{red}{RGB}{175, 49, 39} 
\definecolor{orange}{RGB}{217, 156, 55} 
\definecolor{green}{RGB}{144, 169, 84} 
\definecolor{palegreen}{RGB}{197, 184, 104} 
\definecolor{yellow}{RGB}{250, 199, 100} 
\definecolor{brokenwhite}{RGB}{218, 192, 166} 
\definecolor{brokengrey}{rgb}{0.77, 0.76, 0.82} 
\mathchardef\mathdash="2D
\newcommand{\ov }{/}
\newcommand{\defeq}{\overset{\textup{def}}{=}}
\newcommand\N{\mathbb{N}}
\newcommand\Z{\mathbb{Z}}
\newcommand{\ccc}{\bm{c}} 
\newcommand{\ddd}{\bm{d}}
\newcommand{\mm}[1][m]{\mathbf{#1}}
\newcommand{\nul}{\mathbf{0}}
\newcommand{\one}{\mathbf{1}}
\newcommand{\two}{\mathbf{2}}
\newcommand{\nn}{\mathbf{n}}
\tikzset{
	dot/.style = {circle, fill, minimum size=#1,
		inner sep=0pt, outer sep=0pt},
	dot/.default = 5pt 
}
\tikzset{->-/.style={decoration={
			markings,
			mark=at position #1 with {\arrow{>}}},postaction={decorate}}}
\tikzset{-<-/.style={decoration={
			markings,
			mark=at position #1 with {\arrow{<}}},postaction={decorate}}}
\newcommand\Set{\mathsf{Set}}
\newcommand\Cat{\mathsf {Cat}} 
\newcommand\fin{\mathsf{Set_f}}
\newcommand\sSet{\mathsf{sSet}}
\newcommand\V{\mathsf{X}} 
\newcommand{\CCat}{ \mathsf{C}}
\newcommand{\DCat}[1][D]{ \mathsf{#1}}
\newcommand{\listm} {\small{\mathsf{list}}}
\newcommand\CCC{\mathfrak{C}}
\newcommand\DDD[1][D]{\mathfrak{#1}}
\newcommand{\BD}{\mathsf{BD}}
\newcommand{\BDd}{\mathsf{dBD}}
\newcommand{\Tf}[1][f]{\mathfrak T(#1)}
\newcommand\dipal{\{\wideparen{\uparrow,\ \downarrow}\}}
\newcommand\DiBD{\mathsf{OBD}}
\newcommand{\kcl}{\mathfrak{k}}
\newcommand{\WD}{\mathrm{WD}}
\newcommand{\CWD}[1][(\CCC, \omega)]{\WD^{#1}}
\newcommand{\CA}{{\mathsf{CA}}}
\newcommand{\CCA}[1][(\CCC,\omega)]{\CA^{#1}}
\newcommand{\bigCA}{\bm{\mathsf{CA}}}
\newcommand\E{\mathsf{V}}
\newcommand{\pr}[1]{\mathsf{psh}(#1)}
\newcommand{\sh}[1]{\mathsf{sh}(#1)}
\newcommand{\ElS}[1][S]{\ensuremath{\mathsf{el}{(#1)}}}
\newcommand{\MM}{\mathbb {M}}
\newcommand{\boffcat}[1][\MM, \DCat]{ \Theta_{#1}}
\newcommand\GS{\mathsf {GS}}
\newcommand\oGS{\mathsf{OGS}}
\newcommand\CSM{\bm{\mathfrak {MO}}}
\newcommand{\ElP}[2]{\ensuremath{\mathsf{el}_{#2}(#1)}}
\newcommand{\fiso}{{\mathbf{B}}}
\newcommand{\fisinv}{{\fiso}^{\scriptstyle{\bm \S}}}
\newcommand{\Comm}{{K}}
\newcommand{\CComm}[2][(\CCC,\omega)]{{\Comm}^{#1}_{#2}}
\newcommand{\CommE}[1][\E]{{K^{#1}}}
\newcommand{\CGS}[1][(\CCC,\omega)]{{\GS^{#1}}}
\newcommand{\GSE}[1][\mathsf V]{\GS_{#1}} 
\newcommand{\oGSE}[1][\mathsf V]{{\oGS_{#1}}} 
\newcommand{\prE}[2][\E]{\mathsf{psh_{#1}}(#2)}
\newcommand{\CSME}[1][\E]{{\bm{\mathfrak{MO}}}_{#1}}
\newcommand{\nuCSME}[1][\E]{{\bm{\mathfrak {MO}}}^{-}_{#1}}
\newcommand{\CCO}[1][(\CCC,\omega)]{\mathfrak{CA}^{#1}}
\newcommand{\COE}[1][\E]{{\bm{\mathfrak {CA}}}_{#1}}
\newcommand{\nuCOE}[1][\E]{{\bm{\mathfrak {CA}}}^{-}_{#1}}
\newcommand{\WPE}[1][\E]{{\bm{\mathfrak {WP}}}_{#1}}
\newcommand{\nuWPE}[1][\E]{{\bm{\mathfrak {WP}}}^{-}_{#1}}
\newcommand{\WPin}{{\bm{\mathfrak {WP}}}}
\newcommand\Di{\mathfrak{Di}}
\newcommand{\Disig}{\sigma_{\Di}}
\newcommand{\Dipal}{(\Di, \sigma_{\Di})}%
\newcommand{\Dicomm}{Di}
\newcommand{\DicommE}[1][\E]{Di^{#1}}
\newcommand{\In}{{\mathrm{ in }}}
\newcommand{\Out}{{\mathrm{ out }}}
\newcommand{\elG}[1][\mathcal{G}]{\ensuremath{\mathsf{el}{(#1)}}}
\newcommand{\oelG}[1][\tilde {\mathcal{G}}]{\ensuremath{\mathsf{el}_{Di}{(#1)}}}
\newcommand\nuCSM{{\bm{\mathfrak {MO}}}^-}
\newcommand\CO{{\bm{\mathfrak {CA}}}}
\newcommand{\yet}{\Upsilon}
\newcommand{\yetp}{{\yet_*}}
\newcommand{\diag}{\mathsf{\bm D}}
\newcommand{\GrShape}{\mathsf{psh_f}(\bm D)}
\newcommand{\Grbig}{\mathsf{Gr}\GrShape}
\newcommand{\Gret}{\mathsf{Gr}}
\newcommand{\oGret}{\mathsf{OGr}}
\newcommand\Gr{\mathsf{C}\Gret}
\newcommand{\Griso}{\mathdash\mathsf{CGr_{iso}}}
\newcommand\G{\mathcal{G}}
\renewcommand\H{\mathcal{H}}
\newcommand\W{\mathcal W}
\newcommand\C{\mathcal{C}}
\newcommand{\Lk}[1][k]{\ensuremath{\mathcal L^{#1}}}
\newcommand{\Wl}[1][m]{\ensuremath{\mathcal W^{#1}}}
\newcommand{\Wm}[1][m]{\ensuremath{\mathcal W^{#1}}}
\newcommand\CX[1][X]{{\mathcal C_{#1}}}
\newcommand{\Fgraph}{ \xymatrix{
		E \ar@(lu,ld)[]_\tau&& H \ar[ll]_s \ar[rr]^t&& V}}
\newcommand{\Fgraphdash}{\xymatrix{
		E \ar@(lu,ld)[]_{\tau } &&H \ar[ll]_{s } \ar[rr]^{t }& &V }}
\newcommand{\Fgraphvar}[6]{\xymatrix{
		*[r] {#1}\ar@(ul,dl)[]_{#6} && {#2} \ar[ll]_-{#4} \ar[rr]^-{#5}&& {#3}}}
\newcommand\Cv[1][v]{{\mathcal C_{\mathbf{#1}}}}
\newcommand{\EI}{\ensuremath{{E_\bullet}}}
\newcommand{\vH}[1][v]{\sfrac{H}{#1}}
\newcommand{\vE}[1][v]{\sfrac{E}{#1}}
\newcommand{\nV}[1][n]{V_{#1}}
\newcommand{\nH}[1][n]{H_{#1}}
\newcommand{\nE}[1][n]{E_{#1}}
\newcommand\shorte[1][\tilde e]{{\shortmid_{#1}}}
\newcommand{\esv}[1][v]{\ensuremath{\iota_{{#1}}}}
\newcommand{\ese}[1][\tilde e]{\ensuremath{\iota_{{#1}}}}
\newcommand\X{\mathcal X}
\newcommand\Y{\mathcal Y}
\newcommand\Gg{\mathbf \Gamma}
\newcommand\Gdg{\mathbf \Lambda}
\newcommand\Gid[1][\G]{\mathbf{I}^{#1}}
\newcommand\coGg[1][\G]{{\Gg}({#1})}
\newcommand{\oedge}{\downarrow}
\newcommand{\OOO}{\mathbb {O}}
\newcommand{\TT}{\mathbb T}
\newcommand{\DD}{\mathbb D}
\newcommand{\TTp}{\mathbb T_*}
\newcommand{\LL}{\mathbb L}
\newcommand{\OTTk}{\mathsf O \TTk}
\newcommand{\ODD}{\mathsf \mathbb D}
\newcommand{\TTk}{{\mathbb{ T}^{\times}}} 
\newcommand{\Tk}{{T^{\times}} }
\newcommand{\muk}{{ \mu^{\TTk}} }
\newcommand{\etak}{{ \eta^{\mathbb{T}^{\times}}}}
\newcommand{\OTk}{\mathsf O \Tk}
\newcommand{\OD}{\mathsf OD}
\newcommand{\TTpk}{{\tilde{ \mathbb {L }}\mathbb{ T}_*}} 
\newcommand{\Tpk}{{\widetilde L T_* }} 
\newcommand{\Grisok}{{\mathdash\mathsf{Gr}_{\mathrm{iso}}}}
\newcommand{\GretG}[1][\G]{\ensuremath{\Gret^{(#1)}}}
\newcommand{\GrG}[1][\G]{\ensuremath{\Gr^{(#1)}}}
\newcommand{\Gretp}{\mathsf{Gr}_{*}}
\newcommand{\oGretp}{\mathsf{O Gr_{*}}}
\newcommand{\Grp}{\ensuremath{\mathsf{C}\Gretp}}
\newcommand{\GSp}{\ensuremath{\mathsf{\GS}_*}}
\newcommand{\fisinvp}{\ensuremath{\fisinv_*}}
\newcommand{\elpG}[1][\mathcal{G}]{\ensuremath{\mathsf{el}_*{(#1)}}}
\newcommand{\Gnov}[1][W]{\ensuremath{\mathcal G_{\setminus #1}}}
\newcommand{\Enov}[1][W]{\ensuremath{E_{\setminus #1}}}
\newcommand{\Hnov}[1][W]{\ensuremath{H_{\setminus #1}}}
\newcommand{\Vnov}[1][W]{\ensuremath{V_{\setminus #1}}}
\newcommand{\snov}[1][W]{\ensuremath{s_{\setminus #1}}}
\newcommand{\tnov}[1][W]{\ensuremath{t_{\setminus #1}}}
\newcommand{\taunov}[1][W]{\ensuremath{\tau_{\setminus #1}}}
\newcommand{\delW}[1][W]{\mathsf{del}_{\setminus #1}}
\newcommand\XGrsimp[1][X]{{#1}\mathdash\mathsf{CGr}_{\mathsf{sim}}}
\newcommand{\etap}{\ensuremath{\eta^{\TTp}}}
\newcommand{\Tp}{\ensuremath{T_*}}
\newcommand{\GSEp}[1][\E]{\ensuremath{{\mathsf{\GS}_*}_{#1}}}
\newcommand\Gretsimp{\mathsf{Gr}_{\mathsf{sim}}}
\newcommand\XGretsimp{\mathdash\mathsf{Gr}_{\mathsf{sim}}}
\newcommand{\core}[1]{\mathrm{Core}(#1)}
\newcommand{\Coriso}[1][X]{{{#1}\mathdash\mathsf{Cor}_{\boxtimes}}_{\mathrm{iso}}}
\newcommand{\CorGg}[1][\X]{ \mathsf{Cor}_{\boxtimes} ^{(#1)}}
\newcommand\Klgr{\Xi}
\newcommand{\factcat}[1][\beta]{\ensuremath{\mathsf{fact}_*(#1)}}
\newcommand\Klgrt{{\Xi^\times}}
\title[A nerve theorem for circuit algebras]{Modular operads, iterated distributive laws and a nerve theorem for circuit algebras}
\author{Sophie Raynor}
\date{\today}
\thanks{The author acknowledges the support of Australian
	Research Council grants DP160101519 and FT160100393.}
\begin{document}

\maketitle

\begin{abstract}
	Circuit algebras are a symmetric version of Jones's planar algebras. They originated in quantum topology as a framework for encoding virtual crossings. This paper 
	extends existing results for modular operads to construct a graphical calculus and monad for general circuit algebras and prove an abstract nerve theorem. 
	The proof relies on a subtle interplay between distributive laws and abstract nerve theory, and provides extra insights into the underlying structures. 
	Oriented circuit algebras are equivalent to wheeled props and specialisations of the results to wheeled props follow as straightforward corollaries. 
\end{abstract}

\section{Introduction}

Circuit algebras are a symmetric version of Jones's planar algebras, introduced in \cite{BND17} as a framework for studying finite-type invariants of virtual knotted objects (see also \cite{DF18, Hal16, Tub14}). 
They are graded symmetric monoids equipped with a unary contraction operation satisfying suitable associativity and distributivity axioms. These axioms imply that circuit algebras may be equivalently described as algebras for an operad of wiring diagrams, as symmetric monoidal functors from a category of Brauer diagrams, or as modular operads in a monoid subcategory of symmetric species \cite{RayCA1}.

Besides quantum topology, these structures arise in many areas of mathematics, including invariant theory \cite{DM23, RayCA1} and string theory \cite{Sch96}. In particular, oriented circuit algebras are equivalent to wheeled props \cite{DHR20, RayCA1} which govern algebraic structures 
with trace such as those that appear in contexts including Batalin–Vilkovisky quantisation and deformation theory (see e.g.,~\cite{DM23, Mer10, MMS09}).

	This paper provides a detailed technical account of circuit algebra combinatorics. Building on analogous results for modular operads \cite{Ray20}, a monad and graphical calculus for circuit algebras is constructed and a nerve theorem in the style of Weber  \cite{BMW12, Web07}  is proved.

	\begin{thm}
		[\cref{thm. iterated law}]
		\label{thm: main intro}
		The category $\CO$ of circuit algebras is equivalent to the Eilenberg-Moore (EM) category of algebras for a monad $\OOO$ on Joyal and Kock's graphical species category $\GS$ \cite{JK11}. 
	\end{thm}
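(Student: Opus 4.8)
The plan is to build $\OOO$ not as a monolithic monad but as a composite of simpler monads on $\GS$, each encoding one layer of the circuit-algebra structure, and then to feed the resulting monad into abstract nerve theory. By the equivalent descriptions recalled in the introduction, a circuit algebra is a modular operad internal to the graded symmetric monoids of symmetric species. I would therefore isolate two monads on $\GS$: the ``modular'' monad $\TT$ whose algebras carry the contraction/gluing structure --- precisely the monad constructed in \cite{Ray20} --- and a ``product'' monad, say $\DD$, whose algebras are the graded symmetric monoids. The claim then becomes $\OOO \cong \DD\TT$, so the theorem splits into three tasks: (i) construct a distributive law making $\DD\TT$ a monad; (ii) identify its Eilenberg--Moore algebras with $\CO$; and (iii) verify the hypotheses of the nerve machinery.

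First I would fix the two component monads and describe their free algebras through the graphical calculus: free $\DD$-algebras are finite ``words'' (disjoint unions) of corollas, while free $\TT$-algebras are built from connected graphs with contractions as in \cite{Ray20}. The heart of the argument is then the distributive law $\lambda \colon \TT\DD \Rightarrow \DD\TT$, which encodes the distributivity axiom of circuit algebras: a contraction performed inside a tensor product agrees with first contracting and then re-assembling the product. I would define $\lambda$ on generators by the evident graph-level move --- sliding a monoidal juxtaposition past a contraction --- and check Beck's axioms. Since \cite{Ray20} already realises $\TT$ itself via a distributive law of two simpler monads (a symmetric/unital species monad and a self-gluing monad), the cleanest formulation treats the whole situation as an \emph{iterated} distributive law among three monads, whose coherence reduces to Cheng's Yang--Baxter (hexagon) condition; this is the ``iterated law'' of \cref{thm. iterated law}.

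Granting the iterated law, Beck's theorem gives a monad structure on $\OOO \cong \DD\TT$ together with an isomorphism between its EM-category and the category of ($\TT$-)modular operads internal to the $\DD$-algebras, i.e.\ modular operads in graded symmetric monoids. By the equivalences of \cite{RayCA1} recalled above, this internal category is exactly $\CO$, which yields the desired equivalence. To extract the graphical calculus and the nerve, I would then verify that $\OOO$ is a parametric right adjoint (equivalently, a monad with arities) whose arities are supplied by a dense subcategory of elementary graphs; the Berger--Melli\`es--Weber and Weber machinery \cite{BMW12, Web07} then produces the fully faithful nerve and its Segal-type image.

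The main obstacle is the distributive law and its Yang--Baxter coherence. The subtlety flagged in the abstract is precisely that contraction does not respect the connected-component bookkeeping tracked by the product monad: a single contraction can fuse two distinct tensor factors into one component, or act wholly within a factor, so $\lambda$ cannot be defined by a formal reshuffle and the pentagon/hexagon identities demand a genuine combinatorial analysis on graphs. I expect that splitting $\TT$ into its two constituent monads --- thereby separating the ``grafting'' interaction with $\DD$ from the ``self-gluing'' interaction --- is exactly what makes these identities tractable, and that this verification carries the bulk of the technical weight.
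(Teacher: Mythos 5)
Your proposal follows essentially the same route as the paper: your three monads are exactly the paper's free external-product monad $\LL$ (your ``product'' monad), the unit monad $\DD$, and the gluing monad $\TT$ (your splitting of the Ray \cite{Ray20} modular operad monad), and the paper likewise assembles $\OOO = \LL\DD\TT$ via Cheng's iterated distributive laws, verifying the Yang--Baxter hexagon by a graph-level argument (deleting bivalent vertices commutes with evaluating graph-of-graphs colimits) and then identifying EM algebras with $\CO$ just as you describe. Your diagnosis of where the difficulty lies is also accurate: the paper shows the obstruction concretely by proving that $\lambda_{\DD\TT}$ cannot be extended to a distributive law $\Tk D \Rightarrow D\Tk$, which is precisely why $\TTk = \LL\TT$ must be split before the units can be distributed.

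One correction on your final step, which concerns the nerve theorem rather than the statement at hand: the paper proves that $\OOO$ does \emph{not} have arities on $\GS$ (it is merely ``nervous'' in the sense of Bourke--Garner), so verifying that $\OOO$ is a parametric right adjoint with arities in a dense subcategory of graphs would fail. The nerve theorem is instead obtained from the lifted composite monad $\widetilde{\LL}\TTp$ on the category $\GSp$ of pointed graphical species, which does have arities $\Gretp$; this detour through the pointed setting is forced by the same problem of loops that obstructs the naive distributive law.
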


	Let $\pr{\CCat}$ denote the category of presheaves (contravariant $\Set$-valued functors) on a category $\CCat$.
	
	\begin{thm}[\cref{nerve theorem}]
		\label{thm nerve intro}\label{thm informal nerve}
		There is a full subcategory $\Klgrt \hookrightarrow \CO$, whose objects are undirected graphs. The nerve functor $N \colon \CO \to \pr{\Klgrt}$ induced by the inclusion $\Klgrt \hookrightarrow \CO$ is fully faithful.
		
		A presheaf $P $ on $\Klgrt$ is equivalent to the nerve of a circuit algebra if and only if it is ``Segal''. 
	\end{thm}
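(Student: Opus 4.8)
The plan is to realise the statement as an instance of the abstract nerve theorem of Weber and Berger--Melliès--Weber \cite{Web07, BMW12}, applied to the monad $\OOO$ on $\GS$ produced by \cref{thm: main intro}. That machine takes as input a \emph{dense} full subcategory of arities $\Theta_0 \hookrightarrow \GS$ for which $\OOO$ \emph{has arities} $\Theta_0$ -- the condition that $\GS \xrightarrow{\OOO} \GS \to \pr{\Theta_0}$ be the left Kan extension along $\Theta_0 \hookrightarrow \GS$ of its own restriction, equivalently that $\OOO$ preserve, after taking nerves, the canonical colimit presentation of each graphical species by objects of $\Theta_0$. Its output is exactly the two assertions to be proved: writing $\Klgrt$ for the full subcategory of $\CO \simeq \GS^{\OOO}$ spanned by the free $\OOO$-algebras on the objects of $\Theta_0$, the induced nerve $N \colon \CO \to \pr{\Klgrt}$ is fully faithful, and a presheaf lies in its essential image precisely when it is Segal, i.e.\ when its restriction along the canonical comparison $\Theta_0 \to \Klgrt$ is the $\Theta_0$-nerve of a graphical species.

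The first step is to fix $\Theta_0$ as the full subcategory of $\GS$ spanned by the graphical species that underlie (finite, undirected) graphs, with morphisms the maps of underlying presheaves. Density is immediate: $\Theta_0$ contains the representables of $\GS$ (the edges and corollas, which are themselves the elementary graphs), and any full subcategory of a presheaf category containing a dense subcategory is dense. I would then identify the associated free algebras, checking that the object-set of $\Klgrt$ is exactly the set of graphs and that circuit-algebra morphisms between the free algebras $\OOO(\Gamma)$ recover the expected ``graphical'' morphisms (étale maps and embeddings) between graphs; this pins down $\Klgrt$ as the promised full subcategory of $\CO$ whose objects are undirected graphs, and exhibits the comparison $\Theta_0 \to \Klgrt$ as bijective on objects.

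The heart of the argument, and the step I expect to be the main obstacle, is verifying that $\OOO$ genuinely \emph{has} the arities $\Theta_0$. The difficulty is that $\OOO$ is assembled from simpler monads through the iterated distributive law of \cref{thm: main intro}, and having arities is not formally inherited along such a composite, so the corresponding modular-operad statement of \cite{Ray20} cannot simply be quoted. I would attack this in two stages. First, using the explicit graph-substitution description of $\OOO$, show that it is a parametric right adjoint (familial) monad, so that the left Kan extension condition may be tested on generic factorisations and Weber's criterion reduces the task to checking that the generic part of any morphism out of an arity is again a graph. Second, propagate this condition through the distributive law: the modular-operad factor supplies the \emph{connected} graphs (this is precisely the arity statement of \cite{Ray20}) while the monoidal factor glues these into finite disjoint unions, so that the generic shapes of $\OOO$ are exactly the possibly-disconnected graphs indexing $\Klgrt$. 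Establishing that the interaction of the two factors is compatible with the density and Kan-extension conditions is the delicate point -- the ``subtle interplay between distributive laws and abstract nerve theory'' flagged in the introduction.

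With $\OOO$ shown to have arities $\Theta_0$, the nerve theorem of \cite{BMW12} applies verbatim, giving full faithfulness of $N$ and identifying its essential image with the Segal presheaves. It then remains to unwind the abstract Segal condition into a concrete one: a presheaf $P$ on $\Klgrt$ is Segal exactly when, for every graph $\Gamma$, the value $P(\Gamma)$ is the limit of the values of $P$ on the vertices (corollas) of $\Gamma$ glued along its edges -- equivalently, when $P$ sends the canonical decomposition of each graph into its elementary subgraphs to a limit. Matching this limit with the $\Theta_0$-nerve of a graphical species completes the identification of the essential image, and hence the proof.
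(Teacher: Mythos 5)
Your plan founders at the step you yourself identify as its heart: the claim that $\OOO = \LL\DD\TT$ has arities $\Gret \subset \GS$ (equivalently, that it is a parametric right adjoint / strongly cartesian monad on $\GS$). This is false, and the paper says so explicitly: $\OOO$ is \emph{nervous} but does \emph{not} have arities, so the machinery of \cite{BMW12, Web07} cannot be applied to $\OOO$ on $\GS$ directly. The obstruction is the ``problem of loops'' created by the unit monad $\DD$: contracted units can be represented both by the isolated vertex $\C_\nul$ (via $z \colon \C_\nul \to (\shortmid)$) and by the wheel $\W$ (via $\kappa \colon \W \to (\shortmid)$), and these get identified in $\Tpk\yetp\H$ (diagram (\ref{0 connected})). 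This many-to-one collapse destroys the generic-factorisation and pullback conditions your first stage requires. Your second stage compounds the problem by misquoting \cite{Ray20}: the arity statement there is \emph{not} that the modular operad monad $\DD\TT$ has arities $\Gr$ on $\GS$ (it does not, for the same reason), but that the \emph{extension} $\TTp$ of $\TT$ to the category $\GSp$ of $\DD$-algebras has arities $\Grp$. So there is no arity statement on $\GS$ to ``propagate through'' the distributive law, and no amount of care about the interaction of $\LL$ with $\DD\TT$ will produce one.

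The repair — and the paper's actual route — is to relocate the whole problem to the pointed category $\GSp = \GS^{\DD}$, absorbing the troublesome monad $\DD$ into the base. By \cref{prop. extension exists}, $\TT$ extends to $\TTp$ on $\GSp$ with $\GSp^{\TTp} \cong \GS^{\DD\TT}$; the distributive law for $\LL$ and $\TT$ then lifts to one for the lift $\widetilde\LL$ and $\TTp$, giving a monad $\TTpk = \widetilde\LL\TTp$ on $\GSp$ whose algebras are again circuit algebras (\cref{ssec LTp}). It is \emph{this} monad that has arities, namely $\Gretp$ (graphs with \emph{pointed} \'etale morphisms, which record the unit and contracted-unit maps), and the verification is the connectedness of the factorisation categories $\factcat$ (\cref{connected}), feeding into \cite[Proposition~2.5]{BMW12}. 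The category $\Klgrt$ is then the bo-ff factorisation of $\Gretp \to \GSp \to \CO$, whose morphisms factor as refinement, followed by vertex deletion, followed by unpointed \'etale map — strictly more morphisms than your $\Theta_0 \to \Klgrt$ comparison built from $\Gret$ alone would see. Finally, note that even granting lifted arities, the concrete Segal condition (\ref{eq. Segal nerve sec}) is not automatic from the abstract theorem (cf.~\cref{rmk Segal suff}): it requires the density of $\fisinv \hookrightarrow \Gretp$ and the finality of $\elG \subset \elpG$, both of which are statements about the pointed graphical category that your unpointed setup never constructs.
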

	
Put simply, the graphs encode ways of combining circuit algebra operations, and the ``Segal condition'' says that complex combinations of these operations are completely determined by their constituent parts (see \cref{ssec: Weber} and e.g.,~\cite{Hac22}). 

These theorems apply simultaneously to a very general class of circuit algebras that includes (non)oriented and coloured versions. A monad and nerve theorem for wheeled props, in terms of a category of directed graphs, follow as immediate corollaries (see \cref{cor wheeled props nerve}). 

 The proof of \cref{thm nerve intro} uses Weber's abstract nerve theory \cite{BMW12, Web07}. However, unlike the nerve theorems for categories and operads described in the examples of \cite[Section~4]{Web07}, the Segal condition for circuit algebras in \cref{thm: CO nerve} (as with the Segal condition for modular operads in \cite[Theorem~8.2]{Ray20}) cannot be directly obtained with Weber's machinery: while Theorems \ref{thm: main intro}~and~\ref{thm nerve intro} imply that the circuit algebra monad $\OOO$ is ``nervous'' in the sense of \cite{BG19}, it does not ``have arities'' in the sense of  \cite{BMW12, Web07}.

In \cite[Introduction~\&~Section~6]{Ray20}, I explained how the so-called ``problem of loops'', that is related to contractions of modular operadic units (the unit trace), means that the monad for unital modular operads does not admit a straightforward description in terms of the ``graph-substitution'' notion that underlies analogous constructions for operads \cite{MW07} or prop(erad)s \cite{HRY15, MMS09, Koc18}. As circuit algebras are a special case of modular operads (\cref{sec: definitions} and \cite{RayCA1}), this is also an issue for constructing the monad $\mathbb O$ of \cref{thm: main intro}. Crucially, to obtain the Weber-style nerve theorem \cref{thm nerve intro}, there must be a fully faithful embedding of the graphical category $\Klgrt$ into the category $\CO$ of circuit algebras. This implies that the unit trace is encoded by a map from the graph consisting of a single, isolated (degree 0) vertex to the exceptional edge with no vertices (see \cref{rmk loop intuition} and \cite[Introduction,~Figure~2,~and~Sections~6-8]{Ray20}) and not by a map to an exceptional nodeless loop (as in e.g.,~\cite{HRY19a, HRY19b,Str25}).

The central principle underlying this work (and \cite{Ray20}) is that, under certain conditions, it is possible to build up a description of complicated algebraic structures by suitably combining simpler ones. In particular, if the simpler structures are governed by monads, we may be able to reframe this as a problem of ``composing'' these monads. Distributive laws \cite{Bec69} tell us whether and how monads can be composed and how they interact in the composite. Iterated distributive laws \cite{Che11} describe conditions under which multiple monads may be composed.

In \cite[Section~7]{Ray20}, the modular operad monad was obtained as a composition $\DD\TT$, via a distributive law, of monads $\TT$ (governing composition) and $\DD$ (governing units) on $\GS$.  For circuit algebras (\cref{sec. iterated}), it is even necessary to extend the construction in \cite{Ray20} to include a third monad $\LL$ (governing the monoidal structure) and iterated distributive laws.

\begin{thm}[\cref{prop. iterated law} {\&} \cref{thm. iterated law}]\label{thm: composite intro}	\label{thm. dist intro}
	There is a triple of monads $\LL, \DD$ and $\TT$ on $\GS$ and iterated distributive laws between them such that the induced composite $\LL\DD\TT$ is the circuit algebra monad $\OOO$ from \cref{thm: main intro}.
\end{thm}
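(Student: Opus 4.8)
The plan is to realise $\OOO$ as an iterated composite of three simpler monads, extending the two--factor strategy of \cite[Section~7]{Ray20} by one monad. First I would recall from \cite{Ray20} the monad $\TT$ on $\GS$ governing contraction/composition, the monad $\DD$ governing units, and Beck's distributive law $\delta_{\DD\TT}\colon \TT\DD \Rightarrow \DD\TT$ whose induced composite $\DD\TT$ is the unital modular operad monad. I would then introduce $\LL$ as the free--monoid monad for the monoidal structure on $\GS$ whose monoids realise the ``graded symmetric monoid'' underlying a circuit algebra. The intended picture is that $\LL$--algebras are the underlying monoids, while $\DD\TT$ encodes the contraction--with--units structure that must be internalised over them, so that a circuit algebra is a modular operad internal to such monoids.

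The first substantive task is to construct two further Beck distributive laws, $\delta_{\LL\DD}\colon \DD\LL \Rightarrow \LL\DD$ and $\delta_{\LL\TT}\colon \TT\LL \Rightarrow \LL\TT$, expressing how unit insertions and contractions distribute across the monoid factors, and to verify the four Beck axioms for each \cite{Bec69}. Since the multiplication and unit of the free--monoid monad $\LL$ are ``formal'' (concatenation and singleton), each law amounts to re-bracketing a contraction or unit across the factors in which it occurs; I expect these axiom checks to be routine, though the bookkeeping for contractions acting across distinct monoid factors will be notationally heavy.

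Next I would verify Cheng's compatibility condition for an iterated distributive law \cite{Che11}, namely the Yang--Baxter equation
\[
(\delta_{\LL\DD}\,\TT)\,(\DD\,\delta_{\LL\TT})\,(\delta_{\DD\TT}\,\LL) \;=\; (\LL\,\delta_{\DD\TT})\,(\delta_{\LL\TT}\,\DD)\,(\TT\,\delta_{\LL\DD})
\]
of natural transformations $\TT\DD\LL \Rightarrow \LL\DD\TT$. Granting this, \cref{prop. iterated law} follows from Cheng's theorem, which endows $\LL\DD\TT$ with a canonical monad structure on $\GS$. To then prove \cref{thm. iterated law} I would identify this composite with $\OOO$ by comparing Eilenberg--Moore categories: iterating Beck's theorem, an $\LL\DD\TT$--algebra is a monoid ($\LL$--algebra) equipped with compatible $\DD\TT$--algebra structure, which unwinds to precisely the contraction and distributivity axioms of a circuit algebra. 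Matching this against the equational description of $\CO$ gives an equivalence $\CO \simeq \GS^{\LL\DD\TT}$, and comparison of the induced monads upgrades it to an isomorphism $\LL\DD\TT \cong \OOO$ with the monad of \cref{thm: main intro}.

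The main obstacle will be the Yang--Baxter verification in the presence of the ``problem of loops.'' As explained in the introduction, $\DD$ and $\TT$ already fail to compose by naive graph substitution because contracting a modular operadic unit produces a free--floating loop, which is exactly why $\delta_{\DD\TT}$ is delicate in \cite{Ray20}. The new difficulty is that the two paths around the hexagon perform monoidal products, unit insertions, and contractions in opposite orders, so their equality must be checked even on configurations where contraction within or across monoid factors creates such loops. Confirming that the loop-generating behaviour of $\delta_{\DD\TT}$ interacts coherently with the ``formal'' laws $\delta_{\LL\DD}$ and $\delta_{\LL\TT}$---so that no spurious loops are created, duplicated, or destroyed by reordering---is where I expect the genuine combinatorial work to lie.
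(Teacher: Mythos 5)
Your plan is correct and follows essentially the same route as the paper: the same three monads and the same three pairwise distributive laws, Cheng's Yang--Baxter condition (\cref{prop. iterated law}), and the Eilenberg--Moore comparison identifying $\LL\DD\TT$-algebras with circuit algebras (\cref{thm. iterated law}), with the genuine combinatorial content located, exactly as you predict, in showing that deletion of bivalent and isolated vertices (the source of loops) commutes with graph substitution. The one economy you miss is that the paper never verifies Beck's axioms for $\lambda_{\LL\TT}$ by hand: since $LT$ coincides with the nonunital circuit algebra monad $\Tk$ already constructed, $\lambda_{\LL\TT}$ is simply the restriction of $\mu^{\TTk}$ to the subfunctor $TL \subset (\Tk)^2$, so the four axioms follow at once from the monad axioms (\cref{lem. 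LT is Tk}).
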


The decomposition in \cref{thm: composite intro} is crucial to the proof of \cref{thm nerve intro}. Weber's nerve machinery \cite{BMW12, Web07} relies on a monad $\MM$ on a category $\CCat$ preserving certain colimits in some proper subcategory $\DCat \subsetneq \CCat$, that is said to  \textit{provide arities} for $\MM$, and this is only satisfied under very special conditions. 
However, if $\MM = \MM_n \dots \MM_1$ is a composite monad, then each subset of $\{ \MM_{1}, \dots,\MM_{n}\}$ describes a monad on $\CCat$ and we may consider whether a certain extension (and/or lift) of this monad, if it exists, has arities (\cref{ssec. combine}). In particular, \cref{thm nerve intro} is proved by showing that $\LL\TT$ admits an extension to a monad with arities on the category $\GS^\DD$ of algebras for $\DD$.

The iterated distributive laws used to prove Theorems \ref{thm: main intro}-\ref{thm. dist intro} illuminate the graphical combinatorics of circuit algebras, and dovetail nicely with variations of circuit algebras that appear in the literature: Algebras for $\LL$ are monoids in $\GS$ (\cref{rmk MO not monoidal}) so \cref{thm. dist intro} implies that circuit algebras are equivalent to modular operads ($\DD\TT$-algebras) in the monoid subcategory of $\GS$. Moreover, algebras for the monad $\LL \TT$ are nonunital circuit algebras, whose underlying modular operadic multiplication is not required to have a multiplicative unit (\cref{prop nonunital CA} and \cref{lem. LT is Tk}). By \cref{sec: nonunital}, these are symmetric monoidal functors from subcategories of ``{downward}'' Brauer diagrams \cite{SS15} (see also \cite{RayCA1} and \cite{KRW21, Sto22}). 
In particular, these structures have applications to the representation theory of the infinite dimensional orthogonal, symplectic and general linear groups (see \cite{SS15} and Sections~3.3~\&~6.2 of \cite{RayCA1}). These relationships are explored in the companion paper \cite{RayCA1}. A partial summary is provided in \cite[Table~1]{RayCA1}.

This paper aligns with a larger body of recent works aimed at describing the underlying combinatorics of operad-like structures, such as cyclic and modular operads \cite{DCH19, Hac24, HRY19a, HRY19b,KW24, Ray20, Sto22, War22}, properads and props \cite{HRY15, Koc16, Sto23}. Bek McCann and I are currently using an iterated distributive law approach to provide an operadic description of compact closed categories. A key motivation for much of this research is to find models that characterise generalised operads and their algebras \emph{up-to-homotopy}. The present work is no exception. 

	Recently, Dancso, Halacheva and Robertson have used circuit algebras to relate the graded Kashiwara-Vergne and 
 graded Grothendieck-Teichm\"uller groups, $\mathrm{KRV}$ and $\mathrm{GRT}$ \cite{DHR21}. However, associativity of the circuit algebra operations is not preserved under passage to pro-unipotent completion and, to relate the ungraded groups $\mathrm{GT}$ and $\mathrm{KV}$ directly, it is necessary to weaken the circuit algebra axioms (see \cite[Remark~1.1]{DHR21}).

\cref{thm nerve intro} characterises circuit algebras via a \textit{strong} Segal condition on graphical presheaves. So, it is natural to define homotopy circuit algebras as (simplicial set valued) presheaves that satisfy a weakened Segal condition. Indeed, \cite{HRY19b} and \cite{Ray20} described models for homotopy modular operads -- using different methods and slightly different graphical categories -- as weak Segal presheaves. Unfortunately, neither of these results can be generalised straightforwardly to obtain a model structure on $\pr{\Klgrt}$. It is for future work to explore whether such a model structure exists, or if the weak Segal condition is sufficient to describe a practical homotopy theory for circuit algebras.

 The nonunital case, which avoids the problem of loops, is better understood: Stoeckl \cite{Sto23} has constructed a model for nonunital homotopy wheeled props, building on Ward's proof that the operad for nonunital modular operads is Koszul \cite{War22}, and Kaufmann and Ward \cite{KW24} have shown that the operad governing (nonoriented) nonunital circuit algebras (there called ``Schwarz modular operads'') is Koszul. It would be interesting to compare the model for nonunital homotopy circuit algebras so-obtained with the Segal characterisation suggested by the nonunital case of \cref{thm nerve intro}. It is not expected that these results can be extended to unital circuit algebras. However, the iterated distributive law of \cref{thm. dist intro} together with Markl's notion of distributive laws for operads in \cite{Mar96} -- used in \cite{KW24} to prove that the (monochrome) nonunital circuit algebra operad is Koszul -- gives a  possible approach for exploring Koszulity of the operad for unital circuit algebras. (See also \cref{rmk operad dist} and \cite[Introduction]{RayCA1}.) Batanin and Berger's theory of \textit{tame polynomial monads} \cite{BB17}, together with the distributive law decomposition of the circuit algebra monad, could also shed light on the homotopy theory of unital circuit algebras.

Finally, a word on potential applications of this work outside mathematics. My own interest in circuit algebras, modular operads, and other operadic structures governed by graphs was originally motivated by questions involving complex networks with structure at multiple scales:  Given a circuit algebra $A$, the bar construction for $\LL\DD\TT$ and $A$ may be viewed as a rulebook for \textit{zooming in and out} of networks decorated by $A$.  Indeed, there is an increasing body of work exploring this operadic (or monadic) approach in applications to fields such as systems engineering, dynamical systems, software engineering and neuroscience (see e.g.,~\cite{PBHF23}). Just as they provide the main difficulty for constructing the circuit algebra monad, cycles -- or ``feedback loops'' -- are the drivers of complexity in many real world networks (see e.g.,~\cite{Gig17, Gre23}). The approach of this work (and \cite{Ray20}) highlights the combinatorial structure of loops and may therefore shed light on the properties of feedback loops as they appear in diverse applications.

\subsection{Details and overview}

\cref{sec. cat intro} provides a relatively informal introduction to iterated distributive law and the abstract nerve machinery of \cite{BMW12}, on which the proofs of Theorems \ref{thm: main intro}-\ref{thm. dist intro} are built, as well as some technical remarks on the relationship of this work to the extended algebraic patterns of \cite{CH21} and extensions of strongly cartesian monads described in \cite[Proposition~4.4]{BMW12}. Most of this section should be accessible to readers with just a basic knowledge of category theory.

In \cref{sec: definitions}, circuit algebras and modular operads are introduced as $\E$-valued presheaves over $\GS$ that satisfy certain axioms. It is important to note that the companion paper \cite{RayCA1} and e.g.,~\cite{BND17, Hal16, DHR21, HRY19a, HRY19b} are concerned with circuit algebras (and modular operads) ``enriched'' in a symmetric monoidal category $(\V, \otimes, I)$. From this perspective, different flavours of enriched circuit algebras are described as algebras over different operads. By contrast, this paper gives constructions ``internal'' to a category $\E$ with sufficient (co)limits. The sets of colours (or generating objects) in the enriched version are replaced by ``object objects'' (palettes) in the internal version. This enables all (internal) circuit algebras to be obtained as algebras for a single monad. Of course, the two definitions are equivalent when $\V = \E$ is the category of sets.

The technical work of the paper starts at \cref{s. graphs} which is largely a review of basic graph constructions, using the formalism of Joyal and Kock's ``(Feynman) graphs'' \cite{JK11} (see also \cite {HRY19a, HRY19b, Ray20}), and explains the relationship with Brauer diagrams and wiring diagrams in terms of which circuit algebras are usually defined \cite{BND17,DHR20,DHR21,RayCA1}.

A monad $\TTk$ for nonunital circuit algebras is constructed in \cref{sec: nonunital}. This is almost identical to the monad discussed by Kock in \cite{Koc18} and very similar to the nonunital modular operad monad $\TT$ constructed in \cite{Ray20}. \cref{sec. mod op} reviews the construction in \cite{Ray20} of the modular operad monad $\DD\TT$ by composing $\TT$ with a unit monad $\DD$, and gives an explicit description of the monad $\TTp$, that extends $\TT$ to the EM category of algebras for $\DD$, $\GSp$. 

Despite the similarities between $\TTk$ and $\TT$, the monad for unital circuit algebras cannot be obtained directly from a distributive law involving $\TTk$ and $\DD$. 
Instead, it is necessary to supplement the construction of \cite{Ray20} by observing that $\TTk = \LL\TT$ is, itself a composite monad. 
This happens in \cref{sec. monad CO}, in which an iterated distributive law is described and it is shown circuit algebras are equivalent to algebras for the resulting composite $\LL \DD \TT$ of three monads on $\GS$.

Finally, \cref{sec. nerve} contains a detailed description of the graphical category $\Klgrt$ for circuit algebras, and this leads easily to the proof of the nerve theorem (\cref{thm nerve intro}). The monad $\LL$ lifts to $\DD\TT$ algebras and $\TT$ extends to $\DD$ algebras. Hence, there is a ``lifted'' monad $\widetilde{ \LL }\TTp$  on $\GSp$ whose algebras are circuit algebras. \cref{thm nerve intro} is then implied by the observation that $\widetilde{ \LL }\TTp$ has arities on $\GSp$ and hence 
satisfies the conditions of the abstract nerve machinery of \cite{BMW12}. Three different proofs of this theorem are discussed (c.f.,~Proof of \cref{thm: CO nerve} and Remarks~\ref{rmk bmw proof}~and~\ref{rmk patterns proof}).

\subsection*{Acknowledgements}

I thank Marcy Robertson for motivating me to write this paper and for her unfailing guidance and support. The ideas in this work were incubated and nourished during my time at the Centre of Australian Category Theory, Macquarie University, Dharug Country. I acknowledge the support, friendship, and mathematical input of my friends and colleagues there. Thanks, in particular, to Steve Lack and John Power. Thanks also to Kurt Stoeckl for great suggestions and to Bek McCann and my other students and colleagues at James Cook University, Bindal Country, where this work was written.

\section{Distributive laws and abstract nerve theory}\label{sec. cat intro}

The main result of this work is the construction of a monad (\cref{thm. iterated law}) and a graphical category (\cref{ss. graphical category}) and nerve theorem (\cref{thm: CO nerve}) for circuit algebras. 
\cref{thm: CO nerve} is proved using the machinery of abstract nerve theory \cite{BMW12}, in combination with iterated distributive laws \cite{Bec69, Che11}.

Given the quantum topology origins of circuit algebras \cite{BND17, DHR21} and their connections with Brauer categories and representation theory \cite{DM23,LZ15, RayCA1,SS15}, this overview is aimed at a diverse audience including topologists, representation theorists and category theorists, and is intended to provide an accessible explanation of these formal concepts and how they interact in the proof of \cref{thm: CO nerve}, as well as to establish notation and terminology for what follows. 

More technical details are included as remarks, and I refer those interested in further details on the general theory to \cite{BMW12, Web07}, for abstract nerve theory, and \cite{Bec69, Che11}, for (iterated) distributive laws.

\subsection{Iterated distributive laws} \label{ssec. dist} 

This paper applies the general philosophy that, to understand something difficult, it is often helpful to break it up into pieces and instead study these simpler components and their local interactions. Monads are categorical devices that are often used to encode algebraic structures. In general, monads do not compose. But, in some cases, it may be possible to study a complicated algebraic structure via interactions between monads for simpler structures.

A monad $\MM = (M, \mu^{\MM}, \eta^\MM)$ on a category $\CCat$ is given by an endofunctor $M \colon \CCat \to \CCat$, together with natural transformations $\mu^{\MM} \colon M^2 \Rightarrow M$ (the \textit{monadic multiplication}) and $\eta^{\MM} \colon \mathrm{id}_{\Cat}\Rightarrow M$ (the \textit{monadic unit}) that satisfy axioms making $\MM$ into an associative monoid in the category of endofunctors on $\CCat$. (See, e.g.,~{\cite[Chapter~VI]{Mac98}, for detailed review of the general theory of monads and their algebras.)
	
	An algebra $(c, \theta)$ for $\MM$ consists of an object $ c$ of $\CCat$, and a morphism $\theta \colon Mc \to c$ satisfying two axioms that assert the compatibility of $\theta$ with $\mu^{\MM}$ and $\eta^{\MM}$. Algebras are the objects of the \textit{Eilenberg-Moore (EM) category $\CCat^{\MM}$ of algebras for $\MM$}, whose morphisms $(c,\theta) \to (d, \phi)$  are given by morphisms $f  \in \CCat(c,d)$ such that $\phi \circ Mf   = f \circ  \theta $. The monad $\MM$ induces free-forgetful adjoint functors $\CCat \leftrightarrows \CCat^{\MM}$ where the left adjoint $\CCat \rightarrow \CCat^{\MM}$ is given by $c \mapsto (Mc, \mu^{\MM}(c))$.

	Let $\MM = (M, \mu^{\MM}, \eta^{\MM})$ and $ \MM' = ({M'}, \mu^{\MM'}, \eta^{\MM'})$ be monads on a category $\CCat$.  In general, the composite functor $MM'$ does not carry a monad structure, since the data of $\MM$ and $\MM'$ is not sufficient to define a multiplication $(MM')^2 = MM'MM' \Rightarrow MM'$. 
	Observe, however, that any natural transformation $\lambda\colon{M'}M \Rightarrow M{M'}$ induces a natural transformation $\mu_\lambda \defeq (\mu^{\mathbb{M}}\mu^{\mathbb{{M'}}})\circ (M\lambda M')  \colon( M{M'})^2 \Rightarrow M{M'}$.

	\begin{defn}\label{defn: dist} 
		A \emph{distributive law} for $\MM$ and $ \MM'$ is a natural transformation $\lambda\colon{M'}M \Rightarrow M{M'}$ such that the triple $(M{M'}, \mu_\lambda , \eta^{\mathbb{M}}\eta^{\mathbb{{M'}}})$ defines a monad $\mathbb{M} \mathbb{{M'}}$ on $\CCat$.
		
	\end{defn} 
	
	Distributive laws were introduced by Beck \cite{Bec69} in terms of four axioms encoding their interactions with the monadic multiplications and units.
	
	Let $\lambda\colon {M'}M \Rightarrow M{M'}$ be a distributive law between monads $\MM$ and $\MM'$ on $\CCat$. If $(c, \theta)$ is an $\MM\MM'$ algebra, then $ c$ admits an $\MM'$ algebra structure $\theta \circ \eta^{\MM}M'  \colon M'c \to MM'c \to c$ and an $\MM$ algebra structure $\theta \circ\lambda\circ M\eta^{\MM'}\colon Mc \to M'Mc \to MM'c \to c$. In particular, the induced functor $ \CCat^{\MM\MM'} \to \CCat ^{\MM'}$ has a left adjoint and the monad $\MM$\textit{ lifts} to a monad on $\CCat^{\MM'}$ whose EM category of algebras is isomorphic to $\CCat^{\MM \MM'}$ (see \cite[Section~3]{Bec69}). Indeed, the existence of a functorial lift of $\MM$ to $\MM'$ is equivalent to the existence of a distributive law $\lambda\colon{M'}M \Rightarrow M{M'}$ (see e.g. \cite{Che11}).
	
In general, the functor $ \CCat^{\MM\MM'} \to \CCat ^{\MM}$ does not have a left adjoint and so the monad $\MM'$ does not extend to a monad on $\CCat^{\MM}$. However, in case the left adjoint does exist (in which case it is described by a certain right Kan extension \cite{Mat25, Str72}) and the counit of the adjunction is an isomorphism, then $\MM'$ may be extended to a monad $\MM'_*$ on $\CCat^{\MM}$ such that $(\CCat^{\MM})^{\MM'_*} \cong \CCat^{\MM\MM'}$, and the following square of adjunctions commutes:

\begin{equation}
	\xymatrix@C = .5cm@R = .3cm{
		\CCat^{\MM'} \ar@<-5pt>[rr]\ar@<-5pt>[dd]&\scriptsize{\top}&\CCat^{\MM\MM'} \ar@<-5pt>[ll]\ar@<-5pt>[dd]\\
		\vdash&&\vdash\\
		\CCat \ar@<-5pt>[uu] \ar@<-5pt>[rr]&\scriptsize{\top}& \CCat^\MM. \ar@<-5pt>[ll]\ar@{-->}@<-5pt>[uu]}\end{equation}

\begin{ex}
	\label{ex: category composite} The category of small categories is equivalent to the EM category of algebras for the category monad on the category of directed graphs. This sends a directed graph $G$ to the directed graph $\widetilde G$ with the same vertices, and -- for each directed path (of length $n \geq 0$) from $v$ to $w$ in $G$ -- a directed edge $v \leadsto w$ in $\widetilde G$.
	
	This monad may be obtained as a composite of the \textit{semi-category monad} -- which governs associative composition and assigns to each directed graph $G$ the graph with the same vertices as $G$ and edges corresponding to non-trivial directed paths (of length $n >0$) in $G$ -- and the \textit{reflexive graph monad} that adjoins a distinguished loop at each vertex of a directed graph. The corresponding distributive law encodes the property that the adjoined loops provide identities for the (semi-categorical) composition.

\end{ex}
\begin{ex}\label{ex dist MO}
	Similarly, the monad for modular operads on the category $\GS$ of graphical species (see \cref{defn: graphical species}) is obtained in \cite{Ray20}, as a composite $\DD\TT$: the monad $\TT$ governs the composition structure (multiplication and contraction), and the monad $\DD$ adjoins distinguished elements that encode the combinatorics of the multiplicative unit. The distributive law ensures that the distinguished elements provide units for the multiplication (see also \cref{sec. mod op}).
\end{ex}

Let $\MM_1, \dots,\MM_n$ be an $n$-tuple of monads on a category $\CCat$. 
If there are distributive laws $\lambda_{i,j}: M_i M_j \Rightarrow M_j M_i$ (defined for all pairs $1 \leq i < j \leq n$) such that, for each triple $1 \leq i < j< k \leq n$, the corresponding triple of monads and distributive laws satisfies the so-called \emph{Yang-Baxter} conditions 
\[ \lambda_{j,k}M_i \circ M_j \lambda_{i,k} \circ \lambda_{i,j}M_k =M_k  \lambda_{i,j} \circ \lambda_{i,k} M_j \circ M_i \lambda_{j,k}\colon M_iM_jM_k \Rightarrow M_kM_jM_i, \] 
then, by \cite{Che11}, $\MM_1, \dots,\MM_n$ may be composed to obtain a monad  $\MM_n \dots \MM_1$ on $\CCat$. 
It follows that, for each $1 \leq i < n$, there are composable monads $\MM_n\dots \MM_{i+1}$ and $\MM_i \dots \MM_1$ on $\CCat$, and $\MM_n\dots \MM_{i+1}$ lifts to a monad $\widetilde\MM_{(i+1)}$ on the EM category $\CCat^{\MM_i \dots \MM_{i+1}}$.

\subsection{Presheaves, slice categories and nerve functors} \label{ssec. presheaves}

Let $\E$ be a category and let $\CCat$ be an essentially small category. An $\E$-valued \emph{$\CCat$-presheaf} is a functor $ S\colon \mathsf C^\mathrm{op} \to \E$. The corresponding functor category is denoted by $\prE{\mathsf C}$. 
If $\E = \Set$ is the category of sets and all set maps, then we will write $\pr{\CCat} \defeq \prE{\CCat}$. 

The category of elements $\ElP{P}{\CCat}$ of a $\Set$-valued presheaf $P  \colon \CCat^{\mathrm{op}}\to \Set$ is defined as follows:
\begin{defn}
	\label{defn: general element} 
	Objects of $\ElP{P}{\CCat}$ are pairs $(c, x)$ -- called \emph{elements of $P$} -- where $c$ is an object of $\CCat$ and $x \in P(c)$. Morphisms $(c,x) \to (d,y)$ 
	are given by morphisms $f \in \CCat (c,d)$ such that $P(f)(y) = x$. 	
\end{defn}

A functor $\iota \colon \DCat \to \CCat$ induces a presheaf $\iota^* \CCat(-, c)$, $d \mapsto \CCat(\iota (d),c)$ on $\DCat$. For all $c \in \CCat$, the category $\ElP{\iota^*\CCat(-, c) }{\DCat}$ is called the \textit{slice category of $\DCat$ over $c$} and is denoted by $\iota \ov c$, or simply $\DCat \ov c$. 

The \textit{slice category of $\DCat$ under $c$}, denoted by $c \ov \DCat$ (or $c \ov \iota$) is the category $\iota^{\mathrm{op}} \ov c$ whose objects are morphisms $f \in \CCat(c, \iota(-))$ and whose morphisms $(d,f)\to (d', f)$ are given by $g \in \DCat (d,d')$ such that $ \iota (g) \circ g = f' \in \CCat(c, \iota(d'))$.

A presheaf $P \colon \CCat^{\mathrm{op}} \to \Set$ is \emph{represented} by some $c \in \CCat$ if $P =  \CCat(-, c)$. In this case, $\ElP{P}{\CCat}$ is precisely the \textit{slice category} 
$\CCat\ov c$ whose objects are pairs $(d,f)$ where $f \in \CCat (d,c)$, and morphisms $(d,f) \to (d', f')$ are commuting triangles in $\CCat$:
\[ \xymatrix{ d \ar[rr]^-g \ar[dr]_{f} && d' \ar[dl]^-{f'}\\&c&}\] 

In particular, the \emph{Yoneda embedding} $\CCat \to \pr{\CCat}, c \mapsto \CCat(-,c)$ induces a canonical isomorphism $ \ElP{P}{\CCat} \cong \CCat \ov P$ for all presheaves $P\colon \CCat ^{\mathrm{op}} \to \Set$. These categories will be identified in this work. 

\begin{defn}\label{defn nerve}
	The \emph{nerve functor}
	$N = N(F)\colon \CCat \to \pr{\DCat}$ associated to a functor $F\colon \DCat \to \CCat$  is given by $N(c)(d) = \CCat(F d, c) $ for all $c \in \CCat$ and $ d \in \DCat$. 
	
\end{defn}

Let $\iota \colon \DCat \hookrightarrow \CCat$ be an inclusion of small categories. By  \cite[Proposition~5.1]{Lam66}, the induced nerve $N \colon \CCat \to \pr{\DCat}$ is fully faithful if and only if every object $c \in \CCat$ is canonically the colimit of the forgetful functor $\DCat \ov c \to \CCat$, $ (f \colon \iota (d) \to c) \mapsto \iota (d)$. In this case, the inclusion $\iota \colon \DCat \hookrightarrow \CCat$ is called \textit{dense}.

\begin{ex}\label{ex: cat nerve}
	The nerve of a small category $\CCat$ is described by the image of $\CCat$ under the nerve functor induced by the dense inclusion $\Delta \hookrightarrow \Cat$, where $\Cat$ is the category of small categories, and $\Delta$ is the \textit{simplex category} of non-empty finite ordinals and order preserving morphisms. 
\end{ex}

\subsection{Weber's abstract nerve theory}\label{ssec: Weber}

Every functor admits an (up to isomorphism) unique \textit{bo-ff factorisation} as a bijective on objects functor followed by a fully faithful functor. For example, if $\MM = (M, \mu, \eta)$ is a monad on a category $\CCat$, and $\CCat^{\MM}$ is the EM category of algebras for $\MM$, then the free functor $ \CCat \to \CCat^\MM$ has bo-ff factorisation $\CCat \rightarrow \CCat_{\MM} \rightarrow \CCat^{\MM}$, where $\CCat_{\MM}$ is the {Kleisli category} whose objects are the \emph{free $\MM$-algebras} of the form $(Mc, \mu^{\MM}(c))$ for $c \in \CCat$. See e.g.\ \cite[Section~VI.5]{Mac98}.

Let $\DCat \subset \CCat$ be a dense subcategory and $\Theta_{\MM, \DCat}$ the full subcategory of $\CCat_\MM $ obtained in the bo-ff factorisation of the functor $\DCat \hookrightarrow \CCat \xrightarrow {\text{ free}} \CCat^\MM$. 
Consider the following diagram:  
\begin{equation} \label{eq: arities}
	\xymatrix{ 
		\boffcat\ar[rr]^{\text{f.f.}}						&& 	\CCat^\mathbb{M}\ar@<2pt>[d]^-{\text{forget}}
		\ar [rr]^-{N_{\mathbb{M}, \DCat}}				&&\pr{	\boffcat}\ar[d]^{j^*}	\\
		\DCat\ar@{^{(}->} [rr]^-{\text{}}_-{\text{}}	 \ar[u]_{j}^{\text{b.o.}}		&&\CCat \ar [rr]_{\text{f.f.}}^-{N_{\DCat}}	 \ar@<2pt>[u]^-{\text{free}}	
		&&\pr{\DCat}.
	}
\end{equation} where  $j^*$ is the pullback of the bijective on objects functor $j \colon \DCat \to \Theta_{\MM, \DCat} $. The left square of (\ref{eq: arities}) commutes by definition, and the right square commutes up to natural isomorphism (c.f.,~\cite[Section~1.6]{BMW12}).

If the canonical colimit cocones $\DCat \ov c$ lift to colimit cocones in $\CCat^{\MM}$, then $\DCat$ is said to \textit{provide arities for $\MM$}. In this case, the fully faithful functor $\boffcat \to \CCat^{\MM}$ is also dense and so the induced nerve $N_{\MM, \DCat} \colon \CCat^{\MM} \to \pr{\Theta_{\MM, \DCat}}$ is fully faithful. Moreover, by \cite[Section~4]{Web07}, the image of $N_{\MM, \DCat}$ in $ \pr{	\boffcat}$ consists of precisely those objects that are mapped, up to equivalence, by $j^*$ to the image of $N_{\DCat}$ in $\pr{\DCat}$. This is the \textit{Segal condition} for the nerve theorem.

More generally, if $\DCat\subset \CCat$ is a dense subcategory and $\MM$ is a monad on $\CCat$ such that the induced nerve $N_{\mathbb{M}, \DCat}$ in (\ref{eq: arities}) is fully faithful and the Segal condition is satisfied, then $\MM$ is said to be \emph{$\DCat$-nervous} \cite{BG19}. Having arities is a sufficient, but not necessary condition for a monad to be nervous. Indeed, neither the modular operad monad of \cite{Ray20} nor the circuit algebra monad constructed in this work (\cref{sec. iterated}) have arities, but they are both nervous (c.f., \cref{sec. nerve} and \cite[Section~8]{Ray20}).

\begin{rmk}\label{rmk. groth} In many cases of interest, $\CCat = \pr{\mathcal E}$ is the category of presheaves on some category $\mathcal E$. As the Yoneda embedding $\mathcal E \to \CCat$ is fully faithful and dense, any full subcategory $\DCat \subset \CCat$ that contains the representables is also dense in $\CCat$ (and $\mathcal E \hookrightarrow \DCat$ embeds as a full dense subcategory) so that the induced nerve $ \CCat \to \pr{\DCat}$ is fully faithful. If $\MM$ is a $\DCat$-nervous monad on $\CCat $, then the Segal condition has a particularly nice form: A presheaf $P$ on $\boffcat$ is equivalent to the nerve of an $\MM$-algebra if and only if, for all $d \in \DCat$, 
	\begin{equation}
		\label{eq general Segal}
		P(jd) \cong \mathrm{lim}_{(x, f) \in \mathcal E \ov d}j^*P(x). 
	\end{equation}
	Bourke and Garner \cite{BG19} call this situation \textit{the presheaf context}.
	
	Both the original Segal condition for the simplicial nerve theorem for small categories, and a Segal condition for the dendroidal nerve of an operad, may be obtained from monads with arities in the presheaf context (see e.g.,~\cite[Section~4]{Web07} or \cite[Section~2]{JK11} for details). The presheaf context also applies for the nerve theorems for modular operads \cite[Theorem~8.2]{Ray20} and circuit algebras (\cref{thm: CO nerve}), though the corresponding monads on $\GS$ do not have arities in these cases. 
\end{rmk}

\begin{rmk}
	\label{rmk strongly cartesian} (See also Remarks~\ref{rmk. groth}, \ref{rmk patterns proof} and \ref{rmk bmw proof}.) A special class of monad with arities is described by \textit{strongly cartesian monads} \cite{BMW12} (called \textit{parametric right adjoint} monads in \cite{Web07} and \textit{polynomial monads} in \cite{CH21}). These are \textit{cartesian} monads (they satisfy certain pullback conditions) that are \textit{locally right adjoint} (certain functors of slice categories are right adjoints). For a strongly cartesian monad $\MM$ on $\CCat$, there is a canonical choice of arities $\DCat_{\MM}\subset \CCat$ (see e.g.,~\cite[section~2]{BMW12}). Strongly cartesian monads on presheaf categories --with their associated Segal condition \ref{eq general Segal} -- are central to the theory of \textit{algebraic patterns} developed in \cite{CH21}.
	
	Let $\MM$ be a strongly cartesian monad on $\CCat  = \pr{\mathcal E}$ and let $\DCat \subset \CCat$ provide canonical arities for $\MM$. Consider the diagram (\ref{eq: arities}) for $\MM$ and $\DCat$. In this case, $\boffcat$ is dense in $\CCat^{\MM}$, the induced nerve is fully faithful, and $P \in \pr{\boffcat}$ is, up to equivalence, in the image of the nerve $N _{\MM, \DCat}$ if and only if $P$ satisfies (\ref{eq general Segal}). Since $\MM$ is strongly cartesian, there is, by \cite[Proposition~2.6]{Web07}, a wide subcategory $\boffcat^{\mathrm{ref}} \subset \boffcat$ of so-called \textit{generic morphisms} or \textit{refinements} \cite{JK11} in $\boffcat$ such that every morphism in $\boffcat$ factors as a refinement followed by a morphism in the image of $\DCat$. 
	
	The strongly cartesian monad $\MM$ on $\CCat = \pr{\mathcal E}$ then has the form
\begin{equation}\label{eq. poly}
		M(P)(x) \cong \mathrm{colim}_{(y,f) \in x \ov \boffcat^{\mathrm{ref}} } \mathrm{lim}_{(e,\alpha) \in \mathcal E \ov_{\DCat} y} P(e), \qquad \text{ for all } P \in \CCat, x \in \mathcal E.
\end{equation}

In the terminology of \cite{CH21}, the factorisation system on $\boffcat$, together with the underlying category $\mathcal E$, then describes an \textit{extendable algebraic pattern} with corresponding \textit{free Segal $\boffcat$-space monad} $\MM$. (See Remarks \ref{rmk patterns} and \ref{rmk patterns proof} for an outline of how this applies to the nerve theorems for circuit algebras and modular operads.)
\end{rmk}

\subsection{Applying distributive laws to prove nerve theorems}\label{ssec. combine}
The proof of the nerve theorem \cref{thm: CO nerve} for circuit algebras, like the proof of the modular operad nerve theorem of \cite{Ray20}, exploits the distributive law decompositions of the corresponding composite monad. 

Let $\MM$ be a monad on an essentially small category $\CCat$ with subcategory $\DCat \subset \CCat$, and let $\boffcat\subset \CCat^{\MM}$ be the subcategory obtained in the bo-ff factorisation of $\DCat \hookrightarrow \CCat \to \CCat^{\MM}$. 
Even if $\DCat$ itself does not provide arities for $\MM$, if $\MM  = \MM_n \dots \MM_1$ is a composite monad, then the decomposition of $\MM$ may provide other opportunities to apply the results of \cite{BMW12} in order to prove that the induced nerve $N_{\MM, \DCat} \colon \CCat^{\MM} \to \pr{\Theta_{\MM, \DCat}}$ is fully faithful:

\begin{ex}\label{ex. lifted arities}
	In the simplest case, let $\MM = \MM_2 \MM_1$ be a composite monad on $\CCat$ induced by a distributive law $\lambda \colon M_1 M_2 \Rightarrow M_2 M_1$, and let $\widetilde\MM_2$ be the lift of $\MM_2$ to $\CCat ^{\MM_1}$.  Let $\DCat \subset \CCat$ be a full subcategory, $
	\boffcat$ be the category obtained in the bo-ff factorisation of the functor $ \DCat \to \CCat ^{\MM}$ and let $\DCat_{\MM_1} \subset \CCat^{\MM_1}$ be the full subcategory such that the bijective on objects functor $j \colon \DCat \to \boffcat$ factors through $\DCat_{\MM_1}$. This is illustrated in Diagram (\ref{eq. lift diag}), with $\MM' = \MM_1$ and the free and forgetful functors $\CCat^{\MM'} = \CCat^{\MM_1} \leftrightarrows \CCat^{\MM}$ corresponding to the monadic adjunction associated to the lift $\widetilde\MM_2$ on $\CCat^{\MM_1}$. The functor $\DCat_{\MM_1 }\to \CCat^{\MM}$ induced by the lifted monad $\widetilde\MM_2$ also has bo-ff factorisation through $\boffcat$. So, if $\DCat_{\MM_1}$ provides arities for $\widetilde\MM_2$, then the nerve $N_{\MM, \DCat}$ is fully faithful.
	\begin{equation}
		\label{eq. lift diag}
		\xymatrix@R = .3cm{ 
			\boffcat \ar[rr]^{\text{f.f.}}						&& 	\CCat^{\MM}\ar@<2pt>[dd]^-{\text{forget}}
			\ar [rr]^-{N_{\MM, \DCat}	}		&&\pr{	\boffcat}\ar[dd]\\ &&&&\\
			\DCat_{\MM'}\ar[rr]^{\text{f.f.}}			 \ar[uu]^{\text{b.o.}}		&& 	\CCat^{\MM'}\ar@<2pt>[dd]^-{\text{forget}}\ar@<2pt>[uu]^-{\text{free}}
			\ar [rr]^-{N_{\MM', \DCat}}				&&\pr{\DCat_{\MM'}}\ar[dd]	\\&&&&\\
			\DCat\ar@{^{(}->} [rr]^-{\text{f.f.}}_-{\text{}}	 \ar[uu]^{\text{b.o.}}		&&\CCat \ar [rr] ^-{N_{\DCat}}	\ar@<2pt>[uu]^-{\text{free}}	
			&&\pr{\DCat}.
		}
	\end{equation}

	More generally, let $\MM  = \MM_n \dots \MM_1$ be a composite monad on $\CCat$ and let $\DCat \subset \CCat$ be a full subcategory. For each $1 \leq i < n$, the monads $\MM_n\dots \MM_{i+1}$ and $\MM_i \dots \MM_1$ on $\CCat$ are composable. In particular, $\MM_n\dots \MM_{i+1}$ lifts to a monad $\widetilde\MM_{(i+1)}$ on the EM category $\CCat^{\MM_i \dots \MM_{i+1}}$. 
	
	There is a factorisation of the diagram (\ref{eq: arities}) as
	\begin{equation}
	\label{eq. iterated arities} 
	\xymatrix@R = .3cm{ 
		\boffcat \ar[rr]^{\text{f.f.}}		\ar@{=}[d]				&& 	\CCat^{\MM}\ar@{=}[d]
		\ar [rr]^-{N_{\MM,\DCat} 	}		&&\pr{	\boffcat }\ar@{=}[d]	\\
		\Theta_{\MM_n \dots \MM_1, \DCat}\ar[rr]^{\text{f.f.}}						&& 	\CCat^{\MM_n \dots \MM_1}\ar@<2pt>[dd]^-{\text{forget}}
		\ar [rr]^-{N_{\MM_n \dots \MM_1,\DCat} 	}		&&\pr{		\Theta_{\MM_n \dots \MM_1,\DCat}}\ar[dd]^-{j_{n}^*}	\\ &&&&\\
		\vdots  \ar[uu]_-{j_n}^{\text{b.o.}}&& \vdots \ar@<2pt>[uu]^-{\text{free}}\ar@<2pt>[d] 
		&& \vdots \ar[d]\\	
		\Theta_{\MM_1, \DCat}\ar[rr]^{\text{f.f.}}			\ar[u]			&& 	\CCat^{\mathbb{M}_1}\ar@<2pt>[dd]^-{\text{forget}}\ar@<2pt>[u]
		\ar [rr]^-{N_{\MM_1, \DCat}}				&&\pr{\Theta_{\MM_1, \DCat}}\ar[dd]^-{j_1^*}	\\&&&&\\
		\DCat\ar@{^{(}->} [rr]^-{\text{f.f.}}_-{\text{}}	 \ar[uu]_-{j_1}^{\text{b.o.}}		&&\CCat \ar [rr]_{\text{}}^-{N_{\DCat}}	 \ar@<2pt>[uu]^-{\text{free}}	
		&&\pr{\DCat}.
	}\end{equation}
	
	The induced nerve $N_{\MM, \DCat}$ is fully faithful if, for some $1 \leq i < n$, the lift of $\MM_n \dots \MM_{(i+1)}$ to $ \CCat^{\MM_i \dots \MM_1}$ has arities $	\Theta_{\MM_i\dots \MM_1, \DCat}$ (as illustrated in (\ref{eq. iterated arities})).
\end{ex}

\begin{ex}
	\label{ex monad extension}	\label{ex nerve MO}(See also \cref{ex dist MO}.)  Once again, let $\MM = \MM_2 \MM_1$ be a composite monad on $\CCat$ induced by a distributive law $\lambda \colon M_1 M_2 \Rightarrow M_2 M_1$. 
	If the monad $\MM_1$ extends to a monad ${\MM_{1}}_*$ on $\CCat^ {\MM_2}$ such that $(\CCat^ {\MM_2})^{{\MM_{1}}_*} \cong \CCat^{\MM_2\MM_1}$, then, by letting $\MM' = \MM_2$ in Diagram (\ref{eq. lift diag}), the same technique as in \cref{ex. lifted arities} may be applied. 
	
	For example, in \cite[Theorem~7.46]{Ray20}, the modular operad monad on $\GS$ is obtained, via a distributive law, as a composite $\DD\TT$. The proof of the nerve theorem and Segal condition (\cite[Theorem~8.2]{Ray20}) follows from the fact that the monad $\TT$ extends to a monad $\TTp$, on the category $\GSp$ of $\DD$-algebras, whose EM category of algebras coincides with the EM category of algebras for $\DD\TT$ on $\GS$.  Whilst $\DD\TT$ does not have arities, $\TTp$ does. 

\end{ex}

		In \cref{sec. nerve}, the proof of \cref{thm: CO nerve} extends the proof of the modular operadic nerve theorem (\cref{ex nerve MO}) but  requires a composite $\LL\DD\TT$, obtained via iterated distributive laws, of three monads on $\GS$, and uses both lifts and extensions of monads. The general case is outlined in the following example.
		
		\begin{ex}
			\label{ex. combined arities}Examples \ref{ex. lifted arities} and \ref{ex nerve MO} may be combined: Let $\MM = \MM_3 \MM_2 \MM_1$ be a monad on $\CCat$ and let $\DCat \subset \CCat$ be a full and dense subcategory. Assume that $\MM_1$ extends to a monad $\MM_{1 *} $ on the EM category $\CCat^{\MM_2}$ of algebras for $\MM_2$ and that $\CCat^{\MM_2 \MM_1}\cong (\CCat^{\MM_2})^{\MM_{1 *}}$. By the theory of iterated distributive laws \cite{Che11}, $\MM_3$ lifts functorially to both $\CCat^{\MM_1}$ and $\CCat^{\MM_2 \MM_1}$. Hence, since $\CCat^{\MM_2 \MM_1}\cong(\CCat^{\MM_2})^{\MM_{1 *}}$, the distributive law $M_1 M_3 \Rightarrow M_3 M_1$ between the monads $\MM_1$ and $\MM_3$ on $\CCat$ lifts to a distributive law between the extension ${\MM_{1 *}}$ of $\MM_1$, and the lift $\widetilde {\MM}_3$ of $\MM_3 $ to $\CCat^{\MM_2}$. Moreover, algebras for $ \widetilde {\MM}_3 {\MM_{1 *}}$ on $\CCat^{\MM_2}$ correspond to algebras for $\MM$ on $\CCat$. 
			
			It follows that the nerve induced by the bo-ff factorisation $\DCat \to \boffcat \to \CCat^{\MM}$ is fully faithful if $ \widetilde {\MM}_3 {\MM_{1 *}}$ has arities obtained from the full subcategory $\DCat_{\MM_2} \subset \CCat^{\MM_2}$ with the same objects as $\DCat$ (as in Diagram (\ref{eq. lift diag}) with $\MM' = \MM_2$).
			
		\end{ex}
		
		\begin{rmk}\label{rmk Segal suff}
			
			It is important to note that the conditions in each of the Examples \ref{ex. lifted arities}-\ref{ex. combined arities} are not sufficient to establish a Segal condition in terms of presheaves on the category $\DCat$. The Segal condition in \cite[Theorem~8.2]{Ray20} and \cref{thm: CO nerve}, follows from the observation that the embedding $\fisinv \hookrightarrow \Gretp$ (described in \cref{ss. pointed}) is dense.
		\end{rmk}

	Following Remarks \ref{rmk. groth} and \ref{rmk strongly cartesian}, I conclude this section with remarks on related results, specific to the presheaf context, that can be used to obtain additional proofs of the nerve theorem \ref{thm: CO nerve}. This is further explained in Remarks \ref{rmk patterns proof} and \ref{rmk bmw proof}.
		
		\begin{rmk}
		\label{rmk BMW prop} Let $k \colon \mathcal E_1 \to \mathcal E_2$ be a bijective on objects functor and, for $i = 1,2$, let $\MM_i$ be a monad on $\CCat_i = \pr{\mathcal E_i}$ such that the pullback $k^* \colon \CCat_2 \to \CCat_1$ describes a monad morphism $(\CCat_1, \MM_1) \to (\CCat_1, \MM_1)$ by $ k^* \MM_2 = \MM_1 k^*$. By \cite[Proposition~4.4]{BMW12}, if $\MM_1$ is strongly cartesian, then so is $\MM_2$ and the canonical arities for $\MM_2$ may be obtained from those of $\MM_2$ by extension along $k$.

	In the case of the circuit algebra and modular operad monads $\LL\DD\TT$ and $\DD\TT$ on $\GS$, the forgetful functor $\GSp \to \GS$ associated to the monad $\DD$ on $\GS$ is the pullback of the bijective on objects inclusion $k \colon \fisinv \hookrightarrow \fisinvp$ described in \cref{ssec. D monad}. The corresponding nerve theorems then follow, as in \cref{rmk bmw proof}, by showing that the monads $\LL \TT$ and $\TT$ are strongly cartesian on $\GS$ and admit extension, along $k$, to $\GSp$.

		\end{rmk}
		\begin{rmk}
			\label{rmk patterns} Recall Remarks \ref{rmk. groth} and \ref{rmk strongly cartesian}. Let $\CCat  = \pr{\mathcal E}$ and $\DCat$ be a full subcategory containing the representables. Further, let $\MM$ be a $\DCat$-nervous monad on $\CCat$. So, the nerve $N_{\MM, \DCat}\colon \CCat^{\MM} \to \pr{\boffcat}$ in Diagram (\ref{eq: arities}) is fully faithful and presheaves in its essential image satisfy the presheaf Segal condition (\ref{eq general Segal}). Assume also that $\DCat$ is the smallest full subcategory of $\CCat$ with respect to which $\MM$ is nervous. 
			
	Now, let $\boffcat$ admit an algebraic pattern structure -- with elementary objects in $\mathcal E$ -- as in \cite{CH21}. This means that there is a factorisation system $(\boffcat^{\mathrm{ref}}, \DCat')$ on $\boffcat$ such that presheaves in the essential image of $N_{\MM, \DCat}$ must also satisfy a second Segal condition 
	\begin{equation}
		\label{eq. pattern Segal} 
			P(j'd) \cong \mathrm{lim}_{(x, f') \in \mathcal E' \ov d}{j'}^*P(x), 
	\end{equation}
	where $j' \colon \DCat' \hookrightarrow \boffcat$ is the bijective on objects inclusion and $\mathcal E' \subset \DCat'$ is the full subcategory on the objects from $\mathcal E$. 
	Moreover, if the factorisation of $\boffcat$ describes an extendable algebraic pattern as in \cite[Proposition~8.8]{CH21}, then there is a strongly cartesian monad $\MM'$ on $\CCat' \defeq \pr{\mathcal E'}$, given by 
	\[	M'(P)(x) \cong \mathrm{colim}_{(y,f) \in x \ov \boffcat^{\mathrm{ref}} } \mathrm{lim}_{(e,\alpha') \in \mathcal E' \ov y} P(e), \qquad \text{ for all } P \in \CCat, x \in \mathcal E,\]
	whose EM category of algebras is precisely $\CCat^{\MM}$.
	
	This is summarised in the following diagram, where -- as before (\ref{eq: arities}) -- the left squares commute and the right squares commute up to natural isomorphism: 
	\begin{equation}
		\label{eq pattern arities}
			\xymatrix{ 
					\mathcal E' \ar[rr]^-{\text{f.f.}}_-{\text{dense}}&&
				\DCat'\ar@{^{(}->} [rr]^-{\text{f.f.}}_-{\text{dense}} \ar[d]^{j'}_{\text{b.o.}}		&&\CCat' \ar [rr]_{\text{f.f.}}^-{N_{\DCat'}}	 \ar@<-2pt>[d]_-{\text{free}^{\MM'}}	
				&&\pr{\DCat'}\\
		&&	\boffcat\ar[rr]^{\text{f.f.}}						&& 	\CCat^\mathbb{M}\ar@<2pt>[d]^-{\text{forget}^{\MM}}\ar@<-2pt>[u]_-{\text{forget}^{\MM'}}
			\ar [rr]^-{N_{\mathbb{M}, \DCat}}				&&\pr{	\boffcat}\ar[d]^{j^*}\ar[u]_{{j'}^*}	\\ 
			\mathcal E \ar@{..>}[uu] \ar[rr]^-{\text{f.f.}}_-{\text{dense}}&&
			\DCat\ar@{..>}@/^3.0pc/[uu]\ar@{^{(}->} [rr]^-{\text{f.f.}}_-{\text{dense}} \ar[u]_{j}^{\text{b.o.}}		&&\CCat \ar [rr]_{\text{f.f.}}^-{N_{\DCat}}	 \ar@<2pt>[u]^-{\text{free}^{\MM}}	
			&&\pr{\DCat}.
		}		
	\end{equation}
	
	For example, the situation of \cref{rmk BMW prop} is obtained if there is a functor $i\colon \DCat \to \DCat'$ such that $j'\circ i = j$, and $\MM'$ is the extension -- along the restricted pullback $i_{\mathcal E}^* \colon \CCat' \to \CCat$ - of a strongly cartesian monad $\MM_0$ on $\CCat$. 

It is natural to ask under what conditions such a functor $i\colon \DCat \to \DCat'$ exists, and, furthermore whether it is possible to classify the nervous monads $\MM$ for which (as with the circuit algebra and modular operad nerve constructions of \cref{thm: CO nerve} and \cite[Theorem~8.2]{Ray20}) there exists an iterated distributive law describing $\MM$ from which the strongly cartesian monad $\MM'$ on $\CCat'$ may be obtained as an extension (and/or lift) as in Examples \ref{ex. lifted arities}-\ref{ex. combined arities}.

		\end{rmk}

\section{Graphical species, modular operads, and circuit algebras}\label{sec: definitions}

Given a set $\DDD$, let $\listm (\DDD) =\coprod_{n \in \N} \DDD^n$ be the set of finite ordered sets $\ccc  = (c_1, \dots, c_n)$ of elements of $\DDD$ (so $\listm (\DDD)$ is the set underlying the free associative monoid on $\DDD$).
An \emph{(involutive) palette} is a pair $(\CCC, \omega)$ of a set $\CCC$ with involution $\omega \colon \CCC \to \CCC$ (see also \cite{Ray20, RayCA1}).

Let $(\CCC, \omega)$ be a palette. In \cite{RayCA1}, $(\CCC, \omega)$-coloured circuit algebras and modular operads \textit{enriched }in a symmetric monoidal category $(\V, \otimes, I)$ were characterised axiomatically, as symmetric $\listm(\CCC)$-graded monoids in $\V$ equipped with a contraction operation satisfying certain axioms (see \cite[Proposition~5.4]{RayCA1} and \cite[Definition~5.7]{RayCA1}). 

This section considers structures that satisfy the same 
axioms, but now defined \emph{internally} to a category $\E$ with sufficient (co)limits. Simply put, this means that the ``grading'' is now by some involutive object of $\V$. When $\V = \E = \Set$, the enriched and internal definitions coincide.

\subsection{Symmetric species}\label{ssec. species}	\label{ssec. Symmetric objects}

Let $\fin $ denote the category of finite sets and functions and let $\fiso \subset \fin$ be the groupoid of finite sets and bijections.
For each $n \in \N$, let $ \nn$ denote the set $\{1, 2, \dots, n\}$ (so $\nul = \emptyset$), and let $ \Sigma_n$ denote the group of permutations on $\nn$.  Let $\Sigma = \bigoplus_n \Sigma_n$ be the  \emph{symmetric groupoid} (with $\Sigma(n,n) = \Sigma_n$).

As $\Sigma$ is a skeleton for $\fiso$, for any category $\E$, a $\E$-valued presheaf $P\colon \fiso^\mathrm{op} \to \E$, also called a \emph{(monochrome} or \emph{single-sorted) species \cite{Joy81} in $\E$}, determines a presheaf $\Sigma^{\mathrm{op}} \to \E$ by restriction. 
	And conversely, a $\Sigma$-presheaf $Q$ may always be extended to a $\fiso$-presheaf $Q_{\fiso}$, by setting
		\begin{equation} \label{eq. sigma to fiso}Q_{\fiso}(X) \defeq \mathrm{lim}_{(\nn,f) \in \Sigma \ov X} Q(n) \ \text{ for all } n \in \N.\end{equation}
		Hence, a species $P\colon \fiso^\mathrm{op} \to \E$ is equivalently a sequence $(P(n))_n$ of objects of $\E$ such that $\Sigma_n$ acts on $ P(n)$ for all $n$.

	For a set $\DDD$, the associative (concatenation) product $\ccc \ddd = \ccc\oplus \ddd$ on $\listm (\DDD)$ is given by \[  \ccc \ddd  \defeq (c_{1}, \dots, c_{m}, d_1, \dots, d_n)  \text{ for } \ccc  = (c_1, \dots, c_m), \ddd =(d_1, \dots, d_n) \text{ in } \listm(\DDD) .\] The empty list is the unit for $\oplus$ and is denoted by $\varnothing$ (or $\varnothing_{\DDD}$).
	
	The symmetric groupoid $\Sigma$ acts on $\listm (\DDD)$ from the right by $\sigma \colon  (\ccc \sigma) \defeq (c_{\sigma 1}, \dots, c_{\sigma n}) \mapsto \ccc $, for all $\ccc= (c_1, \dots, c_n)$ and $\sigma \in \Sigma_n$. The monoidal groupoid so obtained is  the \emph{free symmetric groupoid on $\DDD$} denoted by $\Sigma^{\DDD}$. In particular, if $\DDD = \emptyset$, then $\Sigma ^{\DDD} = \{\varnothing\}$.
	
	An \emph{$\DDD$-coloured species in $\E$} is a $\E$-valued presheaf on $\Sigma^{\DDD}$. This is equivalently described by a collection $(P(\ccc))_{\ccc \in \listm(\DDD)}$ of objects of $\E$ such that $\Sigma_n$ (right) acts on $ P(n) = \coprod_{\ccc \in \DDD^{n}} P'(\ccc)$ for all $n$.

\subsection{Graphical species}
\label{ssec: graphical species}
Graphical species in $\Set$, introduced in \cite{JK11} to describe various generalisations of (coloured) operads, generalise the notion of coloured species from \cref{ssec. species} and have been used to describe coloured modular operads (called \textit{compact closed categories} in \cite{JK11}) in \cite{JK11, Ray20, HRY19a, HRY19b}. For more details on graphical species in $\Set$, particularly regarding the involutive palettes $(\CCC, \omega)$ that encode their expressive power, the reader is referred to \cite[Section~1]{Ray20}.

This section provides a short discussion on graphical species taking values in an arbitrary category $\E$. For convenience, $\E$ will always be assumed to have sufficient limits and colimits. The terminal object of $\E$ is denoted by $*$ and the initial object is denoted by $0 = 0_{\E}$.

As before, denote the groupoid of finite sets and bijections by $\fiso$. The category $\fisinv$ (called $\bm{elGr}$ in \cite{JK11}) is obtained from $\fiso $ by adjoining a distinguished object $\S$ that satisfies 
\begin{itemize}
	\item $\fisinv (\S, \S) \cong \Z/2\Z$ with generator $\tau$;
	\item for each finite set $X$ and each element $x \in X$, there is a morphism $ch_x \in \fisinv (\S, X)$ that ``chooses'' $x$, and $\fisinv(\S, X)= \{ch_x\ ,\ ch_x \circ \tau \}_{x \in X} \cong X \amalg X$;
	\item for all finite sets $X$ and $Y$, $\fisinv(X,Y) = \fiso (X,Y)$;
	\item morphisms are equivariant with respect to the action of $\fiso$. That is, $ ch_{f(x)} = f \circ ch_x \in \fisinv (\S, Y)$ for all $ x \in X$ and all bijections $f\colon X \xrightarrow {\cong} Y$.
\end{itemize}

\begin{defn}\label{defn: graphical species}
	A \emph{graphical species $S$ in a category $\E$} is a presheaf $S\colon {\fisinv}^\mathrm{op} \to \E$. The category of graphical species in $\E$ is denoted by $\GSE\defeq\prE{\fisinv}$. When $\E = \Set$, we write $\GS \defeq \GSE[\Set]$.

\end{defn}

So, a graphical species $S$ is described by a $\E$-valued species $(S_X)_{X \in \fiso}$, and an object $S_\S \in \E$ with involution $S_\tau \in \E(S_\S, S_\S)$, 
together with a family of projections $S(ch_x)\colon S_X \to S_\S$, defined for all finite sets $X$, and all $x \in X$, and equivariant with respect to the actions of $\fiso$ and $\tau$.  In the definitions of circuit algebras and modular operads (Sections \ref{subs. MO and Comp}~\&~\ref{ssec CO}), the objects $S_X$ describe the (modular operad) operations, or morphisms, whereas the object $S_\S$ can be thought of as describing the ``colours'', or generating the ``object object''. 

\begin{ex}\label{Comm}
	The terminal graphical species $\CommE$ is the constant (monochrome) graphical species that sends $\S$ and all finite sets $X$ to the terminal object $*$ in $\E$.
	
\end{ex}

\begin{defn}\label{def: GS palette}

A morphism $ \gamma \in \GSE(S, S' )$ is \emph{colour-} or \emph{palette-preserving} if its component at $\S$ is the identity on  $({S_\S}, S_\tau)$.  A graphical species $S \in \GSE$ is called \textit{monochrome} if $S_\S = *$.

\end{defn}

If $(\CCC, \omega)$ is an involutive palette in $\Set$, objects of the subcategory $\CGS \subset \GS$ of \emph{$(\CCC, \omega)$-coloured graphical species in $\Set$} are graphical species $S \in \GS$ such that $(S_\S, S_\tau) = (\CCC, \omega)$.  
Morphisms are palette-preserving morphisms. 

\begin{defn}\label{c arity}

	Let $S \in \CGS$. For a finite set $X$, $ S_X =\coprod_{\ccc \in \CCC^X} S_{\ccc}$, where, for each $\ccc \in \CCC^X$, the set $S_{\ccc}$ is the \emph{$\ccc$-(coloured) arity of $S$} defined as the fibre above $ \ccc $ of the map $(S(ch_x))_{x \in X}\colon {S_X} \to\CCC^{X}$.

\end{defn}

 It follows that $S$ defines a $\CCC$-coloured species $S' \colon (\Sigma^{\CCC})^{\mathrm {op}}\to \Set$ by restriction. Conversely, for any involutive palette $(\CCC, \omega)$, a $\CCC$-coloured species $P \colon (\Sigma^{\CCC})^{\mathrm{op}} \to \Set$ extends to a graphical species $\hat P \in \CGS$ by (\ref{eq. sigma to fiso}).

\begin{ex}\label{ex. terminal coloured species} 
For any palette $(\CCC, \omega)$, the $(\CCC, \omega)$-coloured graphical species $\CComm{}$ given by $\CComm{\ccc}= \{*\}$ for all $\ccc \in \listm(\CCC)$ is terminal in $\CGS$. If $(\CCC, \omega) = \emptyset$, then $K^{\emptyset}_{\nul} = \{*\}$ and $K^{\emptyset}_X = \emptyset $ for $X \not \cong \nul$.

\end{ex}

The element category (\cref{defn: general element}) of a $\Set$-valued graphical species $S \in \GS$ is denoted by $\elG[S] \defeq \ElP{S}{\fisinv}$.

\begin{ex}\label{ex directed graphical species}
Let $\Dipal$ (called $\dipal$ in \cite{RayCA1}) be the involutive palette in $\Set$ described by the unique non-trivial involution $\Disig$ on the two-element set $\Di = \{ \In, \ \Out\}$. This was used in \cite[Section~3.2]{RayCA1} to describe (monochrome) oriented circuit algebras (monochrome wheeled props), and in \cite[Section~1.1]{Ray20} to describe props and wheeled properads.

If $\Dicomm$ is the terminal $\Dipal$-coloured 
graphical species in $\Set$, then, for all finite sets $X$, $\Dicomm_{X}=\Di^X$ is the set of partitions of $X =X_{\In}\amalg  X_{\Out}$ into \textit{input} and \textit{output} sets and morphisms $ \Dicomm_X \to \Dicomm_Y$ are bijections $X \xrightarrow{\cong} Y$ that preserve the partitions. Objects of the category $\CGS[\Dipal]$ are called \emph{monochrome oriented graphical species} in $\Set$. 

Since $(\In) \cong (\Out)$ in $\elG[\Dicomm]$, the element category $\elG[\Dicomm]$ of $\Dicomm$ is equivalent to the category $(\fiso \times \fiso ^{\mathrm{op}})^+$ obtained from $\fiso \times \fiso ^{\mathrm{op}}$ by adjoining a distinguished object $\oedge$ (the \emph{directed exceptional edge}), and morphisms $i_x, o_y \colon (X_{\In}, X_{\Out})\to (\oedge)$ defined for each $x \in X_{\In}$ and $y \in X_{\Out}$.

\end{ex}

	\begin{defn}
		\label{defn oriented GSE}
		The category $\oGSE$ of \emph{oriented graphical species in $\E$} is the presheaf category $ \prE{\elG[\Dicomm]}$. 
	\end{defn}

	For all $\Set$-valued graphical species $S' \in \GS$, there is a canonical equivalence $\GS \ov S' \cong \pr{\elG[S']}$ such that $\gamma \in \GS(S,S')$ describes a presheaf $\tilde S \in \pr{\elG[S']}$ by
		$\tilde S (\phi) =  \gamma^{-1} (\phi)$ for all $\phi \in \elG[S']$. 
		
		\begin{ex}\label{ex. oriented slice}\label{lem oGSE equiv}(See also \cite[Example~1.10]{Ray20}.)
			Assume that $\E$ has finite products and coproducts. Let $\DicommE \in \GSE$ be the graphical species given by  ${\DicommE}_{\S} = *\amalg*$ (and ${\DicommE}_{\tau}$ acts by swapping the components), and, for each finite set $\X$, ${\DicommE}_{X} = \coprod_{(X_{\In}, X_{\Out})\in \Dicomm_X} (* )$.

			There is a canonical equivalence of categories $\oGSE\simeq \GSE \ov \DicommE$. If $(S, \gamma) \in \GSE \ov \DicommE$, then the corresponding presheaf $\tilde S \in  \prE{\elG[\Dicomm]} = \oGSE$ is such that, for all $\C \in \fisinv$ and all $\phi \in S_{\C}$ (so $\phi $ is either a pair of finite sets or an element of $\{\In, \Out\}$), $\tilde S(\phi)$ is given by the pullback 
			\[\xymatrix{\tilde S (\phi) \ar[rr] \ar[d]&& S_{\C} \ar[d]^{\gamma_{\C}}\\ {*} \ar[rr]_-{inc_{\phi}} && \DicommE_{\C}*}\] where  $inc_{\phi}  $ is the obvious map that picks out the component indexed by $\phi$ in $ \DicommE_{\C} = \coprod_{\Dicomm_{\C}} *$. 

		\end{ex}

Since $\E$ is always assumed to have sufficient (co)limits, these two descriptions of oriented graphical species  -- as $\E$-valued presheaves on $\elG[\Dicomm]$ or as objects of $\GSE$ with morphisms to $\DicommE$ -- will be used interchangeably in what follows.

\begin{rmk}\label{rmk monoidal}
	
When $\E$ has finite products and coproducts, addition of natural numbers in $\Sigma$ induces a 
	symmetric monoidal product on the category of species in $\E$ by 
	\begin{equation}\label{eq cauchy prod}
		(S \otimes T)_X \defeq \coprod_{(Y,Z), Y \amalg Z = X} S_Y \times T_Z, \text{ for all } X \in \fiso
	\end{equation} for species $S,T$ and all finite sets $X$. It is easily checked (see also \cite[Section~2.1]{RayCA1}) that monoids for this product correspond to (lax) monoidal presheaves on $\Sigma$. 
	
	This extends to a symmetric monoidal structure on $\GSE$ (and also $\oGSE$) whereby, given $S, T \in \GSE$,  the graphical species $S \otimes T \in \GSE$, has $ (S \otimes T)_X  = \coprod_{(Y,Z), Y \amalg Z = X} S_Y \times T_Z$ as in (\ref{eq cauchy prod}) and $(S \otimes T)_\S \defeq S_\S \amalg T_\S$.

By the universal property of the coproduct, the involutions on $S_\S$ and $T_\S$ induce an involution on $(S \otimes T)_\S$ and the $\fiso$-actions on $S$ and $T$ separately induce a $\fiso$-action on the collection  $((S \otimes T)_X)_{X\in \fiso}$. 
For $x \in X$, the morphism $(S\otimes T)(ch^X_x) \colon (S \otimes T)_X \to (S \otimes T)_\S$ is induced by the maps 
\[ (S \otimes T)(ch^{Y \amalg Z}_x) = \left \{ \begin{array}{ll}
	\iota_S\circ S(ch^Y_x) \circ p_{S_Y}\colon S_Y \times T_Z \to S_\S& \text{ when } x \in Y\\
	\iota_T\circ T(ch^Z_x) \circ p_{T_Z}\colon S_Y \times T_Z \to T_\S & \text{ when } x \in Z.
\end{array}\right . \] 
Here, the maps $p_{(-)}$ from $S_Y \times T_Z$ are the projections and $\iota_{(-)}$ are the maps to the coproduct $(S \otimes T)_\S$.

It is straightforward to check that $S \otimes T\in\GSE$ and that $\otimes $ is associative and the graphical species $I $ defined by $I_{\S} = 0$, $I_{\emptyset} = *$ and $I_X = 0$ for $X \neq \emptyset$ (so $ I =I_\E=K^{\emptyset}$ when $\E = \Set$)
defines a unit for $\otimes$, making $(\GSE, \otimes, I)$ into a symmetric monoidal category. 
 
\end{rmk}

\subsection{Modular operads} \label{subs. MO and Comp}

This section generalises the definition of modular operads in $\Set$ from \cite[Section~1.3]{Ray20} to define modular operads \emph{internal to} arbitrary categories with finite limits. Modular operads internal to a category $\E$ are defined as graphical species in $\E$ equipped with a binary (unital) multiplication operation and unary contraction operation satisfying coherence axioms that say, informally, that the order of applying multiplications and contractions does not matter.

Let $S $ be a graphical species in a category $\E$ with finite limits. For finite sets $X$ and $Y$ with elements $x \in X$ and $y \in Y$, let $(S_X \times S_Y)^{x \ddagger y}  \to S_X \times S_Y$ denote the pullback \begin{equation}\label{eq mult pullback}
	\xymatrix{(S_X \times S_Y)^{x \ddagger y} \ar[rr] \ar[d] && S_X \ar[d]^-{ S(ch_x)}\\
S_Y\ar[rr]_-{S(ch_y \circ \tau)}&& S_\S.}
\end{equation}
More generally, given distinct elements $x_1, \dots, x_k \in X$ and $y_1, \dots, y_k \in Y$, let $(S_X \times S_Y)^{x_1 \ddagger y_1, \dots, x_k \ddagger y_k}$ be the limit of the collection $ S_X \xrightarrow{S(ch_{x_i})} S_\S \xleftarrow{S(ch_{y_i} \circ \tau)} S_Y$, $1 \leq i \leq k$. 

 Let $\sigma _\two$ denote the unique non-identity involution on $\two$. A morphism $\epsilon  \in \E(S_\S, S_\two)$ is \emph{unit-like for $S$} if 
 \begin{equation}
 	\label{eq: unit compatible with involution}	
 	\epsilon \circ  \omega = S(\sigma_\two)  \circ \epsilon\colon S_\S \to S_\two, \text{ and } \ 
 	S(ch_1)\circ \epsilon   = id_{S_{\S}}.
 \end{equation} 

\begin{defn}\label{defn: multiplication}\label{coloured mult cont} 		\label{defn: formal connected unit}
	
	A \emph{multiplication} $\diamond$ on $S$ is a family of morphisms 
\[	\diamond_{X,Y}^{x \ddagger y} \colon (S_X \times S_Y)^{x \ddagger y} \to S_{(X \amalg Y) \setminus \{x,y\}},\] in $\E$, defined for all pairs of finite sets $X$ and $Y$, with elements $x \in X$ and $y \in Y$, and such that 
\begin{enumerate}\item the obvious diagram \[
\xymatrix@R = .5cm{ (S_X \times S_Y)^{x \ddagger y} \ar[rrd]^-{\diamond_{X,Y}^{x \ddagger y}}\ar[dd]_{\cong}&&{}\\
	&&\quad  S_{(X \amalg Y)\setminus\{x,y\}}\\
(S_Y \times S_X)^{y \ddagger x} \ar[urr]_-{ \diamond_{Y,X}^{y \ddagger x}}&& {}}\]
commutes in $\E$,
\item $\diamond $ is equivariant with respect to the $\fisinv$-action on $S$: if $ \hat \sigma \colon X \setminus \{x\} \xrightarrow {\cong } W \setminus \{w\}$ and $ \hat \rho \colon Y\setminus \{y\} \xrightarrow {\cong } Z \setminus \{z\}$ are restrictions of bijections $  \sigma\colon X \xrightarrow{\cong} W$ and $ \rho\colon Y \xrightarrow{\cong} Z$ such that $ \sigma(x) = w$ and $ \rho (y) = z$, then 
		\[ S(\hat \sigma \sqcup  \hat \rho )\  \diamond_{W,Z}^{w \ddagger z} = \diamond_{X,Y}^{x \ddagger y} \ S(  \sigma \sqcup  \rho),\] 
		(where $ \sigma \sqcup \rho\colon X \amalg Y \xrightarrow{\cong} W \amalg Z$ is the block-wise permutation).
\end{enumerate}

	A multiplication $\diamond$ on $S$ is \emph{unital} if $S$ there is a unit-like morphism
$\epsilon  \in \E(S_\S, S_\two)$ such that,
for all finite sets $X$ and all $x \in X$, the composite
		\begin{equation}\label{eq. mult unit} \xymatrix{ S_X \ar[rr]^-{ (id \times ch_x)\circ \Delta}&& S_X \times S_\S \ar[rr] ^-{ id \times \epsilon} &&S_X \times S_\two \ar[rr]^-{\diamond_{X, \two}^{ x \ddagger 2}}&& S_X }\end{equation} is the identity on $S_X$.

\end{defn}

It is immediate from the definition (\ref{eq: unit compatible with involution}) that any unit-like morphism in $\epsilon \in \E(S_\S, S_\two)$ is monomorphic and, by (\ref{eq. mult unit}) and condition (1) of \cref{defn: multiplication}, if a multiplication $\diamond$ on $S$ admits a unit $\epsilon$, then it is unique. 
 
Now, let $S$ be a graphical species in $\E$, and let $x \neq y$ be distinct elements of a finite set $Z$. Let $(S_Z)^{x \ddagger y} = S_{Z}^{y \ddagger x}$ denote the equaliser 
 \begin{equation}
 	\label{eq. dagger pullback}\xymatrix{S_Z^{x \ddagger y} \ar[rr]&& S_Z \ar@<4pt>[rr]^-{S(ch_x)} \ar@<-4pt>[rr]_-{S(ch_y \circ \tau)} && S_\S.}
 \end{equation}
 More generally, for any morphism $ f \colon C \to S_Z$ in $\E$, let $C^{x \ddagger y}  = C^{y \ddagger x}$ denote the pullback of $f$ along the universal map $S_Z^{x \ddagger y}\to S_Z$. 
 Given distinct elements $x_1, y_1, \dots, x_k, y_k$ of $S_Z$, and a morphism $ f \colon C \to S_Z$ in $\E$, then $ C^{x_1 \ddagger y_1, \dots ,x_k \ddagger y_k} \defeq (C^{x_1 \ddagger y_1, \dots ,x_{k-1} \ddagger y_{k-1}} )^{x_k \ddagger y_k}$ is the obvious limit.

 Invariance of $C^{x_1 \ddagger y_1, \dots ,x_k \ddagger y_k}$ under permutations $(x_i, y_i) \mapsto (x_{\sigma i}, y_{\sigma i})$, $\sigma \in \Sigma_k$ follows from invariance of the defining morphisms  \[ \xymatrix {C^{x_1 \ddagger y_1, \dots ,x_k \ddagger y_k}\ar[rr]&& S^{x_1 \ddagger y_1, \dots ,x_k \ddagger y_k}_Z \ar[rr]&& S_{Z}.}\]

 \begin{defn}\label{defn: contraction}
 	A \emph{(graphical species) contraction} $\zeta$ on $ S$ in $\E$ is a family of maps $\zeta^{x \ddagger y}_X \colon S_X^{x \ddagger y}\to S_{X \setminus \{x,y\}}$ defined for each finite set $X$ and pair of distinct elements $x,y \in X$. 
 	
 	The contraction $\zeta$ is equivariant with respect to the action of $\fisinv$ on $S$: If $\hat \sigma \colon X \setminus \{x,y\}\xrightarrow{\cong }Z \setminus\{w,z\}$ is the restriction of a bijection $ \sigma\colon X\xrightarrow{\cong} Z $ with $\sigma(x) = w$ and $\sigma(y) = z$. Then 
 	\begin{equation} \label{eq. contraction equi} S(\hat \sigma) \circ \zeta^{w\ddagger z}_Z = \zeta ^{x \ddagger y}_X \circ S( \sigma) \colon S_Z^{w \ddagger z} \to S_{X \setminus\{x,y\}}.\end{equation}

 \end{defn}

 In particular, by (\ref{eq. contraction equi}), if $\zeta$ is a contraction on $S$, then  $\zeta_X^{x\ddagger y}= \zeta_X ^{y\ddagger x}$ for all finite sets $X$ and all pairs of distinct elements $x, y \in X$.

\begin{defn}\label{def. mod op} 
	A nonunital modular operad $(S, \diamond, \zeta)$ in $\E$ is a graphical species $S \in \GSE$ equipped with a multiplication $\diamond$ and a contraction $\zeta$ such that the following four coherence axioms are satisfied:	
	\begin{enumerate}[(M1)]
		\item \emph{Multiplication is associative} (see \cref{fig. MO axioms}(M1)):\\
		For all finite set $X, Y, Z$ and elements $x \in X$, $z \in Z$, and distinct $y_1, y_2 \in Y$, 
		\[
		\xymatrix@C =.8cm@R =.4cm{
			(	S_{X} \times S_{Y} \times S_{Z} )^{x \ddagger y_1 , y_2 \ddagger z}
			\ar[rr]^-{\diamond_{X, Y}^{x \ddagger y_1}\times id_{S_Z}}
			\ar[dd]_{id_{S_X} \times \diamond_{Y,Z}^{y_2\ddagger z} } &&
			\left (S_{(X \amalg Y)\setminus \{x,y_1\}}\times S_{Z} \right)^{y_2 \ddagger z}
			\ar[dd]^{\diamond^{y_2 \ddagger z}_{ X \amalg Y\setminus \{x,y_1\}, Z} }\\
			&{=}&\\
			\left(S_{X}\times  S_{(Y \amalg Z) \setminus \{y_2,z\}}\right)^{x \ddagger y_1}
			\ar[rr]_-{\diamond^{x\ddagger y_1}_{X, Y \amalg Z\setminus \{y_2,z\}}} &&
			S_{(X \amalg Y \amalg Z) \setminus \{x, y_1,y_2,z\}}. }
		\]
		\item \emph{Contractions commute} (see \cref{fig. MO axioms}(M2)): \\For all finite sets $X$ with distinct elements $w,x,y,z$, the following square commutes:
		\[\xymatrix@C =.8cm@R =.4cm{
			S_{X} ^{ w\ddagger x, y \ddagger z}\ar[rr]^-{\zeta^{w \ddagger x}_X}\ar[dd]_{\zeta^{y \ddagger z}_X} && 
			S_{X \setminus \{w,x\} }^{y \ddagger z}\ar[dd]^{\zeta^{y \ddagger z}_{X \setminus \{w,x\}}}\\
			&{=}&\\\
			S_{X \setminus \{y,z\} }^{w \ddagger x}\ar[rr]_-{\zeta^{w \ddagger x}_{X \setminus\{y,z\}}}&&	S_{X \setminus \{w,x,y,z\} }}.\]
			\item \emph{ Multiplication and contraction commute} (see \cref{fig. MO axioms}(M2)): \\For finite sets $X,Y$,  distinct  $x_1, x_2$ and $x_3$ in $X$, and $y \in Y$, the following diagram commutes:
			\[
			\xymatrix@C =.8cm@R =.4cm{
				(	S_{X }^{x_1 \ddagger x_2} \times S_{Y})^{x_3 \ddagger y}
				\ar[rr]^-{\zeta^{x_1 \ddagger x_2}_X \times id_Y}
				\ar[dd]_ {\diamond^{x_3\ddagger y}_{X,Y}} &&
				(	S_{X \setminus\{x_1, x_2\}} \times S_{Y)} )^{x_3 \ddagger y}
				\ar[dd]^{\diamond^{x_3 \ddagger y}_{X\setminus \{x_1,x_2\},Y}}\\
				&{=}&\\
				S_{(X \amalg Y)\setminus \{x_3,y\}}^{x_1 \ddagger x_2}
				\ar[rr]_-{\zeta^{x_1 \ddagger x_2}_{X \amalg Y \setminus\{x_3,y\}}} &&
				S_{(X \amalg Y)\setminus \{x_1, x_2,x_3,y\}}.}\]
				\item  \emph{``Parallel multiplication'' of pairs} (see \cref{fig. MO axioms}(M4)):\\For finite sets $X$ and $Y$, and distinct elements $x_1, x_2 \in X$ and $y_1 , y_2 \in Y$, the following digram commutes:
				\[{
					\xymatrix@C =.8cm@R =.4cm{
						(	S_{X} \times S_{Y})^{x_1 \ddagger y_1, x_2 \ddagger y_2}
						\ar[rr]^-{ \diamond^{x_1 \ddagger y_1}_{X,Y}}
						\ar[dd]_{ \diamond{x_2 \ddagger y_2}_{X,Y}}&&
						S_{(X \amalg Y) \setminus \{x_1, y_1\}}^{x_2 \ddagger y_2}
						\ar[dd]^{\zeta^{x_2 \ddagger y_2}_{X \amalg Y \setminus \{x_1,y_1\}}}\\&{=}&\\
						S_{(X \amalg Y) \setminus \{x_2, y_2\}}^{x_1 \ddagger y_1}\ar[rr]_-{\zeta^{x_1 \ddagger y_1}{X \amalg Y \setminus \{x_2,y_2\}}}&&
						S_{(X \amalg Y) \setminus \{x_1,x_2, y_1,y_2\}}.}}\]
	\end{enumerate}

\begin{figure}[htb!]
\centerfloat{	\includestandalone[width = .55\textwidth]{CA2M1new}\includestandalone[width = .55\textwidth]{CA2M2new}}
\vskip 5ex
	\includestandalone[width = .55\textwidth]{CA2M3new}\includestandalone[width = .55\textwidth]{CA2M4new}	
	\caption{Modular operad axioms (M1)-(M4).}\label{fig. MO axioms}
\end{figure}
(Axioms (M1)-(M4) are illustrated in \cref{fig. MO axioms}.)

	The category of nonunital, modular operads in $\E$ is denoted by $\nuCSME$.
	
	A \emph{(unital) modular operad $(S, \diamond, \zeta, \epsilon)$ in $\E$} is a graphical species $S \in \GSE$ equipped with a unital multiplication $(\diamond, \epsilon)$ and contraction $\zeta$ such that $(S, \diamond, \zeta) \in \nuCSME$. The category of modular operads in $\E$ is denoted by $\CSME$.
\end{defn}

\begin{rmk}\label{rmk MO not monoidal} As discussed in \cref{ssec: graphical species}, $\GSE$ has a monoidal category structure when $\E$ admits finite coproducts. However, as will be explained in \cref{rmk no tensor on MO}, the categories $\CSM$ and $\nuCSM$ are not closed under the monoidal product in $\GSE$. 

\end{rmk}

\subsection{Circuit algebras}\label{ssec CO}
A \emph{circuit algebra internal to a category $\E$} with finite limits (see \cref{def. co}) is a graphical species $A$ in $\E$ that is equipped with an equivariant contraction and such that $(A(\nn))_{\nn}$ describes a graded associative monoid. The monoid structure and contraction satisfy coherence axioms analogous to the (enriched) circuit algebra axioms $(c1)-(c3), (e1)$ of 
\cite[Proposition~5.4]{RayCA1}.

\begin{defn} \label{def. external}
	A \emph{(commutative) external product} on a graphical species $S$ in $\E$ is given by a $\fiso \times \fiso$-indexed collection of equivariant morphisms $\boxtimes_{X,Y}: S_X \times S_Y \to S_{X \amalg Y},$ in $\E$ 
	such that $\boxtimes_{Y, X} = \boxtimes_{X,Y}\circ \sigma_{Y,X}$ (where $\sigma_{Y,X} \colon S_Y \times S_X \xrightarrow {\cong }S_X \times S_Y$ is the swap isomorphism) and,
	 for all elements $x \in X$, the following diagram commutes in $\E$:
	\[ \xymatrix{S_X \times S_Y \ar[rr]^-{\boxtimes_{X,Y}}\ar[d] &&S_{X \amalg Y} \ar[d]^-{S(ch_x^{X \amalg Y}) } \\
		S_X \ar[rr]_-{S(ch^X_x)} && S_\S}\]
	
	An \emph{external unit} for the external  product $\boxtimes$ is a distinguished morphism $0_S \colon * \to S_\nul$ such that, for all finite sets $X$, the composite 
	\[ \xymatrix{ S_X \ar[r]^-{\cong} &S_X \times *  \ar[rr]^{id \times 0_S }&& S_X \times S_\nul \ar[rr]^- {\boxtimes} && S_{(X \amalg \nul )} \ar@{=}[r] &S_X}\] is the identity on $S_X$ for all finite sets $X$.
	
\end{defn}

\begin{rmk} \label{rmk. monoidal unit} Henceforth, the monoidal unit $0_S $ for $\boxtimes$ will be suppressed in the notation since most of the constructions in Sections \ref{sec: definitions}-\ref{sec. nerve} of this paper carry through without it. 
	To agree with the circuit algebra literature, external products in this paper are always assumed to have a unit. See also \cref{rmk. no empty graph}, and \cite[Paragraph~4.7]{Koc18}.
	
\end{rmk}

Recall (\cref{rmk monoidal}) that, if $\E$ has finite coproducts, then $ \GSE$ is a symmetric monoidal category. It follows immediately from \cref{def. external} that: 
\begin{lem}
	\label{lem external monoid}
	A graphical species $S$ in $\E$ admits an external product $(\boxtimes, 0_S)$ if and only if $(S, \boxtimes, 0_S)$ is a monoid for the monoidal product $ \otimes$ on $\GSE$.
\end{lem}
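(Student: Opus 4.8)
The plan is to unwind the definition of the monoidal product $\otimes$ on $\GSE$ from \cref{rmk monoidal} and to match, datum for datum and axiom for axiom, the structure of an external product $(\boxtimes, \circleddash)$ with that of a monoid $(S, m, e)$. Since \cref{def. external} already records exactly the compatibility of $\boxtimes$ and $\circleddash$ with the graphical species structure, the content of the lemma is that these compatibilities are precisely the conditions for $m$ and $e$ to be morphisms in $\GSE = \prE{\fisinv}$ satisfying the monoid laws.

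First I would extract the multiplication. By definition $(S \otimes S)_X = \coprod_{Y \amalg Z = X} S_Y \times S_Z$, so by the universal property of the coproduct in $\E$ the value $m_X$ of a morphism $m \colon S \otimes S \to S$ is the same as a family of morphisms $\boxtimes_{Y,Z} \colon S_Y \times S_Z \to S_{Y \amalg Z}$ indexed by the ordered partitions $X = Y \amalg Z$; conversely any $\fiso \times \fiso$-indexed family $\boxtimes$ assembles into candidate maps $m_X$. The only further datum of $m$ is its component $m_\S = (m^1_\S, m^2_\S) \colon S_\S \amalg S_\S \to S_\S$, a pair of endomorphisms of $S_\S$.

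Next I would pin down the unit and the $\S$-component together. Since $I_\nul = *$ while $I$ vanishes on $\S$ and on all nonempty finite sets, a morphism $e \colon I \to S$ is determined by its sole nontrivial component $e_\nul = \circleddash \colon * \to S_\nul$, all others being forced by initiality of $0$. The left and right unit laws for $(S, m, e)$, evaluated on the relevant summands, reduce to the external-unit condition of \cref{def. external}; evaluated at $\S$ (where the unitor for $\otimes$ is the canonical isomorphism $0 \amalg S_\S \cong S_\S$), they force $m^1_\S = m^2_\S = id_{S_\S}$, so that $m_\S$ must be the codiagonal $\nabla$. With $m_\S = \nabla$ fixed, naturality of $m$ becomes transparent: naturality with respect to bijections is exactly equivariance of $\boxtimes$; using the explicit formula for $(S \otimes S)(ch_x)$ from \cref{rmk monoidal}, naturality with respect to the choice morphisms $ch_x$ reduces, on the summand $(Y,Z)$ with $x \in Y$, to $S(ch^{Y \amalg Z}_x) \circ \boxtimes_{Y,Z} = S(ch^Y_x) \circ p_{S_Y}$, which is precisely the projection-compatibility square of \cref{def. external}; and naturality with respect to the involution $\tau$ is automatic, since the induced involution on $(S \otimes S)_\S$ is $S(\tau) \amalg S(\tau)$, with which $\nabla$ commutes. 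Finally, associativity of $m$ transcribes to the corresponding associativity of $\boxtimes$, and the symmetry axiom $\boxtimes_{Y,X} = \boxtimes_{X,Y} \circ \sigma_{Y,X}$ records commutativity of the monoid with respect to the braiding of $\otimes$. The two assignments are manifestly mutually inverse, giving the claimed equivalence.

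I expect the only genuine subtlety to be the bookkeeping at the distinguished object $\S$: one must check that the unit laws force $m_\S$ to be the codiagonal (rather than leaving it as free data), and that, with this normalisation, naturality along the colour-projections $ch_x$ recovers exactly the projection-compatibility axiom and is consistent with the involution $\tau$. Everything else is a direct transcription of the coproduct description of $\otimes$, in line with the observation in \cref{rmk monoidal} that monoids for the species product are lax monoidal presheaves on $\Sigma$.
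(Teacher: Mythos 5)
Your proposal is correct in substance and follows the only route available, which is also the paper's (the paper offers no argument beyond declaring the lemma immediate from \cref{def. external}): you identify $m\colon S\otimes S\to S$ with the family $\boxtimes_{Y,Z}$ via the coproduct decomposition of $(S\otimes S)_X$, observe that the unit laws force $m_\S$ to be the codiagonal, and translate naturality along bijections, along the $ch_x$, and along $\tau$ into equivariance, projection-compatibility, and a vacuous condition respectively. All of these checks are right, and the attention to the $\S$-component is exactly the point the paper's ``immediate'' glosses over.

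One point in your write-up should be flagged rather than passed over, because it affects the ``manifestly mutually inverse'' claim. As literally stated, \cref{def. external} does \emph{not} include associativity of $\boxtimes$ (that appears only later, as axiom (C1) of \cref{def. co}), while a monoid object in $(\GSE,\otimes,I)$ is not required to be commutative. What your transcription actually establishes is a correspondence between \emph{associative} external products and \emph{commutative} monoids: associativity of $m$ has no counterpart among the axioms of \cref{def. external}, and conversely the symmetry axiom $\boxtimes_{Y,X}=\boxtimes_{X,Y}\circ\sigma_{Y,X}$ is genuinely extra data for a monoid, since $(Y,Z)$ and $(Z,Y)$ index \emph{distinct} summands of $(S\otimes S)_X$ and naturality in $\fisinv$ never identifies $\boxtimes_{Y,Z}$ with $\boxtimes_{Z,Y}\circ\sigma_{Y,X}$ (when $|Y|\neq|Z|$ no bijection of $X$ even exchanges the two summands). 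This mismatch originates in the paper itself --- its intended reading is confirmed by \cref{lem. L free monoid}, where algebras for the free graded monoid monad $\LL$ (which are associative and commutative) are identified with graphical species carrying an external product --- but a complete proof of \cref{lem external monoid} should either build associativity into the external-product axioms or state the equivalence as one with commutative monoids; as written, your final sentence asserts a bijection that holds only under that reading.
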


An external product $\boxtimes$ and contraction $\zeta$ on a graphical species $S$ induce an equivariant multiplication $\diamond$ on $S$: for all finite sets $X$ and $Y$, all $x \in X$ and $y \in Y$, the multiplication $\diamond^{x \ddagger y}_{X,Y}$ is defined by the composition 
\begin{equation}\label{eq. forget mult}
	\diamond_{X,Y}^{x \ddagger y} \colon \xymatrix{(S_X\times S_Y)^{x \ddagger y} \ar[rr]^-{\boxtimes_{X,Y}} && (S_{X \amalg Y} )^{x \ddagger y} \ar[rr]^-{\zeta^{x\ddagger y}}&& S_{(X \amalg Y)\setminus \{x,y\}}.}
\end{equation}

\begin{defn}\label{def. co}
	A \emph{nonunital circuit algebra in $\E$} is a graphical species $S$ in $\E$, equipped with an external product $\boxtimes$ and a contraction $\zeta$, such that the following three axioms (illustrated in \cref{fig. CO axioms}) are satisfied: 
	
	\begin{enumerate}[(C1)]
		\item \emph{External product is associative} (see \cref{fig. CO axioms} (C1)): \\
	For all finite sets $X, Y, Z$,
	the following square commutes:
	\[\xymatrix@C =.8cm@R =.4cm{
		S_{X} \times S_Y \times S_Z \ar[rr]^-{\boxtimes_{X,Y} \ \times\ id_{S_Z}}\ar[dd]_{id_{S_X}\ \times\ \boxtimes_{Y,Z}} && 
		S_{X \amalg Y} \times S_Z \ar[dd]^{\boxtimes_{(X \amalg Y), Z}}\\
		&{=}&\\
		S_{X} \times S_{Y \amalg Z} \ar[rr]_-{ \boxtimes_{X, (Y \amalg Z)}}&&		S_{X\amalg Y \amalg Z }.}\]

\item \emph{Order of contraction does not matter.} This corresponds with \cref{def. mod op} (M2) (see also \cref{fig. MO axioms} (M2)). 

\item \emph{Contraction and external product commute} (see \cref{fig. CO axioms} (C1)):  \\
For all finite sets $X$ and $Y$, and distinct elements $x_1, x_2 \in X$, the following square commutes:
\[\xymatrix@C =.8cm@R =.4cm{
	S_{X}^{x_1 \ddagger x_2} \times S_Y \ar[rr]^-{\boxtimes_{X,Y} }\ar[dd]_{\zeta^{x_1 \ddagger x_2}_X\ \times\  id_{S_Y}} && 
	S_{X \amalg Y} ^{ x_1\ddagger  x_2 } \ar[dd]^{\zeta^{x_1 \ddagger x_2}_{X\amalg Y}}\\
	&{=}&\\
	S_{X \setminus \{x_1, x_2\} }\times S_Y\ar[rr]_-{\boxtimes_{X \setminus \{x_1, x_2\}, Y}}&&		S_{X \setminus \{x_1, x_2\} \amalg Y} .}\]
	\end{enumerate}

\begin{figure}[htb!]
	
	{\centerfloat 
	\includestandalone[width = .55\textwidth]{CA2C1new}\includestandalone[width = .55\textwidth]{CA2C3new}}
	\caption{Circuit algebra axioms (C1) and (C3). For axiom (C2), see \cref{fig. MO axioms}(M2). }\label{fig. CO axioms}
\end{figure}

Let $\nuCOE$ denote the category of nonunital circuit algebras in $\E$ and morphisms of graphical species that preserve the external product and contraction. 

A \emph{unital circuit algebra (in $\E$)} is a nonunital circuit algebra $(S, \boxtimes, \zeta)$ together with a unit-like morphism $\epsilon \colon S_\S \to S_\two $ (\cref{defn: formal connected unit}), such that, for all finite sets $X$ and $x \in X$, the composite 
\begin{equation}
	\label{eq. conn unit} \xymatrix@C =.7cm{ S_X \ar[rr]^-{ (id \times ch_x)\circ \Delta}&& (S_X \times S_\S)^{x \ddagger 2} \ar[rr] ^-{ id \times \epsilon} &&(S_X \times S_\two)^{x \ddagger 2} \ar[r]^-{\boxtimes}&{ (S_{X \amalg\two})^{x \ddagger 2} }\ar[rr]^-{ \zeta^{x \ddagger 2}_{X \amalg \two} }&& S_X }
\end{equation}is the identity on $S_X$. (Here $\Delta \colon S_X \to S_X \times S_X$ is the diagonal and the last map makes use of the canonical isomorphism $( X \amalg \{1\})\setminus\{x\} \cong X$.)

Morphisms in the category $\COE$ of circuit algebras in $\E$ are morphisms in $\nuCOE$ that preserve units.
\end{defn}


For $\E = \Set$, we will write $\CO \defeq \COE$ and $\CSM\defeq \CSME$. Let $\bigCA$ be the category, defined in \cite[Section~5.2]{RayCA1}, of circuit algebras (of all colours) enriched in $\Set$. The following proposition follows immediately from  \cite[Proposition~5.4]{RayCA1} and by unpacking \cref{def. co}: 
\begin{prop}	\label{prop. CA CO}
The inclusion $\Sigma \hookrightarrow \fiso$ induces a canonical equivalence of categories $\bigCA\simeq\CO$.
\end{prop}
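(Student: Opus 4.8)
The plan is to unwind both definitions until they become two presentations of the same data, and then check that the structure maps and axioms match under the skeletal-to-full dictionary established in \cref{ssec. species}. By \cite[Proposition~5.4]{RayCA1}, an object of $\bigCA$ with colour set $\CCC$ and involution $\omega$ is a $\CCC$-coloured species --- equivalently a presheaf on $\Sigma^{\CCC}$ --- equipped with an external product, a contraction and a unit subject to the enriched axioms $(c1)$--$(c3)$ and $(e1)$. By \cref{def. co}, an object of $\CO$ is a graphical species $S \in \GS$ (a $\fisinv$-presheaf) carrying an external product $\boxtimes$, a contraction $\zeta$ and a unit $\epsilon$ subject to (C1)--(C3) and \eqref{eq. conn unit}. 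Since $\Sigma$ is a skeleton of $\fiso$, restriction along $\Sigma \hookrightarrow \fiso$ together with the extension \eqref{eq. sigma to fiso} give mutually inverse equivalences between $\CCC$-coloured species and $(\CCC,\omega)$-coloured graphical species --- this is exactly the correspondence recorded after \cref{c arity}, with the palette $S_\S = \CCC$ in the role of the colour set. Ranging over all palettes identifies the underlying objects on the two sides.

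It then remains to transport the circuit-algebra structure across this equivalence. First I would note that the graded-monoid multiplication of an enriched circuit algebra, indexed by $\Sigma$, determines the external product $\boxtimes_{X,Y}\colon S_X \times S_Y \to S_{X \amalg Y}$ of \cref{def. external} for arbitrary finite sets $X,Y$ via \eqref{eq. sigma to fiso}; by \cref{lem external monoid} this is the same datum as a monoid structure for $\otimes$ on $\GS$. The unary contraction transports verbatim, and the unit-like morphism $\epsilon$ corresponds to the enriched unit $(e1)$. With the operations matched, the axioms correspond one for one: (C1) is associativity $(c1)$ of the external product, (C2) is commutativity of contractions, (C3) is the compatibility $(c3)$ of contraction with external product, and \eqref{eq. conn unit} is $(e1)$; compatibility with the induced modular multiplication $\diamond$ of \eqref{eq. forget mult} is then automatic. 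Morphisms on both sides are colour-/palette-preserving maps commuting with product, contraction and unit, so they correspond as well, giving a functor in each direction that is inverse up to natural isomorphism.

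The only point requiring genuine care --- and the reason the equivalence is stated as \emph{induced by} $\Sigma \hookrightarrow \fiso$ rather than as an identity --- is the passage between skeletal indexing by $\Sigma$ and full indexing by $\fiso$. Reconstructing $\boxtimes_{X,Y}$ for unordered finite sets from data attached only to the standard sets $\nn$ forces one to choose identifications of $X \amalg Y$ with some $\nn$, and one must verify that the outcome is independent of these choices and equivariant for the $\fiso$-action. This is guaranteed precisely by the $\Sigma$-equivariance built into the enriched axioms together with the limit formula \eqref{eq. sigma to fiso}, which pins down the extended operations coherently. Once this bookkeeping is discharged, the proposition follows immediately by unpacking \cref{def. co} against \cite[Proposition~5.4]{RayCA1}, as claimed.
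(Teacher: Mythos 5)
Your proposal is correct and takes essentially the same route as the paper, which treats the proposition as immediate from \cite[Proposition~5.4]{RayCA1} together with unpacking \cref{def. co}, via exactly the correspondence between $\CCC$-coloured species on $\Sigma^{\CCC}$ and $(\CCC,\omega)$-coloured graphical species recorded after \cref{c arity}. Your write-up simply makes explicit the bookkeeping (the extension \eqref{eq. sigma to fiso}, equivariance of the transported operations, and the axiom-by-axiom matching) that the paper leaves implicit.
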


\begin{rmk}\label{rmk always internal}
	Henceforth, unless explicitly stated otherwise, circuit algebras and modular operads will be assumed to be objects of the categories $\COE$ and $\CSME$ (and $\nuCOE$, $\nuCSME$) internal to a category $\E$ with sufficient limits, and we will not consider enriched circuit algebras.
\end{rmk}

By (\ref{eq. forget mult}), a (nonunital) circuit algebra $(S,\boxtimes, \zeta )$ admits a (nonunital) multiplication $\diamond_S$ induced by $\boxtimes$ and $\zeta$. It follows immediately from (\ref{eq. forget mult}) and axioms (C1)-(C3), that $(S, \diamond_S, \zeta)$ satisfy (M1)-(M4). Hence, circuit algebras in $\E$ canonically admit a modular operad structure.
 This will be used in Sections \ref{s. graphs}-\ref{sec. iterated}, to construct the composite monad for circuit algebras by modifying the modular operad monad of 
\cite{Ray20}.

In fact, we have the following proposition, analogous to \cite[Proposition~5.11]{RayCA1}, that will follow from \cref{thm. iterated law}, \cref{prop nonunital CA} and \cref{lem. LT is Tk}, together with the classical theory of distributive laws \cite{Bec69}:

\begin{prop}	\label{prop: CO to MO} There are canonical monadic adjunctions 
	\begin{equation}\label{eq: adjunction diag}\small
		{\xymatrix@C = .5cm@R = .3cm{
				\nuCOE \ar@<-5pt>[rr]\ar@<-5pt>[dd]&\scriptsize{\top}&\COE\ar@<-5pt>[ll]\ar@<-5pt>[dd]\\
				\vdash&&\vdash\\ 
				\nuCSME \ar@<-5pt>[uu] \ar@<-5pt>[rr]&\scriptsize{\top}& {\CSME} \ar@<-5pt>[ll]\ar@<-5pt>[uu]}}\end{equation} between the categories of (nonunital) modular operads and (nonunital) circuit algebras in $\E$.
	
\end{prop}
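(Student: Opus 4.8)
The plan is to exhibit all four categories as Eilenberg--Moore categories of monads on $\GSE$ and to recognise each of the four functors as a forgetful functor that discards part of a distributive-law composite. By \cite{Ray20}, $\nuCSME \cong \GSE^{\TT}$ and $\CSME \cong \GSE^{\DD\TT}$; by \cref{prop nonunital CA} and \cref{lem. LT is Tk}, $\nuCOE \cong \GSE^{\TTk} = \GSE^{\LL\TT}$; and by \cref{thm. iterated law}, $\COE \cong \GSE^{\LL\DD\TT}$. First I would pass to the base $\GSE^{\TT} = \nuCSME$: by the iterated distributive law of \cref{prop. iterated law}, the monads $\DD$ and $\LL$ lift to monads $\widetilde\DD$ and $\widetilde\LL$ on $\GSE^{\TT}$, and the distributive law between $\DD$ and $\LL$ lifts to one between $\widetilde\LL$ and $\widetilde\DD$. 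Since $\widetilde\LL\widetilde\DD$ lies over $\LL\DD$, this identifies the four categories with $\GSE^{\TT}$, $(\GSE^{\TT})^{\widetilde\DD} \cong \CSME$, $(\GSE^{\TT})^{\widetilde\LL} \cong \nuCOE$, and $(\GSE^{\TT})^{\widetilde\LL\widetilde\DD} \cong \COE$.

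For three of the four edges monadicity is then immediate. The left vertical $\nuCOE \to \nuCSME$ and the bottom horizontal $\CSME \to \nuCSME$ are the Eilenberg--Moore forgetful functors of the monads $\widetilde\LL$ and $\widetilde\DD$ on $\GSE^{\TT}$, hence monadic by definition; and the right vertical $\COE \to \CSME$ forgets the outer monad $\widetilde\LL$ of the composite $\widetilde\LL\widetilde\DD$, hence is monadic by Beck's theory of distributive laws \cite{Bec69}. In each case the left adjoint is the corresponding free Eilenberg--Moore functor.

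The top horizontal $\COE \to \nuCOE$ is the main obstacle, since here one must forget the \emph{inner} monad $\widetilde\DD$ of $\widetilde\LL\widetilde\DD$: the unit monad $\DD$ sits in the middle of $\LL\DD\TT$, so---unlike in the modular operad case---forgetting units for circuit algebras is not a ``forget the outer monad'' operation, and $\LL\DD\TT$ cannot be rebracketed as a two-fold composite with $\LL\TT$ as inner monad. Following \cref{ex monad extension} and \cref{ex. combined arities}, I would instead show that $\widetilde\DD$ \emph{extends} to a monad $\widetilde\DD_{*}$ on $\nuCOE = (\GSE^{\TT})^{\widetilde\LL}$, that is, that the forgetful functor $\COE \to \nuCOE$ admits a left adjoint (using that $\E$, and hence $\GSE$, has sufficient colimits) whose counit is invertible, with $(\nuCOE)^{\widetilde\DD_{*}} \cong \COE$; this makes $\COE \to \nuCOE$ monadic. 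It is the exact analogue of the extension of $\TT$ to $\TTp$ over $\GSp$ used in \cite{Ray20}. The hard part will be verifying that the counit is an isomorphism---equivalently, that freely adjoining units to a nonunital circuit algebra and then forming the free $\widetilde\LL$-completion recovers $\COE$---which I expect to read off from the explicit iterated distributive law underlying \cref{thm. iterated law}.

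It remains to check that the square commutes. Both composites $\COE \to \nuCOE \to \nuCSME$ and $\COE \to \CSME \to \nuCSME$ equal the canonical ``underlying nonunital modular operad'' functor $\GSE^{\LL\DD\TT} \to \GSE^{\TT}$ induced by the monad morphism $\TT \Rightarrow \LL\DD\TT$ (both forget $\LL$ and $\DD$), so the square of forgetful right adjoints commutes strictly. Passing to left adjoints and invoking uniqueness of adjoints then gives the commuting square of free functors, yielding the full diagram (\ref{eq: adjunction diag}).
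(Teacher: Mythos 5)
Your setup mirrors the paper's: you identify $\nuCSME \cong \GSE^{\TT}$, $\CSME \cong \GSE^{\DD\TT}$, $\nuCOE \cong \GSE^{\LL\TT}$ and $\COE \cong \GSE^{\LL\DD\TT}$, and your handling of three of the four adjunctions---via the lifted monads $\widetilde\DD$, $\widetilde\LL$ and the lifted distributive law on $\GSE^{\TT}$---is a correct (and clean) reorganisation of the paper's appeal to the classical theory of distributive laws \cite{Bec69, Che11}. You also correctly isolate the top horizontal adjunction $\nuCOE \rightleftarrows \COE$ as the one edge not covered by any rebracketing of the composite, since $\DD$ sits in the middle of $\LL\DD\TT$.

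That fourth edge, however, is exactly where your proposal stops being a proof, and the gap is genuine. You reduce it to the claim that $\widetilde\DD$ extends to a monad $\widetilde\DD_{*}$ on $\nuCOE$ with $(\nuCOE)^{\widetilde\DD_{*}} \cong \COE$, and then defer the verification to something you ``expect to read off from the explicit iterated distributive law underlying \cref{thm. iterated law}''. This deferral cannot succeed as stated: the Beck--Cheng machinery produces precisely the lifts and the three adjunctions you already have, and by its nature it never produces an adjunction that forgets an \emph{inner} monad---that is the entire content of the extension problem described in \cref{ssec. dist}. What fills this hole in the paper is \cref{prop. extension exists}: an \emph{explicit} construction of the extension $\TTp$ of $\TT$ to $\GSEp$, given by $\Tp S_X = \mathrm{colim}_{\X \in \XGrsimp}S(\X)$ over the similarity categories of pointed graphs (vertex deletion, contracted units), together with a direct verification that $\GSEp^{\TTp} \cong \GSE^{\DD\TT}$; combined with the lift $\widetilde\LL$ of $\LL$ to $\GSEp$ and the lifted distributive law (\cref{ex. combined arities}, \cref{ssec LTp}), this identifies $\COE \cong \GSEp^{\widetilde{\LL}\TTp}$ and yields the horizontal adjunctions. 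That content is graph-combinatorial---it is where the ``problem of loops'' lives---so your transposed extension ($\widetilde\DD_{*}$ over $\nuCOE$) would need an analogous explicit construction, which you never supply. A secondary confusion: the invertibility condition you quote is not the counit of the (adjoin units)$\,\dashv\,$(forget units) adjunction being an isomorphism---that counit is essentially never invertible (the free unital completion of the underlying nonunital circuit algebra of $A$ does not return $A$), and its invertibility would force the forgetful functor to be fully faithful and reflective, which is neither true nor what monadicity requires; in the framework of \cref{ssec. dist} the relevant 2-cell is the one exhibiting the right Kan extension that defines the extended monad \cite{Mat25, Str72}.
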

\begin{proof}
	By \cref{thm. iterated law}, $ \COE $ is the EM category of algebras $ \GSE^{\LL\DD\TT}$ for the composite monad $\LL \DD\TT$ on $\GSE$, constructed using iterated distributive laws. Now, $\nuCSME  = \GSE^{\TT}$ by \cref{prop. Talg}, $\CSME  = \GSE^{\DD\TT}$ by \cref{thm CSM monad DT} (see also \cite{Ray20}), and $\nuCOE  = \GSE^{\LL\TT}$ by \cref{prop nonunital CA} and \cref{lem. LT is Tk}. Hence, the vertical adjunctions exist by the classical theory of distributive laws \cite{Bec69}. 
	
	The horizontal adjunctions -- where the left adjoints correspond to freely adjoining modular operadic units -- exist by \cref{prop. extension exists}. 
\end{proof}

The right adjoints of the pairs $\CSME \rightleftarrows \COE$ (and $\nuCSME \rightleftarrows \nuCOE$) in \cref{prop: CO to MO} correspond to the forgetful functor $(S, \boxtimes, \zeta)\mapsto (S, \diamond, \zeta)$ where the multiplication $\diamond$ is defined by suitable compositions $\zeta \circ \boxtimes$ as in (\ref{eq. forget mult}). 
The left adjoint is induced by the free graded monoid monad: 

Let $\core{X \ov \fin}$ be the maximal subgroupoid of the slice category $X \ov \fin$ whose objects are morphisms of finite sets $f \colon X \to Y$ (that may be thought of as $Y$-indexed partitions of $X$), and whose morphisms $(Y,f) \to (Y', f')$ are isomorphisms $g \cong Y \to Y'$ such that $ gf = f'$. If $S$ is a graphical species, then there is a graphical species $LS$ defined by  
\[ LS_X = \mathrm{colim}_{(Y, f) \in \core{X \ov \fin}} \coprod_{y \in Y} S_{f^{-1}(y)}.\] 

The free monoid structure induces the external product on $LS$, and, if $(S, \diamond, \zeta)$ is a (nonunital) modular operad, then the modular operad axioms imply that the pair of operations $\zeta$ and $\diamond$ define a contraction on $LS$ (see \cref{sec. iterated}).

In particular, since units for the modular operadic multiplication are unique:
\begin{cor}\label{cor. inclusion CO nuCO}
	The canonical forgetful functors $\COE \to \nuCOE$ and $\CSME \to \nuCSME$ are inclusions of categories.
\end{cor}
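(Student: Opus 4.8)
The plan is to verify that each of the two forgetful functors is injective on objects and faithful, since this is exactly what it means for the functor to realise its domain as a (not necessarily full) subcategory. The objects of $\COE$ (resp.\ $\CSME$) are objects of $\nuCOE$ (resp.\ $\nuCSME$) equipped with the extra datum of a unit-like morphism $\epsilon$, and, by definition, the morphisms of $\COE$ (resp.\ $\CSME$) are precisely those morphisms of the nonunital category that preserve $\epsilon$. Hence faithfulness is immediate in both cases: on each hom-object the functor is the inclusion of the subset of unit-preserving morphisms into all morphisms, and these unit-preserving morphisms are visibly closed under composition and contain the identities. So the entire content of the corollary reduces to injectivity on objects, and for this I would invoke the uniqueness of units recorded in the remark following \cref{defn: multiplication}.

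First I would treat $\CSME \to \nuCSME$. Two objects of $\CSME$ mapping to the same nonunital modular operad $(S, \diamond, \zeta)$ are necessarily of the form $(S, \diamond, \zeta, \epsilon)$ and $(S, \diamond, \zeta, \epsilon')$, where both $\epsilon$ and $\epsilon'$ are units for the multiplication $\diamond$ in the sense of (\ref{eq. mult unit}). By the observation after \cref{defn: multiplication} --- that a unit for a given multiplication is unique whenever it exists, which follows from (\ref{eq. mult unit}) together with condition (1) of \cref{defn: multiplication} --- we obtain $\epsilon = \epsilon'$, so the two objects coincide. Thus the functor is injective on objects and therefore a subcategory inclusion.

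For $\COE \to \nuCOE$ the key point is that the unit datum of a circuit algebra is again a unit for the induced modular operadic multiplication, so the same uniqueness applies. Recall from (\ref{eq. forget mult}) that the external product $\boxtimes$ and contraction $\zeta$ of a nonunital circuit algebra induce a multiplication $\diamond_S = \zeta \circ \boxtimes$, with $(S, \diamond_S, \zeta)$ the underlying nonunital modular operad. Substituting $\diamond_S = \zeta \circ \boxtimes$ into (\ref{eq. conn unit}) and comparing with (\ref{eq. mult unit}), the circuit algebra unit axiom for $\epsilon$ is seen to be exactly the modular operad unit axiom for $\diamond_S$; that is, a unit-like $\epsilon$ making $(S, \boxtimes, \zeta, \epsilon)$ a unital circuit algebra is the same as a unit for $\diamond_S$. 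Since $\diamond_S$ is determined by the nonunital circuit algebra $(S, \boxtimes, \zeta)$, uniqueness of its unit forces $\epsilon$ to be determined by $(S, \boxtimes, \zeta)$, giving injectivity on objects and hence the desired inclusion.

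The only genuinely non-formal step is this identification of the circuit algebra unit axiom (\ref{eq. conn unit}) with the modular operad unit axiom (\ref{eq. mult unit}) for the induced multiplication $\diamond_S$; I expect it to be the main obstacle, although it amounts to no more than the substitution described above. Everything else is a direct consequence of the uniqueness of modular operadic units, exactly as the phrasing preceding the corollary anticipates.
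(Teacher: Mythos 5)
Your proof is correct and follows exactly the paper's route: the paper derives this corollary in one line from the observation (made after \cref{defn: multiplication}) that a unit for a modular operadic multiplication is unique when it exists, having already noted via (\ref{eq. forget mult}) that a circuit algebra unit is precisely a unit for the induced multiplication $\diamond_S = \zeta \circ \boxtimes$. Your proposal simply spells out the details (faithfulness from morphisms being the unit-preserving ones, injectivity on objects from uniqueness of units, and the identification of (\ref{eq. conn unit}) with (\ref{eq. mult unit})) that the paper leaves implicit.
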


\begin{ex}\label{ex. wheeled prop CO}

By \cite{DHR20, RayCA1}, oriented circuit algebras enriched in a symmetric monoidal category $(\V, \otimes, I)$ are equivalent to (coloured) wheeled props enriched in $\V$. 
The image of a (nonunital) wheeled prop under the forgetful functor $\CO \to \CSM$ is its underlying (nonunital) \textit{wheeled properad}  \cite{HRY15, JY15}. (See also \cref{ex. terminal coloured species},  \cref{ex Wheeled properads} and \cref{cor wheeled props nerve} and Proposition~3.26 and Example~5.10 of \cite {RayCA1}.)

\end{ex}
The graphical species $\DicommE$ (Examples~\ref{ex directed graphical species}~\&~\ref{ex. oriented slice}) trivially admits a circuit algebra structure. Hence, \cref{ex. wheeled prop CO} motivates the following definition:
\begin{defn}\label{def wp int}
	The category $\WPE$ of \emph{wheeled props in $\E$} is defined as the slice category $\COE \ov \DicommE$. When $ \E = \Set$, we write $\WPin \defeq \WPE[\Set]$. 
	
 The category of \emph{nonunital wheeled props in $\E$} is defined similarly by $\nuWPE \defeq \nuCOE \ov \DicommE$. 
\end{defn}

\begin{rmk}
	\label{rmk monoidal question}
	By \cref{prop: CO to MO}, a circuit algebra is a modular operad equipped with an external product that is compatible with the modular operadic multiplication in the sense of (\ref{eq. forget mult}). By \cref{lem external monoid}, a graphical species with external product $(S, \boxtimes, 0_S)$ is just a monoid object in $\GSE$ so it is natural to ask whether there is a monoidal structure on $\CSME$ such that circuit algebras are precisely the monoid objects in $\CSME$. This is not the case since, as will be explained in \cref{rmk no tensor on MO}, $\CSME$ is not closed under the monoidal product on $\GSE$. 
\end{rmk}

\newcommand{\defretract}[5]{\xymatrix{*[r]{#1} \ar@<1ex>[rrr]^-{#3} \ar@(ul,dl)[]_{#5} &&& #2 \ar@<1ex>[lll]^-{#4}}}

\section{Background on graphs}\label{s. graphs}
The combinatorics of modular operads have been described in \cite[Section~8]{Ray20} in terms of a category $\Klgr$ of connected graphs. The remainder of the present paper is concerned with an analogous construction and graphical category $\Klgrt$ for circuit algebras.

Sections \ref{subs:feynman graphs} and \ref{ssec Sgraph} provide a review of the category (Feynman) graphs and \'etale morphisms, first introduced in \cite{JK11}, that will underpin the rest of the paper. 
The interested reader is referred to \cite[Sections~3~\&~4]{Ray20} for details and explicit proofs related to the graphical formalism.  

\cref{ssec Brauer} gives a brief description of the operads of wiring diagrams and Brauer diagram category used to describe enriched circuit algebras (see e.g.,~\cite{BND17, DHR20, DHR21} and \cite{RayCA1}), and explains their relationship with Feynman graphs. This is purely for context and may be omitted.

\subsection{Graphs and \'etale morphisms}\label{subs:feynman graphs}

The following definition is due to Joyal and Kock \cite{JK11} (see also \cite{Koc16, Ray20, HRY19a, HRY19b}):

\begin{defn}\label{defn: graph}
	
	A \emph{(Feynman) graph} $\G$ is a diagram of finite sets
	\begin{equation} \label{eq. graph} \G = \Fgraph,\end{equation}
	such that $s\colon H \to E$ is injective, and $\tau\colon E\to E $ is an involution without fixed points. 
	
	A \emph{subgraph} $\H \subset \G$ is a triple of subsets $E' \subset E, H' \subset H, V' \subset V$ that inherit a graph structure from $\G$. 
\end{defn}

Elements of $V$ are \emph{vertices} of $\G$ and elements of $E$ are called \emph{edges} of $\G$. The set of $\tau $-orbits in $E$ is denoted by $\tilde E$.
Elements $h$ of the set $H$ are \textit{half-edges} of $\G$.

\begin{defn}\label{defn: inner edges} \label{defn. stick comp}
	The set $\EI \subset E$ of \emph{inner edges} of $\G$ is the maximal subset of $im(s) \subset E$ that is closed under $\tau$. The set of \emph{inner $\tau$-orbits} $\tilde e \in \widetilde E$ with $e \in \EI$ is denoted by $\widetilde {\EI}$. 
	Elements of the set $E_0 \defeq E\setminus im(s)$ are \emph{ports} of $\G$. 
	
	A \emph{stick component} of a graph $\G$ is a pair $\{e, \tau e\}$ of edges of $\G$ such that $e$ and $\tau e$ are both ports. 	
	
\end{defn}

\begin{defn}\label{defn. conn graph}
	A graph is \emph{connected} if it cannot be written as a disjoint union of non-empty graphs. 
A \textit{connected component} $\H$ of $\G$ is a maximal connected subgraph of $\G$.

\end{defn}

In particular, 
any graph $\G$ is the disjoint union of its connected components, and the inclusion $\H \hookrightarrow \G$ of a connected component $\H$ of $\G$ describes a pointwise injective \'etale morphism. 

\begin{ex}
A subgraph $\H \subset \G$ of the form $ \xymatrix{
	\{e, \tau e\} & \emptyset \ar[l] \ar[r]&\emptyset}$ describes a connected component of $\G$ if and only if $\{e, \tau e\}$ is a  stick component of $\G$.
\end{ex}
\begin{rmk}\label{geom real}A graph $\G$ can be thought of as a rule for gluing closed intervals along their boundaries and may be realised geometrically as the one-dimensional space $|\G|$, obtained by taking the discrete space $\{ *_v\}_{v \in V}$, and, for each $e \in E$, a copy $[0,\frac{1}{2}]_e$ of the interval $[0,\frac{1}{2}]$ subject to the identifications $0_{s(h)}\sim *_{t(h)}$ for $h \in H$ and $(\frac{1}{2})_{e} \sim (\frac{1}{2})_{\tau e}$ for all $e \in E$. 
	
	Disjoint union commutes with geometric realisation: for all graphs $\G$ and $\H$, $|\G \amalg \H| = |\G| \amalg |\H|$. So, a graph $\G$ is connected if and only if its realisation $|\G|$ is a connected space.
	
\end{rmk}

\begin{ex}\label{ex stick} (See also \cref{ex generating diagrams}.)
	Up to isomorphism, there are four distinct connected graphs with a single edge orbit:
	\begin{enumerate}[(a)]
		\item 
		the \emph{$\one$-corolla } $\C_{\one}$ has one vertex, no inner edges, and one port (see \cref{fig. one orbit}(a))
		\[ \xymatrix{
			\C_{\one}\defeq&\mathbf 2\ar@(lu,ld)[] &  \one \ar[l] \ar[r]&\one }\] 
		\item 
		the \textit{stick graph} $(\shortmid)$ has no vertices, no inner edges and two ports (see \cref{fig. one orbit}(b))
		\[ \xymatrix{
			( \shortmid)\defeq&\mathbf 2\ar@(lu,ld)[] & \mathbf 0 \ar[l] \ar[r]&\mathbf  0}\] 
		\item 
		the \textit{wheel graph} $\W$ has one vertex, one inner edge orbit and no ports (see \cref{fig. one orbit}(c));
		\[ \xymatrix{
			\W \defeq&\mathbf 2\ar@(lu,ld)[] & \two  \ar[l]_{id} \ar[r]&\mathbf 1}\] 
		\item finally, there is a graph with two vertices, one inner edge orbit and no ports (\cref{fig. one orbit}(d)):
		\[ \xymatrix{
			&\mathbf 2\ar@(lu,ld)[] & \two  \ar[l]_{id} \ar[r]^{id}&\two}.\] 
	\end{enumerate}
	
	The stick and wheel graphs $(\shortmid)$ and $\W$ are particularly important in what follows. 
		\end{ex}
	
	\begin{figure}[htb!]
		\begin{tikzpicture}[scale =.85]
				
				\node at (-1,2.5) {(a)};
				\draw (5,1)--(5,2);
				\node at (4.7,1.3) { \small{$2$}};
				\node at (4.7, 1.8) {\small{$ 1$}};

				\node at (3,2.5) {(b)};
				\draw (0,1)--(0,2);
				\node at (-.3,1.2) { \small{$1^\dagger$}};
				\node at (-.3, 1.9) {\small{$ 1$}};
				\filldraw 
				(0,1)circle (2pt);

				\node at (7,2.5) {(c)};
				\node at (9,1.5)
				{\begin{tikzpicture}
						\draw (0,0) circle (15pt);
						\filldraw (.53,0) circle (2pt);
				\end{tikzpicture}};	
				\node at (12,2.5) {(d)};
				\draw (14,1)--(14,2);
				\filldraw 
				(14,1)circle (2pt);
				\filldraw 
				(14,2)circle (2pt);

			\end{tikzpicture}
			\caption{Realisations of connected graphs with one edge orbit: (a) the 1-corolla $\C_\one$; (b) the stick graph $(\shortmid)$; (c) the wheel graph $\W$ consists of a single inner edge orbit with one end vertex; (d) a single inner edge orbit with distinct end vertices.} \label{fig. one orbit}
		\end{figure}

	For any set $ X$, let $X^\dagger \cong X$ denote its formal involution. 
	\begin{ex}\label{corolla}(See also \cref{fig. one orbit}(d).) 
		The \emph{$X$-corolla} $\CX$ associated to a finite set $X$ has the form
		\[ \xymatrix{
			\CX: && *[r] {X \amalg X^\dagger }
			\ar@(lu,ld)[]_{\dagger}&& 
			X^\dagger \ar@{_{(}->}[ll]_-{\text{inc}}\ar[r]& \{*\}.
		}\]

	\end{ex}

	\label{Hv Ev} 
	Let $\G$ be a graph with vertex and edge sets $V$ and $E$ respectively. For each vertex $v$, define $\vH \defeq t^{-1}(v) \subset H$ to be the fibre of $t$ at $v$, and let $\vE\defeq s(\vH) \subset E$.
	
	\begin{defn}\label{defn: valency} 
		
		Edges in the set $\vE$ are said to be \emph{incident on $v$}.

		The \emph{valency $|v|$ of a vertex $v \in V$} is given by $ |v| \defeq |\vH|$ and $\nV\subset V$ is the set of \emph{$n$-valent} vertices of $\G$. A vertex $v$ is \emph{bivalent} if $|v| = 2$ and a \emph{bivalent graph} is a graph $\G$ with $V = \nV[2]$. A vertex $v \in V$ with $|v| = 0$ is called an \emph{isolated vertex} of $\G$.

	\end{defn}
	
	Bivalent and isolated vertices are particularly important in \cref{ss. pointed}.
	
	\label{Hn En} Vertex valency induces an $\N$-grading on the edge set $E$ 
	of $\G$: For $n \geq 1$, define $\nH \defeq t^{-1}(\nV)$ and $\nE \defeq s(\nH)$. Then $s(H) = \coprod_{n \geq 1} \nE= E\setminus E_0 $, so  
	$E = \coprod_{n \in \N } \nE.$
	
	\begin{ex}
		The stick graph $(\shortmid)$ (\cref{ex stick}) has $E_0(\shortmid) = E(\shortmid) = \two$, and $\EI(\shortmid)   = \emptyset$. Conversely, the wheel graph $\W$ has $ E_0(\W) = \emptyset$ and $\EI(W) = E_2 (\W) = E(\W) \cong \two. $
		
		The $X$-corolla $\CX$ (see \cref{corolla}) with vertex $*$ has set of ports $E_0 (\CX) = X$ and $\vE[*]  = X^\dagger$. So, if $X \cong \nn$, then $|*| = n$, $ V = 
		\nV$, and $E = \nE \amalg \nE[0]$ where $\nE[i] \cong X$ for $i = 0, n$.\end{ex}

	\begin{defn}
		\label{defn. etale morphism} 
		An \emph{\'etale morphism} $f \colon \G \to \G'$ of graphs is a commuting diagram of finite sets 
		\begin{equation}\label{morphism}
			\xymatrix{
				\G\ar[d]&&   E \ar@{<->}[rr]^-\tau\ar[d]&&E \ar[d]&& H \ar[ll]_s\ar[d] \ar[rr]^t&& V\ar[d]\\
				\G'&&  E' \ar@{<->}[rr]_-{\tau'}&&E'&& H' \ar[ll]^{s'} \ar[rr]_{t'}&& V' }\end{equation}
		such that the right-hand square is a pullback. The category of graphs and \'etale morphisms is denoted by $\Gret$.
	\end{defn}
	
	It is straightforward to verify that the right hand square in (\ref{morphism}) is a pullback if and only if, for all $v \in V$, the map $\vE \to \sfrac{E'}{f(v)}$ induced by the restriction of the map $V\to V'$ is bijective. Hence, as the name suggests, \'etale morphisms describe local isomorphisms. 
	
	\begin{ex}\label{ex: choose}
		Let $\G$ be a graph with edge set $E$. Then $\Gret(\shortmid, \G)  =  \{ch_e\}_{e \in E} \cong E$ where for each edge $e \in \E$, $ch_e \colon (\shortmid) \to \G$ is the morphism $1 \mapsto e$ that \emph{chooses} $e$. 
		
\end{ex}

Pointwise disjoint union defines a symmetric strict monoidal structure on $\Gret$ with unit given by the empty graph $\oslash\defeq  \xymatrix{
	{ \ \emptyset \ }
	&
	\ \emptyset \ \ar[l]_-{}\ar[r]&\ \emptyset
}$.

\begin{ex}\label{ex disjoint corollas}
	Let $X$ and $Y$ be finite sets. The disjoint union $\CX \amalg \CX[Y]$ of $X$ and $Y$-corollas has ports $E_0(\CX \amalg \CX[Y]) = X \amalg Y = E_0(\CX)\amalg E_0(\CX[Y])$ and no internal edges. The canonical inclusions $\iota_X \colon \CX  \hookrightarrow\CX \amalg \CX[Y] \hookleftarrow \CX[Y] \colon \iota_Y$ are \'etale.
	
\end{ex}

\begin{ex}\label{Shrubs}
	Up to isomorphism, $(\shortmid)$ is the only connected graph with no vertices.
	As in \cite{Ray20}, and following \cite{Koc16}, a \textit{shrub} $\mathcal S $ is a disjoint union of stick graphs.  
	Any commuting diagram of the form (\ref{morphism}), where $\G = \mathcal S$ is a shrub, is trivially \'etale and hence defines a morphism in $\Gret$.

\end{ex}

\begin{ex}
	\label{deg loop}   (C.f.~\cite[Remark~3.12]{RayCA1}.) 
	Let $\diag$ be the small category $\xymatrix{
		\bullet \ar@(lu,ld)[]_{}\ar[r]&\bullet &\bullet\ar[l]}$. So $\Gret$ is a subcategory of $\pr{\diag}$. 
		The endomorphisms $id, \tau\colon (\shortmid) \rightrightarrows (\shortmid)$ of $(\shortmid)$ in $\Gret$ are coequalised in $\pr{\diag}$ by $* \leftarrow \emptyset \rightarrow \emptyset$, which is not a graph. So, $id_{(\shortmid)}$ and $ \tau$ do not admit a coequaliser in $\Gret$. This provided a motivating example for \cite{Ray20}.
	
	The two endomorphisms $id_\W, \tau_\W \colon \W \to \W$, viewed as morphisms of presheaves on $\diag$, have coequaliser $* \leftarrow * \rightarrow *$ which is terminal in $\pr{\diag}$, and is not a graph. In particular, $\Gret$ does not have a terminal object and is therefore not closed under (finite) limits.
	
\end{ex}

As \cref{deg loop} shows, $\Gret$ does not, in general, admit finite colimits. However, if $\G $ is a graph and $ e_1, e_2 \in E_0$ are ports of $\G$ with $e_1 \neq \tau e_2$, then the colimit $\G^{e_1 \ddagger e_2}$ of the parallel morphisms \begin{equation}
	\label{eq. dagger graph}
	ch_{e_1}, ch_{e_2} \circ \tau \colon (\shortmid) \rightrightarrows \G
\end{equation} is the graph described by identifying $e_1 \sim \tau e_2$ and $e_2 \sim \tau e_1$ in the diagram (\ref{eq. graph}) defining $\G$.

\begin{ex}
	The graph $\C_\two^{e\ddagger \tau e}$ obtained by identifying the ports of the $\two$-corolla is isomorphic to the wheel graph $\W$. 
\end{ex}

\begin{ex}\label{ex: N graph} More generally, for $X$ a finite set with distinct elements $x$ and $y$, the graph $\C_X^{x \ddagger y}$ 
	has ports in $E_0 = X\setminus \{x,y\}$, one inner $\tau$-orbit $\{x,y\}$ (bold-face in \cref{fig: MN}), and one vertex $v$:
	\[\xymatrix{
		*[r] { 
			(	X \setminus \{x,y\}) \amalg X^\dagger}
		\ar@(lu,ld)[]_{\tau} &&&  
		X^\dagger \ar@{_{(}->}[lll]_-{s} \ar[r]^-{t}& \{v\} , }\]
	where $\tau x^\dagger = y^\dagger $ and $\tau z = z^\dagger$ for $z \in X \setminus \{x,y\}$. 
	Graphs of the form 	$\C_X^{x \ddagger y}$ encode formal contractions in graphical species.

\end{ex}

Since the monoidal structure on $\Gret$ is induced by the cocartesian monoidal structure on diagrams of finite sets, colimits in $\Gret$, when they exist, commute with $\amalg$. In particular, 
for any graphs $\G$ and $ \H$ and distinct ports $e_1, e_2$ of $\G$, 
\begin{equation}
	\label{eq. C3 motivation} \G^{e_1 \ddagger e_2} \amalg \H = (\G \amalg \H)^{e_1 \ddagger e_2}.
\end{equation}

\begin{ex}\label{ex: M graph}
Let $X, Y$ be finite sets with elements $x \in X$, and $y \in Y$. By identifying the edges $x \sim \tau y, y \sim \tau x$ in $ \CX \amalg \CX[Y]$ for, we obtain a graph $ \left(\CX \amalg \CX[Y]\right)^{x \ddagger y}$, that has two vertices and one inner edge orbit $\{x,y\}$, highlighted in bold-face in \cref{fig: MN}.
	\[\xymatrix{
		*[r] { \left(\small
			{
				(X \amalg Y)\setminus \{x,y\} \amalg (X \amalg Y)^\dagger 
			}\right)
		}
		\ar@(lu,ld)[]_{\tau} &&&&
		{ \left(\small
			{ (X \amalg Y)^\dagger 
			}\right)
		}\ar@{_{(}->}[llll]_-{s} \ar[rr]^-{t}&& \small{\{v_X, v_Y\}}}\] with $s$ the obvious inclusion and the involution $\tau$ described by $ z \leftrightarrow z^\dagger$ for $z \in (X\amalg Y)\setminus\{x,y\} $ and $x^\dagger \leftrightarrow y^\dagger$. The map $t$ is described by $t^{-1}(v_X ) = \left(X\setminus\{x\}\right)^\dagger \amalg \{y\}$ and $t^{-1}(v_Y) = \left(Y\setminus\{y\}\right)^\dagger\amalg \{x\}$. 
	
	In the construction of modular operads, graphs of the form $(\CX \amalg \CX[Y])^{x\ddagger y}$ are used to encode formal multiplications 
	in graphical species. \end{ex}

\begin{figure}[htb!] 
	\begin{tikzpicture}
		
		\node at(0,0){
			\begin{tikzpicture}{
					\node at (6,0) 
					{\begin{tikzpicture}[scale = 0.5]
							\filldraw(0,0) circle(3pt);
							\foreach \angle in {0,120,240} 
							{
								\draw(\angle:0cm) -- (\angle:1.5cm);

							}
							\draw [ ultra thick] (0,0) -- (1.5,0);
							\node at (.5, -0.3) {\tiny $x$};
							\node at (2.2, -0.3) {\tiny$ y$};

					\end{tikzpicture}};
					
					\node at (7,0){\begin{tikzpicture}[scale = 0.5]
							\filldraw(0,0) circle(3pt);
							\foreach \angle in {-0,90,180,270} 
							{
								\draw(\angle:0cm) -- (\angle:1.5cm);

							}
							\draw [ ultra thick] (0,0) -- (-1.5,0);

					\end{tikzpicture}};

					\node at (6.5,-2){ |$(\CX \amalg \CX[Y])^{x\ddagger y}$|};
		}\end{tikzpicture}};
		
		\node at (5, 0){	\begin{tikzpicture}
				
				\node at (5,0) 
				{\begin{tikzpicture}[scale = 0.5]
						\filldraw(0,0) circle(3pt);
						\draw (0,-1.5)--(0,1.5);
						\draw[ultra thick] 
						(0,0)..controls (3,3) and (-3,3)..(0,0);
						\node at (-1,.6) {\tiny $x$};
						\node at (1, .6) {\tiny $y$};
				\end{tikzpicture}};

				\node at (5,-2){ |$\C_{X}^{x \ddagger y}$|};
				
			\end{tikzpicture}
		};
	\end{tikzpicture}
	\caption{ Realisations of $(\CX \amalg \CX[Y])^{x\ddagger y}$ and $\C_{X}^{x\ddagger y}$ for $X \cong \mathbf{4}, \ Y \cong \mm[3]$.} 
	\label{fig: MN}
\end{figure}
\label{Hv Ev}    

\begin{ex}\label{def Lk} 
	For $k \geq 0$, the \emph{line graph} $\Lk$ is the connected bivalent graph (illustrated in \cref{fig. line and wheel}) with ports in $ E_0 = \{1_{\Lk}, 2_{\Lk}\}$, and 
	\begin{itemize}
		\item ordered set of edges $E (\Lk)= (l_j)_{j = 0}^{2k+1}$ where $l_0 = 1_{\Lk} \in E_0$ and $l_{2k+1} = 2_{\Lk} \in E_0$, and the involution is given by $\tau (l_{2i}) = l_{2i +1}$, for $0 \leq i \leq k$, 
		\item ordered set of $k$ vertices $ V(\Lk) = (v_i)_{i = 1}^k $, such that $ \vE[v_i] = \{ l_{2i -1}, l_{2_i}\}$ for $1 \leq i \leq k$.
	\end{itemize}
	So, $\Lk$ is described by a diagram of the form 
	$\Fgraphvar{\ \two \amalg 2(\mathbf k)\ }{2(\mathbf k)}{\mathbf k.}{}{}{} $
	
	The line graphs $(\Lk)_{k \in \N}$ may be defined inductively by gluing copies of $\CX[\two]$:
	\[ \begin{array}{llll}
		\Lk[\nul]& = & (\shortmid) &\\
		\Lk[k+1] & \cong  & (\Lk \amalg \C_\two)^{ 2_{\Lk}\ddagger 1_{\C_\two}}, & k \geq 0.
	\end{array}\]
\end{ex}

\begin{ex}%
	
	\label{wheels}
	For $m \geq 1$, the {wheel graph} $\Wm $ (illustrated in \cref{fig. line and wheel}) is the connected bivalent graph with no ports, obtained as the coequaliser in $\Grbig$ of the morphisms $ch_{ 1_{\Lk[m]}}, ch_{2_{\Lk[m]}} \circ \tau \colon (\shortmid) \rightrightarrows \Lk[m]$. So, $\Wm = {\Lk[m]}^{(1 \ddagger 2)}$ has
	\begin{itemize}
		\item $2m$ cyclically ordered edges $E(\Wl) = (a_j)_{j = 1}^{2m}$, such that the involution satisfies $\tau (a_8) = a_1$ and $\tau (a_{2i}) = a_{2i +1}$ for $1 \leq i \leq m-1$,
		\item $ m$ {cyclically} ordered vertices $V(\Wl) = (v_i)_{i = 1}^m$, that $ \vE[v_i] = \{ a_{2i -1}, a_{2i }\}$ for $1 \leq i \leq m$, 
		
	\end{itemize} 
	and is described by a diagram of the form $\Fgraphvar{\ 2(\mathbf m)\ }{2(\mathbf m)}{\mathbf m.}{}{}{}$

	\begin{figure}[htb!]
		\begin{tikzpicture}
			\node at (0,-1.5) {$\mathcal L^4$};
			\node at (0,0) { \begin{tikzpicture}[scale = 0.4]
					\draw (0,0) -- (10,0);
					\filldraw (2,0) circle (3pt);
					\filldraw (4,0) circle (3pt);
					\filldraw		(6,0) circle (3pt);
					\filldraw		(8,0) circle (3pt);
					\node at (0,-.7) {\tiny $l_0$};
					\node at (1.7,-.7) {\tiny $l_1$};
					\node at (10,-.7) {\tiny $l_9$};
				\end{tikzpicture}
			};
			\node at (6,-1.5) {$\mathcal W^4$};
			\node at(6,0){\begin{tikzpicture}[scale = 0.4]
					\draw (0,0) circle (2cm);
					\filldraw (2,0) circle (3pt);
					\filldraw (0,2) circle (3pt);
					\filldraw		(-2,0) circle (3pt);
					\filldraw		(0,-2) circle (3pt);
					
					\node at (-.6,2.2) {\tiny{$a_1$}};	
					\node at (.6,2.2) {\tiny{$a_2$}};
					\node at (-2.4,0.3) {\tiny{$a_8$}};
			\end{tikzpicture}};
			
		\end{tikzpicture}
		
		\caption{Line and wheel graphs.}
		\label{fig. line and wheel}
	\end{figure}

\end{ex}

By \cite[Proposition~4.23]{Ray20}, a connected bivalent graph is isomorphic to $\Lk$ or $\Wl$ for some $k, m$. 

In general, given ports $e_1, e_2$ of a graph $\G$ such that $e_1 \neq \tau e_2$, then $\G^{e_1 \ddagger e_2}$ is formed by ``gluing'' the ports $e_1$ and $e_2$ (see \cref{fig: MN}). As the following examples shows, the result of forming $\G^{e_1 \ddagger e_2 }$ is somewhat surprising when $e_1 = \tau e_2$, so $e_1, e_2$ are the edges of the same stick component:

\begin{ex}\label{ex. stick colimit} 
	Observe that 
	$ch_1^{(\shortmid)} \colon (\shortmid) \to (\shortmid)$, $1 \mapsto 1$ is the identity on $(\shortmid)$ and $ch_2^{(\shortmid)} \colon (\shortmid) \to (\shortmid)$, $1 \mapsto 2$ is precisely $\tau$. Hence 
	$ ch_2^{(\shortmid)} \circ \tau = \tau^2 = id_{\shortmid}, $
	and therefore \[ (\shortmid)^{1 \ddagger 2} = (\shortmid).\]
	
	It follows that, if a pair of ports $e_1, e_2 = \tau e_1 \in E_0(\G)$ form a stick component of a graph $\G$ (\cref{defn. stick comp}), then $\G^{e_1 \ddagger e_2} \cong \G$.
	
\end{ex}
\begin{rmk}
	\label{rmk loop intuition} (See also \cite{Ray20}, in particular \cite[Introduction~\&~Section~6]{Ray20}.) 
It is worth contrasting \cref{deg loop} with \cref{ex. stick colimit} above. Intuitively (graphically), the (formal) coequaliser of  $id, \tau\colon (\shortmid) \rightrightarrows (\shortmid)$ in \cref{deg loop} can be thought of as gluing the two ends of the exceptional edge together to form a nodeless loop, and one may even wish to formally add the diagram $* \leftarrow 0 \rightarrow 0$ to $\Gret$ to encode this operation. However, as \cref{ex. stick colimit} shows, it is in fact the case that $(\shortmid)^{1 \ddagger 2} = (\shortmid)$. (In \cref{wheel deletion}, it will be seen that $(\shortmid)$ is also the outcome of ``substituting $(\shortmid)$ into $\W$''.)  Indeed, in \cite{HRY19a, HRY19b} graphs are defined as in \cref{defn: graph} together with additional boundary information and the nodeless loop is described by the same graph as $\shortmid$, but with empty boundary. 

This work, like \cite{Ray20}, takes a different approach, partly informed by the Weber nerve construction. No nodeless loop graph is introduced. Instead contracted units are defined in terms of a map (in the Kleisli category of the monad $\DD$, \cref{sec. mod op}) from the isolated vertex to the exceptional edge $(\shortmid)$. (See also the Introduction, \cref{sec. iterated} and  \cite[Sections~6~\&~7]{Ray20}.)

\end{rmk}

\begin{defn}
	\label{lem. mono}
	Following \cite{HRY19a}, a graph \emph{embedding} 
		is a morphism $f \in \Gret (\G, \H)$ that factors 
	as 
	\[ \G \to \G^{e_1 \ddagger e'_1, \dots, e_k \dagger e'_k} \hookrightarrow \H \] where the first morphism describes a colimit $\G \to \G^{e_1 \ddagger e'_1, \dots, e_k \dagger e'_k}$ of pairs of parallel maps 
	\[ch_{e_i}, ch_{e'_i} \circ \tau \colon (\shortmid) \rightrightarrows \G,\] (with $e_1, e'_1, \dots, e_k, e'_k$ a possibly empty set of distinct ports of $\G$), and the second morphism is a pointwise injection  in $\Gret$. 
\end{defn}

Embeddings form a subcategory of $\Gret$ that includes all isomorphisms. They may be thought of as analogous to inclusions of open subsets in topology. The category $\Gret$ also admits constructions that play the role of neighbourhoods of points in topology. (See \cref{ex topology} for a description of the Grothendieck topology on $\Gret$. This is discussed in more detail in \cite{JK11,Ray20}.)

\begin{defn}\label{def. neighbourhood v}
	A \emph{neighbourhood} of a vertex $v \in V$ (respectively, an edge $ e \in E$) is an embedding $u \colon \H \to \G$ such that $v$ (respectively $e$) is in the image of $u$. 
\end{defn}

There is a canonical choice of neighbourhood for each vertex $v$ and edge $e$ of a graph $\G$:

\begin{ex}\label{ex. essential}
	Let $\G$ be a graph. 
	For each orbit $\tilde e \in \tilde E$, the inclusion of the stick graph 
	$\shorte \defeq \xymatrix{	\{e, \tau e\}
		& \emptyset \ar[l] \ar[r]&\emptyset} $ 
	canonically induces an embedding $\ese \colon (\shorte) \hookrightarrow \G$. 
	
	Recall that, for each $v \in V$,  $\vE \defeq s(t^{-1}(v))$ is the set of edges incident on $v$. Let $\mathbf{v} = (\vE)^\dagger$ denote its formal involution. 
	
	Then the corolla $\Cv$ is given by 
	\[ \Cv  =  \qquad  \xymatrix{*[r] { \left(\vE \amalg ({\vE})^\dagger \right)}\ar@(lu,ld)[] && \vH \ar[ll]_-s \ar[r]^t&{\{v\}}. 
	}\] 
	
	The inclusion $\vE \hookrightarrow E$ induces an embedding $\esv^\G$ or $\esv \colon \Cv \to \G$. 
	Observe that, whenever there is an edge $e$ such that $e$ and $\tau e $ are both incident on the same vertex $v$ -- so $\{e, \tau e\}$ forms a \textit{loop} at $v$ -- then $\esv$ is not injective on edges.
	
	If $ \vE$ is empty -- so $\Cv$ is an isolated vertex -- then $ \Cv \hookrightarrow \G$ is a connected component of $\G$.
	
	For each $v \in V$ (respectively, $ e \in E$), an embedding $u \colon \H \to \G$ is a neighbourhood of $v$ (respectively $e$) precisely when $\esv \colon \Cv \to \G$ (respectively $\ese \colon (\shorte) \hookrightarrow \G$) factors through $u$. 
\end{ex}

\subsection{Evaluating graphical species on graphs}\label{ssec Sgraph}

The assignments $\S \mapsto (\shortmid)$, $X \mapsto \CX$ (for finite sets $X$) define a full inclusion $ \Phi \colon \fisinv\hookrightarrow \Gret$. 
Let	$ \yet \colon \Gret \to \GS$ be the induced nerve, given by $\yet \G  \colon \C \longmapsto \Gret (\C, \G)$ for $ \C \in \fisinv$.

\begin{defn}
	\label{def. elements of a graph} For $\G \in \Gret$, the category $\elG[\yet \G] \cong \fisinv \ov \G$ is called the \emph{category of elements of $\G$} and denoted simply by $\elG$.
\end{defn}

The following lemma is proved in \cite[Lemmas~4.16~\&~4.24]{Ray20} and makes precise the intuitive idea that any graph $\G$ is obtained by gluing together edges and corollas (the elements of $\G$):
\begin{lem}\label{lem: essential cover}
	For all graphs $\G$,
	$\G    =  \mathrm{colim}_{ (\C, b) \in \elG} \C$ canonically. In other words, $\fisinv$ is dense in $\Gret$. 
\end{lem}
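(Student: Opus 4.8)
The statement is equivalent, by the criterion of \cite[Proposition~5.1]{Lam66} recalled above, to full faithfulness of the nerve $\yet$, so it suffices to exhibit each graph $\G$ as the colimit \emph{in $\Gret$} of the forgetful functor $(\C, b)\mapsto\C$ on $\elG\cong\fisinv\ov\G$, taken along the tautological cocone whose leg at $(\C, b)$ is $b\colon\C\to\G$ itself. (This family is a cocone precisely because a morphism $(\C,b)\to(\C',b')$ in $\fisinv\ov\G$ is a map $\phi$ with $b'\circ\Phi\phi = b$.) The objects of $\elG$ are, by \cref{ex: choose}, the edge-elements $ch_e\colon(\shortmid)\to\G$ indexed by $e\in E$, together with the vertex-elements $\CX\xrightarrow{b}\G$, which — since étale morphisms are exactly the local isomorphisms characterised after \cref{defn. etale morphism} — correspond to a vertex $v=b(\ast)$ with $|v|=|X|$ and a bijective labelling $X\cong\vE$. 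The nontrivial morphisms are generated by the bijections $X\to Y$ relabelling a vertex-element and by the maps $ch_x\colon\S\to X$ sending a vertex-element to the edge-element at its incident edge $x$.

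Since colimits of presheaves are computed pointwise and $\Gret\subset\pr\diag$ (\cref{deg loop}), I would first compute $L\defeq\mathrm{colim}_{(\C,b)\in\elG}\C$ in $\pr\diag$, identify it with $\G$, and only then argue that this colimit is reflected by the inclusion $\Gret\hookrightarrow\pr\diag$. To organise the computation I would pass to the subcategory $\esG\subset\elG$ of \emph{essential} elements of \cref{ex. essential}: one corolla $\esv\colon\Cv\to\G$ per vertex $v$, and one stick $\ese\colon(\shorte)\hookrightarrow\G$ per orbit $\tilde e\in\tilde E$, together with the incidence maps between them. The key reduction is that $\esG\hookrightarrow\elG$ is \emph{final}: every edge-element $ch_e$ is isomorphic (via $\mathrm{id}$ or $\tau$) to the essential stick of its orbit, and every vertex-element factors through a unique relabelling into the essential corolla at $b(\ast)$, so that the relevant comma categories are connected and nonempty. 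Granting finality, $L\cong\mathrm{colim}_{\esG}\C$.

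The essential diagram is then exactly the incidence diagram of $\G$ — corollas $\Cv$ receiving a stick for each incidence of an orbit with a vertex — and I would verify pointwise that its colimit is $\G$. At the vertex level the sticks contribute nothing and distinct corollas remain distinct, giving $V$; at the half-edge level one obtains $\coprod_v\vH=H$. The delicate computation is at the edge level, where the maps $ch_x$ glue the two incident ends of each inner orbit, and where one must separately treat the degenerate cases of a loop (when $e,\tau e$ meet the same vertex) and of a stick component (an orbit incident to no vertex, contributing a bare edge pair), using \cref{ex. stick colimit}. The upshot is that the edge set of $L$ is $E$ with its involution $\tau$, that the maps $s,t$ match those of $\G$, and hence that $L\cong\G$ in $\pr\diag$; in particular $L$ is a graph. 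I expect this edge-level bookkeeping, which is essentially the content of \cite[Lemma~4.16]{Ray20}, together with the verification of finality (\cite[Lemma~4.24]{Ray20}), to be the main obstacle; the geometric picture of \cref{geom real} is the intuition guiding it.

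Finally I would upgrade the pointwise isomorphism $L\cong\G$ to the assertion that the tautological cocone is colimiting \emph{in} $\Gret$. Given any graph $\mathcal K$ with a compatible cocone $(\C\xrightarrow{c_b}\mathcal K)$, the presheaf colimit yields a unique $f\colon\G\to\mathcal K$ in $\pr\diag$ with $f\circ b=c_b$, and it remains to check that $f$ is étale. This follows from the local description after \cref{defn. etale morphism}: each vertex $w$ of $\G$ arises from a vertex-element whose leg $b_w\colon\Cv[w]\to\G$ is étale, hence restricts to a bijection of stars; since $c_{b_w}=f\circ b_w$ is also étale, $f$ induces a bijection $\sfrac{E}{w}\to\sfrac{E}{f(w)}$ for every $w$, which is precisely the condition that $f$ be étale. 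Uniqueness is inherited from the presheaf colimit. Hence $\G=\mathrm{colim}_{(\C,b)\in\elG}\C$ in $\Gret$, so $\fisinv$ is dense in $\Gret$ and $\yet$ is fully faithful.
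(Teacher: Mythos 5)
Your proposal is correct and takes essentially the same approach as the paper: the paper's "proof" of this lemma is purely a citation to \cite[Lemmas~4.16~\&~4.24]{Ray20}, and your sketch reconstructs exactly the structure of that argument --- finality of the essential elements $\esG \subset \elG$, the pointwise colimit computation in $\pr{\diag}$ with the delicate edge-level gluing (loops, stick components), and reflection of the colimit into $\Gret$ via the local \'etale condition --- while deferring the two technical cores to those same two lemmas. The steps you do carry out (the finality reduction, and the check that the mediating presheaf morphism is \'etale because each vertex star is matched bijectively through an \'etale corolla leg) are sound.
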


If $ \E$ is a category with all small limits, then the pullback functor $\Phi^* \colon  \prE{\Gret} \to \GSE$ along $\Phi \colon \fisinv \hookrightarrow \Gret$ has a right adjoint $\Phi_*\colon\GSE \hookrightarrow \prE{\Gret}$ given by
\begin{equation}\label{eq. S graph} S \longmapsto \left(\G \mapsto \mathrm{lim}_{ (\C, b) \in \elG} S(\C)\right ).\end{equation}  Since $\Phi$ is fully faithful, so is $\Phi_*$ \cite[Expos\'e~ii]{SGA4}. The same notation will be used for graphical species viewed as ($\E$-valued) presheaves on $\fisinv$ and for their image as presheaves on $\Gret$.

\begin{ex}\label{ex. product union}
	By (\ref{eq. S graph}), $ S(\G) \times S(\H) = S(\G \amalg \H)$ for all graphs $\G$ and $\H$. In particular, 
	\begin{equation}
		\label{eq. product union}S_X \times S_Y = S(\CX) \times S(\CX[Y]) = S(\CX \amalg \CX[Y]) \text{ for all finite sets } X \text{ and } Y.
	\end{equation}
	So, an external product $\boxtimes$ on $S$ (\cref{def. external}) may be defined by a collection of equivariant maps $S(\CX \amalg \CX[Y]) \to S_{ X \amalg Y}$. 
\end{ex}

\begin{ex}\label{ex. N-structures} \label{ex. mult} Let $X$ be a finite set with distinct elements $x$ and $y$. 
	By (\ref{eq. dagger pullback}), $S_X^{x \ddagger y} \in \E$ is the equaliser of the morphisms $ S(ch_x), S(ch_y \circ \tau)  \colon S_X \rightrightarrows S_\S $. Since the graph $\C_X^{x \ddagger y}$ is defined as the coequaliser of the morphisms $ ch_x, ch_y \circ \tau  \colon (\shortmid) \to \CX$ (see \cref{ex: N graph} ), it follows from (\ref{eq. S graph}) that
	\begin{equation}
		\label{eq. N-structures}S^{x \ddagger y}_X = S(\C_X^{x \ddagger y}).	
	\end{equation} 
	Hence, a contraction $\zeta$ on a graphical species $S$ in $\E$ (\cref{defn: contraction}) is defined by a collection of maps $S(\C^{x \ddagger y}_X) \to S_{ X \setminus \{x,y\}}$ where $X$ is a finite set with distinct elements $x, y \in X$.
	
	Likewise, if $X,Y$ are finite sets with elements $x \in X$, $y \in Y$, then by (\ref{eq mult pullback},
	\begin{equation}
		\label{eq. mult structure}(S_X \times S_Y)^{x \ddagger y} =(S(\CX \amalg \CX[Y]))^{x \ddagger y} =  S((\CX \amalg \CX[Y])^{x \ddagger y}),
	\end{equation} 
	since $(\CX \amalg \CX[Y])^{x \ddagger y}$ is the coequaliser of the morphisms $ ch_x, ch_y \circ \tau  \colon (\shortmid) \to \CX \amalg \CX[Y]$.

	So, a multiplication $\diamond$ on $S$ (\cref{defn: multiplication}) is defined by a collection of maps $S((\CX \amalg \CX[Y])^{x \ddagger y}) \to S_{ X \amalg Y \setminus \{x,y\}}$ where $X, Y$ are finite sets and $x \in X, y \in Y$. 
	
\end{ex}

\begin{defn}\label{ex S structured graphs}
	When $\E = \Set$ and $S \in \GS$, an element $\alpha \in S(\G)$ is called an \emph{$S$-structure on $\G$}. Objects of the category $\Gret (S)$, of \emph{$S$-structured graphs}, are pairs $(\G, \alpha)$ with $\alpha \in S(\G)$, and morphisms $(\G, \alpha) \to (\G', \alpha')$ in $\Gret (S) $ are given by morphisms $f \in \Gret (\G, \G')$ such that $S(f) (\alpha') = \alpha$.

\end{defn}
By \cref{lem: essential cover}, $\Gret$ may be viewed as a full subcategory of $\GS$ and for graphs $\G \in \Gret$, I will write $\G \in \GS$, rather than $\yet \G \in \GS$ where there is no risk of confusion.
\begin{rmk}
	\label{ex topology}
	Let $\G$ be a graph. Families of jointly surjective morphisms define the covers at $\G$ for a canonical \emph{\'etale topology} $J$ on $\Gret$. Since $\elG$ refines every cover at $\G$, the category $\GS$ of graphical species in $\Set$ is equivalent to the category $\sh{\Gret,J}$ of \'etale sheaves on $\Gret$. It follows, in particular, that 
	there is a canonical isomorphism 
	$S(\G) \cong \GS (\yet \G, S)$. Hence, $\Gret(S)$ is canonically isomorphic to the slice category $\Gret \ov S$.
	
\end{rmk}

\begin{ex}\label{def directed graphs} \label{ex OGS on graphs}	
	
	The terminal oriented graphical species $\Dicomm \in \GS$ was defined in \cref{ex directed graphical species}. 
	By \cref{ex topology}, a $\Dicomm$-structure $\phi \in \GS (\G,\Dicomm)$  on a graph $\G$ is given by a bijection $\{ e, \tau e\} \xrightarrow {\cong}\Di$ for each edge orbit $\{e, \tau e\}$ of $\G$. In other words, $\phi$ defines an orientation on $\G$. Hence, the \emph{category $\oGret \cong \Gret (\Dicomm)$ of directed graphs} is defined as the slice category $\Gret \ov \Dicomm$. This is a full subcategory of $\oGS=\GS \ov \Dicomm $ by \cref{lem: essential cover} and each directed graph $\tilde \G = (\G, \phi)\in \oGret$ is canonically the colimit of its induced oriented element category $\oelG$ obtained by pulling back the orientation $\phi$ on $\G$ along each element $(\C,b) \in \elG$. (See also~\cite[Section~4.5]{Ray20}.)
	
	In particular, if $\tilde S \colon {\elG[\Dicomm]}^{\mathrm{op}} \to \E$ is an oriented graphical species in $\E$ corresponding to a pair $(S, \gamma) \in \GSE \ov \DicommE$, then $\tilde S$ extends to a presheaf on $\oGret$ such that, for all graphs $\G \in \Gret$,
	\[ S (\G) \cong\coprod_{ \phi \in \Di(\G)} \tilde S (\G, \phi).\]

\end{ex}
\subsection{Relationship between graphs and Brauer diagrams and wiring diagrams}\label{ssec Brauer}

In \cref{thm. iterated law}, circuit algebras internal to a category $\E$ will be constructed as algebras for a composite monad $\LL\DD\TT$ on $\GSE$ that is described in terms of graphs in $\Gret$. 
 By contrast, enriched circuit algebras are usually defined as algebras over  operads of wiring diagrams (see~e.g.,~\cite{BND17,  DF18, Hal16, Tub14, DHR21} and \cite{RayCA1}). 

In \cite[Section~4]{RayCA1}, the operad $\WD$ of (monochrome unoriented) wiring diagrams is defined using the symmetric monoidal category $\BD$ of Brauer diagrams, used in the representation theory of orthogonal and symplectic groups \cite{LZ15, SS15, RayCA1}. In fact, (enriched) circuit algebras may equivalently be described by symmetric lax monoidal functors from categories of (coloured) Brauer diagrams. 

A partial overview of how different variations of circuit algebras are associated, via Schur-Weyl duality, to group representations, and how they relate to the composite circuit algebra monad $\LL\DD\TT$ is provided in \cite[Introduction]{RayCA1}. This short section reviews the definitions of the category $\BD$ and operad $\WD$, describes how wiring diagrams are related to graphs as in \cref{defn: graph}, and indicates how composition of wiring diagrams is encoded by the monads $\LL, \DD$ and $\TT$ introduced in the following Sections \ref{sec: nonunital}-\ref{sec. iterated}.

Objects of $\BD$ are natural numbers. Morphisms are \emph{(monochrome, unoriented) Brauer diagrams}, where a Brauer diagram in $f \colon m \to n$ is a pair $f =(\tau, \kcl)$ such that 
$\kcl \in \N_{\geq 0}$ is a natural number and $\tau$ is a fixed-point free involution (or \textit{matching}) on $\mm \amalg \nn$ (as usual, $\nn=\{1,\dots, n\}$ for all $n \geq 1$ and $\nul = \emptyset$). 

A Brauer diagram $(\tau, \kcl) \colon m \to n$ may be visualised, as in \cref{fig. pairing comp} (b) by $\kcl$ closed loops next to a univalent graph that comprises of 
a row of $n$ vertices bijectively labelled by $\nn$ above a row of $m$ vertices labelled by $\mm$, 
such that there is an edge connecting vertices $x$ and $y$ in $\mm \amalg \nn$ if and only if $x = \tau y$. 
The monoidal product $\oplus$ on $\BD$ is given by $f_1 \oplus f_2 = (\tau_1 \amalg \tau_2, \kcl_1 + \kcl_2)\in \BD(m_1 + m_2, n_1 + n_2) $ for
$f_1  = (\tau_1, \kcl_1) \in\BD(m_1, n_1)$ and $f_2  = (\tau_2, \kcl_2) \in\BD(m_2, n_2)$. It is represented graphically by juxtaposition. 
Categorical composition is described by ``diagram stacking'' as in  \cref{fig. pairing comp} (see also \cite[Section~3]{RayCA1}).

\begin{figure}[htb!]
	
	\begin{tikzpicture}
		\node at (-2.5,1){(a)};
		\node at (-.5,0){	\begin{tikzpicture}[scale = .3]
				\begin{pgfonlayer}{above}
					\node [dot, violet]  (12) at (-2, 16) {};
					\node  [dot, violet] (13) at (-1, 16) {};
					\node  [dot, violet] (14) at (0, 16) {};
					\node  [dot, violet] (15) at (1, 16) {};
					\node [dot, violet]  (16) at (2, 16) {};
					\node  [dot, violet] (17) at (3, 16) {};
					\node  [dot, violet] (18) at (4, 16) {};
					\node  [dot, sapgreen] (20) at (-0.5, 18) {};
					\node  [dot, sapgreen](21) at (1, 18) {};
					\node  [dot, sapgreen](22) at (2.5, 18) {};
					\node  [dot, cyan](28) at (-2, 12) {};
					\node   [dot, cyan](29) at (-0.5, 12) {};
					\node   [dot, cyan](30) at (1, 12) {};
					\node   [dot, cyan](31) at (2.5, 12) {};
					\node  [dot, cyan] (32) at (4, 12) {};
					\node [dot, violet]  (33) at (-2, 14) {};
					\node  [dot, violet] (34) at (-1, 14) {};
					\node  [dot, violet] (35) at (0, 14) {};
					\node  [dot, violet] (36) at (1, 14) {};
					\node [dot, violet]  (37) at (2, 14) {};
					\node  [dot, violet] (38) at (3, 14) {};
					\node [dot, violet]  (39) at (4, 14) {};
				\end{pgfonlayer}
				\begin{pgfonlayer}{background}
					\filldraw[draw = white, fill = violet, fill opacity  = .1]	(1,16) ellipse (3.6cm and .6cm); 
					\filldraw[draw = white, fill = violet, fill opacity  = .1]	(1,14) ellipse (3.6cm and .6cm);
					\filldraw[draw = white, fill = sapgreen, fill opacity  = .1]	(1,18) ellipse (3.6cm and .6cm);
					\filldraw[draw = white, fill =cyan , fill opacity  = .1]	(1,12) ellipse (3.6cm and .7cm);
					\draw [red, bend left=300, looseness=0.75] (20.center) to (12.center);
					\draw [red, in=90, out=-150] (21.center) to (14.center);
					\draw [red, bend left=60] (22.center) to (18.center);
					\draw [red, bend left=90, looseness=1.25] (16.center) to (17.center);
					\draw [red, bend left=60] (13.center) to (15.center);
					\draw [blue, bend right=75, looseness=1.25] (36.center) to (37.center);
					\draw [blue, in=135, out=-60] (35.center) to (31.center);
					\draw [blue, bend left=75] (28.center) to (29.center);
					\draw [blue, bend left=60, looseness=0.75] (30.center) to (32.center);
					\draw [blue, bend right] (34.center) to (38.center);
					\draw [blue, bend right] (33.center) to (39.center);
				\end{pgfonlayer}
		\end{tikzpicture}};
		\draw [thick, dashed, -> ] (1.5,0)--(2.5,0);
		\node at (5,0){\begin{tikzpicture}[scale = .3]
				\begin{pgfonlayer}{above}
					%
					\node [dot, violet]  (12) at (-2, 15) {};
					\node  [dot, violet] (13) at (-1, 15) {};
					\node  [dot, violet] (14) at (0, 15) {};
					\node  [dot, violet] (15) at (1, 15) {};
					\node [dot, violet]  (16) at (2, 15) {};
					\node  [dot, violet] (17) at (3, 15) {};
					\node  [dot, violet] (18) at (4, 15) {};
					\node  [dot, sapgreen] (20) at (-0.5, 18) {};
					\node  [dot, sapgreen](21) at (1, 18) {};
					\node  [dot, sapgreen](22) at (2.5, 18) {};
					\node  [dot, cyan](28) at (-2, 12) {};
					\node   [dot, cyan](29) at (-0.5, 12) {};
					\node   [dot, cyan](30) at (1, 12) {};
					\node   [dot, cyan](31) at (2.5, 12) {};
					\node  [dot, cyan] (32) at (4, 12) {};
					\node [dot, violet]  (33) at (-2, 15) {};
					\node  [dot, violet] (34) at (-1, 15) {};
					\node  [dot, violet] (35) at (0, 15) {};
					\node  [dot, violet] (36) at (1, 15) {};
					\node [dot, violet]  (37) at (2, 15) {};
					\node  [dot, violet] (38) at (3, 15) {};
					\node [dot, violet]  (39) at (4, 15) {};
				\end{pgfonlayer}
				\begin{pgfonlayer}{background}
					\filldraw[draw = white, fill = violet, fill opacity  = .1]	(1,15) ellipse (3.6cm and .6cm); 
					
					\filldraw[draw = white, fill = sapgreen, fill opacity  = .1]	(1,18) ellipse (3.6cm and .6cm);
					\filldraw[draw = white, fill =cyan , fill opacity  = .1]	(1,12) ellipse (3.6cm and .7cm);
					\draw [red, bend left=300, looseness=0.75] (20.center) to (12.center);
					\draw [red, in=90, out=-150] (21.center) to (14.center);
					\draw [red, bend left=60] (22.center) to (18.center);
					\draw [red, bend left=90, looseness=1.5] (16.center) to (17.center);
					\draw [red, bend left=60, looseness=1.5] (13.center) to (15.center);
					\draw [blue, bend right=75, looseness=1.5] (36.center) to (37.center);
					\draw [blue, in=135, out=-60] (35.center) to (31.center);
					\draw [blue, bend left=75] (28.center) to (29.center);
					\draw [blue, bend left=60, looseness=0.75] (30.center) to (32.center);
					\draw [blue, bend right = 75, looseness=1] (34.center) to (38.center);
					\draw [blue, bend right = 90, looseness=1] (33.center) to (39.center);
				\end{pgfonlayer}
			\end{tikzpicture}
		};
		
		\node at (8,1){(b)};
		\node at (11,0){\begin{tikzpicture}[scale = .4]
				\begin{pgfonlayer}{above}
					\node   (12) at (-2, 15) {};
					\node   (13) at (-1, 15) {};
					\node   (14) at (0, 15) {};
					\node   (15) at (1, 15) {};
					\node   (16) at (2, 15) {};
					\node   (17) at (3, 15) {};
					\node   (18) at (4, 15) {};
					\node  [dot, sapgreen] (20) at (-0.5, 16) {};
					\node  [dot, sapgreen](21) at (1, 16) {};
					\node  [dot, sapgreen](22) at (2.5, 16) {};
					\node  [dot, cyan](28) at (-2, 12) {};
					\node   [dot, cyan](29) at (-0.5, 12) {};
					\node   [dot, cyan](30) at (1, 12) {};
					\node   [dot, cyan](31) at (2.5, 12) {};
					\node  [dot, cyan] (32) at (4, 12) {};
					\node   (33) at (-2, 15) {};
					\node   (34) at (-1, 15) {};
					\node   (35) at (0, 15) {};
					\node   (36) at (1, 15) {};
					\node   (37) at (2, 15) {};
					\node  (38) at (3, 15) {};
					\node   (39) at (4, 15) {};
				\end{pgfonlayer}
				%
				%
				\begin{pgfonlayer}{background}

					\filldraw[draw = white, fill = sapgreen, fill opacity  = .1]	(1,16) ellipse (3.2cm and .6cm);
					\filldraw[draw = white, fill =cyan , fill opacity  = .1]	(1,12) ellipse (3.6cm and .7cm);
					\draw [ in=90, out=-150] (21.center) to (31.center);
					
					\draw [ bend right=90, looseness=1.5] (20.center) to (22.center);
					
					\draw [ bend left=75] (28.center) to (29.center);
					\draw [ bend left=60, looseness=0.75] (30.center) to (32.center);
					\draw [violet] (-1, 14) circle (.5cm);
					
				\end{pgfonlayer}
			\end{tikzpicture}
		};
	\end{tikzpicture}
	\caption{(a) Composition of Brauer diagrams $ f = (\tau_f, 0) \in \BD(5,7)$ and $ g = (\tau_g, 0)\in \BD(7,3)$ 
		(b) the resulting Brauer diagram $gf = (\tau_g\tau_f, 1)\in \BD(5,3)$ with a single loop formed in the composition. }\label{fig. pairing comp}	
\end{figure}

The operad $\WD$ is the $\N$-coloured \emph{operad underlying $\BD$}. This means that, for each tuple of natural numbers $(m_1, \dots, m_k;n)$, the set of \emph{(monochrome) wiring diagrams of type $(m_1, \dots, m_k;n)$} is given by $\WD(m_1, \dots, m_k;n) \defeq \BD (\sum_{i = 1}^k m_i;n)$. For $f \in \BD (\sum_{i = 1}^k m_i;n)$, the corresponding wiring diagram is denoted by $\overline f \in \WD(m_1, \dots, m_k;n)$. The (strict) symmetric monoidal category structure on $\BD$ makes $\WD$ into an $\N$-coloured operad (see \cite[Section~4]{RayCA1}) with composition 
\[ \gamma \colon \WD(m_1, \dots, m_k;n) \otimes \bigotimes_{i = 1}^{k} \WD(l_{i,1}, \dots, l_{i,k_i};m_i) \to \WD(l_{1,1}, \dots, l_{k,k_i};n)  \] given by \[ \gamma \left(\overline g, (\overline f_i)_i\right)  \defeq  \overline{\left(g \circ  (f_1 \oplus \dots \oplus f_n)\right)}.\]
It follows, in particular, that an algebra over $\WD$ (a monochrome circuit algebra) is, equivalently, a lax monoidal functor from $\BD$  (see \cite[Sections~3~\&~4]{RayCA1}).

\begin{rmk}\label{rmk representations}
	Extending the Brauer's 1937 work \cite{Bra37}, categories of (oriented) Brauer diagrams have been used to study the representation theory of the (general linear) orthogonal and symplectic groups (see, e.g.,~\cite{LZ15, RS20, SS15}). 
	
	Building on the wheeled prop characterisation of algebras for the general linear groups in \cite{DM23}, \cite[Theorem~6.3]{RayCA1} characterises algebras over the finite dimensional orthogonal and symplectic groups as (enriched) circuit algebras satisfying two simple relations. 
\end{rmk}

The relationship between (monochrome) wiring diagrams and graphs is straightforward: If a monochrome wiring diagram $\overline f \in \WD (m_1, \dots, m_k; n)$ is induced by a Brauer diagram $ f \in \BD(\sum_{i = 1}^k m_i, n)$ of the form $f = (\tau_f,0)$ (so $f$ has no closed loops), then $f$ describes a graph:
\begin{equation}\label{eq. wiring graph}
	\Fgraphvar{\left(\nn \amalg (\coprod_{i =1}^k \mm_i)\right)}{\coprod_{i =1}^k \mm_i}{\mathbf{k}  }{\text{ inc}}{p}{\tau_{f}}
\end{equation} where $p \colon \coprod_{i =1}^k \mm_i \to \mm[k] $ is the canonical projection. 

\begin{ex}
	\label{ex corolla identity}
	When $X = \nn$, the corolla $\CX$ defined in \cref{corolla} corresponds by (\ref{eq. wiring graph}) to the identity wiring diagram $\overline{id_n} \in \WD(\nn;\nn)$. 
\end{ex}

By \cite[Theorem~2.6]{LZ15} or \cite[Proposition~2.15]{Ban16}, the category $\BD$ is generated, under composition and monoidal product, by the Brauer diagrams $\sigma_\two  \in \BD(2,2)$ (the non-identity bijection between source and target sets that generates the symmetry in $\BD$) and $id_1 \in \BD(1,1)$, $\cap\in \BD(2,0)$ and $\cup \in \BD(0,2)$. 
\begin{ex}\label{ex generating diagrams} With the exception of $\sigma_\two$, each of the generating morphisms of $\BD$ has the form $(\tau_\two, 0)$, where $\tau_\two$ is the unique matching on the two-element set.  Hence $\tau_{\two}$ induces distinct wiring diagrams 
	$\overline{id_1} \in \WD(1;1)$, $\overline \cup \in \WD(-;2)$ and $\overline \cap_{(\two; \nul)} \in \WD(2;0)$ and $\overline \cap _{(\one, \one; \nul)}\in WD(1,1;0)$. By (\ref{eq. wiring graph}), these describe the four graphs with a single edge orbit from \cref{ex stick}:
	\begin{enumerate}[(a)]
		\item for $\overline{id_1} \in \WD(1;1)$, the corresponding graph is the \emph{$\one$-corolla } $\C_{\one}$ (\cref{fig. one orbit}(a));
		\item for $\overline \cup \in \WD(-;2)$, the corresponding graph is the \textit{stick graph} $(\shortmid)$  (\cref{fig. one orbit}(b)); 
		\item for $\overline \cap_{(\two; \nul)} \in \WD(2;0)$, the corresponding graph is the \textit{wheel graph} $\W$ 
		( \cref{fig. one orbit}(c));
		\item for $\overline \cap _{(\one, \one; \nul)}\in WD(1,1;0)$, the corresponding graph is given by \cref{ex stick}(d) (\cref{fig. one orbit}(d)).
	\end{enumerate}

\end{ex}

\begin{rmk}
	Wiring diagrams are, essentially, symmetric versions of Jones's \emph{planar diagrams} \cite{Jon94}. As such, they are commonly represented, not by Brauer diagrams, but in terms of immersions of compact 1-manifolds in punctured discs as in \cref{fig. disc rep} (see e.g., \cite{DHR20} and \cite[Section~4.2]{RayCA1} for details).
	
	Consider the punctured disc representation of the wiring diagram $\overline g$ in \cref{fig. disc rep}. It is clear that, by shrinking the punctures to points, and deleting the outer disc, we obtain a representation of a graph with $3$ vertices of degree $2,4$ and $3$ respectively, and $3$ ports. This corresponds, via (\ref{eq. wiring graph}), to the geometric realisation (\cref{geom real}) of a Feynman graph.  This also holds for each of the wiring diagrams $\overline {f_i}$. Note, however, that a loop is formed in the composition $\overline{\left(g \circ  (f_1 \oplus f_2 \oplus f_3)\right)}$, so shrinking the punctures to points in this diagram is not sufficient to obtain a Feynman graph. 
	
\end{rmk}

\begin{figure}
	[htb!]
	\begin{tikzpicture}
		\node at (0,0){
			\begin{tikzpicture}[scale = .33]
				\begin{pgfonlayer}{above}
					\draw[thick,red] (.5,-.5)--(.5,3.5);
					\node [dot] (1) at (1,0) {};
					\node [dot] (2) at (2,0) {};
					\node [dot] (3) at (3,0) {};
					\draw[ultra thick,cyan] (3.5,-.5)--(3.5,.3);
					\node [dot] (4) at (4,0) {};
					\draw[thick,red] (4.5,-.5)--(4.5,3.5);
					\node [dot] (5) at (5,0) {};
					\node [dot] (6) at (6,0) {};
					\draw[ultra thick,cyan] (6.5,-.5)--(6.5,.3);
					\draw[thick,red] (8.5,-.5)--(8.5,3.5);
					\node [dot] (9) at (9,0) {};
					\draw[ultra thick,cyan] (9.5,-.5)--(9.5,.3);
					\node [dot] (10) at (10,0) {};
					\draw[ultra thick,cyan] (10.5,-.5)--(10.5,.3);
					\node [dot] (11) at (11,0) {};
					\draw[thick,red] (11.5,-.5)--(11.5,3.5);
					
					\node [dot] (12) at (2,3) {};
					\node [dot] (13) at (3,3) {};
					\node [dot] (15) at (5,3) {};
					\node [dot] (16) at (6,3) {};
					\node [dot] (17) at (7,3) {};
					\node [dot] (18) at (8,3) {};
					\node [dot] (19) at (9,3) {};
					\node [dot] (20) at (10,3) {};
					\node [dot] (21) at (11,3) {};
					
					\node [dot] (24) at (4,6) {};
					\node [dot] (26) at (6,6) {};
					\node [dot] (28) at (8,6) {};
				\end{pgfonlayer}
				\begin{pgfonlayer}{background}
					\draw [in=-90, out=90] (1.center) to (13.center);
					\draw [in=-90, out=90] (3.center) to (12.center);
					\draw [in=-90, out=90] (5.center) to (16.center);
					\draw [in=-90, out=90] (6.center) to (18.center);
					\draw [in=-90, out=90] (9.center) to (19.center);
					\draw [in=-90, out=90] (12.center) to (26.center);
					\draw [in=-90, out=90] (16.center) to (24.center);
					\draw [bend left=100, looseness=1] (2.center) to (4.center);
					\draw [bend left=100, looseness=2] (10.center) to (11.center);
					\draw [in=-90, out=90] (19.center) to (28.center);
					\draw [bend right=100, looseness=1] (15.center) to (17.center);
					\draw [bend right=100, looseness=2] (20.center) to (21.center);
					\draw [bend left=100, looseness=1] (15.center) to (17.center);
					\draw [bend left=100, looseness=.5] (13.center) to (20.center);
					\draw [bend left=100, looseness=.8] (18.center) to (21.center);
				\end{pgfonlayer}
		\end{tikzpicture}};
		\draw[gray, dashed, ->, line width = 1](3.4,0)--(4.6,0);
		\node at (4,.2){$\gamma$};
		\node at (8,0){
			\begin{tikzpicture}[scale = .33]
				\begin{pgfonlayer}{above}
					\draw[ultra thick,cyan] (.5,-.5)--(.5,.3);
					\node [dot] (1) at (1,0) {};
					\node [dot] (2) at (2,0) {};
					\node [dot] (3) at (3,0) {};
					\draw[ultra thick,cyan] (3.5,-.5)--(3.5,.3);
					\node [dot] (4) at (4,0) {};
					\draw[ultra thick,cyan] (4.5,-.5)--(4.5,.3);
					\node [dot] (5) at (5,0) {};
					\node [dot] (6) at (6,0) {};
					\draw[ultra thick,cyan] (6.5,-.5)--(6.5,.3);
					\draw[ultra thick,cyan] (8.5,-.5)--(8.5,.3);
					\node [dot] (9) at (9,0) {};
					\draw[ultra thick,cyan] (9.5,-.5)--(9.5,.3);
					\draw[ultra thick,cyan] (10.5,-.5)--(10.5,.3);
					\draw[ultra thick,cyan](11.5,-.5)--(11.5,.3);
					\node [dot] (10) at (10,0) {};
					\node [dot] (11) at (11,0) {};
					
					\node [] (12) at (2,3) {};
					\node [] (13) at (3,3) {};
					\node [] (15) at (5,3) {};
					\node [] (16) at (6,3) {};
					\node [] (17) at (7,3) {};
					\node [] (18) at (8,3) {};
					\node [] (19) at (9,3) {};
					\node [] (20) at (10,3) {};
					\node [] (21) at (11,3) {};
					
					\node [dot] (24) at (4,6) {};
					\node [dot] (26) at (6,6) {};
					\node [dot] (28) at (8,6) {};
				\end{pgfonlayer}
				\begin{pgfonlayer}{background}
					\draw [in=-90, out=90] (3.center) to (26.center);
					\draw [in=-90, out=90] (5.center) to (24.center);
					\draw [in=-90, out=90] (9.center) to (28.center);
					\draw [bend left=100, looseness=1] (2.center) to (4.center);
					\draw [bend left=100, looseness=1] (1.center) to (6.center);
					\draw [bend left=100, looseness=2] (10.center) to (11.center);
					\draw [bend right=100, looseness=1] (19.center) to (21.center);
					\draw [bend left=100, looseness=1] (19.center) to (21.center);
				\end{pgfonlayer}
		\end{tikzpicture}};
	\end{tikzpicture}
	\medspace
	\includegraphics[width=0.95\textwidth]{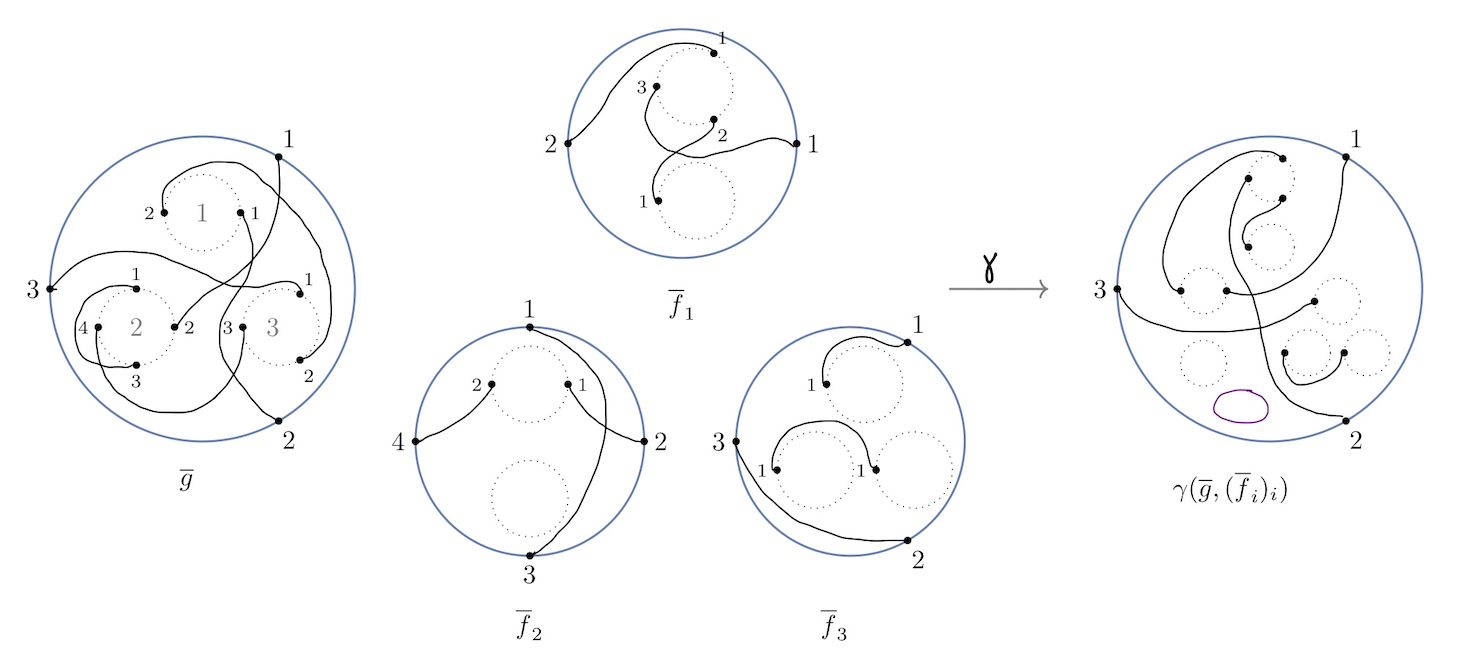}

	\caption{Brauer diagram and punctured disc representations of the same composition of wiring diagrams.}
	\label{fig. disc rep}\label{fig. pictorial}\label{fig. CWD comp}
\end{figure}

Inclusions of connected components in $\Gret$ describe factors of monoidal products in $\BD$. But, in general, morphisms in $\Gret$ do not admit a nice description in terms of $\BD$ (or $\WD$). Nonetheless, since the category $\CO$ of circuit algebras in $\Set$ is equivalent to the category $\bigCA$ described in \cite{RayCA1}, and objects of $\bigCA$ are algebras over operads of wiring diagrams \cite[Section~4]{RayCA1}, there must be constructions on graphs that play the role of operadic composition, and are able to encode the formation of closed loops, in $\WD$. 

\begin{rmk}\label{rmk dBD} (See also Remarks \ref{rmk wiring diag comp} and \ref{rmk. down Brauer graph of graphs})
	The subcategory $\BDd \subset  \BD$ of \emph{downward Brauer diagrams} is generated by $\sigma_\two  = (\{s_1 \mapsto t_2, s_2 \mapsto t_1\}, 0) \in \BD(2,2)$, $id_1 \in \BD(1,1)$ and $\cap\in \BD(2,0)$ (but not $\cup \in \BD (0,2)$). Morphisms in $\BDd(m,n)$ have the form $f = (\tau, 0)$ where, for all $t_i \in \Tf$, $\tau (t_i) \in Sf$. In particular, Brauer diagrams in $\BDd$ are ``loop-free". 
	
	By \cite{SS15} finite-length (vector-space valued) functors from $\BDd$ are equivalent to representations of the infinite dimensional orthogonal (or symplectic) group, and representations of the infinite dimensional general linear group are described by functors from the category of downward \textit{oriented} Brauer diagrams. 
	
	Punctured disc style representations of downward wiring diagrams are precisely those that do not include loops or wires that meet the outer disc at both ends. 
	
In \cite[Section~4]{RayCA1}, nonunital circuit algebras are defined as lax symmetric monoidal functors from (coloured versions of) $\BDd$. In the category $\Set$, these are equivalent to algebras for the monad $\TTk = \LL \TT$ described in \cref{sec: nonunital}, and composition of downward wiring diagrams is given by taking colimits of nondegenerate graphs of graphs as described in \cref{subs: gluing} (see also \cref{rmk. down Brauer graph of graphs}).

In particular, the categories of algebras for the infinite orthogonal, symplectic and general linear groups admit descriptions in terms of nonunital circuit algebras. (See the discussions in \cite[Introduction~\&~Section~6.2]{RayCA1}, and \cite[Theorem~6.13]{RayCA1} in particular.)

\end{rmk}

\begin{rmk}
	\label{rmk wiring diag comp} (See also Remarks \ref{rmk dBD} and \ref{rmk. down Brauer graph of graphs})
	To describe units for circuit algebras, the morphism $\cup \in \BD (0,2)$ is also required. This generates punctured disc representations that may include loops and wires that meet the outer disc at both ends. Composition of general wiring diagrams -- that describes operations of unital circuit algebras \cite{BND17,DHR20,Ray20} -- is related to taking colimits of (possibly degenerate) graphs of graphs (\cref{subs: gluing}). The graphical combinatorics of this are encoded by the unit monad $\DD$ that will be described in \cref{sec. mod op} and the iterated distributive laws that will be described in \cref{sec. iterated}.
\end{rmk}

\begin{rmk} \label{rmk CBD}
	The (internal) construction of circuit algebras discussed in this work characterises \textit{all} circuit algebras as algebras for a single monad on $\GSE$, with special cases, such as wheeled props (oriented circuit algebras) obtained as subcategories or via slice constructions. By contrast, different versions of (enriched) circuit algebras -- such as oriented or nonoriented -- are algebras over distinct (different coloured) operads of wiring diagrams.
	
	New categories of Brauer diagrams -- and hence new operads of wiring diagrams -- may be obtained by adding colours to the connected components of the underlying graphs as in \cite[Section~3.2]{RayCA1}.  In particular, the category $\DiBD$ of \emph{oriented monochrome diagrams}, whose objects are elements of the monoid $\listm \{ \In, \Out \}$, may be defined similarly to $\BD$ by orienting the edges of the Brauer diagrams. (See \cite[Section~3.2]{RayCA1} for a precise definition in terms of the palette $\Dipal$ defined in \cref{ex directed graphical species}.) This category is important in the representation theory of general linear groups \cite{LZ15, SS15, RayCA1}.

\end{rmk}

\section{A monad for nonunital circuit algebras}\label{a free monad}\label{sec: nonunital}\label{nonunital csm section}

This section gives a short outline, closely related to \cite{JK11} and \cite[Section~5]{Ray20}, of the construction of the nonunital circuit algebra monad $\TTk$ on $\GSE$ where, as usual, the category $\E$ is assumed to have enough limits and colimits for all the given constructions.

\subsection{Gluing constructions and labelled graphs}\label{subs: gluing}

As shown in \cref{deg loop}, the graph category $\Gret$ is not closed under finite colimits. However, there is a class of diagrams in $\Gret$ that always admit colimits, and are central to the definition of the nonunital circuit algebra monad $\TTk$ in \cref{ssec nonunital CO}.

The following terminology is based on \cite{Koc16}:
\begin{defn} \label{graph of graphs}
	Let $\G$ be a graph. A \emph{$\G$-shaped graph of graphs} is a functor $ \Gg\colon \elG \to \Gret$ (or $\Gg^\G$) such that
	\[\begin{array}{ll}
		\Gg(a) = (\shortmid) & \text{ for all } (\shortmid, a) \in \elG, \\
		E_0(\Gg(b)) = X &  \text{ for all } (\CX, b) \in \elG,
	\end{array}\]
	and, for all $(\C_{X_b},b) \in \elG $ and all $ x \in X_b$,
	\[ \Gg( ch_x) = ch^{\Gg(b)}_x \in \Gret(\shortmid, \Gg(b)).\]
	\label{defn degenerate}
	A $\G$-shaped graph of graphs $ \Gg\colon \elG \to \Gret$ is \emph{nondegenerate} if, for all $  v \in V$, $\Gg ( \esv)$ has no stick components. Otherwise, $\Gg$ is called \emph{degenerate}.

\end{defn}


%
\begin{figure}[!htb]
	
	\includestandalone[width = .4\textwidth]{substitutionstandalone}
	\caption{A $\G$ shaped graph of graphs $\Gg$ describes \textit{graph substitution} in which each vertex $v$ of $\G$ is replaced by a graph $\G_v$ according to a bijection $E_0(\G_v)  \xrightarrow{\cong} (\vE)^\dagger$. If $\Gg$ is nondegenerate, then taking its colimit corresponds to erasing the inner (blue) nesting. } \label{fig: graph nesting}
\end{figure}

A nondegenerate $\G$-shaped graph of graphs may be viewed informally as a rule for substituting graphs into vertices of $\G$ as in \cref{fig: graph nesting}.

\begin{prop}\label{colimit exists}
	
	A $\G$-shaped graph of graphs $\Gg$ admits a colimit $\Gg(\G)$ in $\Gret$, and, for each $(\C,b) \in \elG$, the universal map $b_{\Gg}  \colon \Gg(b) \to \coGg$ is an embedding in $\Gret$.
	
	If $\Gg$ is nondegenerate and $\Gg(b) \neq \oslash$ for all $(\C, b) \in \elG$, then 
	\begin{itemize}
		\item$\Gg$ describes a bijection $ E(\coGg) \cong E(\G) \amalg \coprod_{v \in V} \EI (\Gg(\esv))$ of sets;
		\item $\Gg$ induces an identity $E_0(\G) \xrightarrow{=} E_0(\coGg)$ of ports;
		\item there is a canonical surjective map $V(\Gg(\G)) \twoheadrightarrow V(\G)$ induced by the embeddings  $\Gg(\esv)\colon \Gg(\esv) \to \Gg(\G)$ for each vertex $v $ of $\G$. 
	\end{itemize}
	
\end{prop}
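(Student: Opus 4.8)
The plan is to construct the colimit $\coGg$ \emph{by hand}, as the graph obtained from $\G$ by substituting $\Gg(\esv)$ into each vertex $v$, and then to verify the universal property directly. First I would form the disjoint union $D \defeq \bigl(\coprod_{v \in V}\Gg(\esv)\bigr) \amalg \bigl(\coprod (\shortmid)\bigr)$, where the second coproduct runs over the stick components of $\G$, i.e.\ the connected components carrying no vertex (cf.\ \cref{Shrubs}). By the port condition $E_0(\Gg(\esv)) = \vE$, the ports of $D$ are indexed by $\mathrm{im}(s_\G)$ together with the ports of the stick components. For each inner edge orbit $\{e,\tau e\}\in \widetilde{\EI}(\G)$ -- say $e$ incident on $v$ and $\tau e$ on $w$ -- the associated ports of $\Gg(\esv)$ and $\Gg(\esv[w])$ are distinct ports of $D$ (distinct because nondegeneracy rules out $e = \tau_D(\tau e)$), and I would glue them using the colimit $(-)^{e\,\ddagger\,\tau e}$ of \eqref{eq. dagger graph}. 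As there are finitely many such orbits, and since this gluing commutes with $\amalg$ by \eqref{eq. C3 motivation} and with itself (disjoint pairs of ports give independent pushouts, so the order is immaterial), the iterated gluing
\[ \coGg \;\defeq\; D^{\{e\,\ddagger\,\tau e \,:\, \{e,\tau e\}\in \widetilde{\EI}(\G)\}} \]
exists in $\Gret$. This already gives existence, including the degenerate case: there the only new phenomenon is that a stick component of some $\Gg(\esv)$ may force a gluing of shape $(\shortmid)^{1\ddagger 2}$, which by \cref{ex. stick colimit} returns a stick rather than a vertexless circle, so the colimit still exists as a genuine graph.

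Next I would check that $\coGg$, with its evident cocone, \emph{is} $\mathrm{colim}_{\elG}\Gg$. Maps out of a disjoint union are tuples of maps out of the summands, and maps out of $\G'^{\,e\ddagger e'}$ are precisely maps out of $\G'$ coequalising $ch_e$ and $ch_{e'}\circ\tau$; assembling these identifications shows that a morphism $\coGg \to \H$ in $\Gret$ is exactly a family of morphisms $\Gg(b)\to\H$ compatible along every morphism of $\elG$. Compatibility along the $ch_x$ is what the port-gluings encode, and compatibility along the automorphisms $\tau$ of the stick-objects is exactly the $\tau$-twist built into the $\ddagger$-construction; this is the point at which the diagram $\Gg$ avoids the pathological coequaliser of \cref{deg loop}. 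Hence $\coGg \cong \mathrm{colim}_{\elG}\Gg$. The universal map $\Gg(\esv)\to\coGg$ factors as the self-gluing of $\Gg(\esv)$ along those inner orbits of $\G$ that form loops at $v$, followed by a pointwise-injective inclusion into $\coGg$, so by \cref{lem. mono} it is an embedding; for a stick-object $b_\Gg$ is an inclusion of an edge orbit, again an embedding.

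Finally, under nondegeneracy with $\Gg(\esv)\neq\oslash$, I would read off the three bullet points from the construction. For the edge bijection, gluing one inner orbit identifies four edges of $D$ in pairs, removing exactly two; a count using that no stick components in $\Gg(\esv)$ forces $\mathrm{im}(s_{\Gg(\esv)}) = \{\tau x : x \in E_0(\Gg(\esv))\} \amalg \EI(\Gg(\esv))$ matches the surviving edges with $E(\G)\amalg\coprod_v \EI(\Gg(\esv))$. The port identity is the statement that the unglued ports of $D$ together with the stick ports are precisely $E_0(\G)$, which holds on the nose once the conventions identifying $E_0(\Gg(\esv))$ with $\vE$ and the orientation of each leg are tracked. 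For the vertex map, $V(\coGg)=\coprod_v V(\Gg(\esv))$ collapses each summand to $v$, and this is surjective because nondegeneracy together with $\Gg(\esv)\neq\oslash$ forbids any $\Gg(\esv)$ from being vertexless (a vertexless graph is a shrub, hence all stick components, cf.\ \cref{Shrubs}).

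The main obstacle, and the only place that needs genuine care, is the interplay between the degenerate case and this bookkeeping: one must confirm that the iterated $\ddagger$-gluing really is order-independent even when stick-component self-gluings $(\shortmid)^{1\ddagger 2}$ occur, and that the port and edge identifications in the three bullets are the asserted literal identity and bijections rather than mere cardinality matches. This is exactly where the $\dagger$ conventions for $\esv$ and for the corollas $\Cv$ must be invoked explicitly.
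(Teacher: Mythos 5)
Your proposal is correct, but it takes a different route from the paper: the paper's own proof of \cref{colimit exists} is a pure citation, deferring the nondegenerate case to \cite[Proposition~5.16]{Ray20}, the degenerate case to \cite[Proposition~7.15]{Ray20}, and the three bullet points to \cite[Corollary~5.19]{Ray20}, whereas you carry out the underlying construction directly. Your argument -- disjoint union of the $\Gg(\esv)$ plus one stick per stick component of $\G$, glued along the inner orbits of $\G$ by iterated $\ddagger$-colimits, with \cref{ex. stick colimit} absorbing the stick self-gluings that occur in the degenerate case -- is the expected content of those citations, and your universal-property check (morphisms out of the iterated gluing are exactly $\elG$-compatible families) is the right mechanism: it gives existence and renders the order-independence worry moot at the same time, since once every gluing step exists, any order of gluing computes a colimit of the same diagram, and colimits are unique up to canonical isomorphism. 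What the citation buys the paper is brevity and reuse of the delicate degenerate-case (``problem of loops'') analysis already done in \cite{Ray20}; what your construction buys is a self-contained proof in which disconnected graphs, stick components of $\G$, and the port/edge bookkeeping behind the three bullets are all explicit, which is what this more general statement (over $\Gret$ rather than connected graphs) actually requires.

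One correction to your embedding argument: in the degenerate case, the self-glued part of $b_{\Gg} \colon \Gg(\esv) \to \coGg$ is not confined to inner orbits of $\G$ that are loops at $v$, because a stick component in another summand can transmit an identification back into $\Gg(\esv)$. For instance, if $v$ and $w$ are joined by two parallel edge orbits and $\Gg(\esv[w])$ is a stick, then the two corresponding ports of $\Gg(\esv)$ become identified (through that stick) in $\coGg$, although neither orbit is a loop at $v$. The conclusion is unaffected -- gluings never identify vertices or half-edges, so $b_{\Gg}$ still factors as a self-gluing of ports of $\Gg(\esv)$ followed by a pointwise injection, hence is an embedding by \cref{lem. mono} -- but the self-glued pairs must be indexed by the orbits that become loops at the image of $v$ after collapsing sticks, not merely by the loops of $\G$ itself.
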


\begin{proof}
	If $\Gg$ is nondegenerate, then $\Gg$ admits a colimit $\Gg(\G) $ in $\Gret$ by \cite[Proposition~5.16]{Ray20}. In case $\Gg$ is degenerate, the existence of a colimit in $\Gret$ follows from \cite[Proposition~7.15]{Ray20}.

	The other statements follow directly from \cite[Corollary~5.19]{Ray20}.
\end{proof}

\begin{rmk}
	\label{rmk. down Brauer graph of graphs} 
	
	(See also Remarks \ref{rmk dBD} and \ref{rmk wiring diag comp}.) By \cref{prop. CA CO}, there is an equivalence of categories between (internal) circuit algebras in $\Set$ and 
	algebras for operads of wiring diagrams. Under this equivalence, graphs of graphs play the role of operadic composition of wiring diagrams.

	Let $\G$ be a graph with $n$ ports, $k$ vertices, and such that each vertex $v_i \in V$ has $m_i = |\vE[v_i]|$ adjacent edges. Up to orderings, $\G$ describes a wiring diagram $\overline g \in \WD (\mm_1, \dots, \mm_k; \nn)$ with no closed components. A $\G$-shaped graph of graphs $\Gg$ such that $ \Gg(b) \neq \oslash$ for all $(\C,b) \in \elG$ describes a choice of $k$ wiring diagrams $ \overline f^i \in \WD (\bm l _1, \dots, \bm l_{k_i};\mm_i)$ for $( l _1, \dots,  l_{k_i}) \in \listm( \N)$ and $1 \leq i \leq k$.
	
	Non-degeneracy of $\Gg$ corresponds to the condition, discussed in \cite[Remark~3.12]{RayCA1}, that, for each $ \overline f^i$ in the composition, the underlying morphism $f^i $ of Brauer diagrams is a morphism of the downward Brauer category $ f^i  \in \BDd (l_1 + \dots + l_{k_i}, m_i)$. 
	If $\Gg$ is nondegenerate, then its colimit $\Gg(\G)$ is described by the composition $\gamma \left( \overline g , (\overline f_i)_i \right)$ in $\WD$. (Colimits of degenerate graphs of graphs are discussed in \cref{ss. pointed}.) 
	
\end{rmk}

\begin{ex}\label{ex. id Gg}
	By \cref{lem: essential cover}, every graph $\G$ is the colimit of the (nondegenerate) \textit{identity $\G$-shaped graph of graphs} $\Gid$ given by the forgetful functor $\elG \to \Gret$, $(\C, b) \mapsto \C$.  
	
	The identity $\G$-shaped graph of graphs corresponds, as in \cref{rmk. down Brauer graph of graphs} to a composition of wiring diagrams of the form $\gamma \left(\overline g , (\overline{ id}_{m_i})_{i = 1}^k \right)$.

\end{ex}

\begin{defn}\label{defn: graphs of graphs cat}
	The category $\GretG$ of \emph{nondegenerate $\G$-shaped graphs of graphs} is the full subcategory of nondegenerate $\G$-shaped graphs of graphs $\Gg$ in the category of functors $\elG \to \Gret$.
\end{defn}

\begin{defn}
	\label{def. X graph}
	
	Let $X $ be a finite set. A generalised \emph{$X$-graph} (henceforth $X$-graph) is a $\CX$-shaped graph of graphs. An $X$-graph is \emph{admissible} if it is nondegenerate.  
	
	The maximal subgroupoid $\core{\GretG[\CX]}\subset \GretG[\CX]$ is called the \emph{groupoid of (admissible) $X$-graphs} and \emph{$X$-isomorphisms} and is denoted by $X\Grisok$. 
	
	More generally, for any graph $\G$ and any $(\C,b) \in \elG$, $[b]\Grisok$ denotes the groupoid of nondegenerate $\C$-shaped graphs of graphs and isomorphisms.
\end{defn}

Hence, a generalised $X$-graph is given by a pair $\X = (\G, \rho)$, where $\G $ is a graph and $\rho\colon E_0\xrightarrow{\cong} X$. An $X$-graph is \textit{admissible} if it has no stick components, and an $X$-isomorphism in $ X\Grisok (\X, \X')$ is an isomorphism $ g \in \Gr(\G, \G')$ that preserves the $X$-labelling: $\rho' \circ g_{E_0} = \rho\colon E_0 \to X.$

\begin{ex}\label{ex. Xv graph of graphs}
	For any graph $\G$, a (nondegenerate) $\G$-shaped graph of graphs $\Gg \colon \elG \to \Gret$ is precisely a choice of (admissible) $X_b$-graph $\X_b$ for each $(\CX[X_b], b) \in \elG$. 
\end{ex}

Let $S$ be a graphical species in $\E$, and $\G$ a graph. If $\Gg \colon \elG \to \Gret$ is a nondegenerate $\G$-shaped graph of graphs with colimit $\Gg(\G)$ in $\Gret$, then the composition 
\[ \elG^{\mathrm{op}} \xrightarrow{\Gg^{\mathrm{op}}} \Gret^{\mathrm{op}}\xrightarrow {S} \E\]
defines a functor $ \elG^{\mathrm{op}} \to \E$ with limit $S(\Gg(\G))$.

\begin{ex} Recall \cref{ex S structured graphs}. If $S$ is a graphical species in $\Set$ and $\G$ is any graph, then a nondegenerate $\G$-shaped graph of graphs $\Gg$ and choice of $S$-structure $\alpha \in S(\Gg(\G))$ determines a functor $\elG \to \Gret(S)$ -- called a \textit{$\G$-shaped graph of $S$-structured graphs} -- by $b \mapsto S(b_{\Gg})(\alpha)$ (where, for all $(\C, b) \in \elG$, $b_{\Gg}$ is the universal map described in \cref{colimit exists}). 
\end{ex}

\subsection{Nonunital circuit algebras}\label{ssec nonunital CO}

It is now straightforward to construct a monad $\TTk = (\Tk, \muk, \etak)$ on $\GSE$, whose algebras are nonunital circuit algebras in $\E$.

The underlying endofunctor $\Tk  = \Tk_{\E}\colon \GSE \to \GSE$ is defined, for all $S \in \GSE$, by 
\begin{equation}\label{free}
\begin{array}{llll}
\Tk S_\S &= & S_\S, &\\
\Tk S_X &= & \mathrm{colim}_{\X\in X{\Grisok}}  S(\X) & \text{ for all finite sets } X.
\end{array}
	\end{equation}

This acts on isomorphisms in $\fisinv$, $\Tk S$ acts by relabelling graph ports. Let $g \colon \X' \to \X'$ be a port-preserving isomorphism in $X\Grisok$, and for each $x \in X$, let $ch_x^{\X} \in \Gr(\shortmid, \G)$ be the map $ch_{\rho^{-1}(x)}$ defined by $ 1 \mapsto \rho^{-1}(x) \in E_0 (\G)$. 
 Then, \[S(g \circ ch^\X_x) = S(ch^\X_x)S(g) = S(ch^{\X'}_x). \]  Hence, the projections $\Tk S(ch_x)$ 
 induced by
$S({ch^{\X}_x}) \colon S(\X) \to S(\shortmid) = S_\S$, 
are well-defined. So $\Tk S$ extends to all morphisms in $\fisinv$ and therefore defines a graphical species in $\E$. The assignment $S \mapsto \Tk S$ is clearly natural on $S \in \GSE$, and so $\Tk$ defines an endofunctor on $\GSE$.

As in \cref{colimit exists}, for an $X$-graph $\X$ and nondegenerate $\X$-shaped graph of graphs $\Gg$ in $\Gret^{(\X)}$, let $\Gg(\X)$ denote the colimit of $\Gg$ in $\Gret^{(\X)}$. 
\begin{lem}
	\label{lem. Tk mult}
	For any graphical species $S$ in $\E$ and any finite set $X$, there is a canonical isomorphism
	\[({\Tk})^2 S_X \xrightarrow {\cong} \mathrm{colim}_{ \X \in X \Grisok} \left(\mathrm{lim}_{\Gg \in \Gret^{(\X)} }S (\Gg(\X))\right).\] 
\end{lem}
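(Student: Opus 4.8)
The plan is to compute $(\Tk)^2 S_X$ by unwinding the definition \eqref{free} of $\Tk$ twice and then reorganising the resulting iterated $(\mathrm{co})$limit using the combinatorics of graphs of graphs. First I would write $(\Tk)^2 S_X = \Tk(\Tk S)_X = \mathrm{colim}_{\X \in X\Grisok}(\Tk S)(\X)$ and evaluate the graphical species $\Tk S$ on each $X$-graph $\X$ (with underlying graph $\G$) by means of \eqref{eq. S graph} and \cref{lem: essential cover}. This presents $(\Tk S)(\X)$ as a limit over the element category $\elG$ that glues the corolla values $(\Tk S)_{X_b} = \mathrm{colim}_{\Y_b \in [b]\Grisok} S(\Y_b)$ along the shared edge values $(\Tk S)_{\S} = S_{\S}$. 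The task is thus reduced to reindexing a gluing-limit of vertexwise colimits as the inner $\GretG[\X]$-indexed expression $S(\coGg[\X])$ appearing on the right-hand side.

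The key combinatorial input is \cref{ex. Xv graph of graphs}: a nondegenerate $\X$-shaped graph of graphs $\Gg \in \GretG[\X]$ is exactly a compatible choice of admissible vertex-graph $\Y_b$ at every corolla $(\CX[X_b], b) \in \elG$, so the family of vertex-colimits above is indexed precisely by $\GretG[\X]$. For a fixed such $\Gg$ I would then invoke \cref{colimit exists}, which supplies the colimit $\coGg[\X]$ together with the port identity $E_0(\G) \xrightarrow{=} E_0(\coGg[\X])$ and the edge decomposition $E(\coGg[\X]) \cong E(\G) \amalg \coprod_{v} \EI(\Gg(\esv))$. These identify the element category of $\coGg[\X]$ with the gluing of the element categories of the $\Y_b$ along the edges of $\G$; feeding this into \eqref{eq. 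S graph} identifies $S(\coGg[\X])$ with the limit over $\elG$ of the values $S(\Gg(b))$, i.e.\ with the gluing of the vertexwise $S$-structures along the edges of $\G$.

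The heart of the argument, and the step I expect to be the main obstacle, is the interchange of the gluing-limit over $\elG$ with the vertexwise colimits over the groupoids $[b]\Grisok$: only after pulling these colimits outside does the expression reorganise into the $\X$-indexed colimit of the inner $\GretG[\X]$-indexed term $S(\coGg[\X])$. This is not a formal consequence of commuting limits with colimits (which fails in general), but depends on the specific structure at hand: the identifications imposed by the $\elG$-limit occur solely along edge elements, whose $\Tk S$-value is the unchanged object $S_{\S}$, so that a compatible family for the $\elG$-limit amounts to a single graph of graphs $\Gg \in \GretG[\X]$ together with an $S$-structure on the substituted graph $\coGg[\X]$. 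I would make this precise first in the case $\E = \Set$, using the sheaf description $S(\G) \cong \GS(\yet \G, S)$ from \cref{ex topology} and the category of $S$-structured graphs from \cref{ex S structured graphs} to exhibit the required bijection explicitly, and then transfer it to an arbitrary internal category $\E$ by noting that both sides are assembled by the same finite limits from the representable values $S(\C)$, with the comparison morphism furnished by the universal property of the colimits $\coGg[\X]$. Finally I would verify naturality in $\X$, so that the per-$\X$ identifications assemble under $\mathrm{colim}_{\X \in X\Grisok}$ into the asserted canonical isomorphism.
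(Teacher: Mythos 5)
You have the paper's skeleton right: unwind the definition of $\Tk$ twice, use \cref{ex. Xv graph of graphs} to identify a family of admissible vertex-graphs $(\Y_b)_b$ with a nondegenerate $\X$-shaped graph of graphs, and use \cref{colimit exists} together with (\ref{eq. S graph}) to identify $S(\Gg(\X))$ with $\mathrm{lim}_{(\C,b) \in \elG[\X]}S(\Gg(b))$. Your observation that the gluing over $\elG[\X]$ interacts with the vertexwise colimits only through the edge values, where $\Tk S_\S = S_\S$ is unchanged, is also the right conceptual reason a reorganisation is possible at all. But there is a decisive mismatch you never confront: the interchange you describe — pulling the colimits over the groupoids $[b]\Grisok$ outside the gluing limit — produces $\mathrm{colim}_{\Gg \in \GretG[\X]} S(\Gg(\X))$, an inner \emph{colimit}. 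Indeed, your own formulation ("a compatible family amounts to a single graph of graphs $\Gg$ together with an $S$-structure on $\Gg(\X)$", taken up to choice of representatives) is precisely a description of elements of that colimit over the groupoid of graphs of graphs. The statement to be proved has an inner \emph{limit} $\mathrm{lim}_{\Gg \in \GretG[\X]} S(\Gg(\X))$, and the two are not interchangeable for free. Bridging them is exactly the one substantive step of the paper's proof: the paper first replaces each $\mathrm{colim}_{\Y \in [b]\Grisok}S(\Y)$ by $\mathrm{lim}_{\Y \in [b]\Grisok}S(\Y)$, on the grounds that $[b]\Grisok$ is a groupoid, after which the interchange with the $\elG[\X]$-limit is a purely formal commutation of limits with limits, combined with the decomposition of $\GretG[\X]$ coming from \cref{defn: graphs of graphs cat}. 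Your proposal is silent on which side of this lim/colim divide your reorganisation lands on and contains no argument connecting the two, so as written it establishes a different formula from the one asserted.

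The second gap is the reduction to $\E = \Set$ followed by "transfer". Both sides of the asserted isomorphism contain colimits over the (highly disconnected) groupoids $X\Grisok$ and $[b]\Grisok$ as well as limits over $\elG[\X]$, so it is simply not the case that both sides are "assembled by the same finite limits from the representable values $S(\C)$"; and an interchange of a limit with a colimit verified in $\Set$ by choosing representatives does not automatically hold in an arbitrary category $\E$ with sufficient (co)limits — such interchanges are ambient-category-dependent, and your $\Set$ argument leans on the concrete description of colimits over groupoids as sets of orbits, which has no counterpart in general $\E$. The paper's proof, by contrast, never leaves $\E$: it is a manipulation of universal properties (unwinding, the groupoid step, Fubini for limits), hence uniform in $\E$ by construction. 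To repair your approach you would need either to recast your representative-choosing argument as a genuinely $\E$-internal universal-property argument (at which point you must still say how the colimit you obtain relates to the limit in the statement), or to follow the paper in converting the inner colimits to limits before attempting any interchange.
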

\begin{proof}
	
By definition, for all finite sets $X$,
 \[\begin{array}{lll}
({\Tk})^2 S_X&=&\mathrm{colim}_{ \X \in X \Grisok} \Tk S(\X)\\
& = &\mathrm{colim}_{ \X \in X \Grisok} \mathrm{lim}_{ (\C, b) \in \elG[\X]} \Tk S(\C)\\
& = &\mathrm{colim}_{ \X \in X \Grisok} \mathrm{lim}_{ (\C, b) \in \elG[\X]} \mathrm{colim}_{ Y \in [b]\Grisok} S(\mathcal Y). \end{array}\]

Since $ [b]\Grisok$ is a groupoid by definition,  $\mathrm{colim}_{ \mathcal Y \in [b]\Grisok} S(\mathcal Y) =\mathrm{lim}_{ \mathcal Y \in [b]\Grisok} S(\mathcal Y)$ for all $(\C, b) \in \elG[\X]$. 
By \cref{defn: graphs of graphs cat}, \[ \mathrm{lim}_{\Gg \in \Gret^{(\X)} }S (\Gg(\X)) \cong \mathrm{lim}_{ (\C, b) \in \elG[\X]}\left( \mathrm{lim}_{ \mathcal Y \in [b]\Grisok} S(\mathcal Y)\right)\] canonically, and therefore, by the universal property of (co)limits, there is a unique isomorphism
 \begin{equation}\label{eq. T2 iso}\begin{array}{lll}
(\Tk)^2 S_X&
\cong & \mathrm{colim}_{ \X \in X \Grisok} \left(\mathrm{lim}_{ \Gg \in \Gret^{(\X)} } S(\Gg(\X))\right). 
\end{array}\end{equation}\end{proof}

By \cref{colimit exists}, each $\Gg(\X)$ inherits the structure of an $X$-graph from $\X$. Hence, by an application of \cref{lem. Tk mult}, the assignment $(\X, \Gg)\to \Gg(\X)$ describes a $\E$-morphism $\mu^{\TTk} S_X \in E \left(({\Tk})^2 S_X,\Tk S_X\right)$ in $\E$ given by 
\begin{equation}\label{eq. T mult} \mu^{\TTk} S_X  \colon (\Tk)^2 S_X =  \mathrm{colim}_{ \X \in X \Grisok}\left( \mathrm{lim}_{ \Gg \in \Gret^{(\X)} } S(\Gg(\X))\right)\to \mathrm{colim}_{ \X' \in X \Grisok} S(\X') = \Tk S_X.\end{equation}

This is natural in $X$ and in $S$ and hence defines a natural transformation $\mu^{\TTk}\colon (\Tk)^2 \Rightarrow \Tk$.

\label{eta defn} 
 The inclusion $\fisinv \hookrightarrow \Gret$ induces a natural transformation $ \etak  \colon id_{\GS} \Rightarrow \Tk$, $ S_X \xrightarrow = S(\CX) \to \Tk S_X$ that provides a unit for the endofunctor $\Tk$ on $\GSE$. It is straightforward to verify that the triple $\TTk = (\Tk, \mu^{\TTk}, \eta^{\TTk})$ defines a monad. 

\begin{ex}\label{ex. free circuit algebra} For $S \in \GSE$, $\Tk S$ admits a canonical nonunital circuit algebra structure:

		Disjoint union of graphs induces an external product $\boxtimes$ 
		on $\Tk S$. For, let $X,Y$ be finite sets and let $\X$ be an $X$-graph and $\Y$ a $Y$-graph. 
Then, the following diagram -- in which the unlabelled maps are the defining morphisms for $\Tk$ -- commutes in $\E$ and describes an external product $\boxtimes$ on $\Tk S$:
	\[\xymatrix{ S(\X \amalg \Y ) \ar[rrd] \ar@{=}[r]& S(\X) \times S(\Y) \ar[r]& \Tk S_X \times \Tk S_Y \ar@{=}[r] \ar@{-->}[d]_-{\boxtimes} & \Tk S(\CX \amalg \C_Y) \ar[r] & (\Tk)^2 S_{X \amalg Y} \ar[dll]^-{\mu^{\TTk}} \\
		&& \Tk S_{X \amalg Y}.&&}\]
		Since $\oslash \in \nul \Grisok$, there is a morphism $ *  = S(\oslash) \to \Tk S_\nul$ from the terminal object $* $ in $\E$. This provides an external unit for $\boxtimes$.

	Similarly, the contraction $\zeta$ on $\Tk S$ is induced by commuting diagrams of the form 
\[\xymatrix{ S(\X^{x \ddagger y})\ar[rr] \ar[rrd] && \Tk S(\C^{x \ddagger y}_X) \ar[rr] \ar@{-->}[d]_-{\zeta_X^{x \ddagger y}} && (\Tk)^2 S_{X\setminus \{x,y\}} \ar[dll]^-{\mu^{\TTk}} \\
	&& \Tk S_{X\setminus \{x,y\}}&&}\] where $X$ is a finite set with distinct elements $x, y \in X$, and $\X$ an $X$-graph.
	
It follows immediately from the definitions (and the diagrams in the proof of \cref{prop nonunital CA}below) that $\boxtimes $ and $\zeta$ satisfy (C1)-(C3) of \cref{def. co}.

\end{ex}

\begin{prop}\label{prop nonunital CA}
	There is a canonical isomorphism of categories $\GSE^\TTk  \cong \nuCOE$.
\end{prop}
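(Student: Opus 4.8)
The plan is to exhibit functors $\GSE^{\TTk} \to \nuCOE$ and $\nuCOE \to \GSE^{\TTk}$, to check each is palette-preserving and functorial on morphisms, and to show they are mutually inverse. In the direction $\GSE^{\TTk} \to \nuCOE$, an algebra $(S,\theta)$ with structure map $\theta \colon \Tk S \to S$ is equipped with an external product and contraction by restricting $\theta$ along the generating graphs. Using the identifications $S_X \times S_Y = S(\mathcal D_{X,Y})$ from \cref{ex. product union} and $S_X^{x \ddagger y} = S(\C_X^{x \ddagger y})$ from \cref{ex. N-structures}, I would define $\boxtimes_{X,Y}$ as the composite of the colimit coprojection $S(\mathcal D_{X,Y}) \to \Tk S_{X \amalg Y}$ with $\theta_{X \amalg Y}$, and $\zeta_X^{x \ddagger y}$ as the composite of $S(\C_X^{x \ddagger y}) \to \Tk S_{X \setminus \{x,y\}}$ with $\theta_{X \setminus \{x,y\}}$; the external unit is the coprojection of the empty graph $\oslash \in \nul\Grisok$. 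The coherence axioms (C1)--(C3) of \cref{def. co} then follow from the associativity axiom $\theta \circ \muk = \theta \circ \Tk\theta$, combined with the description of $\muk$ as graph substitution (\cref{lem. Tk mult}, \cref{colimit exists}): associativity of $\boxtimes$ reflects associativity of disjoint union of corollas as a graph of graphs, while (C2) and (C3) reflect the independence of the colimit $\Gg(\G)$ of the order in which inner edges are glued. These are exactly the comparison diagrams exhibited in \cref{ex. free circuit algebra}.

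The converse direction $\nuCOE \to \GSE^{\TTk}$ is the technical core. Given $(S,\boxtimes,\zeta)$, I would attach to each admissible $X$-graph $\X$ an evaluation morphism $\mathrm{ev}_\X \colon S(\X) \to S_X$, obtained by applying $\boxtimes$ across the vertex-corollas $\Cv$ of $\X$ and then $\zeta$ once for each inner edge orbit, and assemble these over the groupoid $X\Grisok$ to obtain $\theta_X \colon \Tk S_X \to S_X$. The first task is to show $\mathrm{ev}_\X$ is independent of the chosen presentation of $\X$: every admissible $X$-graph arises from the disjoint union of its vertex-corollas by iterated formation of contractions $\C{}^{e_1 \ddagger e_2}$ along inner edge orbits (as in \cref{def Lk} and \cref{ex: M graph}), and distinct orders of these operations are intertwined precisely by (C1)--(C3). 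Equivariance of $\mathrm{ev}_\X$ under $X$-isomorphisms follows from the equivariance clauses in \cref{def. external} and \cref{defn: contraction}, so $\mathrm{ev}$ descends to the colimit defining $\Tk S_X$.

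It then remains to verify the two algebra axioms for $\theta$. The unit axiom $\theta \circ \etak = \mathrm{id}$ holds because $\etak$ is the coprojection at the trivial $X$-graph $\CX$, on which $\mathrm{ev}_{\CX}$ is the identity on $S_X$. The multiplication axiom $\theta \circ \muk = \theta \circ \Tk\theta$ follows from \cref{colimit exists}: since substitution preserves ports, $E_0(\G) \xrightarrow{=} E_0(\Gg(\G))$, and decomposes the edge set as $E(\Gg(\G)) \cong E(\G) \amalg \coprod_v \EI(\Gg(\esv))$, evaluating a graph of graphs one layer at a time uses the same external products and contractions as evaluating its colimit in a single pass, and the two iterated evaluations agree. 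Finally, the two functors are mutually inverse: composing $\nuCOE \to \GSE^{\TTk} \to \nuCOE$ recovers $(\boxtimes,\zeta)$ because $\mathrm{ev}$ restricted to $\mathcal D_{X,Y}$ and $\C_X^{x\ddagger y}$ is $\boxtimes$ and $\zeta$ by construction, and composing $\GSE^{\TTk} \to \nuCOE \to \GSE^{\TTk}$ recovers $\theta$ because, by the EM axioms, $\theta$ is determined on $\Tk S$ by its restriction to the generating graphs.

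I expect the main obstacle to be the well-definedness of $\mathrm{ev}_\X$ for a general admissible graph, together with the multiplication axiom. Both reduce to the single principle that the coherence axioms (C1)--(C3) are exactly strong enough to render evaluation invariant under every way an admissible $X$-graph can be presented as a colimit of corollas glued by disjoint unions and contractions; this invariance in turn rests on the combinatorial control of colimits supplied by \cref{colimit exists} and \cref{lem. Tk mult}. Everything else is a routine diagram chase.
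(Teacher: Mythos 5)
Your proposal follows essentially the same route as the paper's proof: the forward direction restricts the structure map along $\mathcal D_{X,Y}$, $\C_X^{x\ddagger y}$ and $\oslash$ and verifies (C1)--(C3) by comparing colimits of graphs of graphs, while the converse builds exactly the paper's evaluation map $\hat h_\X = \zeta^{(k-1)}\circ \boxtimes^V$ (your $\mathrm{ev}_\X$) and checks the monad algebra axioms via the port/edge decomposition of \cref{colimit exists}. The only cosmetic difference is that you cite \cref{ex. free circuit algebra} for the coherence diagrams that the paper instead displays inside the proof itself, and you spell out the mutual-inverse check slightly more explicitly.
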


\begin{proof} 
Let $(A,h)$ be an algebra for $\TTk$. 

For finite sets $X$ and $Y$, the morphism $\boxtimes_{X,Y} \colon A_X \times A_Y \to A_{X \amalg Y}$ in $\E$ is obtained by composing the structure morphism $ A_X \times A_Y  = A(\C_X \amalg \C_Y) \to {\Tk} A_{X \amalg Y}$ with ${h} \colon {\Tk} A_{X \amalg Y} \to A_{X \amalg Y}$. Moreover, the composition
$ *  = A(\oslash) \to \Tk A_\nul \xrightarrow {h} A_\nul$ provides a unit for the external product $\boxtimes $ so defined.

If a finite set $X$ has distinct elements $x$ and $y$, then we may define a morphism $\zeta_{X}^{x \ddagger y} \colon A_X \to A_{X\setminus \{x,y\}}$  in $\E$ by composing 
$ A_X^{x\ddagger y}   = A(\C_X^{x\ddagger y} )\to {\Tk} A_{X\setminus \{x,y\}}$ with $h$. 

The proof that $ (A, \boxtimes, \zeta)$ satisfies the axioms for a circuit algebra proceeds by representing each side of the equations in (C1)-(C3) of \cref{def. co} in terms of graph of graphs, and then showing that these have the same colimit in $\Gret$. 

For (C1), the external product $\boxtimes_{X,Y}$ is represented by the graph $\C_X \amalg \C_Y$, and so $\boxtimes_{X \amalg Y, Z} \circ  (\boxtimes_{X , Y}\  \times \  id_{A _Z})$ is represented by the $ \C_{X \amalg Y} \amalg \C_Z$-shaped graphs of graphs $\left (\iota_{X \amalg Y}\mapsto  \CX \amalg \CX[Y], \ \iota_Z \mapsto \CX[Z]\right )$. This has  
 colimit $\CX \amalg \CX[Y] \amalg \CX[Z]$ which is also the colimit of the $\CX \amalg \C_{Y \amalg Z}$-shaped graph of graphs $\left(\iota_X \mapsto \CX, \  \iota_{Y \amalg Z} \mapsto \CX[Y] \amalg \CX[Z]\right)$. 

More precisely, it follows from the monad algebra axioms that, for all finite sets $X, Y$ and $Z$, the following diagram,  in which the maps $ A_{X \amalg Y} \times A_Z \rightarrow  A_{X \amalg Y \amalg Z} \leftarrow A_X \times A_{ Y \amalg Z}$ are just $\boxtimes_{ X \amalg Y, Z}$ and  $\boxtimes_{X,Y \amalg Z}$, commutes:
\[\small{\xymatrix@C=.43cm@R = .7cm{
&&& A_{\scriptscriptstyle X} \times A_{\scriptscriptstyle Y }\times A_{\scriptscriptstyle Z}\ar[dl]_-{=} \ar[d]^{=}\ar[dr]^-{=}&&\\
&A_{\scriptscriptstyle {X \amalg Y}} \times A_{\scriptscriptstyle Z } \ar@{=}[d]&\ar@{-->}[l]_-{(\boxtimes,id)}A(\C_X \amalg \C_Y) \times A(\C_Z)\ar[r]^-{=}\ar[d]& A(\C_X \amalg \C_Y \amalg \C_Z)\ar[dd]& A(\C_X) \times A(\C_Y \amalg \C_Z)\ar[d] \ar[l]_-{=}\ar@{-->}[r]^-{(id, \boxtimes)}& A_{\scriptscriptstyle  X }\times A_{\scriptscriptstyle { Y \amalg Z}} \ar@{=}[d]\\
&A(\C_{\scriptstyle {X \amalg Y}} \amalg \C_Z)  \ar[d]&{\Tk} A (\C_{\scriptstyle {X \amalg Y}} \amalg \C_Z) \ar[l]^-{h}\ar[d]&& {\Tk} A(\CX \amalg \C_{\scriptstyle { Y \amalg Z}}) \ar[d]\ar[r]_-{h}\ar[d]& A (\C_X \amalg \C_{\scriptstyle {Y\amalg Z}}) \ar[d]\\
&{\Tk} A_{\scriptscriptstyle{ X \amalg Y \amalg Z}}\ar[rrdd]_-{h}& \ar[l]^{{\Tk} h}{\Tk} ^2A_{ \scriptscriptstyle{ X \amalg Y \amalg Z}} \ar[r]_-{ \mu^{\TTk} k A}& {\Tk} A_{\scriptscriptstyle {X \amalg Y \amalg Z}}\ar[dd]^-{h}& \ar[l]^-{ \mu^{\TTk} k A}{\Tk} ^2A_{ \scriptscriptstyle {X \amalg Y \amalg Z}}\ar[r]_{{\Tk} h} &  {\Tk} A_{\scriptscriptstyle {X \amalg Y \amalg Z}}\ar[ddll]^-{h}\\
&&&&&\\
&&& A_{X \amalg Y \amalg Z}. &&}}\]
Hence, $\boxtimes $ satisfies (C1) of \cref{def. co}.

For (C2), if $w,x,y,z$ are mutually distinct elements of $X$, then, by the monad algebra axioms, the following diagram commutes in $\E$, and hence $\zeta$ satisfies (C2):
	\[\small{\xymatrix@C=.43cm@R = .65cm{
		&& (A_{\scriptscriptstyle X})^{\scriptscriptstyle {w \ddagger x, y\ddagger z} } \ar[dl]_-{=} \ar[d]^{=}\ar[dr]^-{=}&&\\
	\left	(	A_{\scriptscriptstyle {X \setminus \{w,x\}} }\right )^{\scriptscriptstyle {y \ddagger z}} \ar@{=}[d]&\ar@{-->}[l]_-{\zeta^{\scriptscriptstyle {w \ddagger x}}}\left(A(\C_{\scriptscriptstyle X}^{ \scriptscriptstyle {w \ddagger x}})\right)^{\scriptscriptstyle {y \ddagger z}} \ar[r]^-{=}\ar[d]& A\left ((\C_X \amalg \C_Y) ^{\scriptscriptstyle {w \ddagger x, y \ddagger z}}\right )\ar[dd]& \left(A(\C_{\scriptscriptstyle X}^{ \scriptscriptstyle {y \ddagger z}})\right)^{\scriptscriptstyle {x \ddagger y}}\ar[d] \ar[l]_-{=}\ar@{-->}[r]^-{\zeta^{y \ddagger z}}& \left	(	A_{\scriptscriptstyle {X \setminus \{y,z\}}}\right )^{\scriptscriptstyle{ w \ddagger x}}\ar@{=}[d]\\
		A(\C_{\scriptscriptstyle {X \setminus \{w,x\}}}^{\scriptscriptstyle{ y \ddagger z}} ) \ar[d]&{\Tk} A (\C_{\scriptscriptstyle {X\setminus \{w,x\}}}^{\scriptscriptstyle {y \ddagger z}}) \ar[l]_-{h}\ar[d]&&{\Tk} A (\C_{\scriptscriptstyle {X\setminus \{y,z\}}}^{\scriptscriptstyle {w \ddagger x}}) \ar[d]\ar[r]^-{h}\ar[d]& 	A(\C_{\scriptscriptstyle {X \setminus \{y,z\}}}^{\scriptscriptstyle {w \ddagger x}})  \ar[d]\\
		{\Tk} A_{\scriptscriptstyle {X\setminus \{w,x,y,z\}}}\ar[rrdd]_-{h}& \ar[l]_{{\Tk} h}{\Tk} ^2A_{\scriptscriptstyle {X\setminus \{w,x,y,z\}}} \ar[r]^-{ \mu^{\TTk} k A}& {\Tk} A_{\scriptscriptstyle {X\setminus \{w,x,y,z\}}}\ar[dd]^-{h}& \ar[l]_-{ \mu^{\TTk} k A}{\Tk} ^2A_{\scriptscriptstyle {X\setminus \{w,x,y,z\}}}\ar[r]^{{\Tk} h} &  {\Tk} A_{\scriptscriptstyle {X\setminus \{w,x,y,z\}}}\ar[ddll]^-{h}\\
	&&&&\\
		&& A_{\scriptscriptstyle {X\setminus \{w,x,y,z\}}}.&&}}\]

Finally, if $X$ and $Y$ are finite sets and $x_1$ and $x_2$ are distinct elements of $X$, then the diagram
	\[\scriptsize{\xymatrix@C=.4cm@R = .7cm{
			&& (A_{\scriptscriptstyle {X} }\times A_{\scriptscriptstyle {Y}})^{\scriptscriptstyle{ x_1 \ddagger x_2} }\ar[dl]_-{=}\ar[dr]^-{=}&&\\
			\left	(	A_{\scriptscriptstyle{ X \amalg Y}} \right )^{\scriptscriptstyle {x_1 \ddagger x_2}} \ar@{=}[d]&\ar@{-->}[l]_-{\boxtimes}\left(A(\C_X \amalg \C_Y)\right)^{\scriptscriptstyle {x_1\ddagger x_2}} \ar[ddr]\ar@{=}[rr]\ar[d]&& A(\C_X^{\scriptscriptstyle { x_1 \ddagger x_2}})\times A_Y \ar[ddl]\ar[d] \ar@{-->}[r]^-{\zeta^{x_1 \ddagger x_2}}& 	A_{\scriptscriptstyle {X \setminus \{x_1,x_2\}}}\times A_Y\ar@{=}[d]\\
			A(\C_{\scriptstyle {X \amalg Y}}^{\scriptscriptstyle {x_1 \ddagger x_2}} ) \ar[d]
			&{\Tk} 	A(\C_{\scriptstyle{ X \amalg Y}}^{\scriptscriptstyle {x_1 \ddagger x_2}} ) \ar[l]_-{h}\ar[d]&&{\Tk} A (\C_{\scriptstyle {X\setminus \{x_1, x_2\}}}\amalg \C_Y) \ar[d]\ar[r]^-{h}\ar[d]& 	A (\C_{\scriptstyle {X\setminus \{x_1, x_2\}}}\amalg \C_Y)   \ar[d] \\
						{\Tk} A_{\scriptscriptstyle {(X \amalg Y)\setminus \{x_1, x_2\}}}\ar[rrdd]_-{h}& \ar[l]_-{{\Tk} h} {\Tk} ^2A_{\scriptscriptstyle {(X \amalg Y)\setminus \{x_1, x_2\}}}  \ar[r]^-{ \mu^{\TTk} k A}& {\Tk} A_{\scriptscriptstyle {(X \amalg Y)\setminus \{x_1, x_2\}}}  \ar[dd]^{h}& {\Tk} ^2A_{\scriptscriptstyle{ (X \amalg Y)\setminus \{x_1, x_2\}}}  \ar[r]^-{{\Tk} h} \ar[l]_-{ \mu^{\TTk} k A} &  {\Tk} A_{\scriptscriptstyle {(X \amalg Y)\setminus \{x_1, x_2\}}} \ar[ddll]^-{h} \\
			&&&&\\
			&& A_{\scriptscriptstyle{ (X \amalg Y)\setminus \{x_1, x_2\}}} &&}}\] commutes.
		Therefore, (C3) is satisfied and $(A, \boxtimes, \zeta)$ is a nonunital circuit algebra. 
		
		Conversely, let $(S, \boxtimes, \zeta)$ be a nonunital circuit algebra. For finite sets $X_1, \dots, X_n$, let $\boxtimes_{ X_1, \dots, X_n}: \prod_{i = 1}^n S_{X_i} \to S_{ \coprod_i X_i}$ be the obvious morphism induced by $\boxtimes$. This is well defined since $ (S, \boxtimes, \zeta)$ satisfies (C1)-(C3). Recall from \cref{ssec CO} that, if $x_1, y_1, \dots, x_k, y_k$ are mutually distinct elements of $\coprod_i X_i$, then $(\prod_i S_{X_i})^{x_1 \ddagger y_1, \dots, x_k \ddagger y_k }$ is the limit of the diagram of parallel morphisms 
		\[\boxtimes_{ X_1, \dots, X_n} \circ S(ch_{x_i}), \boxtimes_{ X_1, \dots, X_n} \circ S(ch_{y_i}\circ \tau) \colon \prod_{i = 1}^n S_{X_i} \to S_{ \coprod_i X_i} \to S_\S, \] and that this is independent of the ordering of the pairs $(x_i, y_i)$.

	Let $X$ be a finite set and let $\X$ be an admissible 
		$X$-graph. For each $v \in V$, let $(\CX[Z_v], b_v) \in \elG[\X]$ be a {neighbourhood} of $v$, and let $Z_v \cong |v|$ be such that $\coprod_v Z_v = X \amalg \{y_1, z_1,\dots, y_k, z_k\}$.
		
		Since $\X$ 
		has no stick components, 
		\begin{equation}\label{eq. SX resolution}
		S(\X) = \left(\prod_{v \in V(\X)} S_{Z_v}\right )^{y_1 \ddagger z_1, \dots, y_k \ddagger z_k} .
		\end{equation} 
	In particular, the morphism $\prod_v S_{Z_v} \to S_{(\coprod_v Z_v)}$ induces a morphism \[\boxtimes^V \colon S(\X) \to S_{ \coprod_{v\in V} Z_v}^{y_1 \ddagger z_1, \dots, y_k \ddagger z_k}. \]
	
	Let $\hat h_\X \colon S(\X) \to S(\C_X)  = S_X$ be the map (illustrated in \cref{fig. COnu structure map}) defined, using (\ref{eq. SX resolution}), by the composition 
	\begin{equation}
		\label{eq. prod eq} \xymatrix{ S(\X) \ar[rr]^-{\boxtimes^{V}} && 
			S_{ \coprod_{v\in V} Z_v}^{y_1 \ddagger z_1, \dots, y_k \ddagger z_k}\ar[rr]^- {\zeta^{(k-1)}}&&S_X}
	\end{equation}
	where $\zeta^{(k-1)}$ is obtained by the obvious iterated contractions. Since $(S, \boxtimes, \zeta)$ satisfies (C1) and (C2), these are independent of choices of ordering.

\begin{figure}[htb!]
	 \begin{tikzpicture} [scale =  0.9]
	
	\node at(0,3){
		\begin{tikzpicture}[scale = 0.3]\begin{pgfonlayer}{above}
		\node [dot, red] (0) at (0, 0) {};
		\node  [dot, red](1) at (2, 0) {};
		\node [dot, red] (2) at (-2, 1) {};
		\node [dot, red] (3) at (-1, -1) {};
		\node  (4) at (-4.5, 2.25) {};
		\node  (5) at (-3, -3) {};
		\node  (6) at (0, -4) {};
		\node  (7) at (4, 2) {};
		\node  (8) at (5, -1) {};
		\node  (9) at (5, -3) {};
		\node  (10) at (-1, 2) {};
		
		\end{pgfonlayer}
		\begin{pgfonlayer}{background}
		\draw [bend left=45, looseness=1.75] (0.center) to (1.center);
		\draw [bend right=60, looseness=1.25] (2.center) to (3.center);
		\draw [in=165, out=15, looseness=1.25] (4.center) to (2.center);
		\draw [in=60, out=-105, looseness=1.25] (3.center) to (5.center);
		\draw [bend left=15] (3.center) to (6.center);
		\draw [bend left=45] (1.center) to (7.center);
		\draw [in=105, out=-30] (1.center) to (8.center);
		\draw [in=-180, out=-45, looseness=1.25] (1.center) to (9.center);
		\draw [in=135, out=90, looseness=1.50] (2.center) to (10.center);
		\draw [in=315, out=-15] (2.center) to (10.center);
		\draw [bend right=60, looseness=1.50] (0.center) to (1.center);
		\end{pgfonlayer}\end{tikzpicture}};
	
	\node at (-6.5,0){\begin{tikzpicture}[scale = 0.2]\begin{pgfonlayer}{above}
		\node [dot, red] (0) at (-0.25, 0) {};
		\node [dot, red] (1) at (1, 0) {};
		\node [dot, red] (2) at (-1.5, 0.5) {};
		\node  [dot, red](3) at (0, -1) {};
		\node  (4) at (-7.5, 3) {};
		\node  (5) at (-3.5, -4.75) {};
		\node  (6) at (-0.5, -5.75) {};
		\node  (7) at (7, 2) {};
		\node  (8) at (7, -1.75) {};
		\node  (9) at (6.75, -4) {};
		\node  (10) at (-5, 6.5) {};
		\draw[draw = blue, fill = blue, fill opacity = .2] (-.2,.1) circle (2cm);
		\end{pgfonlayer}
		\begin{pgfonlayer}{background}
		\draw [bend left=120, looseness=22.00] (0.center) to (1.center);
		\draw [bend right=60, looseness=1.25] (2.center) to (3.center);
		\draw [in=165, out=15, looseness=1.25] (4.center) to (2.center);
		\draw [in=60, out=-105, looseness=1.25] (3.center) to (5.center);
		\draw [bend left=15] (3.center) to (6.center);
		\draw [bend left=45] (1.center) to (7.center);
		\draw [in=105, out=-30] (1.center) to (8.center);
		\draw [in=-180, out=-45, looseness=1.25] (1.center) to (9.center);
		\draw [in=-180, out=90, looseness=0.75] (2.center) to (10.center);
		\draw [in=0, out=75, looseness=0.50] (2.center) to (10.center);
		\draw [bend left=105, looseness=16.25] (0.center) to (1.center);
		\end{pgfonlayer}\end{tikzpicture}};
	\draw  [->, dashed,  line width = 1.5, draw= gray](-5,0)--(-2,0);

	\node at (-3.5,.5){\tiny{apply $\boxtimes$ locally at vertices}
	};
	
	\node at (0,0){ 
		\begin{tikzpicture}[scale = 0.2]\begin{pgfonlayer}{above}
		\node [dot,blue] (0) at (0, 0) {};
		\node  (1) at (0, 0) {};
		\node  (2) at (0, 0) {};
		\node  (3) at (0, 0) {};
		\node  (4) at (-7, 0.25) {};
		\node  (5) at (-3, -4.25) {};
		\node  (6) at (0.25, -5) {};
		\node  (7) at (6.25, 3.25) {};
		\node  (8) at (5.25, -1.75) {};
		\node  (9) at (5, -4) {};
		\node  (10) at (-2.25, 1) {};
		\node  (11) at (0, 2.25) {};
		\node  (12) at (0, 1.5) {};
			\node  (13) at (-1.5, -1.8) {};
		\draw[draw =green, fill = green, fill opacity = .2] (-.2,.1) circle (2.5cm);
		\end{pgfonlayer}
		\begin{pgfonlayer}{background}
	\draw [in=-150, out=15, looseness=1.25] (4.center) to (2.center); 
	\draw [in=60, out=-105, looseness=1.25] (3.center) to (5.center);
	\draw [bend left=15] (3.center) to (6.center);
	\draw [in=150, out=15] (1.center) to (7.center);
	\draw [in=105, out=-30] (1.center) to (8.center);
		\draw [in=-180, out=-45, looseness=1.25] (1.center) to (9.center);
		\draw [in=-135, out=-165] (2.center) to (10.center);
		\draw [in=45, out=75, looseness=0.50] (2.center) to (10.center);
		\draw [in=135, out=165, looseness=0.75] (12.center) to (3.center);
		\draw [in=60, out=-15, looseness=0.75] (12.center) to (3.center);
		\draw [in=150, out=180] (11.center) to (3.center);
		\draw [in=30, out=0] (11.center) to (3.center);
		\draw [in=135, out=165, looseness=0.5] (13.center) to (3.center);
		\draw [in=60, out=-15, looseness=0.5] (13.center) to (3.center);
		\end{pgfonlayer}\end{tikzpicture}};
	\draw  [->, dashed, line width = 1.5, draw= gray](2,0)--(5,0);
	\node at (3.5,.5){\tiny{apply $\zeta$ on inner edges}
	};

	\node at (6.5,0){ 
		\begin{tikzpicture}[scale = 0.2]	\begin{pgfonlayer}{above}
		\node [dot, green] (0) at (0, 0) {};
		\node  (1) at (0, 0) {};
		\node  (2) at (0, 0) {};
		\node  (3) at (0, 0) {};
		\node  (4) at (-7, 0.25) {};
		\node  (5) at (-3, -4.25) {};
		\node  (6) at (0.25, -5) {};
		\node  (7) at (6.25, 3.25) {};
		\node  (8) at (5.25, -1.75) {};
		\node  (9) at (5, -4) {};
		\end{pgfonlayer}
		\begin{pgfonlayer}{background}
		\draw [bend right=60, looseness=1.25] (2.center) to (3.center);
		\draw [in=-150, out=15, looseness=1.25] (4.center) to (2.center);
		\draw [in=60, out=-105, looseness=1.25] (3.center) to (5.center);
		\draw [in=75, out=-75] (3.center) to (6.center);
		\draw [in=150, out=15] (1.center) to (7.center);
		\draw [in=105, out=-30] (1.center) to (8.center);
		\draw [in=-180, out=-45, looseness=1.25] (1.center) to (9.center);
		\end{pgfonlayer}\end{tikzpicture}};\end{tikzpicture}
\caption{ A representation of the structure map $\hat h_\X \colon S(\X)\to S_X$.} \label{fig. COnu structure map}
\end{figure}

The maps $\hat h_\X$ so defined are the components of a morphism $\hat h \colon \Tk S \to S$ in $\GSE$.

Moreover, for any $\X$-shaped graph of graphs $\Gg$, $\Gg(b_v)$ is a $Z_v$-graph. So, by (C3), we obtain morphisms $\hat h_{ \Gg(b_v)} \colon S(\Gg(b_v)) \to S_{Z_v}$, and by iterating (\ref{eq. prod eq}), it is straightforward to show that $S$, together with $\hat h \colon \Tk S \to S$ satisfy the monad algebra axioms. (See \cref{fig. COnu h alg} for an illustration of coherence of $\hat h$ and $\mu^{\TTk}$.)
\end{proof}

\begin{figure}[htb!]
\begin{tikzpicture} [scale = .9]
\node at(-5,0){ 
	\begin{tikzpicture}[scale = .4]
	\begin{pgfonlayer}{above}
	\node [dot, red] (0) at (0, 0) {};
	\node  [dot, red](1) at (2, 0) {};
	\node [dot, red] (2) at (-2, 1) {};
	\node [dot, red] (3) at (-1, -1) {};
	\node  (4) at (-4.5, 2.25) {};
	\node  (5) at (-3, -3) {};
	\node  (6) at (0, -4) {};
	\node  (7) at (4, 2) {};
	\node  (8) at (5, -1) {};
	\node  (9) at (5, -3) {};
	\node  (10) at (-1, 2) {};
	\draw[draw = blue, fill = blue, fill opacity =.2] (-1.6,1.2) circle (1.2cm);
	\draw[draw = blue, fill = blue, fill opacity =.2] (-1,-1) circle (.6cm);
	\draw[draw = blue, fill = blue, fill opacity =.2] (1,0)ellipse (1.7cm and .9cm);
	\end{pgfonlayer}
	\begin{pgfonlayer}{background}
	\draw [bend left=45, looseness=1.75] (0.center) to (1.center);
	\draw [bend right=60, looseness=1.25] (2.center) to (3.center);
	\draw [in=165, out=15, looseness=1.25] (4.center) to (2.center);
	\draw [in=60, out=-105, looseness=1.25] (3.center) to (5.center);
	\draw [bend left=15] (3.center) to (6.center);
	\draw [bend left=45] (1.center) to (7.center);
	\draw [in=105, out=-30] (1.center) to (8.center);
	\draw [in=-180, out=-45, looseness=1.25] (1.center) to (9.center);
	\draw [in=135, out=90, looseness=1.50] (2.center) to (10.center);
	\draw [in=315, out=-15] (2.center) to (10.center);
	\draw [bend right=60, looseness=1.50] (0.center) to (1.center);
	\end{pgfonlayer}
	\end{tikzpicture}
};
\node at (-4,-1.2){$(\Tk)^2 S$};
\node at (4,-1.2){$\Tk S$};
\draw  [->, line width = 1.22, draw= gray, dashed](-2,0)--(2,0);
\node at (0,.5){
	$\Tk h$
};
\draw  [->, line width = 1.2, draw= gray](-5,-2)--(-5,-3);
\node at (-5.8,-2.5){$\mu^{\TTk}S$
};

\node at (5,0){ 
	\begin{tikzpicture}[scale = .4]
	\begin{pgfonlayer}{above}
	\node [dot, blue] (0) at (0, 0) {};
	\node  [dot, blue](1) at (0, 0) {};
	\node [dot, blue] (2) at (-1, 1) {};
	\node [dot, blue] (3) at (-1, -1) {};
	\node  (4) at (-4.5, 2.25) {};
	\node  (5) at (-3, -3) {};
	\node  (6) at (0, -4) {};
	\node  (7) at (4, 2) {};
	\node  (8) at (5, -1) {};
	\node  (9) at (5, -3) {};
	\node  (10) at (-1, 2) {};
	\end{pgfonlayer}
	\begin{pgfonlayer}{background}
	\draw [bend left=45, looseness=1.75] (0.center) to (1.center);
	\draw [bend right=60, looseness=1.25] (2.center) to (3.center);
	\draw [in=165, out=15, looseness=1.25] (4.center) to (2.center);
	\draw [in=60, out=-105, looseness=1.25] (3.center) to (5.center);
	\draw [bend left=15] (3.center) to (6.center);
	\draw [bend left=45] (1.center) to (7.center);
	\draw [in=105, out=-30] (1.center) to (8.center);
	\draw [in=-180, out=-45, looseness=1.25] (1.center) to (9.center);
	\draw [bend right=60, looseness=1.50] (0.center) to (1.center);
	\end{pgfonlayer}
	\end{tikzpicture}
};
\draw  [->, line width = 1.2, draw= gray](5,-2)--(5,-3);

\node at (5.5,-2.5){
	$h$
};

\node at (-4,-5.5){$\Tk S$};
\node at (3.5,-5.5){$ S$};
\node at (-5,-4.5){ 
	\begin{tikzpicture}[scale = .4]
	\begin{pgfonlayer}{above}
	\node [dot, red] (0) at (0, 0) {};
	\node  [dot, red](1) at (2, 0) {};
	\node [dot, red] (2) at (-2, 1) {};
	\node [dot, red] (3) at (-1, -1) {};
	\node  (4) at (-4.5, 2.25) {};
	\node  (5) at (-3, -3) {};
	\node  (6) at (0, -4) {};
	\node  (7) at (4, 2) {};
	\node  (8) at (5, -1) {};
	\node  (9) at (5, -3) {};
	\node  (10) at (-1, 2) {};
	\end{pgfonlayer}
	\begin{pgfonlayer}{background}
	\draw [bend left=45, looseness=1.75] (0.center) to (1.center);
	\draw [bend right=60, looseness=1.25] (2.center) to (3.center);
	\draw [in=165, out=15, looseness=1.25] (4.center) to (2.center);
	\draw [in=60, out=-105, looseness=1.25] (3.center) to (5.center);
	\draw [bend left=15] (3.center) to (6.center);
	\draw [bend left=45] (1.center) to (7.center);
	\draw [in=105, out=-30] (1.center) to (8.center);
	\draw [in=-180, out=-45, looseness=1.25] (1.center) to (9.center);
	\draw [in=135, out=90, looseness=1.50] (2.center) to (10.center);
	\draw [in=315, out=-15] (2.center) to (10.center);
	\draw [bend right=60, looseness=1.50] (0.center) to (1.center);
	\end{pgfonlayer}
	\end{tikzpicture}
};
\draw  [->, line width = 1.2, draw= gray, dashed](-2,-4)--(2,-4);
\node at (0,-4.5){
	$h$
};
\node at (5,-4){ 
	\begin{tikzpicture}[scale = 0.35]	\begin{pgfonlayer}{above}
	\node [dot, green] (0) at (0, 0) {};
	\node  (1) at (0, 0) {};
	\node  (2) at (0, 0) {};
	\node  (3) at (0, 0) {};
	\node  (4) at (-7, 0.25) {};
	\node  (5) at (-3, -4.25) {};
	\node  (6) at (0.25, -5) {};
	\node  (7) at (6.25, 3.25) {};
	\node  (8) at (5.25, -1.75) {};
	\node  (9) at (5, -4) {};
	\end{pgfonlayer}
	\begin{pgfonlayer}{background}
	\draw [bend right=60, looseness=1.25] (2.center) to (3.center);
	\draw [in=-150, out=15, looseness=1.25] (4.center) to (2.center);
	\draw [in=60, out=-105, looseness=1.25] (3.center) to (5.center);
	\draw [in=75, out=-75] (3.center) to (6.center);
	\draw [in=150, out=15] (1.center) to (7.center);
	\draw [in=105, out=-30] (1.center) to (8.center);
	\draw [in=-180, out=-45, looseness=1.25] (1.center) to (9.center);
	\end{pgfonlayer}

	\end{tikzpicture}};

\end{tikzpicture}
\caption{Compatibility of the structure map $\hat h \colon \Tk S \to S$ with the monad multiplication $\mu^{\TTk}S\colon(\Tk)^2 S\rightarrow \Tk S$.}\label{fig. COnu h alg}
\end{figure}

\begin{rmk}\label{rmk. no empty graph}
	The monad in \cite{Koc18} is defined identically to $\TTk$ on $\GS$ except that, in \cite{Koc18}, the empty graph $\oslash$ is not included as a $\nul$-graph. Consequently, algebras for the monad in \cite{Koc18} are equipped with a nonunital external product. See also \cref{rmk. monoidal unit}.
\end{rmk}

\begin{rmk}
The monad $\TTk$ has arities $\Gret \subset \GS$ (in fact, $\TT$ is strongly cartesian) and hence there is an abstract nerve theorem in the style of \cite{BMW12} for $\Set$-valued nonunital circuit algebras. However, this construction is not described here since it is straightforward to derive from the -- more interesting (and challenging) -- nerve theorem for unital circuit algebras, \cref{thm: CO nerve}.
\end{rmk}

\begin{ex}
	\label{ex nonunital WP}

Recall that the oriented graphical species $\DicommE \in \GSE$  defined in \cref{ex. oriented slice} has a (unique) trivial circuit algebra structure and hence, there is a monad $\OTTk$ on the category $\GSE\ov \DicommE$ of oriented graphical species in $\E$ whose EM category of algebras is isomorphic to $ \GSE^{\TTk}\ov \DicommE$. By \cref{prop nonunital CA} and \cref{def wp int}, this is the category $\nuWPE$ of nonunital wheeled props in $\E$. 

The monad $\OTTk$ is easy to describe: Following  \cref{ex OGS on graphs}, the endofunctor $\OTk$ on oriented graphical species in $\E$ is obtained by replacing the categories $X\Grisok$ in the definition of $\Tk$ with the categories of directed graphs with labelled input and output ports and port-preserving morphisms in $\oGret = \Gret \ov \Dicomm$, and applying the equivalence $\GSE\ov \DicommE \simeq \prE{\elG[\Dicomm]}$. 

\end{ex}

\begin{rmk}
	\label{rmk nonunital wheeled props}
	In \cite{Sto23}, a \emph{biased} definition (following \cite{JY15}) of (enriched) wheeled props is given and a groupoid-coloured operad $\mathbb W$ for nonunital wheeled props is described and shown to be Koszul. The wheeled prop definition is essentially an oriented version of the description of enriched circuit algebras in\cite[Proposition~5.4]{RayCA1}. The operad $\mathbb W$ is obtained from the operad of oriented downward wiring diagrams (see \cite[Section~5.1]{RayCA1}) by including isomorphisms in the (groupoid) colouring.
	
	\cref{prop nonunital CA} uses the idea that a graph is obtained by iterated ``contraction'' (or gluing ports) of a disjoint union of corollas. This technique is very similar to the \emph{normal form of a graph} used in \cite[Section~5]{Sto23} to prove that the (enriched) nonunital wheeled prop operad $\mathbb W$ is Koszul.

\end{rmk}

\section{Review of modular operads and pointed graphs}\label{sec. mod op}

In \cite{Ray20}, modular operads in $\Set$ are constructed as algebras for a composite monad $\DD\TT$ on $\GS$. The monads $\DD$ and $\TT$ will be used in \cref{sec. iterated} to construct the circuit algebra monad. Therefore, this section provides a review of their definitions, and the distributive law between them, in the more general setting of $\GSE$ (where, as usual, $\E$ is assumed to have sufficient limits and colimits).

\subsection{The monad $\TT$ and nonunital modular operads}

The nonunital modular operad monad $\TT = (T, \mu^\TT, \eta^\TT)$ on $\GS$ was described in \cite[Section~5]{Ray20}. In this paper, 
the nonunital $\E$-valued modular operad monad on $\GSE$ is also denoted by $\TT = \TT_\E$. 

For all finite sets $X$, let $X\Griso \subset X \Grisok$ be the full subgroupoid of connected $X$-graphs. 
The endofunctor $T\colon\GSE \to \GSE$ is defined, for all $S\in \GSE$, by
\begin{equation}\label{eq. T def}
	\begin{array}{llll}
		\Tk S_\S &= & S_\S,& \\
		\Tk S_X &= & \mathrm{colim}_{\X\in X{\Griso}}  S(\X), & \text{ for all finite sets } X.
	\end{array}
\end{equation}

By \cite[Corollary~5.19]{Ray20} (which follows \cite[Lemma~1.5.12]{Koc16}), if $\G$ is a connected graph, and $\Gg \colon \elG \to \Gr$ is a nondegenerate $\G$-shaped graph of graphs such that $\Gg(b)$ is connected for all $(\C, b) \in \elG$, then the colimit $ \Gg(\G)$ is also connected. 

Hence, $\TT = (T, \mu^\TT, \eta^\TT)$, where $\mu^{\TT}, \eta^{\TT}$ are the obvious (co)restrictions to connected graphs, describes a monad on $\GSE$. 

\begin{prop}\label{prop. Talg}
	The EM category $\GSE^\TT$ of algebras for $\TT$ on $\GSE$ is canonically equivalent to $\nuCSME$.  
\end{prop}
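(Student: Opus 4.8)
The plan is to follow closely the template of the proof of \cref{prop nonunital CA}, replacing the external product by the modular operadic multiplication and restricting throughout to \emph{connected} graphs. First, given a $\TT$-algebra $(A,h)$, I would equip $A$ with a contraction and a multiplication. Since the graphs $\C_X^{x\ddagger y}$ (\cref{ex: N graph}) and $\mathcal D_{X,Y}^{x\ddagger y}$ (\cref{ex: M graph}) are both connected, they represent objects of the groupoids $X\Griso$ and $(X\amalg Y)\Griso$ of connected graphs, so that by (\ref{eq. N-structures}) and (\ref{eq. mult structure}) the identifications $A_X^{x\ddagger y}=A(\C_X^{x\ddagger y})$ and $(A_X\times A_Y)^{x\ddagger y}=A(\mathcal D_{X,Y}^{x\ddagger y})$ hold. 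I would then define $\zeta_X^{x\ddagger y}$ and $\diamond_{X,Y}^{x\ddagger y}$ as the composites of the canonical maps into $TA_{X\setminus\{x,y\}}$, respectively $TA_{(X\amalg Y)\setminus\{x,y\}}$, with the structure map $h$, exactly as in \cref{ex. free circuit algebra}. Equivariance is immediate from the equivariance of the defining graphs.

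Second, I would verify axioms (M1)--(M4) of \cref{def. mod op}. Each axiom is checked by representing its two sides as connected $\G$-shaped graphs of graphs for a common connected $\G$, and observing that both have the same colimit: for (M1) the graph is $\CX\amalg\CX[Y]\amalg\CX[Z]$ with the two inner orbits attached in the two possible orders; for (M2) it is the single corolla $\CX$ with two inner orbits contracted; and (M3), (M4) similarly combine one multiplication with one contraction. The required equalities then follow from the $\TT$-algebra axioms (compatibility of $h$ with $\mu^{\TT}$ and $\eta^{\TT}$), via the same commuting-diagram argument as in the three diagrams of the proof of \cref{prop nonunital CA}. Connectedness of each intermediate graph guarantees, by \cite[Corollary~5.19]{Ray20}, that every colimit involved is again connected, so the argument stays inside $X\Griso$.

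Conversely, given a nonunital modular operad $(S,\diamond,\zeta)$ I would build a $\TT$-algebra structure $h\colon TS\to S$ by defining, for each connected $X$-graph $\X$, a map $\hat h_\X\colon S(\X)\to S_X$. Fixing a spanning tree of $\X$, I would first combine the vertex-corollas along the tree edges using iterated multiplications $\diamond$ --- possible precisely because $\X$ is connected --- obtaining a single corolla whose legs are the ports $X$ together with the half-edges of the remaining inner orbits, and then contract those orbits using iterated $\zeta$. Naturality in $\X$ and the monad-algebra axioms would then be checked by iterating these constructions, as in the second half of the proof of \cref{prop nonunital CA} (cf.\ \cref{fig. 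COnu structure map} and \cref{fig. COnu h alg}).

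The main obstacle is the well-definedness of $\hat h_\X$: I must show it is independent of the chosen spanning tree and of the order in which the multiplications and contractions are performed. This is exactly the content of axioms (M1)--(M4): (M1) governs associativity and reassociation of the tree multiplications, (M2) the independence of the order of the final self-contractions, and (M3)--(M4) the commutation of multiplication with contraction needed to pass between different spanning trees. Establishing this independence, together with the verification that the two assignments $(A,h)\mapsto(A,\diamond,\zeta)$ and $(S,\diamond,\zeta)\mapsto(S,h)$ are mutually inverse and natural, is where the combinatorial work lies; everything else is a routine transcription of \cref{prop nonunital CA} with ``external product'' replaced by ``connected multiplication'' and ``arbitrary $X$-graph'' replaced by ``connected $X$-graph''.
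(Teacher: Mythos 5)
Your proposal is correct and takes essentially the same route as the paper: the forward direction (defining $\zeta$ and $\diamond$ from $h$ via the connected graphs $\C_X^{x\ddagger y}$ and $\mathcal D_{X,Y}^{x\ddagger y}$, then checking the axioms by exhibiting pairs of graphs of graphs with a common colimit) matches the paper's argument exactly, and your spanning-tree construction of $\hat h_\X$ with well-definedness resting on (M1)--(M4) is precisely the method the paper delegates to \cite[Proposition~5.29]{Ray20}. The only nitpick is notational: $\C_X^{x\ddagger y}$ and $\mathcal D_{X,Y}^{x\ddagger y}$ are objects of $(X\setminus\{x,y\})\Griso$ and $((X\amalg Y)\setminus\{x,y\})\Griso$ rather than of $X\Griso$ and $(X\amalg Y)\Griso$.
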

\begin{proof}
This is proved in \cite[Proposition~5.29]{Ray20} for the case $\E = \Set$. The proof for general $\E$ is similar. 

The inclusion $X \Griso \hookrightarrow X \Grisok$ induces a monomorphism $TS_X \to  \Tk S_X$. In particular, since $\X^{x \ddagger y}$ is connected for all connected $X$-graphs $\X$ and all pairs $x, y$ of distinct elements of $X$, if $(A, h)$ is an algebra for $\TT$, then $A$ admits a contraction $\zeta$ given by 
$ \zeta_X^{x \ddagger y} \colon A_X^{x \ddagger y} = A(\C_X^{x \ddagger y}) \xrightarrow{h} TA_{X\setminus \{x,y\}}. $ This satisfies (M2) by the proof of \cref{prop nonunital CA}.

Moreover, 
there is a multiplication $\diamond$ on $A$ such that, for all finite sets $X$ and $Y$ and all elements $x \in X$, and $y \in Y$, $\diamond_{X,Y}^{x \ddagger y}$ is induced by 
$A(\mathcal D_{X,Y}^{x \ddagger y} ) \to TA_{(X \amalg Y) \setminus \{x,y\}} \xrightarrow {h} A_{(X \amalg Y) \setminus \{x,y\}} .$

So, to show that $(A, \diamond, \zeta)$ satisfy the remaining axioms (M1)-(M3), we may construct (as in the proof of \cref{prop nonunital CA} and \cite[Proposition~5.29]{Ray20}), for each axiom, a pair of graphs of graphs that have the same colimit in $\Gr$. 

The converse follows exactly the same method as the proof of \cite[Proposition~5.29]{Ray20} and \cref{prop nonunital CA}.	
\end{proof}

\subsection{The unit monad $\DD$}\label{ssec. D monad}
The modular operad monad on $\GSE$ will be described in terms of a distributive law for composing $\TT$ with the \textit{pointed graphical species monad} $\DD = (D, \mu^{\DD}, \eta^{\DD})$. This was defined on graphical species in $\Set$ in \cite{Ray20}, and is generalised here to a monad, also denoted by $\DD = \DD_\E$, on $\GSE$ for $\E$ with sufficient limits and colimits.

	The endofunctor $D  $ on $\GSE$ is given by $DS_\S = S_\S$, $DS_\tau = S_\tau$ and, for finite sets $X$: 
	\[ DS_X   = \left \{ \begin{array}{ll}
	S_X & X \not \cong \two, \text{ and } X \not \cong \nul,\\
	S_\two \amalg S_\S& X = \two,\\
	S_\nul \amalg \tilde {S_\S} & X = \nul,
	\end{array}  \right . \]
	where $ \tilde {S_\S}$ is the coequaliser of the identity and $S_\tau$ on $S_\S$

There are canonical natural transformations $\eta^\DD \colon 1_{\GSE} \Rightarrow D$, provided by the induced morphism $S \hookrightarrow DS$, and $\mu^\DD  \colon D^2 \Rightarrow D$ induced by the canonical projections $D^2S_\two \to DS_\two$, so that $\DD = (D, \mu^\DD, \eta^\DD)$ defines the pointed graphical species monad on $\GSE$. 

Algebras for $\DD$ -- called \textit{pointed graphical species in $\E$} -- are graphical species $S$ in $\E$ equipped with 
	a distinguished unit-like morphism (\cref{defn: formal connected unit}) $\epsilon \colon S_\S \to  S_\two,$ 
	and a distinguished morphism $o \colon S_\S \to S_\nul $ that factors through $ \tilde {S_\S}$.

Let $\fisinvp$ be category obtained from $\fisinv$ by formally adjoining morphisms
$u \colon  \two \to \S$ and $ z\colon  \nul \to \S$, subject to the relations:
\begin{enumerate}[(i)]
	\item 
	$u \circ ch_1 =  id \in\fisinv (\S, \S)$ and  $ u \circ ch_2= \tau \in \fisinv (\S, \S) $;
	\item $\tau \circ u = u \circ \sigma_{\mathbf 2}\in \fisinv (\two, \S)$ ;
	\item 
	$z = \tau \circ z \in \fisinv (\nul, \S)$.
	\label{relations}
\end{enumerate}

It is straightforward to verify that $\fisinvp$ is completely described by:
\begin{enumerate}[(i)]
	\item $\fisinvp(\S, \S) = \fisinv (\S, \S)$ and $\fisinvp (Y,X)  = \fisinv(Y,X)$ whenever $Y \not \cong \nul$ and $ Y \not \cong \two$;
	\item $\fisinvp (\nul, \S) = \{z\}$, and $\fisinvp (\nul, X) = \fisinv (\nul, X) \amalg \{ ch_x\circ z\}_{x \in X}$;
	\item $\fisinvp (\two, \S) = \{u, \tau \circ u\}$, and $\fisinvp (\two, X) = \fisinv (\nul, X) \amalg \{ ch_x\circ u, ch_x \circ \tau \circ u\}_{x \in X}$. 
\end{enumerate}

It follows that:

\begin{lem}\label{lem: elp presheaves}
The EM category $\GSEp$ of algebras for $\DD$ is the category of $\E$-presheaves on $\fisinvp$. In other words, the following are equivalent:
	\begin{enumerate}
		\item $S_*$ is a presheaf on $ \fisinvp$ that restricts to a graphical species $S$ on $ \fisinv$;
		\item $(S, \epsilon, o)$, with $\epsilon  = S_*(u)$ and $o = S_*(z)$ is a pointed graphical species.
	\end{enumerate}
	\label{CGSp}
\end{lem}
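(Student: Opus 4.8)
The plan is to deduce the isomorphism $\GSEp \cong \prE{\fisinvp}$ directly from the explicit presentation of $\fisinvp$ recorded in items (i)--(iii) above, so that the only substantive input is that presentation together with the description of $\DD$-algebras as pointed graphical species $(S,\epsilon,o)$. First I would note that the inclusion $\fisinv \hookrightarrow \fisinvp$ is bijective on objects and faithful, so restriction along it defines a functor $\prE{\fisinvp}\to\GSE$. The presentation shows that every morphism of $\fisinvp$ is either already in $\fisinv$ or of the form $g\circ u$ or $g\circ z$ for a unique $g\in\fisinv$ (with $u\colon\two\to\S$, $z\colon\nul\to\S$ the two adjoined generators). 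Consequently a presheaf $S_*\in\prE{\fisinvp}$ is completely determined by its restriction $S\defeq S_*|_{\fisinv}\in\GSE$ together with the two morphisms $\epsilon\defeq S_*(u)\colon S_\S\to S_\two$ and $o\defeq S_*(z)\colon S_\S\to S_\nul$, and the only constraints on the pair $(\epsilon,o)$ are those forced by the defining relations (i)--(iii).

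The next step is to translate these relations contravariantly. Relation~(i), namely $u\circ ch_1=id$ and $u\circ ch_2=\tau$, gives $S(ch_1)\circ\epsilon=id_{S_\S}$ (together with the automatic consequence $S(ch_2)\circ\epsilon=S_\tau$), while relation~(ii), $\tau\circ u=u\circ\sigma_\two$, gives $\epsilon\circ S_\tau=S(\sigma_\two)\circ\epsilon$; these are exactly the two clauses of \eqref{eq: unit compatible with involution} that make $\epsilon$ \emph{unit-like} for $S$ (with the palette involution taken to be $S_\tau$). Relation~(iii), $z=\tau\circ z$, gives $o=o\circ S_\tau$, i.e.\ $o$ coequalises $id_{S_\S}$ and $S_\tau$; by the universal property of the coequaliser this is equivalent to $o$ factoring uniquely through $\tilde{S_\S}$. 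Thus extending $S$ to a $\fisinvp$-presheaf is exactly the same as equipping $S$ with a unit-like $\epsilon$ and a morphism $o$ factoring through $\tilde{S_\S}$, i.e.\ with pointed graphical species structure. Conversely, any such $(\epsilon,o)$ satisfies the images of (i)--(iii), so -- since $\fisinvp$ carries no relations beyond these -- it extends uniquely and consistently to a presheaf on $\fisinvp$. This gives the object-level equivalence (1)$\Leftrightarrow$(2).

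It then remains to match morphisms so that this object bijection promotes to an isomorphism of categories. A morphism $S_*\to S'_*$ in $\prE{\fisinvp}$ is a natural transformation; by the same factorisation argument it amounts to a morphism $\gamma\colon S\to S'$ of the underlying graphical species subject only to the naturality squares at $u$ and $z$, which read $\gamma_{\two}\circ\epsilon=\epsilon'\circ\gamma_\S$ and $\gamma_{\nul}\circ o=o'\circ\gamma_\S$. These are precisely the conditions that $\gamma$ commute with the pointed structure, i.e.\ that $\gamma$ be a morphism of $\DD$-algebras. Hence $\GSEp$ and $\prE{\fisinvp}$ have the same objects and the same morphisms, yielding the claimed isomorphism.

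The only genuinely non-formal ingredient is the presentation of $\fisinvp$ asserted just before the lemma; everything else is routine contravariant bookkeeping. I would therefore expect the one delicate point to be the faithfulness of that presentation -- that adjoining $u$ and $z$ introduces no relations beyond (i)--(iii) (in particular $u$ and $z$ never compose with each other, since $\S$ is not a finite set, and the composites $ch_x\circ u$, $ch_x\circ z$ stay pairwise distinct). Granting this, the single step needing a little care is the coequaliser identification for $o$, where the universal property of $\tilde{S_\S}$ is needed to pass between the equational form $o=o\circ S_\tau$ and the factorisation-through-$\tilde{S_\S}$ demanded by the definition of a pointed graphical species.
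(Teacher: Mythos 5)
Your proposal is correct and takes essentially the same route as the paper: the paper offers no separate argument beyond asserting that the lemma "follows" from the explicit description of the hom-sets of $\fisinvp$, and your contravariant translation of relations (i)--(iii) into the unit-like condition on $\epsilon$ and the coequaliser condition on $o$, plus the matching of natural transformations with $\DD$-algebra morphisms, is precisely the bookkeeping being left implicit. One minor inaccuracy worth noting: the factorisation $g \circ z$ is not unique (since $\tau \circ z = z$, both $ch_x$ and $ch_x \circ \tau$ give the same composite $ch_x \circ z$), but this does not harm your argument, because determination of the presheaf and consistency of the extension are exactly what relation (iii), i.e.\ $o = o \circ S_\tau$, guarantees.
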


As a consequence of \cref{lem: elp presheaves}, the notation $ S_*$ and $ (S, \epsilon, o)$ will be used interchangeably to denote the same pointed graphical species. 

\begin{rmk}\label{rmk contracted units}
	It is convenient to think of $\epsilon$ as a \textit{unit morphism} for $\S$, and  of $o$ as a \textit{contracted unit morphism} for $S$. Likewise, the monad $D$ may be viewed as adjoining formal units to $S_\two$ and formal contracted units to $S_\nul$ as will now be explained. 
\end{rmk}

\subsection{ Pointed graphs}\label{ss. pointed}

The category $\Grp$ of connected graphs and \textit{pointed \'etale morphisms}, obtained in the bo-ff factorisation of the functor $\Gr \to \GSp$ is described in detail in \cite[Section~7.2]{Ray20}. Here, $\Grp$ is extended to the category $\Gretp$ of \textit{all} graphs and pointed \'etale morphisms. In other words, $\Gretp$ is the category defined by the bo-ff factorisation of the functor $ \Gret \xrightarrow{\yet} \GS \to \GSp$, as in the following commuting diagram of functors:
\begin{equation} \label{defining Grp}
\xymatrix{ 
	\fisinvp \ar[rr]^-{\text{ }}_-{\text{ f.f.}}	\ar@/^2.0pc/[rrrr]_-{\text{ f.f.} }			&& \Gretp \ar [rr]_-{\text{ f.f.} }^-{\yetp}				&& \GSp \ar@<2pt>[d]^-{\text{forget}^{\DD}}\\ 
	\fisinv \ar@{^{(}->} [rr]\ar[u]^{\text{b.o.}}	&& \Gret \ar@{^{(}->} [rr]_-{\yet} \ar[u]^{\text{b.o.}}	&& \GS.  \ar@<2pt>[u]^-{\text{free}^{\DD}}
}\end{equation}

The inclusion $\fisinvp \to \Gretp$ is fully faithful (by uniqueness of bo-ff factorisation), and the induced nerve $\yetp\colon \Gretp \to \GSp$ is fully faithful by construction.

Let $\G\in \Gret$ be a graph. By \cref{ssec. D monad}, for each edge $e \in E$, the $ch_e$-coloured unit for $ \yetp \G$ is given by $ \epsilon^\G_e  =  ch_e \circ u \in \Gretp (\C_\two, \G)$, and the corresponding contracted unit is given by $o^\G_{\tilde e} = ch_e \circ z \in \Gretp (\C_\nul, \G)$.

Since the functor $\yetp$ embeds $\Gretp$ as a full subcategory of $\GSp$, I will write $\G$, rather than $\yetp \G$, where there is no risk of confusion. In particular, the element category $\elpG[\yetp \G]$ is denoted simply by $\elpG$ and called the 
\emph{category of pointed elements of a graph $\G$}.

For all pointed graphical species $S_*$, the forgetful functor $\GSp \to \GS$ induces an inclusion $ \ElS[S]\hookrightarrow \elpG[S_*]$, and, for all graphs $\G$, the inclusion $\elG \hookrightarrow \elpG$ is final (see \cite[Section~IX.3]{Mac98}) by \cite[Lemma~7.8]{Ray20}. Hence,  for all pointed graphical species $S_* = (S, \epsilon, o)$ in $\E$,
\begin{equation}
\label{eq: pointed sheaves}  \mathrm{lim}_{ (\C, b) \in \elpG}S(\C)  = \mathrm{lim}_{ (\C, b) \in \elG}S(\C) = S(\G). \end{equation} 

Moreover, by \cite[Section~7.3]{Ray20}, the induced inclusion $\GSEp \hookrightarrow\prE{\Gretp}$ is fully faithful. 

To better understand  morphisms in $\Gretp$, let $\G$ be a graph and 
let $W \subset V_0 \amalg V_2$ be a subset of isolated and bivalent vertices of $\G$. 

The \emph{vertex deletion functor (for $W$)} is the $ \G$-shaped graph of graphs ${\Gdg}^\G_{\setminus W}\colon  \elG \to \Gret $ given by 
\[{\Gdg}^\G_{\setminus W} (\CX, b)  = \left \{ \begin{array}{ll} 
(\shortmid)& \text{ if } (\CX, b) \text{ is a {neighbourhood} of } v \in W\ (\text{so }  X \cong \nul \text{ or } X \cong \two),\\
\CX &\text{ otherwise. }  
\end{array} \right.  \]   

By \cref{colimit exists}, this has a colimit $\Gnov$ in $\Gret$, and, for all $(\C, b) \in \elG$, ${\Gdg}^\G_{\setminus W}$ describes a morphism $\C \to {\Gdg}^\G_{\setminus W} (b)$ in $\fisinvp$. Hence, ${\Gdg}^\G_{\setminus W}$ induces a morphism $\delW \in \Gretp (\G, \Gnov)$.

\begin{defn}\label{def: vertex deletion}

The morphism $\delW\in \Gretp (\G,\Gnov)$ induced by ${\Gdg}^\G_{\setminus W}\colon  \elG \to \Gret $ is called the  \emph{vertex deletion morphism corresponding to $W \subset V_0 \cong V_2$}. 

\end{defn}

\begin{ex}\label{ex. admissible deletion}

Assume that $\G \not \cong \C_\nul$ is connected (so $V_0$ is empty). Let $W \subset V_2$ be a subset of bivalent vertices of $\G$. Unless $W = V(\G)$ and $\G = \Wm$ for some $m \geq 1$, the graph $\Gnov$ may be intuitively described as \emph{``$\G$ with a subset of bivalent vertices deleted''} as in \cref{fig: vertex deletion}.

In this case, the graph $\Gnov$ is described explicitly by:
\[\ \Gnov = \ \xymatrix{
	*[r] { \Enov}\ar@(ul,dl)[]_{\taunov} && { \Hnov} \ar[ll]_-{\snov} \ar[rr]^-{\tnov}&& \Vnov,}\] where
\[ \begin{array}{ll}\Vnov&=V \setminus W,\\
\Hnov &= H \setminus \left (\coprod_{v \in W} \vH \right),\\
\Enov 
&=  E \setminus\left (\coprod_{v \in W} \vE \right), 
 	\end{array}\]
and $\snov, \tnov$ are just the restrictions of $s$ and $t$. The description of the involution $\taunov$ is more complicated than needed here. (The interested reader is referred to \cite[Section~7.2]{Ray20} which contains full descriptions of the vertex deletion morphisms). 

\begin{figure}[htb!]
	\begin{tikzpicture}[scale = .3]

	\draw[->] (-15,0)--(-11,0);
	\node at (-13,.5){\scriptsize{$\delW$}};

	\node at (-25,0){\begin{tikzpicture}[scale = .25]
		
		\draw [thick, cyan]
		(-3,2).. controls (-2.5,1.5) and (-2.5,0.5).. (-3,0)
		(-3,0).. controls (-3.5,-1).. (-5,-1)
		(-5,3) --(-2,4)--(1,4)--(3,3)
		
		(1,-4)--(4, -3.5)--(5,-1)--(5,2)
		(-1,-5) -- (-3,-5)--(-5, -6)--(-6, -9)
		;
		
		\draw[blue, fill = cyan]
		(-3,0) circle (8pt)
		(4, -3.5) circle (8pt)
		(5,-1)circle (8pt)
		(5,2)circle (8pt)
		(-2,4)circle (8pt)
		(1,4)circle (8pt)
		(-3,-5)circle (8pt)(-5, -6)circle (8pt)(-5.5,-7.5)circle (8pt)
		;

		\draw[ thick] 	
		(-6,1).. controls (-6,2).. (-5,3)
		(-5,3).. controls (-4.5,3) and (-3.5,3).. (-3,2)
		(-5,-1).. controls (-6,-0) .. (-6,1)
		(-5,-1)-- (-3,2)
		(-3,2)--(-6,1)
		(-6,1)..controls ( -7.5,1) and (-9, 0.5) ..(-11,0.5)
		(-1,-5)..controls (-0.5,-3.5) and ( 0.5, -3.5)..(1,-4)
		(-1,-5)..controls (-0.5,-5) and ( 0.5, -5)..(1,-4)
		(4,1)--(5,2)
		(5,2)--(3,3)
		(3,3)--(4,1)
		(3,3).. controls (4,3.5) and (5,2.5)..(5,2)
		(3,3).. controls (3,4) and(4,6)  ..(8,6)
		(-3,2).. controls (1,-1.5) and (5,-1).. (5,2)
		;
		
		\filldraw [black]	
		(-6,1) circle (8pt)
		(-5,3) circle (8pt)
		(-3,2) circle (8pt)
		(-5,-1) circle (8pt)
		(1,-4) circle (8pt)
		(-1,-5) circle (8pt)
		(4,1) circle (8pt)
		(5,2) circle (8pt)
		(3,3) circle (8pt);
		
		\draw[black, fill = white]
		(-4,.5)circle (8pt)
		(4,1) circle (8pt);

		(0,6 ) .. controls (2,6) and (7,4) ..(7,0)
		( 7,0) .. controls (7,-2) and (2,-8) ..(0,-8)--(0,0)--(-7.5,0);
		\end{tikzpicture}};
	
	\node at (-2,0){\begin{tikzpicture}[scale = .25]
		
		\draw [thick, red]
		(-3,2).. controls (-2.5,1.5) and (-2.5,0.5).. (-3,0)
		(-3,0).. controls (-3.5,-1).. (-5,-1)
		(-5,3)..controls (-2,4) and (1,4) .. (3,3)
		
		(1,-4)..controls (2, -4) and (6,-2).. (5,2)
		(-1,-5) .. controls (-2.5,-5) and (-5, -4.5) ..(-6, -9)
		;

		\draw[ thick] 	
		(-6,1).. controls (-6,2).. (-5,3)
		(-5,3).. controls (-4.5,3) and (-3.5,3).. (-3,2)
		(-5,-1).. controls (-6,-0) .. (-6,1)
		(-5,-1)-- (-3,2)
		(-3,2)--(-6,1)
		(-6,1)..controls ( -7.5,1) and (-9, 0.5) ..(-11,0.5)
		(-1,-5)..controls (-0.5,-3.5) and ( 0.5, -3.5)..(1,-4)
		(-1,-5)..controls (-0.5,-5) and ( 0.5, -5)..(1,-4)
		(4,1)--(5,2)
		(5,2)--(3,3)
		(3,3)--(4,1)
		(3,3).. controls (4,3.5) and (5,2.5)..(5,2)
		(3,3).. controls (3,4) and(4,6)  ..(8,6)
		(-3,2).. controls (1,-1.5) and (5,-1).. (5,2)
		;
		
		\filldraw [black]	
		(-6,1) circle (8pt)
		(-5,3) circle (8pt)
		(-3,2) circle (8pt)
		(-5,-1) circle (8pt)
		(1,-4) circle (8pt)
		(-1,-5) circle (8pt)
		(4,1) circle (8pt)
		(5,2) circle (8pt)
		(3,3) circle (8pt);
		
		\draw[black, fill = white]
		(-4,.5)circle (8pt)
		(4,1) circle (8pt);

		(0,6 ) .. controls (2,6) and (7,4) ..(7,0)
		( 7,0) .. controls (7,-2) and (2,-8) ..(0,-8)--(0,0)--(-7.5,0);
		\end{tikzpicture}};

\end{tikzpicture}
\caption{Vertex deletion $\delW\colon  \G \to \Gnov$. } 
\label{fig: vertex deletion}
\end{figure}
\end{ex}
\begin{ex} If $\G = \C_\nul$ is an isolated vertex $*$ and $W = \{*\}$, then $\delW$ is precisely $ z \colon \C_\nul \to (\shortmid). $	
	
\end{ex}

\begin{ex}\label{line deletion}

If $\G = {\Lk}$, and $ W  = V$, then ${\Gdg}^\G_{\setminus W}$ is the constant functor to $(\shortmid)$ and hence $\Gnov = (\shortmid)$. The induced morphism is denoted by $u^k \defeq \delW\colon \Lk \to \Gretp$ and described by the following diagram in $\Gretp$:
	\[
	\xymatrix{ 
		(\shortmid) \ar[rr]^-{ch_1}\ar[drr]_-{id_{(\shortmid)}}  && \C_\two \ar[d]^{u} &&{ \text{ \dots }} \ar[ll]_-{ch_2 \circ \tau} \ar[rr]^-{ch_1}
		&& \C_\two \ar[d]^-{u}&& \ar[ll]_-{ch_2 \circ \tau}\ar[dll]^{id_{(\shortmid)}} (\shortmid)\\
		&&(\shortmid)\ar@{=}[r]&&{ \text{ \dots }} &&\ar@{=}[l](\shortmid).&&}\]
	In particular, $u^1 = u \colon  \C_\two \to (\shortmid)$ and $u^0$ is just the identity on $(\shortmid)$.
\end{ex}

\begin{ex}\label{wheel deletion} 
	Let $\G = \W$ be the wheel graph with one vertex $v$. Then $\W_{/\{v\}} = \mathrm{colim}_{ \elG[\W]} {\Gdg}^\W_{/\{v\}} $ exists and is isomorphic to $(\shortmid)$ in $ \Gret$.  The induced morphism $\kappa \defeq \delW[\{v\}]\colon  \W \to (\shortmid)$ is described in (\ref{epsilon}). Hence, there are precisely two morphisms $\kappa$ and $ \tau \circ \kappa$ in $ \Gretp (\W, \shortmid)$: 
	
	\begin{minipage}[t]{0.5\textwidth}
		\begin{equation} \label{epsilon}
			\xymatrix{
				& \W & \\
				(\shortmid) \ar[ur]^{ch_a} \ar@<-2pt>[rr]_{ch_2\circ \tau}  \ar@<2pt>[rr]^{ ch_{1} }  \ar@{=}[dr] && \C_\two \ar[ul]_{ 1_{\C_\two} \mapsto  a} \ar[dl]^{u}\\
				& (\shortmid)& }\end{equation}
		
	\end{minipage}
	\begin{minipage}[t]{0.5\textwidth}
		\begin{equation} \label{tau epsilon}
			\xymatrix{
				& \W & \\
				(\shortmid) \ar[ur]^{ch_a} \ar@<-2pt>[rr]_{ch_2\circ \tau}  \ar@<2pt>[rr]^{ ch_1 } \ar@{=}[d] && \C_\two \ar[ul]_{ 1_{\C_\two} \mapsto  a} \ar[d]^{\sigma_\two}\\
				(\shortmid) \ar@<-2pt>[rr]_{ch_1\circ \tau}  \ar@<2pt>[rr]^{ ch_2}  \ar@{=}[dr] && \C_\two  \ar[dl]^{u}\\
				& (\shortmid).& }\end{equation}
		
	\end{minipage}
	
	More generally, let $ \Wl$ be the wheel graph with $m$ vertices $(v_i) _{i = 1}^m$, and let $\iota \in \Gr(\Lk[m-1], \Wl)$ be an \'etale inclusion. If $W$ is the image of $ V(\Lk[m-1])$ in $V(\Wl)$, then $ V(\Wl) \cong W \amalg \{*\}$, and by (\ref{epsilon}) and \cref{line deletion}, 
	there are two distinct pointed morphisms, $\kappa^m  $ and $\tau \circ \kappa^m$,  in $ \Gretp (\Wl, \shortmid)$. Hence, for all $\G$, 
	\[\Gretp (\Wl, \G) =  \Gret(\Wl, \G)\amalg \{ch_{e} \circ \kappa^m\}_{ e\in E(\G)} \cong \Gretp(\W, \G).\]  \end{ex}

For convenience, let us define $\Wm[0] \defeq (\shorte) = \Wm_{/V(\Wm)}$ ($m \geq 1$). Then for all $k \geq 0$, $u^k \colon \Lk \to (\shortmid)$ factors canonically as $ \Lk \to \Wm[k] \xrightarrow{\kappa^k} (\shortmid)$.

\begin{rmk}(See \cref{ssec Brauer} and \cite[Section~3]{RayCA1}.)
		In \cite[Remark~3.2]{RayCA1}, it was explained that the composition $\bigcirc = \cap \circ \cup$ of Brauer diagrams is not described by a pushout of cospans as in \cite[Diagram~(3.13)]{RayCA1}. Instead, the pushout for $(\cap \circ \cup)$
		is induced by 
		the trivial diagram $(\shortmid) \xrightarrow{id}(\shortmid) \xleftarrow{id} (\shortmid)$, and therefore agrees with $\kappa \colon \W \to (\shortmid)$ described in \cref{wheel deletion}. (See \cite[Section~6]{Ray20} for more details on the combinatorics of contracted units.)
\end{rmk}

\begin{defn}
	\label{def: similar}
	The \emph{similarity category} $\Gretsimp\hookrightarrow \Gretp$ is the identity-on-objects subcategory of $\Gretp$ whose morphisms are generated under composition by vertex deletion morphisms and graph isomorphisms. Morphisms in $\Gretsimp$ are called \emph{similarity morphisms}, and connected components of $\Gretsimp$ are \emph{similarity classes}. Graphs are \emph{similar} if they are in the same connected component of $\Gretsimp$. 
\end{defn}

The following proposition summarises some important basic properties of vertex deletion morphisms. Detailed proofs may be found in \cite[Section~7.2]{Ray20}.

\begin{prop}
	\label{prop. vertex deletion}
	For all graphs $\G$ and all subsets $W \subset V_2$ of bivalent vertices of $\G$,
	\begin{enumerate}
		\item $\delW \colon \G \to \Gnov$ preserves connected components, and may be defined componentwise;
		\item	if $\G$ is connected and $W = V$, then $\G \cong \Lk$ or $\G \cong \Wm$ for $k, m \geq 0$ and $\Gnov \cong (\shortmid)$;
		\item if $\G$ is connected, then, unless $\G \cong \Wm$ or $\G \cong \C_\nul$ and $W = V$ for $m \geq 1$,  then $\delW$ induces an identity on ports: $\delW \colon E_0 (\G) \xrightarrow{=}E_0 (\Gnov)$.
	
	\end{enumerate} 
The pair $(\Gretsimp, \Gret)$ of subcategories of $\Gretp$ defines an orthogonal 
factorisation system on $\Gretp$.%

\end{prop}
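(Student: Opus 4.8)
The plan is to establish the three numbered claims as essentially local statements, reducing each to the connected analysis of \cite[Section~7.2]{Ray20} together with the bivalent classification \cite[Proposition~4.23]{Ray20}, and then to assemble the orthogonal factorisation system from them.

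For (1) I would use that the vertex deletion functor ${\Gdg}^\G_{\setminus W}$ is defined neighbourhood-by-neighbourhood. Writing $\G = \coprod_j \G_j$ for the decomposition into connected components and $W_j = W \cap V(\G_j)$ (each isolated or bivalent vertex lying in a unique component), the restriction of ${\Gdg}^\G_{\setminus W}$ to the elements of $\G_j$ is exactly ${\Gdg}^{\G_j}_{\setminus W_j}$. Since the monoidal structure on $\Gret$ is induced by the cocartesian structure on diagrams of finite sets, colimits commute with $\amalg$ (cf.~(\ref{eq. C3 motivation})), so by \cref{colimit exists} we obtain $\Gnov = \coprod_j (\G_j)_{\setminus W_j}$ and $\delW = \coprod_j \delW[W_j]$. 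In particular $\delW$ preserves connected components and is defined componentwise.

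For (2), when $\G$ is connected with $W = V$ every vertex is isolated or bivalent; if $\G$ has no edges then $\G = (\shortmid) = \Lk[0]$ or $\G = \C_\nul$, and otherwise connectedness forces every vertex to be bivalent, so $\G \cong \Lk$ or $\G \cong \Wm$ by \cite[Proposition~4.23]{Ray20}. In each case \cref{line deletion,wheel deletion} (and the morphism $z$ for $\C_\nul$) give $\Gnov \cong (\shortmid)$. For (3) I would observe that deleting a bivalent vertex replaces its corolla $\Cv \cong \C_\two$ by $(\shortmid)$ via $u$, merging the two incident edges while leaving $E_0$ untouched; a port can only be created when an \emph{inner} edge orbit is turned into a stick, which by (2) happens precisely when an entire wheel $\Wm$ ($m \geq 1$) or an isolated vertex $\C_\nul$ is collapsed. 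The remaining bookkeeping --- in particular the description of the involution $\taunov$ on $\Gnov$ --- is the content of \cite[Section~7.2]{Ray20} and of \cref{ex. admissible deletion}.

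For the orthogonal factorisation system $(\Gretsimp, \Gret)$, closure under composition and containment of all isomorphisms hold by \cref{def: similar} and because $\Gret \subseteq \Gretp$ is a subcategory; moreover a non-trivial vertex deletion collapses a corolla to a stick and so fails the pullback condition of \cref{defn. etale morphism}, whence $\Gretsimp \cap \Gret$ consists exactly of the isomorphisms. For \emph{existence} of factorisations, given $f \in \Gretp(\G, \H)$ I would set $W_f \subseteq V_0 \amalg V_2$ to be the set of vertices $v$ for which the composite $\Cv \xrightarrow{\esv} \G \xrightarrow{f} \H$ factors through $u$ (if $|v| = 2$) or through $z$ (if $|v| = 0$); using the presentation of $\Gretp$ by the adjoined morphisms $u, z$ of $\fisinvp$ (\cref{ssec. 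D monad}) together with (1), $f$ then factors as $\G \xrightarrow{\delW[W_f]} \Gnov[W_f] \to \H$ with the second arrow étale. The \emph{main obstacle} is uniqueness, i.e.~proving $\Gretsimp \perp \Gret$: one must supply a unique diagonal filler in every commuting square whose left leg is a similarity morphism and whose right leg is étale. Here the degenerate behaviour isolated in (2)--(3) is exactly what must be controlled, since vertex deletions are not port-bijective when wheels or isolated vertices collapse, and the two distinct morphisms $\kappa, \tau \circ \kappa \in \Gretp(\W, \shortmid)$ of \cref{wheel deletion} show that the filler cannot be recovered from the port sets alone. I would therefore reduce to connected components via (1) and then pin the filler down locally, using the local-isomorphism property of the right factor and the connected analysis of \cite[Section~7.2]{Ray20}.
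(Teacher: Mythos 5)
Your proposal is correct and follows essentially the same route as the paper: the paper offers no argument of its own beyond deferring all details to \cite[Section~7.2]{Ray20}, and your componentwise reduction of claims (1)--(3) and of the factorisation system to that connected-graph analysis (via \cite[Proposition~4.23]{Ray20} for the bivalent classification) is exactly the intended proof. Your explicit construction of $W_f$ for existence of factorisations, and your acknowledged deferral of the orthogonality (unique diagonal filler) argument to the local analysis of \cite[Section~7.2]{Ray20}, match the paper's own level of detail.
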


\begin{ex}\label{ex. DS}
	For all graphical species $S$ in $\E$, and all graphs $\G$ with no isolated vertices,
	\begin{equation} \label{eq. D on graphs}
	DS(\G) = \coprod_{W \subset V_2} S(\Gnov).
	\end{equation} 
\end{ex}

More generally, since $DS_\nul  = DS (\C_\nul) = S_\nul \amalg \tilde {S_\S}$ :  \begin{equation}\label{eq. D on graphs general} DS(\G) \cong \mathrm{colim}_{(\H,f) \in \G \ov \Gretsimp} S(\H), \quad  \text{  for all graphs } \G \in \Gret.\end{equation}

\begin{ex}
	\label{wheel and line structure notation} Let $S_*  = (S, \epsilon, o)$ be a pointed graphical species in $\E$. For $k,m \geq 0$, 
	there are distinguished monomorphisms in $\E$: 
	\begin{equation}
	\label{eq. unit graph definitions}
 S_*(u^k)\colon S_\S \to S(\Lk), \ \text{ and } S_*(\kappa ^m)\colon S_\S \to S(\Wm).
	\end{equation}
\end{ex}

\subsection{The distributive law for modular operads}\label{ssec. Similar connected X-graphs}

For the remainder of this section, all graphs will be assumed to be connected, unless stated otherwise. 

 Recall from \cref{def. X graph} that an $X$-graph $\X$ is said to be admissible if it has no stick components.
For any finite set $X$, we may enlarge the category $X\Griso$ of admissible connected $X$-graphs and port-preserving isomorphisms, to form a category $\XGrsimp$ that includes vertex deletion morphisms  morphisms out of $X$-graphs together with any labelling of ports from the domain.

\begin{itemize}
	\item If $X \not \cong \nul$, $X \not \cong \two$, then $ \XGrsimp$ is the category whose objects are connected $X$-graphs and whose morphisms are similarity morphisms 
	that preserve the labelling of the ports.
	
	\item For $X = \two $, $\XGrsimp[\two]$ contains the morphisms $ \delW[V] \colon \Lk \to (\shortmid)$, and hence the labelled stick graphs $(\shortmid, id)$ and $(\shortmid, \tau)$. There are no non-trivial morphisms out of these special graphs, and $\X$ is in the same connected component as $(\shortmid, id)$ if and only if $\X = \Lk$ (with the identity labelling) for some $k \in \N$. In particular, $\tau \colon (\shortmid) \to (\shortmid)$ does not induce a morphism in $ \XGrsimp[\two]$.
	
	\item Finally, when $X = \nul$, the morphisms $\delW[V] \colon \Wm \to (\shortmid)$, and $z \colon \C_\nul \to (\shortmid)$ are not boundary-preserving, and, in particular, do not equip $(\shortmid)$ with any labelling of its ports. So, the objects of $\XGrsimp[\nul]$ are the admissible $\nul$-graphs, and $(\shortmid)$. 
	
	In particular, $\Wm, \C_\nul$ and $(\shortmid)$ are in the same connected component of $\nul\XGrsimp$. Since $(\shortmid)$ is not admissible, there are no non-trivial morphisms in $ \XGrsimp[\nul]$ with $(\shortmid)$ as domain.

\end{itemize}
To simplify notation in what follows, we write ${\C_{\nul}}_{\setminus V} \defeq (\shortmid)$ and $ \delW[V] = z \colon \C_\nul \to (\shortmid)$.

The following lemma is immediate from the definitions:
\begin{lem}\label{lem. terminal X}
	For all finite sets $X$ and all connected $X$-graphs $\X$, $\X^\bot\defeq\X_{ /(V_2\amalg V_0)}$ is terminal in the connected component of $\XGrsimp$ containing $\X$.
	
If $\X \cong \Lk$ or $\X \cong \Wm $, $\X \cong \C_\nul$ for some $k, m \geq 0$, then $\X^\bot \cong (\shortmid)$. Otherwise, $\X^\bot$ is an admissible, connected $X$-graph (without bivalent or isolated vertices).

\end{lem}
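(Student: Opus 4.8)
The plan is to show that $\X^\bot$ is terminal by producing, for every object $\Y$ of the connected component of $\XGrsimp$ containing $\X$, a unique morphism $\Y \to \X^\bot$. First I would record the structural input from \cref{prop. vertex deletion}: the pair $(\Gretsimp, \Gret)$ is an orthogonal factorisation system on $\Gretp$, vertex deletions preserve connected components, and a connected graph $\G$ satisfies $\Gnov \cong (\shortmid)$ exactly when $W = V$ and $\G \cong \Lk$ or $\G \cong \Wm$ (together with the conventions ${\C_\nul}_{\setminus V} = (\shortmid)$ and $\delW[V] = z$). The elementary observation driving everything is that deleting a bivalent vertex only smooths out a degree-two vertex, while deleting an isolated vertex drops a disjoint point; neither operation changes the valency of any surviving vertex. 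Hence $W = V_2 \amalg V_0$ is the largest admissible deletion set, and performing it cannot create new bivalent or isolated vertices. This yields the dichotomy in the statement: $\X^\bot \cong (\shortmid)$ precisely when $\X \cong \Lk$, $\Wm$, or $\C_\nul$, and otherwise $\X^\bot$ is an admissible connected $X$-graph all of whose vertices have valency $\neq 0, 2$.

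Next I would verify that $\X^\bot$ lies in the component of $\X$. The morphism $\delW[V_2 \amalg V_0] \colon \X \to \X^\bot$ is a similarity morphism by \cref{def: similar}, and I must check that it preserves port labels so that it is a morphism of $\XGrsimp$. For $X \not\cong \two, \nul$ this is immediate from \cref{prop. vertex deletion}(3), which gives an identity on ports; for $X \cong \two$ and $X \cong \nul$ I would appeal to the tailored definitions of $\XGrsimp[\two]$ and $\XGrsimp[\nul]$, which were arranged precisely to contain $\delW[V] \colon \Lk \to (\shortmid)$, $\delW[V] \colon \Wm \to (\shortmid)$ and $z \colon \C_\nul \to (\shortmid)$.

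For terminality I would first describe the whole component. Each generating morphism of $\XGrsimp$ — a vertex deletion or a port-preserving isomorphism — leaves the subgraph spanned by the vertices of valency $\neq 0,2$ unchanged, up to the merging of maximal bivalent chains into single edges; so any two objects in one component have isomorphic cores. Thus every object $\Y$ of the component is a connected $X$-graph with $\Y^\bot \cong \X^\bot$, and the full deletion $\Y \to \Y^\bot \cong \X^\bot$ provides a morphism into $\X^\bot$. It then remains to prove uniqueness. Using the orthogonal factorisation system and the fact that $\X^\bot$ admits no further non-trivial vertex deletion, any morphism $\Y \to \X^\bot$ in the component must delete exactly the bivalent and isolated vertices of $\Y$, so it differs from the canonical reduction by a port-preserving automorphism of $\X^\bot$; uniqueness therefore reduces to showing that $\X^\bot$ has no non-trivial such automorphism inside the component.

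The hard part will be exactly this rigidity step — controlling the port-preserving automorphisms of the reduced graph $\X^\bot$ — and I expect the three palette ranges to behave differently. For $X \cong \two$ the deliberate exclusion of $\tau \colon (\shortmid) \to (\shortmid)$ from $\XGrsimp[\two]$ separates $(\shortmid, id)$ from $(\shortmid, \tau)$ and pins down the unique map out of each $\Lk$; for $X \cong \nul$ the fact that $(\shortmid)$ carries no port-labelling and admits no non-trivial morphisms from itself makes it terminal over the component containing the wheels $\Wm$ and $\C_\nul$. Both of these I would reduce to the explicit computations of $\Gretp(\Wm, \G)$ and $\Gretp(\Lk, \G)$ in \cref{wheel deletion} and \cref{line deletion}. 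The generic case, where $\X^\bot$ may still carry internal symmetry (for instance a loop at a vertex), is the most delicate: here one must use the $X$-labelling of the ports, together with connectivity, to show that every port-preserving automorphism occurring in the component is forced to be trivial.
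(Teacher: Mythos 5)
The paper does not actually prove this lemma: it is introduced with the words ``The following lemma is immediate from the definitions'', with the underlying facts supplied by \cref{prop. vertex deletion} and the case-by-case construction of $\XGrsimp$ in \cref{ssec. Similar connected X-graphs}. Your handling of the dichotomy (deleting bivalent and isolated vertices cannot create new ones, so $\X^\bot$ is either $(\shortmid)$ or a reduced admissible connected $X$-graph) and of existence (the canonical deletion $\delW[V_2 \amalg V_0] \colon \X \to \X^\bot$ is a port-preserving similarity morphism, and every object of the component reduces to the same $\X^\bot$) is correct and is essentially the argument the paper leaves implicit.

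The genuine problem is your last step. You propose to prove uniqueness by showing that ``every port-preserving automorphism occurring in the component is forced to be trivial''; that rigidity statement is false, and no use of the $X$-labelling or connectivity can rescue it. Concretely, the graph $\C_X^{x \ddagger y}$ of \cref{ex: N graph} (one vertex, one loop, ports $X \setminus \{x,y\}$) has no bivalent or isolated vertices, so it equals its own reduction $\X^\bot$, yet swapping the two half-edges of the loop while fixing everything else is a non-trivial automorphism fixing every port, hence a morphism of $\XGrsimp[X \setminus \{x,y\}]$ from $\X^\bot$ to itself. For $X = \nul$ the failure is even more visible: port-preservation is vacuous, so \emph{every} automorphism of a reduced $\nul$-graph (e.g.\ the graph with two trivalent vertices joined by three parallel edges) lies in $\XGrsimp[\nul]$; and for the wheel, composing $\kappa \colon \W \to (\shortmid)$ with the edge-swapping automorphism $\tau_\W$ of $\W$ yields the second morphism $\tau \circ \kappa$ of \cref{wheel deletion}, so $(\shortmid)$ receives two distinct morphisms from $\W$ inside the component. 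Consequently strict terminality (a \emph{unique} morphism from every object) cannot hold, and the lemma can only be read in the weak sense: every object of the component admits a morphism to $\X^\bot$, uniquely up to automorphisms of $\X^\bot$. That weak statement, together with the dichotomy, is also all the paper uses downstream (the factorisation of pointed morphisms through $\X_{\setminus W}$, and connectedness of the factorisation categories in the nerve argument). Your proof should therefore stop at weak terminality; the rigidity claim you plan to establish is a dead end.
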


Let $S$ be a graphical species in $\E$. 
By (\ref{eq. D on graphs general}), if $\X$ is a connected $X$-graph and $\XGrsimp^\X$ is the connected component of $\X$ in $\XGrsimp$, then there is a canonical isomorphism
\[ DS(\X)  \cong  \mathrm{colim}_{\X' \in \XGrsimp^\X}S(\X').\]

By (\ref{eq. D on graphs}), for $X \not \cong \nul$, 
\[TD S_X= \mathrm{colim}_{ \X \in X\Griso} DS(\X)= \mathrm{colim}_{ \X \in X\Griso} \coprod_{W \subset V_2(\X)} S(\X_{\setminus W}).\]

And, since vertex deletion morphisms preserve ports when $X \neq \nul$, each $\X_{\setminus W} \in X \Griso$ is either an admissible $X$-graph or $\X_{\setminus W} \cong (\shortmid)$, and hence there are canonical morphisms
\[ TD S_X 
\to \left \{ \begin{array}{llll}
\mathrm{colim}_{ \X \in X\Griso}  S(\X)  &= 
TS_X& = DTS_X & \text{ when } X \not \cong \nul, X \not \cong \two,\\
 \left( \mathrm{colim}_{ \X \in X\Griso}  S(\X) \right) \amalg  S_\S &  = 
TS_\two \amalg S_\S &= DTS_\two &\text{ when } X = \two
\end{array} \right. \]
(since $(\shortmid)$ is not a $\two$-graph by convention). These describe the morphisms $ \lambda_{\DD\TT} S_X \colon TDS_X \to DTS_X$ in $\E$ for $X \not \cong \nul$.

When $X = \nul$, 
\[\begin{array}{lll}
TDS_\nul & = &\mathrm{colim}_{ \X \in \nul\Griso} DS(\X)\\
& = & \left(\mathrm{colim}_{\overset{ \X \in \nul \Griso}{\X \not \cong \C_\nul}} \coprod_{W \subset V_2(\X)} S(\X_{\setminus W}) \right) \amalg DS(\C_\nul) \\ 
& = &  \left(\mathrm{colim}_{\overset{ \X \in \nul \Griso}{\X \not \cong \C_\nul}} \coprod_{W \subset V_2(\X)} S(\X_{\setminus W}) \right) \amalg S_\nul \amalg \tilde {S_\S}. 
\end{array}\]
Vertex deletion maps $\delW \colon \G \to \Gnov$ from $\nul$-graphs preserve ports unless $\delW \cong z \colon \C_\nul \to (\shortmid)$ or $\delW \cong \kappa ^m \colon \Wm \to (\shortmid)$. So, $\lambda_{\DD\TT} S _\nul \colon TDS_\nul \to DTS_\nul$ is given by the canonical morphism 
\[ TDS_\nul = \left(\mathrm{colim}_{\overset{ \X \in \nul \Griso}{\X \not \cong \C_\nul}} \coprod_{W \subset V_2(\X)} S(\X_{\setminus W}) \right) \amalg S_\nul \amalg \tilde {S_\S} \to  \mathrm{colim}_{ \X \in \nul \Griso} S(\X) \amalg \tilde {S_\S}  = DTS_\nul.  \]

The verification that $\lambda_{\DD\TT}$ satisfies the four distributive law axioms \cite{Bec69} follows by a straightforward application of the definitions. To help the reader gain familiarity with the constructions, I describe just one here, namely that the following diagram of endofunctors on $\GSE$ commutes:
\begin{equation}\label{eq. dist 1 DT}
\xymatrix{T^2 D \ar@{=>}[rr]^-{ T \lambda_{ \DD \TT}}\ar@{=>}[d]_-{ \mu^{\TT}D} && TDT \ar@{=>}[rr]^-{ \lambda_{ \DD \TT}T}&& DT^2 \ar@{=>}[d]^-{ D \mu^\TT}\\
	TD\ar@{=>}[rrrr]^-{ \lambda_{ \DD \TT}}&&&& DT.}
\end{equation}

Recall, from (\ref{eq. T mult}) and (\ref{eq. D on graphs}), that, for all graphical species $S$ in $\E$ and all finite sets $X$, \[\begin{array}{lll}T^2 D S_X &=&\mathrm{colim}_{ \X \in X \Griso} \mathrm{lim}_{(\C,b) \in \elG[\X]} \mathrm{colim}_{ \Y^b \in [b]\Griso} \coprod_{W^b \subset V_2}  S(\Y^b_{\setminus W^b})\\ 
&= &\mathrm{colim}_{ \X \in X \Griso} \mathrm{lim}_{ \Gg \in \GrG[\X]} \coprod_{W^\Gg \subset V_2(\Gg(\X))} S (\Gg (\X)_{\setminus W^{\Gg}}),
\end{array}\]
where, as usual, $\esv \colon \Cv \to \X$ is the canonical morphism. So, to describe the two paths in (\ref{eq. dist 1 DT}), 
 let $\X$ be a connected $X$-graph, $\Gg$ a nondegenerate $\X$-shaped graph of connected graphs, and let $W^\Gg \subset (V_0 \amalg V_2)(\Gg(\X))$ be a subset of (bivalent or isolated) vertices of the colimit $\Gg(\X)$ of $\Gg$ in $\Gr$. 
 
 For each $(\C, b) \in \elG[\X]$, let 
 \[ W^b \defeq V(\Gg(b)) \cap W^\Gg, \ \text{ so } W^{\Gg}  \cong \coprod_{v \in V(\X)} W^{\esv}.\] 
 
 If $(\X, \Gg, W) = (\C_\nul, \Gid[\C_{\nul}], V(\C_\nul))$, then $\Gg(\X) = \C_\nul$, and $(\C_\nul, b)$ is the unique object of $\elG[\X]$. 
 
  Otherwise, since $\X$ is connected, it must be the case that $W^{\Gg} \subset V_2 (\Gg(\X))$.

In each case we may define $W \subset V(\X)$ to be the set of vertices $v$ of $\X$ such that $W^{\esv} = V(\Gg(\esv))$. 
Then, 
there is a unique nondegenerate $\X_{\setminus W}$-shaped graph of connected graphs $\Gg_{\setminus W}$ given by $\Gg_{\setminus W} (\delW \circ b) = \Gg(b)_{\setminus W^b}$, and with colimit $\Gg_{\setminus W} (\X_{\setminus W})$ in $\Gretp$:
\begin{equation}\label{eq. colimits agree}
\Gg_{\setminus W} (\X_{\setminus W}) = \Gg(\X)_{\setminus W^\Gg}.
\end{equation}

	\begin{figure}[htb!]
		\begin{tikzpicture} [scale = .85]
		\node at(-6,0){ 
			\begin{tikzpicture}[scale = .35]
			\begin{pgfonlayer}{above}
			\node [dot, red] (0) at (0, 0) {};
			\node [dot, red] (1) at (1.5, -1) {};
			\node [dot, red] (2) at (0, -1) {};
			\node [dot, red] (3) at (-3, 0) {};
			\node [dot, red] (4) at (-2.5, -1) {};
			\node [dot, red] (5) at (-4, -1) {};
			\node [dot, green] (6) at (-1.5, -2.5) {};
			\node [dot, green] (7) at (-0.5, -3) {};
			\node [dot, green] (8) at (1, -3) {};
			\node [dot, green] (9) at (0, 2) {};
			\node  (10) at (-6, -4) {};
			\node  (11) at (3, -5) {};
			\node  (12) at (3, 4) {};
			\draw[draw = blue, fill = blue, fill opacity = .2] (.65,-.5) circle (1.6cm);
			\draw[draw = blue, fill = blue, fill opacity = .2] (-3.25,-.5) circle (1.6cm);
			\draw[draw = green, fill = green, fill opacity = .2] (0,2) circle (.7cm);
			\draw[rotate around={80:(-.5,-2.8)}, draw = green, fill = green, fill opacity = .2] (-.5,-2.8) ellipse (.7cm and 2cm);
			\end{pgfonlayer}
			\begin{pgfonlayer}{background}
			\draw [bend right, looseness=1.25] (0) to (2);
			\draw [bend right=60, looseness=1.25] (2) to (1);
			\draw [bend right] (0) to (1);
			\draw [in=165, out=-120, loop] (0) to ();
			\draw [bend right] (3) to (5);
			\draw [bend right=45, looseness=1.25] (5) to (4);
			\draw [in=120, out=-165, loop] (4) to ();
			\draw [in=135, out=60] (3) to (0);
			\draw [bend left] (3) to (9);
			\draw [bend left=60, looseness=0.75] (0) to (1);
			\draw [in=150, out=-60] (4) to (6);
			\draw [bend left=15] (6) to (7);
			\draw [bend right=15, looseness=1.25] (7) to (8);
			\draw [in=-75, out=30] (9) to (12.center);
			\draw [in=135, out=0, looseness=1.25] (8) to (11.center);
			\draw (5) to (10.center);
			\end{pgfonlayer}
			\end{tikzpicture}
			
		};
		
		\node at(0,0){ 
			\begin{tikzpicture}[scale = .35]
			\begin{pgfonlayer}{above}
			\node [dot, red] (0) at (0, 0) {};
			\node [dot, red] (1) at (1.5, -1) {};
			\node [dot, red] (2) at (0, -1) {};
			\node [dot, red] (3) at (-3, 0) {};
			\node [dot, red] (4) at (-2.5, -1) {};
			\node [dot, red] (5) at (-4, -1) {};
			\node (6) at (-1.5, -2.5) {};
			\node  (7) at (-0.5, -3) {};
			\node  (8) at (1, -3) {};
			\node  (9) at (0, 2) {};
			\node  (10) at (-6, -4) {};
			\node  (11) at (3, -5) {};
			\node  (12) at (3, 4) {};
			\draw[draw = blue, fill = blue, fill opacity = .2] (.65,-.5) circle (1.6cm);
			\draw[draw = blue, fill = blue, fill opacity = .2] (-3.25,-.5) circle (1.6cm);
			\draw[draw = green, fill = green, fill opacity = .8] (0,2) circle (.7cm);
				\draw[draw = green, fill = green, fill opacity = .8] (7) circle (.7cm);
			\end{pgfonlayer}
			\begin{pgfonlayer}{background}
			\draw [bend right, looseness=1.25] (0) to (2);
			\draw [bend right=60, looseness=1.25] (2) to (1);
			\draw [bend right] (0) to (1);
			\draw [in=165, out=-120, loop] (0) to ();
			\draw [bend right] (3) to (5);
			\draw [bend right=45, looseness=1.25] (5) to (4);
			\draw [in=120, out=-165, loop] (4) to ();
			\draw [in=135, out=60] (3) to (0);
			\draw [bend left] (3) to (9.center);
			\draw [bend left=60, looseness=0.75] (0) to (1);
			\draw [in=150, out=-60] (4) to (6.center);
			\draw [bend left=15] (6.center) to (7.center);
			\draw [bend right=15, looseness=1.25] (7.center) to (8.center);
			\draw [in=-75, out=30] (9.center) to (12.center);
			\draw [in=135, out=0, looseness=1.25] (8.center) to (11.center);
			\draw (5) to (10.center);
			
			\end{pgfonlayer}
			\end{tikzpicture}
			
		};
		\node at(6,0){
			\begin{tikzpicture}[scale = .35]
			\begin{pgfonlayer}{above}
			\node [dot, red] (0) at (0, 0) {};
			\node [dot, red] (1) at (1.5, -1) {};
			\node [dot, red] (2) at (0, -1) {};
			\node [dot, red] (3) at (-3, 0) {};
			\node [dot, red] (4) at (-2.5, -1) {};
			\node [dot, red] (5) at (-4, -1) {};
			\node (6) at (-1.5, -2.5) {};
			\node  (7) at (-0.5, -3) {};
			\node  (8) at (1, -3) {};
			\node  (9) at (0, 2) {};
			\node  (10) at (-6, -4) {};
			\node  (11) at (3, -5) {};
			\node  (12) at (3, 4) {};
			\draw[draw = blue, fill = blue, fill opacity = .2] (.65,-.5) circle (1.6cm);
			\draw[draw = blue, fill = blue, fill opacity = .2] (-3.25,-.5) circle (1.6cm);
			
			\end{pgfonlayer}
			\begin{pgfonlayer}{background}
			\draw [bend right, looseness=1.25] (0) to (2);
			\draw [bend right=60, looseness=1.25] (2) to (1);
			\draw [bend right] (0) to (1);
			\draw [in=165, out=-120, loop] (0) to ();
			\draw [bend right] (3) to (5);
			\draw [bend right=45, looseness=1.25] (5) to (4);
			\draw [in=120, out=-165, loop] (4) to ();
			\draw [in=135, out=60] (3) to (0);
			\draw [bend left] (3) to (9.center);
			\draw [bend left=60, looseness=0.75] (0) to (1);
			\draw [in=150, out=-60] (4) to (6.center);
			\draw [bend left=15] (6.center) to (7.center);
			\draw [bend right=15, looseness=1.25] (7.center) to (8.center);
			\draw [in=-75, out=30] (9.center) to (12.center);
			\draw [in=135, out=0, looseness=1.25] (8.center) to (11.center);
			\draw (5) to (10.center);
			
			\end{pgfonlayer}
			\end{tikzpicture}
			
		};
		
		\draw  [->, line width = 1.22, draw= gray, dashed](-4,0)--(-2,0);
		\node at (-3,.5){
			$T\lambda_{ \DD \TT}$
		};
		\draw  [->, line width = 1.22, draw= gray, dashed](2,0)--(4,0);
		\node at (3,.5){
			$\lambda_{ \DD \TT}T$
		};
		\draw  [->,dashed,  line width = 1.2, draw= gray](-5,-2)--(-5,-3);
		\node at (-5.8,-2.5){$D\mu^{\TT}S$
		};

		\draw  [->,dashed,  line width = 1.2, draw= gray](5,-2)--(5,-3);
		
		\node at (5.8,-2.5){
			$D\mu^{\TT}$
		};
		
		\node at (-6,-4.5){ 
			\begin{tikzpicture}[scale = .35]
			\begin{pgfonlayer}{above}
			\node [dot, red] (0) at (0, 0) {};
			\node [dot, red] (1) at (1.5, -1) {};
			\node [dot, red] (2) at (0, -1) {};
			\node [dot, red] (3) at (-3, 0) {};
			\node [dot, red] (4) at (-2.5, -1) {};
			\node [dot, red] (5) at (-4, -1) {};
			\node [dot, green] (6) at (-1.5, -2.5) {};
			\node [dot, green] (7) at (-0.5, -3) {};
			\node [dot, green] (8) at (1, -3) {};
			\node [dot, green] (9) at (0, 2) {};
			\node  (10) at (-6, -4) {};
			\node  (11) at (3, -5) {};
			\node  (12) at (3, 4) {};
			
			\end{pgfonlayer}
			\begin{pgfonlayer}{background}
			\draw [bend right, looseness=1.25] (0) to (2);
			\draw [bend right=60, looseness=1.25] (2) to (1);
			\draw [bend right] (0) to (1);
			\draw [in=165, out=-120, loop] (0) to ();
			\draw [bend right] (3) to (5);
			\draw [bend right=45, looseness=1.25] (5) to (4);
			\draw [in=120, out=-165, loop] (4) to ();
			\draw [in=135, out=60] (3) to (0);
			\draw [bend left] (3) to (9);
			\draw [bend left=60, looseness=0.75] (0) to (1);
			\draw [in=150, out=-60] (4) to (6);
			\draw [bend left=15] (6) to (7);
			\draw [bend right=15, looseness=1.25] (7) to (8);
			\draw [in=-75, out=30] (9) to (12.center);
			\draw [in=135, out=0, looseness=1.25] (8) to (11.center);
			\draw (5) to (10.center);
			\end{pgfonlayer}
			\end{tikzpicture}
			
		};
		\draw  [->, line width = 1.2, draw= gray, dashed](-2.5,-4)--(2.5,-4);
		\node at (0,-4.5){
			$\lambda_{ \DD \TT}$
		};
		\node at (6,-4){ 
			\begin{tikzpicture}[scale = .35]
			\begin{pgfonlayer}{above}
			\node [dot, red] (0) at (0, 0) {};
			\node [dot, red] (1) at (1.5, -1) {};
			\node [dot, red] (2) at (0, -1) {};
			\node [dot, red] (3) at (-3, 0) {};
			\node [dot, red] (4) at (-2.5, -1) {};
			\node [dot, red] (5) at (-4, -1) {};
			\node (6) at (-1.5, -2.5) {};
			\node  (7) at (-0.5, -3) {};
			\node  (8) at (1, -3) {};
			\node  (9) at (0, 2) {};
			\node  (10) at (-6, -4) {};
			\node  (11) at (3, -5) {};
			\node  (12) at (3, 4) {};
		
			\end{pgfonlayer}
			\begin{pgfonlayer}{background}
			\draw [bend right, looseness=1.25] (0) to (2);
			\draw [bend right=60, looseness=1.25] (2) to (1);
			\draw [bend right] (0) to (1);
			\draw [in=165, out=-120, loop] (0) to ();
			\draw [bend right] (3) to (5);
			\draw [bend right=45, looseness=1.25] (5) to (4);
			\draw [in=120, out=-165, loop] (4) to ();
			\draw [in=135, out=60] (3) to (0);
			\draw [bend left] (3) to (9.center);
			\draw [bend left=60, looseness=0.75] (0) to (1);
			\draw [in=150, out=-60] (4) to (6.center);
			\draw [bend left=15] (6.center) to (7.center);
			\draw [bend right=15, looseness=1.25] (7.center) to (8.center);
			\draw [in=-75, out=30] (9.center) to (12.center);
			\draw [in=135, out=0, looseness=1.25] (8.center) to (11.center);
			\draw (5) to (10.center);
			
			\end{pgfonlayer}
			\end{tikzpicture}
			
		};

	\end{tikzpicture}

	\caption{Diagram (\ref{eq. dist 1 DT}) commutes. Here the vertices in $W$ are marked in green.}
	\label{fig. DTdist}
\end{figure}

Then, the two paths (illustrated in \cref{fig. DTdist}) described by the diagram (\ref{eq. dist 1 DT}) are induced by the following canonical maps (where, in each case, the vertical arrows are the defining universal morphisms):\\

{	\bf{Top-right}}
		\[\xymatrix @R =.25cm@C = .75cm{
			&(\X, (\Gg(b)_{\setminus W^b})_{b}, W) \ar@{..>}[dd] \ar@{..>}[rr]^-{\tiny{\text{ delete } W \text{ in } \X}} && (\X_{\setminus W}, \Gg_{\setminus W}) \ar@{..>}[dd] \ar@/^1pc/@{..>}[rd]^-{\tiny{\text{evaluate colimit } \Gg_{\setminus W} }}&\\
			(\X, (\Gg(b))_{b}, (W^b)_b) \ar@{..>}[dd]\ar@/^1pc/@{..>}[ur]^-{\tiny{ \text{ delete } W^b \text{ in } \Gg(b)}}&&&&\Gg_{\setminus W}(\X_{\setminus W}) \ar@{..>}[dd] \\
	&\mathrm{lim}_{\elG[\X]}S(\Gg(b)_{\setminus W^b})\ar[rr]\ar[dd]&&S(\Gg_{\setminus W}(\X_{\setminus W})\ar@/^1pc/[dr]\ar[dd]&\\
	\mathrm{lim}_{\elG[\X]}	S(\Gg(b)_{\setminus W^b})\ar@/^1pc/[ur]\ar[dd]	&&&&S(\Gg_{\setminus W}(\X_{\setminus W}))\ar[dd] \\
&TDTS_X \ar[rr]_-{\lambda_{\DD\TT} T S_X}	&& DT^2 S_X \ar@/^1pc/[rd]_-{ D \mu^\TT S_X }&\\
T^2 DS_X \ar@/^1pc/[ur]_-{T\lambda_{\DD\TT}S_X}&&&& DTS_X }
\]

{\bf{Left-bottom}}
	\[	\xymatrix{(\X, \Gg, W^{\Gg}) \ar@{..>}[d]\ar@{..>}[rrr]^-{\tiny{\text{ evaluate colimit } \Gg}}&&&(\Gg(\X), W^{\Gg})\ar@{..>}[d] \ar@{..>}[rrr]^-{\tiny{\text{ delete } W^{\Gg} \text{ in } \Gg(\X)}}&&&\Gg(\X)_{\setminus W^{\Gg}}\ar@{..>}[d]\\ 
	S(\Gg(\X)_{\setminus W^{\Gg}})\ar@{=}[rrr]\ar[d]&&& S(\Gg(\X)_{\setminus W^{\Gg}})\ar@{=}[rrr] \ar[d]&&&S(\Gg(\X)_{\setminus W^{\Gg}})\ar[d]\\
		T^2 DS_X \ar[rrr]_-{\mu^{\TT}DS_X}&&&TDS_X \ar[rrr]_-{\lambda_{\DD\TT}  S_X}	&&& DT S_X.}\]

These are equal since $S(\Gg_{\setminus W}(\X_{\setminus W})) = S(\Gg(\X)_{\setminus W^{\Gg}})$ by construction, and hence (\ref{eq. dist 1 DT}) commutes.

The proofs that $\lambda_{\DD\TT}$ satisfies the remaining three distributive law axioms follow similarly (and are somewhat simpler than the proof that (\ref{eq. dist 1 DT}) commutes.

\begin{prop}
	\label{prop. extension exists}
	The monad $\TT$ on $\GSE$ extends to a monad $\TTp= (\Tp, \mu^{\TTp}, \eta^{\TTp})$on $\GSEp$ such that $\GSEp^{\TTp}\cong \GSE^{\DD\TT}$.
\end{prop}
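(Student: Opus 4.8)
The plan is to construct $\TTp$ explicitly on $\GSEp$, generalising the $\Set$-valued construction of \cite{Ray20}, and to deduce the equivalence of Eilenberg--Moore categories from the distributive law $\lambda_{\DD\TT}\colon TD\Rightarrow DT$ established above (of which the commuting square (\ref{eq. dist 1 DT}) is the representative axiom). By \cref{defn: dist} and \cite{Bec69}, $\lambda_{\DD\TT}$ exhibits $DT=\DD\TT$ as a monad on $\GSE$, so it suffices to produce a monad $\TTp$ on $\GSEp$ together with an isomorphism $\GSEp^{\TTp}\cong\GSE^{\DD\TT}$ under which the forgetful functor to $\GSEp=\GSE^{\DD}$ is the standard one. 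Throughout I would use the identification $\GSEp=\prE{\fisinvp}$ of \cref{lem: elp presheaves} and the fully faithful extension of pointed graphical species to presheaves on $\Gretp$.

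First I would define the endofunctor $\Tp$. For a pointed graphical species $S_*=(S,\epsilon,o)$ and a finite set $X$, set $(\Tp S_*)_\S = S_\S$ and
\[ (\Tp S_*)_X = \mathrm{colim}_{\X\in X\Griso} S_*(\X), \]
the colimit over connected $X$-graphs; the pointing $(\epsilon,o)$ induces unit and contracted-unit morphisms on $\Tp S_*$ (see the obstacle below), making $\Tp S_*$ an object of $\GSEp$. The unit $\eta^{\TTp}$ is induced, as for $\TT$, by the inclusion of corollas. For the multiplication I would run the colimit-of-colimits computation of \cref{lem. Tk mult} in the pointed setting: an element of $((\Tp)^2 S_*)_X$ is represented by a connected $X$-graph $\X$ together with a nondegenerate $\X$-shaped graph of graphs $\Gg$, and $\mu^{\TTp}$ sends this to the class of the colimit $\Gg(\X)$. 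Coherence of $\mu^{\TTp}$ with vertex deletion (hence with the $\DD$-structure) is precisely the identity $\Gg_{\setminus W}(\X_{\setminus W})=\Gg(\X)_{\setminus W^\Gg}$ of (\ref{eq. colimits agree}), and \cref{lem. terminal X} guarantees that each connected component of $\XGrsimp$ has a well-defined terminal object $\X^\bot$, so that the colimits computing $\Tp$ and $\mu^{\TTp}$ are coherent and the monad axioms follow as in \cref{sec: nonunital}.

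To identify the algebras I would use the defining property of the distributive law: by \cref{defn: dist} and \cite{Bec69}, a $\DD\TT$-algebra structure on $S$ is exactly a $\DD$-algebra structure together with a $\TT$-algebra structure that are compatible via $\lambda_{\DD\TT}$. Repackaging the $\DD$-algebra as an object $S_*\in\GSEp$ and the remaining (compatible) $\TT$-structure as a map $\Tp S_*\to S_*$ in $\GSEp$ yields a bijection between $\DD\TT$-algebra structures on $S$ and $\TTp$-algebra structures on $S_*$; as this correspondence is functorial, it gives the isomorphism $\GSEp^{\TTp}\cong\GSE^{\DD\TT}$. That $\TTp$ genuinely \emph{extends} $\TT$ is then visible from the formula for $\Tp$: forgetting the pointing recovers the connected-graph colimit (\ref{eq. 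T def}) defining $T$, and (\ref{eq: pointed sheaves}) ensures the underlying graphical species is unchanged.

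The main obstacle is the \emph{problem of loops}: for $\Tp$ to descend to $\GSEp$, the contraction of a unit edge --- which produces a wheel $\Wm$ --- must be coherently identified with a contracted unit $o$, rather than yielding a spurious new operation. This is exactly why the plain connected-graph monad $\TT$ does not lift to $\GSEp$, and why the pointed apparatus is needed: the category $\XGrsimp[\nul]$ of \cref{ssec. Similar connected X-graphs} places $\Wm$, $\C_\nul$ and $(\shortmid)$ in a single connected component via the morphisms $\kappa^m\colon\Wm\to(\shortmid)$ of \cref{wheel deletion} and $z\colon\C_\nul\to(\shortmid)$, forcing the required identification in the colimit defining $(\Tp S_*)_\nul$. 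Verifying that these identifications are compatible with $\mu^{\TTp}$ --- equivalently, that all the relevant colimits are computed correctly in the presence of wheels --- is the delicate step, and is where the generalisation from $\Set$ to an arbitrary $\E$ with sufficient $(\mathrm{co})$limits must be handled with care, since each such colimit must interact correctly with the limits defining the evaluations $S(\G)$.
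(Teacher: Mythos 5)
Your proposal contains a genuine gap, and it sits exactly at the definition of $\Tp$. You set $(\Tp S_*)_X = \mathrm{colim}_{\X\in X\Griso}\, S_*(\X)$, a colimit over the \emph{groupoid} of connected $X$-graphs and isomorphisms. But by (\ref{eq: pointed sheaves}) one has $S_*(\X) = S(\X)$ for every graph $\X$, so this colimit is literally $TS_X$ from (\ref{eq. T def}): your $\Tp$ is not a quotient of $T$ at all, merely $T$ with a pointing carried along. The paper instead defines, in (\ref{eq. Tp}), $\Tp S_X = \mathrm{colim}_{\X \in \XGrsimp} S(\X)$, a colimit over the similarity category of \cref{ssec. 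Similar connected X-graphs}, whose vertex-deletion morphisms force the identification of an $S$-structure on a graph whose bivalent vertices are decorated by units with the induced structure on the contracted graph. This difference is fatal to your algebra correspondence: with your $\Tp$, the pointing of $\Tp S_*$ is $\eta^{\TT}S\circ\epsilon$ and $\eta^{\TT}S\circ o$, so a structure map $h\colon \Tp S_*\to S_*$ in $\GSEp$ preserves the pointing \emph{automatically}, by the unit axiom $h\circ\eta^{\TT}S = \mathrm{id}$. Hence your $\TTp$-algebras would be nonunital modular operads equipped with an arbitrary unit-like element $\epsilon$ and contracted unit $o$, with no axiom forcing $\epsilon$ to be a unit for $\diamond$; this category is strictly larger than $\GSE^{\DD\TT}$, so the claimed isomorphism of EM categories fails.

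The ``repackaging'' step is where the non-formal content hides, and it cannot be delegated to Beck's theorem. The distributive law $\lambda_{\DD\TT}\colon TD\Rightarrow DT$ formally gives a \emph{lift} of $\DD$ to $\GSE^{\TT}$; as \cref{ssec. dist} emphasises, an \emph{extension} of $\TT$ to $\GSEp = \GSE^{\DD}$ need not exist in general, and its existence is precisely what \cref{prop. extension exists} asserts. The $\lambda_{\DD\TT}$-compatibility of a $\TT$-algebra structure with a given $\DD$-structure must therefore be absorbed into the definition of $\Tp$ itself, which is exactly what the quotient over $\XGrsimp$ achieves: a map $TA\to A$ is $\lambda_{\DD\TT}$-compatible if and only if it factors through the quotient $TA\twoheadrightarrow \Tp A$, and this factorisation property is what the paper uses to build the inverse functors $\GSEp^{\TTp}\leftrightarrows\GSE^{\DD\TT}$. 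Ironically, your closing ``obstacle'' paragraph names the correct mechanism --- the morphisms $\kappa^m\colon\Wm\to(\shortmid)$ and $z\colon\C_\nul\to(\shortmid)$ forcing identifications in the colimit --- but this contradicts your displayed formula (a colimit over a groupoid admits no such identifications), and it is stated only for $X=\nul$, whereas the quotient is needed at every arity (e.g.\ the line graphs $\Lk$ at $X\cong\two$, and generally any $X$-graph with bivalent vertices). Replacing $X\Griso$ by $\XGrsimp$ in your definition, and then proving that algebra maps factoring through the quotient are exactly the $\lambda_{\DD\TT}$-compatible ones, would recover the paper's proof.
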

\begin{proof}
	For $\E = \Set$, $\TTp$ has been described in detail in \cite[Section~7]{Ray20}. 
	
	For graphical species in a general category $\E$ with sufficient (co)limits, the endofunctor $\Tp$ is the quotient of $T$ given, for all pointed graphical species $S_* = (S, \epsilon, o)$ in $\E$, by
	\begin{equation}\label{eq. Tp} \Tp S_\S = S_\S, \text { and } \Tp S_X = \mathrm{colim}_{\X \in \XGrsimp} S(\X).  \end{equation}
	The unit $\epsilon ^{\Tp S}\defeq \Tp (\epsilon)  \colon S_\S \to \Tp S_\two$ is described by the obvious composite 
	\[  \xymatrix{S_\S \ar[r]^-{\epsilon} &S_\two\ar[r]^- {\eta^\TT S}& TS_\two \ar@{->>}[r]& \Tp S_\two,}  \] and the contracted unit $o ^{\Tp S}\defeq \Tp (o)  \colon S_\S \to \Tp S_\nul$ is described by
	\[  \xymatrix{S_\S \ar[r]^-{o}& S_\nul \ar[r]^-{\eta^\TT S} & TS_\nul \ar@{->>}[r]& \Tp S_\nul.} \]
	In particular, $\epsilon^{\Tp S}$ is defined by maps $S(u^k) \colon S_\S \to S(\Lk)$ (for $k \geq 1$), and $o^{\Tp S}$ by maps $S(\kappa^m)\colon S_\S \to S(\Wm)$, $m \geq 1$ and $S( o) \colon S_\S \to S(\C_\nul)$ such that $o \circ S_\tau = o$.

	The multiplication $\mu^{\TTp}$ for $\TTp$ is the one induced by $\mu^\TT$, that forgets the pair $(\X, \Gg)$ -- of an $X$-graph $\X$, and a nondegenerate $\X$-shaped graph of connected graphs $\Gg$ -- and replaces it with the colimit $\Gg(\X)$. The unit $\etap$ for $\TTp$ is defined by the maps $S_X =  S(\C_X) \mapsto \mathrm{colim}_{ \X \in \XGrsimp} S(\X)$.
	
	Let $(A_*, h_*)$ be a $\TTp$ algebra in $\GSEp$ such that $A_* = (A, \epsilon, o)$. The unique map $h \colon DTA \to A$ that preserves distinguished (contracted) units and factors through the quotient $TA \twoheadrightarrow \Tp A$ on $TA$ describes a $\DD\TT$ algebra structure on $ A$.
	
	Conversely, if $(A, h)$ is a $\DD\TT$ algebra, then in particular, $A$ has a $\DD$-algebra structure $(A, \epsilon, o)$ and $h$, together with $\lambda_{\DD\TT}$, induce a $\TTp$ algebra structure on $(A, \epsilon, o)$. (See the proof of \cite[Proposition~7.39]{Ray20}). 
	
	It is straightforward (though somewhat laborious) to use the definition of $\lambda_{\DD\TT}$ -- as in the proof of \cite[Proposition~7.39]{Ray20} -- to check that these assignments extend to inverse functors $\GSEp^{\TTp}\leftrightarrows \GSE^{\DD\TT}$.
\end{proof}


It is then straightforward to derive the following theorem (that, in the case $\E = \Set$, was the main result, Theorem~7.46, of \cite{Ray20}) for general $\E$.
\begin{thm}\label{thm CSM monad DT}
	The EM category $\GSE^{\DD\TT}$ of algebras for $\DD\TT$ is canonically isomorphic to $\CSME$.
\end{thm}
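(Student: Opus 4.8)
The plan is to leverage \cref{prop. extension exists}, which already supplies a canonical isomorphism $\GSEp^{\TTp}\cong\GSE^{\DD\TT}$ between the Eilenberg--Moore category of the lifted monad $\TTp$ on pointed graphical species and that of the composite $\DD\TT$ on $\GSE$. It therefore suffices to produce a canonical isomorphism $\GSEp^{\TTp}\cong\CSME$, so that the whole statement becomes a unit-aware refinement of \cref{prop. Talg}. First I would unpack the data of a $\TTp$-algebra $(A_*,h_*)$ with underlying pointed graphical species $A_*=(A,\epsilon,o)$. Restricting the structure map $h_*$ to the connected admissible graphs $\Griso \hookrightarrow \XGrsimp$, and using (exactly as in \cref{prop. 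Talg}) that the graphs $\C_X^{x\ddagger y}$ and $\mathcal D^{x\ddagger y}_{X,Y}$ encoding contraction and multiplication are connected, equips $A$ with a nonunital modular operad structure $(\diamond,\zeta)$; the unit-like morphism $\epsilon\colon A_\S\to A_\two$ is part of the $\DD$-structure. The remaining content is to verify that $\epsilon$ is a unit for $\diamond$ in the sense of (\ref{eq. mult unit}), and that the contracted unit $o\colon A_\S\to A_\nul$ is determined by $o=\zeta^{1\ddagger 2}_{\two}\circ\epsilon$ and so carries no new information.

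The crux is the behaviour of the line and wheel graphs inside $\XGrsimp$. The vertex-deletion morphisms $u^k\colon\Lk\to(\shortmid)$ of \cref{line deletion} collapse chains of bivalent vertices, so in the colimit $\Tp A_X=\mathrm{colim}_{\X\in\XGrsimp}A(\X)$ of (\ref{eq. Tp}) the $X$-graph obtained by attaching an $\epsilon$-decorated bivalent vertex at a leg of $\C_X$ is identified, via deletion of that vertex, with $\C_X$ itself. Compatibility of $h_*$ with this identification is precisely the unit axiom (\ref{eq. mult unit}). Dually, the morphisms $\kappa^m\colon\Wm\to(\shortmid)$ of \cref{wheel deletion} govern the contraction of a ``loop of units'', and the coequaliser $\tilde{S_\S}$ appearing in the definition of $DS_\nul$ guarantees that the resulting contracted unit is $S_\tau$-invariant and equal to $\zeta^{1\ddagger 2}_{\two}\circ\epsilon$. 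I expect this loop bookkeeping---showing that the wheel relations impose no condition beyond the modular operad axioms, while pinning down $o$---to be the main obstacle; it is exactly the ``problem of loops'' that the auxiliary monad $\DD$ and the distributive law $\lambda_{\DD\TT}$ were introduced to resolve.

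For the converse I would start from a unital modular operad $(S,\diamond,\zeta,\epsilon)$, set $o\defeq\zeta^{1\ddagger 2}_{\two}\circ\epsilon$, and check that $o$ factors through $\tilde{S_\S}$ using the unit-like identity (\ref{eq: unit compatible with involution}) together with the equivariance (\ref{eq. contraction equi}) of the contraction, which yield $o\circ S_\tau=o$. This makes $(S,\epsilon,o)$ a pointed graphical species. I would then extend the nonunital structure map of \cref{prop. Talg} across all of $\XGrsimp$: the action on a line graph $\Lk$ is forced by the unit axiom to factor through $u^k$, and the action on a wheel $\Wm$ through $\kappa^m$, so $h$ descends to a well-defined $h_*\colon\Tp S\to S$ satisfying the $\TTp$-algebra axioms.

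Finally I would confirm that these two assignments are mutually inverse and functorial: a morphism of $\TTp$-algebras is a palette-preserving map commuting with $h_*$, hence preserving $\epsilon$ and $\diamond,\zeta$, hence a morphism of unital modular operads, and conversely such a morphism automatically preserves $o=\zeta\epsilon$. Since every construction in play is assembled from limits and colimits in $\E$, the verifications are formally identical to those in the case $\E=\Set$ treated in \cite[Theorem~7.46]{Ray20}, so the argument transfers verbatim and establishes the canonical isomorphism $\GSE^{\DD\TT}\cong\CSME$.
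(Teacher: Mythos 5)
Your proposal is correct and takes essentially the same route as the paper: the paper's own proof likewise obtains the nonunital structure from \cref{prop. Talg} and settles the unit $\epsilon$ and contracted unit $o = \zeta\epsilon$ via \cref{prop. extension exists} and (\ref{eq. Tp}) --- i.e.\ exactly the line-graph/wheel-graph vertex-deletion combinatorics of $\TTp$ that you spell out. The only difference is organisational: you reduce both directions to proving $\GSEp^{\TTp}\cong\CSME$ at the outset, whereas the paper treats the forward direction directly as a $\DD\TT$-algebra statement, citing the distributive law $\lambda_{\DD\TT}$ ``or, equivalently'' the $\TTp$ route you follow.
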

\begin{proof}
	Let $(A,h)$ be a $\DD \TT$-algebra in $\E$. Then $h$ induces a $\TT$-algebra structure $h^\TT \colon TA \to A$ by restriction, and a $\DD$-algebra structure $h^\DD = h \circ D \eta^{\TT}A \colon DA \to A$. In particular, $h^\TT$ equips $A$ with the a multiplication $\diamond$ and contraction $\zeta$ such that $(A, \diamond , \zeta)$ is a nonunital modular operad in $\E$, and $h^\DD$, equips $A$ with the structure of a pointed graphical species $A_* = (A, \epsilon, o) $ in $\E$. 
	
	To prove that $(A, h)$ describes a modular operad, it remains to show that $\epsilon$ is a unit for $\diamond.$ But, this is immediate from the definition of the distributive law $\lambda_{\DD\TT}$ or, equivalently, from \cref{prop. extension exists} and (\ref{eq. Tp}).
	

Conversely, if $(S, \diamond, \zeta, \epsilon)$ is a modular operad in $\E$, then $(S, \diamond, \zeta)$ defines a $\TT$-algebra $p^\TT \colon TS \to S$ by \cref{prop. Talg}, and $(S, \epsilon, \zeta \epsilon)$ defines a $\DD$-algebra by \cref{ssec. D monad}. In particular, since $\epsilon$ is a unit for $\diamond$, and $p^\TT$ is induced by iterations of $\diamond$ and $\zeta$, $p^\TT$ induces a $\TTp$-algebra structure on $(S, \epsilon, \zeta\epsilon)$ by (\ref{eq. Tp}). Hence, $(S, \diamond, \zeta, \epsilon)$ describes an algebra for $\DD\TT$.
\end{proof}

\begin{ex}\label{ex Wheeled properads}
(See also Examples~\ref{ex directed graphical species},\ref{ex. wheeled prop CO} and \ref{ex nonunital WP}, and \cref{def wp int}.) The results of this section may be modified for oriented graphical species (see \cref{defn oriented GSE}).  By restricting to connected directed graphs, we may obtain an oriented version of $\TT$ as in \cref{ex nonunital WP}. 
Since there are no orientation preserving automorphisms of the directed exceptional edge $\downarrow$ in $\oGret$, the endofunctor $\OD$ underlying the monad $\ODD$ on $\oGSE $ such that $ \oGSE^{\ODD} \simeq \GSE^{\DD} \ov \DicommE$ is defined, for all oriented graphical species $\tilde S  \colon {\elG[\Dicomm]}^{\mathrm{op}} \to \E$, by $\OD \tilde S(\downarrow)= \tilde S(\downarrow)$ and 
	\[ \mathsf O D\tilde S(X_{\In},X_{\Out})   = \left \{ \begin{array}{ll}
	\tilde S(X_{\In},X_{\Out}) & \text{ for } ({X_{\In},X_{\Out}}) \not \cong ({\nul, \nul}), \text{ and } ({X_{\In},X_{\Out}})\not \cong (\mathbf {1}, \mathbf {1}),\\
	\tilde S (\mathbf {1}, \mathbf {1}) \amalg  \tilde S(\downarrow)&\text{ for } ({X_{\In},X_{\Out}})\cong (\mathbf {1}, \mathbf {1}),\\
\tilde S (\mathbf {0}, \mathbf {0}) \amalg  \tilde S(\downarrow)&\text{ for } ({X_{\In},X_{\Out}} )\cong (\mathbf {0}, \mathbf {0})\\
\end{array}  \right . \]
 A category $\oGretp$ of pointed directed graphs and a distributive law are obtained exactly as for the undirected case. This is equipped with distinguished morphisms $\hat z \colon \C_{\mathbf 0,\mathbf 0} \to (\oedge)$ and $\hat u \colon \C_{\mathbf 1,\mathbf 1} \to (\oedge)$ satisfying
$\hat u \circ i_1 = \hat u \circ o_1  = id_{\oedge}$. 

Algebras for the composite monad on $\oGSE$ so-obtained are wheeled properads in $\E$. 
\end{ex}

\begin{rmk}
	\label{rmk no tensor on MO}
	By \cref{rmk monoidal question}, circuit algebras are, in particular monoid objects in $\GSE$. However, they do not describe monoids in the category $\CSME$ of modular operads since $\CSME \simeq \GSE^{\DD\TT}$ is not closed under the monoidal product $ \otimes$ in $\GSE$: For, given $S, S' \in \GSE$, there is, in general, no map $DT(S \otimes S') \to DT(S)\otimes DT(S')$. And hence, if $(A, h), (A', h')$ are algebras for $\DD\TT$ it is not possible to construct a map $DT(A \otimes A') \to A \otimes A'$.
	
	To see this, consider an element of $T(A \otimes A') _{\two} \subset  T(A \otimes A') _{\two}$ represented by an $A \otimes A'$ structure on $\Lk[2]$, where decoration by $A$ is coloured in filled red, and decoration by $A'$ is unfilled blue: 
	\[ \begin{tikzpicture}[scale = .8]
		\draw (1,0) -- (5,0);
		\filldraw[white] (3,0) circle (22pt);
		
		\draw[red] (1,0)--(2.7,0);
		
		\draw[blue] (3.3,0)--(5,0);
		
		\filldraw[red] (2.7,0) circle (4pt);
	\filldraw[white]	(3.3,0) circle (4pt);
		\draw[blue, ultra thick] (3.3,0) circle (4pt);

		\draw [thick](3,0) circle (22pt);

	\end{tikzpicture}\]
	Since the $TA$ and $TA'$ components are both connected, this represents an element of $TA_{\one} \otimes TA'_{\one}$ (``delete'' the black circle) and hence, applying $h, h'$, we obtain an element of $A \otimes A'$. 
	
	However, in the following $A \otimes A'$ structure on $\Lk[3]$, the $TA$ (red, filled) and $TA'$ (blue, unfilled) components are no longer connected: 
	\[ \begin{tikzpicture}[scale = .8]
		\draw (1,0) -- (11,0);
		\filldraw[white] (3,0) circle (22pt);
		\filldraw[white] (6,0) circle (22pt);
		\filldraw	[white]	(9,0) circle (22pt);
		
		\draw[red] (1,0)--(2.7,0)
		(6.3,0)--(8.7,0);
		\draw[blue] (3.3,0)--(5.7,0)
		(9.3,0)--(11,0);
		\filldraw[red] (2.7,0) circle (4pt);
		\filldraw[white] (3.3,0) circle (4pt);
		\filldraw[white] (5.7,0) circle (4pt);
		\draw[blue, ultra thick] (3.3,0) circle (4pt);
		\draw[blue, ultra thick] (5.7,0) circle (4pt);
		\filldraw[red] (6.3,0) circle (4pt);
		\filldraw[red] (8.7,0) circle (4pt);
			\filldraw[white] (9.3,0) circle (4pt);
		\draw[blue, ultra thick] (9.3,0) circle (4pt);

		\draw [thick](3,0) circle (22pt);
		\draw[thick](6,0) circle (22pt);
		\draw[ thick]	(9,0) circle (22pt);
		
	\end{tikzpicture}\]
	Hence this does not represent an element of $(DTA \otimes DTA')_{\two}$ and so $\GSE^{\DD\TT}\simeq \CSME$ is not closed under the monoidal product $\otimes$ on $\GSE$. 
	
\end{rmk}

\section{A monad for circuit algebras}  \label{sec. monad CO}

Despite the similarity between the definitions of the monads $\TT$ and $\TTk$ on $\GSE$, it is not possible to modify $\lambda_{\DD\TT}$ by replacing $\TT$ with $\TTk$ to obtain a distributive law for $\DD$ and $\TTk$: 

Namely, for all graphical species $S$, since $DS_\two\cong S_\two \amalg S_\S$ by definition, there is a canonical morphism \[S(\shortmid \amalg \shortmid) = S_\S \times S_\S   \to DS_\two \times DS_\two =  DS(\C_\two \amalg \C_\two) \to TDS_{ \two \amalg \two}.  \] 
However, there is, in general, no morphism
\[ (S_\two \amalg S_\S)\times (S_\two \amalg S_\S) \to T S_{ \two \amalg \two}  = DTS_{\two \amalg \two}, \] and therefore $\lambda_{ \DD \TT} \colon TD \Rightarrow DT$ does not extend to a distributive law $\Tk D \Rightarrow D \Tk$. 


The problem of finding the circuit algebra monad on $\GSE$ is easily solved by observing that the monad $\TTk$ is, itself a composite: the free graded monoid monad on $\pr{\Sigma}$ (see \cref{ssec CO}) induces a monad $\LL = (L, \mu^\LL, \eta^\LL)$ on $\GSE$ 
whose algebras are graphical species in $\E$ that are equipped with an external product. Moreover, there is a distributive law $\lambda_{ \LL \TT} \colon TL \Rightarrow LT$ such that $\TTk = \LL \TT$ is the induced composite monad. 

It will then be straightforward to prove (\cref{thm. iterated law}) that, for all (unpointed) graphical species $S$ in $\E$, $LDTS$ describes a circuit algebra in $\E$. 

By \cite{Che11}, there is a distributive law $\lambda_{ \LL(\DD\TT)} \colon (DT)L \Rightarrow L(DT)$, and hence a composite monad $\LL \DD \TT$ on $\GSE$, if there are pairwise distributive laws $\lambda_{ \LL \TT} \colon TL \Rightarrow LT$, and $\lambda_{ \LL\DD} \colon DL \Rightarrow LD$, such that the following \textit{Yang-Baxter diagram} of endofunctors on $\GSE$ commutes:
	\begin{equation}\label{YB}
	\xymatrix{
		&& DTL \ar[rr]^-{D\lambda_{\LL\TT}} && DLT \ar[drr]^-{\lambda_{\LL\DD}T}&& \\
		TDL \ar[urr]^-{\lambda_{\DD\TT}L}\ar[drr]_-{T\lambda_{\LL\DD}}&&  && && LDT.\\
		&& TLD \ar[rr]_-{\lambda_{\LL\TT}D}&& LTD \ar[urr]_-{L\lambda_{\DD\TT}}&& }
	\end{equation}

In \cref{ssec. LL}, the monad $\LL$ is defined and the distributive laws $\lambda_{\LL\TT}: TL \Rightarrow LT$ and $\lambda_{\LL \DD}: DL \Rightarrow LD $ are constructed. In \cref{sec. iterated}, it is proved that the diagram (\ref{YB}) commutes, and the EM category of algebras for the composite monad $\LL\DD \TT$ on $\GSE$ is canonically equivalent to the category $\COE$ of circuit algebras in $\E$.

\subsection{The free external product monad $\LL$}\label{ssec. LL}

Recall  from \cref{ssec. presheaves} that, for any finite set $X$, $\core{X \ov \fin}$ is the maximal subgroupoid of $X \ov \fin$ whose objects are pairs $(Y, f)$ with $f \in \fin (X, Y)$ a morphism of finite sets. 

As usual, let $\E$ be a category with sufficient (co)limits. The free graded monoid endofunctor $L$ on $\GSE$ is given by 
\begin{equation}\label{eq. defn L}\begin{array}{lll}
LS_\S &= & S_\S \\ 
LS _X  &= & \mathrm{colim}_{(Y,f) \in \core{X \ov \fin}} \prod_{y \in Y} S_{  f^{-1}(y)}
\end{array}\end{equation}
for all finite sets $X$. This underlies a monad $\LL  = (L, \mu^\LL, \eta^\LL)$  with monadic multiplication $\mu^\LL$ induced by disjoint union of partitions, and monadic unit $\eta^\LL$ induced by inclusion of trivial partitions.

The following lemma is immediate from the definitions (see also \cref{lem external monoid}):
\begin{lem}\label{lem. L free monoid}
The EM category of algebras for the monad $\LL$ on $\GSE$ is equivalent to the category of graphical species with unital external product in $\E$. In particular $\GSE^{\LL}$ is equivalent to the monoid subcategory of  $\GSE$.
	\end{lem}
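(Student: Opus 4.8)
The plan is to establish the claimed equivalence $\GSE^{\LL} \simeq \GSE\text{-mon}$ by exhibiting mutually inverse functors between $\LL$-algebras and monoids for $(\GSE, \otimes, I)$, and the whole argument rests on unwinding the definition of $L$ in \eqref{eq. defn L} together with \cref{lem external monoid} and \cref{def. external}. First I would observe that the endofunctor $L$ is precisely the free monoid functor for the monoidal product $\otimes$ on $\GSE$: concretely, since $(S^{\otimes n})_X = \coprod_{(Y_1,\dots,Y_n),\, \coprod_i Y_i = X} \prod_i S_{Y_i}$ and the coproduct over ordered partitions of $X$ into $n$ (possibly empty) blocks, summed over all $n$, is canonically identified with the colimit $\mathrm{colim}_{(Y,f)\in \core{X \ov \fin}} \prod_{y\in Y} S_{f^{-1}(y)}$ appearing in \eqref{eq. defn L}, one has a natural isomorphism $LS \cong \coprod_{n\geq 0} S^{\otimes n}$. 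Here the value at $\S$ is $S_\S$ on both sides (the empty and singleton contributions to $(-)_\S$ collapse correctly because $I_\S = 0$ and $\otimes$ is additive on the $\S$-component). This identification is the technical heart of the proof.

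Granting this, I would then invoke the standard fact that for any monoidal category with the relevant coproducts over which $\otimes$ distributes, the free-monoid functor $S \mapsto \coprod_{n\geq 0} S^{\otimes n}$ underlies a monad whose EM category is exactly the category of monoids for $\otimes$; and I would check that the monadic multiplication $\mu^\LL$ (induced by disjoint union of partitions) and unit $\eta^\LL$ (induced by the trivial partition) match the free-monoid monad structure under the isomorphism $LS \cong \coprod_n S^{\otimes n}$. Concretely, $\mu^\LL$ corresponds to concatenation of tensor words and $\eta^\LL$ to the length-one inclusion $S = S^{\otimes 1}\hookrightarrow LS$. An algebra structure $h\colon LS \to S$ then restricts, via the length-two component $S^{\otimes 2}\to LS \xrightarrow{h} S$, to a map $\boxtimes\colon S\otimes S \to S$, and via the length-zero component $I\to LS\xrightarrow{h}S$ to a unit $\circleddash\colon I\to S$; the algebra axioms for $h$ are equivalent to associativity and unitality of $(\boxtimes,\circleddash)$, which is exactly the data of a monoid in $(\GSE,\otimes,I)$.

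I would then close the loop using \cref{lem external monoid}, which already identifies graphical species equipped with a (unital, commutative) external product with monoids for $\otimes$ in $\GSE$. Thus the forgetful functor $\GSE^{\LL}\to \GSE$ is identified with the forgetful functor from monoids, both being monadic for the same monad $\LL$; the comparison functor is an isomorphism of categories by uniqueness of the free-monoid monad. One subtlety I would flag explicitly: \cref{def. external} requires the external product to be \emph{commutative} ($\boxtimes_{Y,X} = \boxtimes_{X,Y}\circ \sigma_{Y,X}$), whereas the bare free-monoid monad produces possibly noncommutative monoids; the phrase ``free graded monoid'' and the symmetry built into $\core{X\ov\fin}$ (where the colimit is taken over the \emph{groupoid}, automatically symmetrising the blocks) must be reconciled with the commutativity demanded in \cref{def. external}, or else the statement should be read as identifying $\GSE^{\LL}$ with the category of graphical species with external product in the sense of \cref{def. external}.

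The main obstacle I anticipate is verifying the natural isomorphism $LS \cong \coprod_{n\geq 0} S^{\otimes n}$ carefully at the level of the $\fisinv$-presheaf structure --- in particular checking that the projections $LS(ch_x)\colon LS_X \to LS_\S = S_\S$ induced from \eqref{eq cauchy prod} agree with the maps coming from \eqref{eq. defn L}, and that the $\Sigma$-equivariance and the involution on the $\S$-component behave compatibly under the identification. Everything else is a routine, if somewhat bookkeeping-heavy, transport of the classical free-monoid monad along this isomorphism; the content is entirely in setting up the colimit over $\core{X\ov\fin}$ correctly as the symmetrised free monoid and matching it block-by-block with the $\otimes$-powers.
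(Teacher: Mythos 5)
Your proposal hinges on the natural isomorphism $LS \cong \coprod_{n\geq 0} S^{\otimes n}$, which you call the technical heart of the argument, and this isomorphism is false for two independent reasons. First, the colimit in \cref{eq. defn L} is taken over the \emph{groupoid} $\core{X \ov \fin}$, so $LS_X$ is a coproduct over isomorphism classes of partitions of the coinvariants under partition automorphisms; in particular, orderings of the blocks are identified. The free monoid $\coprod_{n\geq 0} S^{\otimes n}$ is a coproduct over \emph{ordered} decompositions with no identifications. Concretely, working in $\Set$ with $S_\nul = \emptyset$ and $S_X = *$ for $|X| = 1$, the partition of $\two$ into two singleton blocks contributes one point to $LS_\two$ but two points (the two orderings) to $\coprod_n (S^{\otimes n})_\two$. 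You notice this symmetrisation yourself in your third paragraph, but it is not merely a ``tension to be reconciled'': it contradicts the isomorphism asserted in your first paragraph. Second, the $\S$-components do not match: by \cref{rmk monoidal}, $(S \otimes T)_\S = S_\S \amalg T_\S$, so $\bigl(\coprod_{n} S^{\otimes n}\bigr)_\S = \coprod_n n \cdot S_\S$, whereas $LS_\S = S_\S$ by definition. Your parenthetical claim that the $\S$-contributions ``collapse correctly because $I_\S = 0$ and $\otimes$ is additive on the $\S$-component'' is exactly backwards: additivity is why they do not collapse. (Even the symmetrised candidate $\coprod_n (S^{\otimes n})_{\Sigma_n}$ has $\S$-component $\coprod_{n \geq 1} S_\S$.) The monad $\LL$ is palette-preserving; the free $\otimes$-monoid monad is not. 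Since the isomorphism fails, the appeal to ``uniqueness of the free-monoid monad'' and the transport of the classical free-monoid algebra correspondence collapse with it.

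What the lemma needs -- and what the paper does, declaring the lemma immediate from the definitions with a pointer to \cref{lem external monoid} -- is a direct unwinding. By the universal property of a colimit over a groupoid, a structure map $h \colon LS \to S$ (whose $\S$-component is forced to be the identity by the algebra unit axiom) is exactly an equivariant family $\boxtimes_{X,Y} \colon S_X \times S_Y \to S_{X \amalg Y}$ that automatically satisfies the commutativity $\boxtimes_{Y,X} = \boxtimes_{X,Y}\circ\sigma_{Y,X}$ of \cref{def. external} -- a map out of coinvariants is the same as an invariant map, and the block swap is an isomorphism in $\core{X \ov \fin}$ -- together with a unit $* \to S_\nul$ coming from the empty partition; the two monad-algebra axioms translate precisely into associativity and unitality of $(\boxtimes, \circleddash)$. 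Then \cref{lem external monoid} converts this into the monoid statement. Note that this also disposes of the commutativity worry you flag at the end: it is not a loose end but the very feature of the groupoid colimit that makes $\LL$-algebras land in external products in the sense of \cref{def. external}, i.e.\ in the monoids the paper has in mind.
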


In order to describe a distributive law $\lambda_{\LL \TT}\colon TL \Rightarrow$ for $\LL$ and $\TT$, observe first that $L \subset \Tk$ is a subfunctor since, if $S$ is a graphical species in $\E$, and $X$ a finite set, then 
\[ \begin{array}{lll}
LS_X  & =  & \mathrm{colim}_{(Y,f) \in \core{X \ov \fin}} \prod_{y \in Y}S_{  f^{-1}(y)}\\
& = & \mathrm{colim}_{\X \in  \Coriso} S(\X)
\end{array}\] where $\Coriso\subset X\Grisok$ is the groupoid of $X$-graphs $\X_{\mathrm{cor}}$ that are disjoint unions of corollas $\X_{\mathrm{cor}}= \coprod_{ i \in I} \CX[X_i]$ (with $\coprod_{i \in I} X_i = X$). In particular, from this point of view, the multiplication $\mu^\LL$ for $\LL$ is just the restriction of the multiplication $\mu^{\TTk}$ for $\TTk$ (\cref{prop nonunital CA}) to $\X_{\mathrm{cor}}$-shaped graphs of disjoint unions of corollas, 
and the unit $\eta^\LL$ for $\LL$, is the obvious corestriction. 

Given a finite set $X$, and a connected $X$-graph $\X \in X \Griso$, let $\CorGg$ denote the full subgroupoid of $\Gret^{(\X)}$ whose objects are nondegenerate $ \X$-shaped graphs of graphs $\Gg$ such that, for all $(\C_{X_b}, b) \in \elG[\X]$, $ \Gg (\C_{X_b}, b) \in \Coriso$ is a disjoint union of corollas. Then, \[ \begin{array}{lll}
TL S_X& = &\mathrm{colim}_{ \X \in X \Griso}  \mathrm{lim}_{(\C,b) \in \elG[\X]} LS(\C)\\
&=& \mathrm{colim}_{ \X \in X \Griso}  \mathrm{lim}_{ \Gg \in\CorGg} S(\Gg(\X)).
\end{array}\]

Since, for all $\Gg \in \CorGg$, the colimit $\Gg(\X)$ has the structure of an admissible (in general, not connected) $X$-graph, there is a canonical morphism $TLS_X \to \mathrm{colim}_{\X' \in X \Grisok} S(\X')$. 
And, therefore, since every graph is the coproduct in $\Gret$ of its connected components, $S(\X')= \prod_{k \in K}S(\X'_k)$ describes an element of $LTS_X$ for all $X$-graphs $\X'=  \coprod_{k \in K} \X'_k$.

Let $\lambda_{\LL \TT}\colon TL \Rightarrow LT$ be the natural transformation so obtained. In other words, $\lambda_{ \LL \TT}$ is induced by the canonical morphisms, 
\[\mathrm{colim}_{ \X \in X \Griso } \left( \mathrm{colim}_{ \Gg \in \CorGg} S (\Gg(\X)) \right)\to \mathrm{colim}_{ \X' \in X \Griso } S (\X') \  (\text{where } S\in \GSE, \text{ and } X \in \fiso),\] that forgets pairs
 $(\X, \Gg)$ where $\X$ is a connected admissible $X$-graph and $\Gg \in\CorGg$, and remembers only the $X$-graphs given by the colimit $\Gg(\X) = \coprod_{k \in K} \Gg(\X)_k$, in the evaluation of $S(\Gg(\X))$.

\begin{lem}\label{lem. LT is Tk}
The natural transformation $\lambda_{\LL \TT} \colon TL \Rightarrow LT$ satisfies the four axioms of a distributive law, and $\LL \TT = \TTk$ on $\GSE$. Hence, algebras for $\LL\TT$ are nonunital circuit algebras.
\end{lem}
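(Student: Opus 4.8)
The plan is to establish both assertions simultaneously by identifying the composite endofunctor $LT$ with $\Tk$ and then transporting the monad structure. The key structural input is that every admissible $X$-graph $\X'$ is canonically the disjoint union $\X' = \coprod_{k\in K}\X'_k$ of its connected components, each $\X'_k$ being a connected $X_k$-graph for the induced partition $X = \coprod_k X_k$; since port-preserving $X$-isomorphisms preserve this decomposition, the groupoid $X\Grisok$ is equivalent to the groupoid of partitions $(Y,f)\in\core{X\ov\fin}$ equipped with a connected $f^{-1}(y)$-graph for each $y\in Y$. Using that $S$ sends disjoint unions of graphs to products (\cref{ex. product union}), so that $S(\X') = \prod_k S(\X'_k)$, and that the relevant products commute with these groupoid colimits, this yields a natural isomorphism
\[
\Tk S_X = \mathrm{colim}_{\X'\in X\Grisok} S(\X') \;\cong\; \mathrm{colim}_{(Y,f)\in\core{X\ov\fin}}\;\prod_{y\in Y}\Bigl(\mathrm{colim}_{\X_y\in f^{-1}(y)\Griso} S(\X_y)\Bigr) = LTS_X,
\]
natural in $S$ and $X$. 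This isomorphism is precisely the identification used in the definition of $\lambda_{\LL\TT}$, which factors as $TL \Rightarrow \Tk \cong LT$.

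By \cref{defn: dist}, to show that $\lambda_{\LL\TT}\colon TL \Rightarrow LT$ is a distributive law it suffices to verify that the induced triple $(LT,\mu_{\lambda_{\LL\TT}},\eta^\LL\eta^\TT)$, with $\mu_{\lambda_{\LL\TT}} = (\mu^\LL\mu^\TT)\circ(L\lambda_{\LL\TT}T)$, is a monad on $\GSE$. I would prove the stronger claim that, under the isomorphism $LT\cong\Tk$ above, this triple is carried isomorphically onto $\TTk = (\Tk,\muk,\etak)$, which is already known to be a monad (\cref{ssec nonunital CO}); this simultaneously gives the equality $\LL\TT = \TTk$. The comparison of units is immediate, since $\eta^\LL$, $\eta^\TT$ and $\etak$ are all induced by the inclusion $\fisinv\hookrightarrow\Gret$ sending a corolla $\CX$ to the trivial $X$-graph, which is at once connected and a disjoint union of corollas. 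For the multiplications, both $\mu_{\lambda_{\LL\TT}}$ and $\muk$ are instances of ``evaluate a graph of graphs to its colimit in $\Gret$'' (\cref{lem. Tk mult}): $\muk$ does this directly for an admissible $X$-graph $\X$ together with a nondegenerate $\X$-shaped graph of graphs $\Gg$, whereas $\mu_{\lambda_{\LL\TT}}$ first applies $\lambda_{\LL\TT}$ to reorganise the data into connected components, evaluates the connected colimits via $\mu^\TT$, and then reassembles the monoid structure via $\mu^\LL$.

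The step I expect to be the genuine obstacle is verifying that these two procedures yield the same result, i.e.\ that forming the colimit $\Gg(\X)$ in $\Gret$ is compatible with the connected-component decomposition. This reduces to the fact that disjoint union commutes with colimits of graphs of graphs, recorded in \cref{colimit exists} and \eqref{eq. C3 motivation}: the connected components of $\Gg(\X)$ are exactly the colimits of the restrictions of $\Gg$ regrouped according to how substitution fuses the components of $\X$. Making this precise requires carefully tracking the interaction between the partition of $X$ (the $\LL$-datum) and the connectivity created by substitution (the $\TT$-datum), and I would carry it out by the same ``same colimit in $\Gret$'' technique as in the proof of \cref{prop nonunital CA}, representing each composite as a graph of graphs and checking that the universal objects coincide. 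Once the triple is identified with $\TTk$, \cref{defn: dist} yields that $\lambda_{\LL\TT}$ is a distributive law and that $\LL\TT = \TTk$; the final assertion that algebras for $\LL\TT$ are nonunital circuit algebras is then immediate from \cref{prop nonunital CA}.
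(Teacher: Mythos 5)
Your opening identification $LT = \Tk$ (decomposing an admissible $X$-graph into connected components, so that $X\Grisok$ is equivalent to the groupoid of partitions of $X$ with a connected graph attached to each part) is exactly the paper's first step, and your closing appeal to \cref{prop nonunital CA} is also the paper's. The problem is the middle inference. What your argument delivers is that the triple $(LT, \mu_{\lambda_{\LL\TT}}, \eta^{\LL}\eta^{\TT})$ is a monad, by identifying it with $\TTk$; you then invoke \cref{defn: dist}. But the lemma asserts that $\lambda_{\LL\TT}$ satisfies Beck's \emph{four axioms}, and these do not follow from the composite carrying a monad structure: Beck's correspondence between distributive laws and monad structures on $LT$ requires, beyond the monad axioms for $\mu_{\lambda}$, that the canonical maps $L\eta^{\TT}\colon L \Rightarrow LT$ and $\eta^{\LL}T\colon T \Rightarrow LT$ be monad morphisms and that the middle unitary law $\mu_{\lambda}\circ(L\eta^{\TT}\eta^{\LL}T) = \mathrm{id}_{LT}$ hold; only with these can $\lambda$ be recovered from the multiplication and the four axioms deduced. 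The monad axioms alone see $\lambda$ only through the composite $(\mu^{\LL}\mu^{\TT})\circ(L\lambda T)$, so they yield ``integrated'' consequences of the axioms, not the axioms themselves. This is not pedantry here: the four axioms are precisely what is consumed downstream, in the Yang--Baxter argument of \cref{prop. iterated law} (Cheng's theorem takes genuine pairwise distributive laws as input) and in the lifting of the distributive law to $\GSp$ in \cref{sec. nerve}.

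The paper closes this, and simultaneously dissolves what you call the ``genuine obstacle'', with one observation you are a step away from making: $L \subset \Tk$ and $T \subset \Tk$ are subfunctors whose structure maps $\mu^{\LL}, \mu^{\TT}, \eta^{\LL}, \eta^{\TT}$ are (co)restrictions of $\muk$ and $\etak$, and $\lambda_{\LL\TT}$ -- ``evaluate the colimit of the graph of graphs'' -- is by its very construction the restriction of $\muk$ to the subfunctor $TL \subset (\Tk)^2$. Each of the four Beck diagrams is then the restriction, to a subfunctor of $(\Tk)^2$ or $(\Tk)^3$, of an associativity or unit diagram for $\TTk$, and so commutes because $\TTk$ is already known to be a monad (\cref{ssec nonunital CO}). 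The same restriction argument gives $\mu_{\lambda_{\LL\TT}} = \muk$, i.e.\ $\LL\TT = \TTk$ as monads, essentially for free: the compatibility of graph substitution with disjoint union that you propose to re-prove by the ``same colimit'' technique is exactly the associativity of $\muk$, which \cref{lem. Tk mult} and \cref{prop nonunital CA} already establish. So your route can be repaired -- either verify Beck's side conditions alongside your identification, or, better, replace the identification argument by the restriction observation -- but as written the passage from ``the induced triple is a monad'' to ``the four axioms hold'' is a genuine gap.
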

\begin{proof}
Since a graph is the disjoint union of its connected components, $LT = \Tk  \colon \GSE \to \GSE$. 

Moreover, $L \subset \Tk$ and $T \subset \Tk$ so $TL \subset ({\Tk})^2$ is a subfunctor, and $\lambda_{\LL\TT}$ is just the restriction to $TL$ of $\mu^{\TTk} \colon ({\Tk})^2 \Rightarrow \Tk = LT$ on $\GSE$. The four distributive law axioms then follow immediately from the monad axioms.
\end{proof}

Observe also that there is an obvious choice of natural transformation $\lambda_{ \LL \DD} \colon DL \Rightarrow LD$ that defines the distributive law for $\LL$ and $\DD$:

\begin{lem}
	 The natural transformation $\lambda_{\LL \DD} \colon DL \Rightarrow LD$ obtained by extending 
	 the natural transformation 
$L \eta^{\DD} \colon L \Rightarrow LD$ to a $DL$ by
\[ S_\S \to DS_\two \xrightarrow {\eta^\LL DS} LDS_\two, \ \text{ and } S_\S \to DS_\nul \xrightarrow {\eta^\LL DS} LDS_\nul, \ \text{ for all } S \in \GSE\] describes a distributive law.
\end{lem}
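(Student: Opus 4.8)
The plan is to verify directly that $\lambda \defeq \lambda_{\LL\DD}$ satisfies the four axioms of \cref{defn: dist} (in the form given by Beck \cite{Bec69}): compatibility with $\eta^\DD$, with $\eta^\LL$, with $\mu^\DD$, and with $\mu^\LL$. Before doing so I would make $\lambda_S$ explicit on each arity. It is the identity at $\S$; at every arity $X \not\cong \two,\nul$ it is the component of $L\eta^\DD$; and at the two exceptional arities, where $DLS_\two = (LS)_\two \amalg S_\S$ and $DLS_\nul = (LS)_\nul \amalg \tilde{S_\S}$, it is $L\eta^\DD$ on the non-adjoined summand together with the stated maps $S_\S \to DS_\two \xrightarrow{\eta^\LL DS} LDS_\two$ and $S_\S \to DS_\nul \xrightarrow{\eta^\LL DS} LDS_\nul$ on the adjoined summands. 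Naturality of $\lambda$ in $S$ is then immediate, since each constituent ($\eta^\DD$, $\eta^\LL$, the unit-summand inclusions, and the coequaliser $S_\S \twoheadrightarrow \tilde{S_\S}$ of \cref{ssec. D monad}) is natural.

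The key simplification I would exploit is that $D$ agrees with the identity functor at every arity $X \not\cong \two,\nul$ (\cref{ssec. D monad}), so that each of the four axioms holds automatically at such $X$, collapsing to the corresponding identity for the monad $\LL$; it therefore suffices to check the axioms at the arities $\two$ and $\nul$. The two unit axioms I expect to be immediate: the $\eta^\DD$-axiom holds by construction, because $\lambda$ restricted along the inclusion $\eta^\DD L$ of the non-adjoined summand is defined to be $L\eta^\DD$; and for the $\eta^\LL$-axiom one checks on the two summands at $\two$ (resp.\ $\nul$) that the adjoined-unit inclusion followed by $\eta^\LL DS$ is exactly the trivial-partition image, under $\eta^\LL D$, of the unit summand of $DS$. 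The $\mu^\DD$-axiom I would reduce to the description of $\mu^\DD$ as the projection that collapses the two copies of $S_\S$ (resp.\ $\tilde{S_\S}$) in $D^2S_\two$ (resp.\ $D^2 S_\nul$): both composites send the twice-adjoined unit to the single length-one unit of $LDS$.

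The hard part will be the axiom for $\mu^\LL$, which records that adjoining formal (contracted) units commutes with the concatenation of partitions underlying $\mu^\LL$. The guiding observation is that $\lambda$ always carries the adjoined summands $S_\S$ and $\tilde{S_\S}$ into the length-one (trivial) partition of $LDS$, so that their interaction with $\mu^\LL$ is governed entirely by the unit laws of $\LL$; tracing both composites then gives the identity at arity $\two$ at once. The genuinely delicate case is arity $\nul$, where $LDS_\nul$ is the free graded monoid on $S_\nul \amalg \tilde{S_\S}$ while $DLS_\nul$ is the free graded monoid on $S_\nul$ with a single extra summand $\tilde{S_\S}$; here I would keep careful track of the coequaliser defining $\tilde{S_\S}$ and confirm that the adjoined contracted unit is sent to the corresponding length-one word, after which the axiom again follows from the monad laws for $\LL$. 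As a conceptual cross-check (and an alternative to the direct computation), I would note that $\lambda$ is precisely the distributive law corresponding, via \cite{Bec69, Che11}, to the lift of $\LL$ to the category $\GSEp$ of pointed graphical species (\cref{lem: elp presheaves}) sending $(S,\epsilon,o)$ to $(LS,\, \eta^\LL\circ\epsilon,\, \eta^\LL\circ o)$; exhibiting that this assignment is a well-defined monad lift—routine from the structures of $\DD$ and $\LL$—would itself guarantee that $\lambda$ is a distributive law.
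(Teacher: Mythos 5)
Your proposal is correct and takes essentially the same route as the paper: the paper's own proof just describes $\lambda_{\LL\DD}$ informally (tuples of elements of $S$ regarded as tuples of elements of $DS$, and adjoined (contracted) units regarded as $1$-tuples in $LDS$) and asserts that the Beck axioms follow routinely using the monad unit axiom for $\eta^{\DD}$ --- precisely the direct verification you flesh out, summand by summand at the exceptional arities $\two$ and $\nul$. Your closing cross-check via the lift $(S,\epsilon,o)\mapsto (LS,\eta^\LL \epsilon,\eta^\LL o)$ is not part of the paper's proof of this lemma, but it matches the lift $\widetilde{\LL}$ that the paper constructs later in \cref{ssec LTp}, so it is a consistent (if strictly speaking incomplete, since one must still identify the induced law with the given $\lambda_{\LL\DD}$) alternative.
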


\begin{proof} 
Informally, $\lambda_{\LL \DD}$ takes a tuple $(S_{X_i})_{i = 1}^n \in LS_{\prod_i X_i}$ of objects in the image of $S$, and regards it as a tuple in the image of $DS$, and takes the image of $S_\S$ in $DS_\nul$ or $DS_\two$ and regards it as a 1-tuple in the image of $DS \hookrightarrow LDS$. This construction is natural in $S$ and it is simple to show, using the monad unit axiom for $\eta^{\DD}$, that $\lambda_{\LL \DD}$ so obtained satisfies the axioms of a distributive law. 
\end{proof}

\subsection{An iterated distributive law for circuit algebras}\label{sec. iterated}
So far, we have defined three monads $\TT, \DD$ and $\LL$ on $\GSE$, and three distributive laws $\lambda_{\DD\TT}\colon TD \Rightarrow DT$, $\lambda_{\LL\TT}\colon TL\Rightarrow LT$  and $\lambda_{\LL\DD}\colon DL\Rightarrow LD$. It remains to show that the Yang-Baxter diagram (\ref{YB}) commutes and therefore there is an iterated distributive law for the three monads such that the resulting composite monad $\LL \DD \TT $ on $\GSE$ is the desired circuit algebra monad. 

\medspace

Let $\G$ be any graph and $W \subset V_2 $ any subset of bivalent vertices of $\G$. As in \cref{ss. pointed}, let $\Gnov$ be the graph obtained from $\G$ by deleting the vertices in $W$. If $\Gg_{\setminus W} \colon \elG [\Gnov]\to \Gret$ is a nondegenerate $\Gnov$-shaped graph of graphs, then there is an induced nondegenerate $\G$-shaped graph of graphs $\Gg^W \colon \elG \to \Gret$ defined by
\begin{equation}
\label{eq. GgW}
(\C,b) \mapsto \left \{ \begin{array}{ll}
\C_\two & (\C,b) \text{ is a neighbourhood of } v \in W\\
\Gg_{\W} (\delW \circ b) & \text{ otherwise.}
\end{array} \right .
\end{equation} 
Moreover, $\Gg^W$ induces an inclusion $W \hookrightarrow V_2(\Gg^W (\X))$ (see \cref{fig. GgW}), and   
 \begin{equation}
 \label{eq. deleted colims} \Gg^W(\X)_{\setminus W} = \Gg_{\setminus W}(\X_{\setminus W}) .
 \end{equation} 
Hence, the assignment $\Gg_{\setminus W} \to \Gg^W $ defines a canonical full embedding $\Gret^{(\Gnov)}\hookrightarrow \Gret^{(\G)}$ of categories that preserves similarity classes of colimits:   
\begin{figure}[htb!]
	\begin{tikzpicture} [scale = .8]
	\node at (-2,-4){$\Gg_{\setminus W}$};
	\node at(0,-5){ 
		\begin{tikzpicture}[scale = .3]
\begin{pgfonlayer}{above}
\node [dot, red] (0) at (2.5, -0.5) {};
\node [dot, red] (1) at (3, -1) {};
\node [dot, red] (3) at (-1, -0.25) {};
\node [dot, red] (4) at (-0.25, -1) {};
\node [dot, red] (5) at (-1.75, -1) {};
\node  (7) at (1.5, -3) {};
\node  (9) at (2, 2) {};
\node  (10) at (-4, -4) {};
\node  (11) at (5, -5) {};
\node  (12) at (5, 4) {};
\node  (13) at (7, 0) {};
\draw [draw = blue, fill = blue, fill opacity = .2](2.7,-.75) circle (1.3cm);
\draw [draw = blue, fill = blue, fill opacity = .2](-1, -.5) circle (1.3cm);
\end{pgfonlayer}
\begin{pgfonlayer}{background}
\draw [in=165, out=-120, loop] (0) to ();
\draw [bend right=45, looseness=1.25] (5) to (4);
\draw [in=135, out=60] (3) to (0);
\draw [bend left] (3) to (9.center);
\draw [bend right, looseness=1.25] (9.center) to (12.center);
\draw [bend right] (5) to (10.center);
\draw [bend left] (4) to (7.center);
\draw [in=150, out=-90, looseness=0.75] (7.center) to (11.center);
\draw [in=180, out=60] (1) to (13.center);
\end{pgfonlayer}
		\end{tikzpicture}
		
	};
	\node at (6,-4){$\Gg^W$};
\node at(8,-5){ 
	\begin{tikzpicture}[scale = .3]
	\begin{pgfonlayer}{above}
	\node [dot, red] (0) at (2.5, -0.5) {};
	\node [dot, red] (1) at (3, -1) {};
	\node [dot, red] (3) at (-1, -0.25) {};
	\node [dot, red] (4) at (-0.25, -1) {};
	\node [dot, red] (5) at (-1.75, -1) {};
	\node [dot, green] (7) at (1.5, -3) {};
	\node [dot, green] (9) at (2, 2) {};
	\node  (10) at (-4, -4) {};
	\node  (11) at (5, -5) {};
	\node  (12) at (5, 4) {};
	\node  (13) at (7, 0) {};
	\draw [draw = blue, fill = blue, fill opacity = .2](2.7,-.75) circle (1.3cm);
	\draw [draw = blue, fill = blue, fill opacity = .2](-1, -.5) circle (1.3cm);
		\draw [draw = green, fill = green, fill opacity = .2](7) circle (.6cm);
			\draw [draw = green, fill = green, fill opacity = .2](9) circle (.6cm);
	\end{pgfonlayer}
	\begin{pgfonlayer}{background}
	\draw [in=165, out=-120, loop] (0) to ();
	\draw [bend right=45, looseness=1.25] (5) to (4);
	\draw [in=135, out=60] (3) to (0);
	\draw [bend left] (3) to (9.center);
	\draw [bend right, looseness=1.25] (9.center) to (12.center);
	\draw [bend right] (5) to (10.center);
	\draw [bend left] (4) to (7.center);
	\draw [in=150, out=-90, looseness=0.75] (7.center) to (11.center);
	\draw [in=180, out=60] (1) to (13.center);
	\end{pgfonlayer}
	\end{tikzpicture}
	
};
	\node at (-2,1){$\Gnov$};
	\node at(0,0){ 
	\begin{tikzpicture}[scale = .3]
	\begin{pgfonlayer}{above}
	\node [dot, blue] (0) at (2.5, -0.5) {};
	\node [dot, blue] (1) at (2.5, -0.5) {};
	\node [dot, blue] (3) at (-1, -0.5) {};
	\node [dot, blue] (4) at (-1, -0.5) {};
	\node [dot, blue] (5) at (-1, -0.5) {};
	\node  (7) at (1.5, -3) {};
	\node  (9) at (2, 2) {};
	\node  (10) at (-4, -4) {};
	\node  (11) at (5, -5) {};
	\node  (12) at (5, 4) {};
	\node  (13) at (7, 0) {};
	\end{pgfonlayer}
	\begin{pgfonlayer}{background}
	\draw [in=135, out=60] (3) to (0);
	\draw [bend left] (3) to (9.center);
	\draw [bend right, looseness=1.25] (9.center) to (12.center);
	\draw [bend right] (5) to (10.center);
	\draw [bend left] (4) to (7.center);
	\draw [in=150, out=-90, looseness=0.75] (7.center) to (11.center);
	\draw [in=180, out=60] (1) to (13.center);
	\end{pgfonlayer}
	\end{tikzpicture}	
};
	\node at (6,1){$\G$};
	\node at(8,0){ 
	\begin{tikzpicture}[scale = .3]
	\begin{pgfonlayer}{above}
	\node [dot, blue] (0) at (2.5, -0.5) {};
	\node [dot, blue] (1) at (2.5, -0.5) {};
	\node [dot, blue] (3) at (-1, -0.5) {};
	\node [dot, blue] (4) at (-1, -0.5) {};
	\node [dot, blue] (5) at (-1, -0.5) {};
	\node [dot, green] (7) at (1.5, -3) {};
	\node  [dot, green](9) at (2, 2) {};
	\node  (10) at (-4, -4) {};
	\node  (11) at (5, -5) {};
	\node  (12) at (5, 4) {};
	\node  (13) at (7, 0) {};
	\end{pgfonlayer}
	\begin{pgfonlayer}{background}
	\draw [in=135, out=60] (3) to (0);
	\draw [bend left] (3) to (9.center);
	\draw [bend right, looseness=1.25] (9.center) to (12.center);
	\draw [bend right] (5) to (10.center);
	\draw [bend left] (4) to (7.center);
	\draw [in=150, out=-90, looseness=0.75] (7.center) to (11.center);
	\draw [in=180, out=60] (1) to (13.center);
	\end{pgfonlayer}
	\end{tikzpicture}	
};
\end{tikzpicture}
\caption{The colimits of the $\G$-shaped graph of graphs $\Gg^W$ described in (\ref{eq. GgW}), and the $\Gnov$-shaped graph of graphs $\Gg_{\setminus W}$ are similar in $\Gretp$. }
	\label{fig. GgW}
\end{figure}
 In particular, the embedding $\Gret^{(\Gnov)}\hookrightarrow \Gret^{(\G)}$ restricts to an inclusion  $\mathsf{Cor}_\boxtimes^{(\Gnov)}\hookrightarrow \mathsf{Cor}_\boxtimes^{(\G)}$. 
 

\begin{prop}\label{prop. iterated law}
	The triple of distributive laws $\lambda_{ \DD \TT}\colon TD \Rightarrow DT$, $\lambda_{ \LL \TT}\colon TL \Rightarrow LT$, and $\lambda_{ \LL \DD }\colon DL  \Rightarrow LD$ on $\GS$, describe a composite monad $\LL \DD\TT$ on $\GSE$. 

\end{prop}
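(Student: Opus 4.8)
The plan is to invoke Cheng's theorem on iterated distributive laws \cite{Che11}: once the three pairwise distributive laws $\lambda_{\DD\TT}$, $\lambda_{\LL\TT}$ and $\lambda_{\LL\DD}$ are known to satisfy the Yang--Baxter condition, the composite $\LL\DD\TT$ automatically acquires a monad structure. Thus the entire content of the proof is to show that the Yang--Baxter diagram (\ref{YB}) of endofunctors on $\GSE$ commutes. As in the verification of the distributive law axioms for $\lambda_{\DD\TT}$ (for instance the commutativity of (\ref{eq. dist 1 DT})), I would do this by evaluating both composite natural transformations $TDL \Rightarrow LDT$ on an arbitrary graphical species $S$ and finite set $X$, giving each a fully explicit description in terms of graphs of graphs, and exhibiting both as one and the same canonical morphism out of a colimit.

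First I would unwind the relevant combinatorial data. An element of $TDLS_X$ is represented by a connected $X$-graph $\X \in X\Griso$ (the $T$-level), a decoration of each vertex of $\X$ by a disjoint union of corollas, i.e. an object $\Gg$ of $\CorGg$ (the $L$-level), and a subset $W$ of the bivalent vertices of the resulting graph of graphs slated for deletion (the $D$-level). Tracing the top path, $\lambda_{\DD\TT}L$ globalises the deletion data to the colimit graph $\Gg(\X)$ via (\ref{eq. colimits agree}); next $D\lambda_{\LL\TT}$ forms the colimit and, using $\LL\TT = \TTk$ (\cref{lem. LT is Tk}), records its decomposition into connected components; finally $\lambda_{\LL\DD}T$ re-reads the unit data componentwise. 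Along the bottom path, $T\lambda_{\LL\DD}$ and $\lambda_{\LL\TT}D$ first split the colimit into its connected components, carrying the per-vertex data along, and only then does $L\lambda_{\DD\TT}$ globalise the deletions within each individual component.

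The two paths therefore differ only in the order in which two operations are performed: deleting the chosen bivalent vertices, and splitting the colimit graph into its connected components. The key step is to show these two operations commute, which is exactly the content of \cref{prop. vertex deletion}(1): vertex deletion preserves connected components and may be computed componentwise. Combined with the compatibility between deletion and graph substitution, namely the identities $\Gg^W(\X)_{\setminus W} = \Gg_{\setminus W}(\X_{\setminus W}) = \Gg(\X)_{\setminus W^{\Gg}}$ of (\ref{eq. deleted colims}) and (\ref{eq. colimits agree}), this shows both paths are induced by the same canonical map, and hence that (\ref{YB}) commutes. The near-triviality of $\lambda_{\LL\DD}$ --- it merely extends $L\eta^{\DD}$, regarding a tuple in the image of $S$ as a tuple in the image of $DS$ --- means that the two ``move $D$ past $L$'' steps carry no genuine combinatorial content, so all the real work is concentrated in the interaction of $\lambda_{\DD\TT}$ with $\lambda_{\LL\TT}$.

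I expect the main obstacle to be the careful bookkeeping in the degenerate nullary and binary cases, where $D$ behaves specially: the contracted-unit summand $\widetilde{S_\S}$, and the collapse of the wheel graphs $\Wm$ and of $\C_\nul$ to the stick $(\shortmid)$ under full deletion (via $\kappa^m$ and $z$). These are precisely the loci of the ``problem of loops'', and one must check that forming a disjoint union before or after collapsing such a component yields matching data. Here again the componentwise formulation of \cref{prop. vertex deletion} resolves the issue, since $\Wm$ and $\C_\nul$ are connected, so their collapse commutes with the splitting of $\lambda_{\LL\TT}$. Once (\ref{YB}) has been checked in these cases as well, the proposition follows immediately from \cite{Che11}.
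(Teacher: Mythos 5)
Your overall strategy matches the paper's --- invoke Cheng's theorem and verify the Yang--Baxter hexagon (\ref{YB}) by describing both composites explicitly on graph-indexed data --- but your execution contains a genuine error at the very first step: your unwinding of $TDLS_X$ is in fact the unwinding of $TLDS_X$. In the composite $TDL$, the monad $\DD$ is applied to the graphical species $LS$, so when $T$ evaluates $DLS$ on a connected $X$-graph $\X$ one obtains $\coprod_{W} (LS)(\X_{\setminus W})$: the deletion set $W$ is a subset of the bivalent (or isolated) vertices of the \emph{outer} graph $\X$, and the corolla decoration $\Gg_{\setminus W}$ lives on the \emph{deleted} graph $\X_{\setminus W}$. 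Your data $(\X,\ \Gg \in \CorGg,\ W \subset V_2(\Gg(\X)))$ --- decorate first, then mark bivalent vertices of the colimit for deletion --- is the element description of $TLDS_X$, the object one step along the \emph{bottom} path, not of the source of the hexagon. These are genuinely different sets: if $\X$ has a single $4$-valent vertex then $V_2(\X) = \emptyset$, so the only $TDL$-deletion set is empty, yet a decoration of that vertex by $\C_\two \amalg \C_\two$ creates bivalent vertices in $\Gg(\X)$ which your description would permit to be deleted. As a consequence your tracing of the top path does not parse: $\lambda_{\DD\TT}L$ acts on $TD(LS)$, where its effect is simply to perform the deletion $\X \to \X_{\setminus W}$; it cannot ``globalise deletion data to the colimit graph'', and the identity (\ref{eq. colimits agree}) you invoke there belongs to the verification of $\lambda_{\DD\TT}$'s own axioms (graphs of \emph{connected} graphs), not to this diagram.

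The mis-description also suppresses the one piece of genuine combinatorial content that your proposal explicitly dismisses. The step $T\lambda_{\LL\DD}$, evaluated on honest $TDL$-data $(\X, W, \Gg_{\setminus W})$, is implemented by the construction (\ref{eq. GgW}) of $\Gg^W$: one inserts a $\C_\two$ (carrying unit data) at every vertex of $W$, thereby extending the decoration of $\X_{\setminus W}$ to a decoration of $\X$. It is precisely this construction that makes the identity $\Gg^W(\X)_{\setminus W} = \Gg_{\setminus W}(\X_{\setminus W})$ of (\ref{eq. deleted colims}) available, and that identity --- together with the degenerate case $W = V(\X)$, where the paper simply observes that both paths factor through $S_\S$ --- is what closes the hexagon. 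So although you cite the correct key identity, your argument as written cannot reach it: with your data neither path can be applied as stated, and declaring the ``move $\DD$ past $\LL$'' steps contentless removes exactly the mechanism by which the two paths are compared. Correcting the unwinding of $TDL$, and restoring $\Gg^W$ to its central role, would bring your argument in line with the paper's proof.
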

\begin{proof}
By \cite[Theorem~1.6]{Che11}, we must check that the diagram (\ref{YB}) commutes for $\lambda_{ \DD \TT}, \lambda_{ \LL \TT}$, and $\lambda_{ \LL \DD }$:

For all graphical species $S$ in $\E$ and all finite sets $X$, 
\[
TDLS_X  = \mathrm{colim}_{ \X \in X \Griso} \coprod_{W \subset (V_0 \amalg V_2)(\X)} \mathrm{lim}_{ \Gg_{\setminus W}\in \mathsf{Cor}_\boxtimes^{\scriptscriptstyle{(\X_{\setminus W})}}} S(\Gg_{\setminus W}(\X_{\setminus W})) \] 
is indexed by triples 
$(\X, W, \Gg _{\setminus W})$, where $\X$ is a connected $X$-graph, $W \subset V_2(\X)$ (or $\X \cong \C_\nul$ and $W= V(\C_\nul)$) and $\Gg_{\setminus W} $ is a nondegenerate $\X_{\setminus W}$-shaped graph of graphs such that $\Gg_{\setminus W} (b)$ is a disjoint union of corollas for each $(\C_{X_b}, b) \in \elG[\X_{\setminus W}]$.

Observe first that, if $ W = V(\X)$, then either $\X \cong \C_\nul$ or $\X \cong \Wm$ in $\nul \Griso$, or $\X \cong \Lk $ in $ \two \Griso$. In each case $\X_{\setminus W} \cong (\shortmid)$ and hence $\Gg_{\setminus W}$ is the trivial graph of graphs $id \mapsto (\shortmid)$. In particular, the maps $S(\Gg_{\setminus W} (\X_{\setminus W})) \to TDLS_X \to LDTS_X$ induced by each path $TDL \Rightarrow  LDT$ in (\ref{YB}) coincide, since both factor through $S_\S$. 
 
Assume therefore that $W \neq V(\X)$. The two paths in (\ref{YB}) are then described by the following data:

\begin{description}
	\item[Top]
		\[\small{\xymatrix@C = .8cm{
			&	{	\begin{array}{c}
				(\X_{\setminus W}, \Gg_{\W}),\\ \X_{\setminus W}\in X \Griso
				\end{array}}
			\ar@{..>}[rrr]^-{\tiny{\text{ take colimit of } \Gg_{\setminus W}}}\ar@{..>}[d]&&& 	{	\begin{array}{c}
			\Gg_{\setminus W}(\X_{\setminus W})\\ \in X \Grisok
				\end{array}}	\ar@/^1.0pc/@{..>}[rd]^-{L\eta^{\DD}T}\ar@{..>}[d]&\\
		{	\begin{array}{c}
	(\X, W,  \Gg_{\setminus W}),\\ \X \in X \Griso
			\end{array}}\ar@/^1.0pc/@{..>}[ur]^-{\tiny{\text{delete } W \text{ in } \X}}\ar@{..>}[d] &S(\Gg_{\setminus W}(\X_{\setminus W}))\ar[d]\ar[rrr]&&&
		S(\Gg_{\setminus W}(\X_{\setminus W}))\ar[d]\ar@/^1.0pc/[rd]& {	\begin{array}{c}
			\Gg_{\setminus W}(\X_{\setminus W}) \\  \in X \Grisok
			\end{array}} \ar@{..>}[d]
			\\
		 S(\Gg_{\setminus W}(\X_{\setminus W}))\ar[d]	\ar@/^1.0pc/[ru]& DTLS_X\ar[rrr]_-{D\lambda_{\LL\TT}S} &&& DLTS_X \ar@/^1.0pc/[rd]_-{\lambda_{\LL\DD}TS}& S(\Gg_{\setminus W}(\X_{\setminus W}))\ar[d]\\
			TDLS_X	\ar@/^1.0pc/[ur]_-{\lambda_{ \DD \TT}LS} &&&&  & LDTS_X}}\]
where, in each case, we evaluate $S(\Gg_{\setminus W}(\X_{\setminus W}))$ and take the appropriate colimit of such objects.
	\item[Bottom]
The bottom path of (\ref{YB}) gives
	\[ \small{\xymatrix@C = .8cm{ {	\begin{array}{c}
					(\X, W,  \Gg_{\setminus W}) ,\\ \X\in X \Griso
				\end{array}}
	\ar@{..>}[d]	\ar@/_1.0pc/@{..>}[dr]^-{\tiny{\text{ forget } W}}
&&&&&{	\begin{array}{c}
\Gg^W(\X)_{\setminus W} \\ \in X \Grisok
	\end{array}} 
 \ar@{..>}[d] \\
		S( \Gg_{\setminus W}(\X_{\setminus W}))\ar[d]	\ar@/_1.0pc/[dr]&   {	\begin{array}{c}
			(\X, \Gg^{W}, W)  ,\\ \X\in X \Griso
			\end{array}}  \ar@{..>}[d]  \ar@{..>}[rrr]^-{\tiny{\text{ take colimit of } \Gg^W}}  &&& {	\begin{array}{c}
		(\Gg^W(\X), W)  ,\\ \Gg^W(\X)\in X \Grisok
			\end{array}}  \ar@/_1.0pc/@{..>}[ru]^-{\tiny{\text{ delete } W \text{ in } \Gg^W(\X)}} \ar@{..>}[d]&
	S(\Gg^W(\X)_{\setminus W})\ar[d]	\\
	TDLS_X\ar@/_1.0pc/[dr]_{T \lambda_{\DD\LL}S} & S(\Gg^W(\X)_{\setminus W})\ar[d] \ar[rrr]&&&S(\Gg^W(\X)_{\setminus W})\ar@/_1.0pc/[ru]\ar[d]&LDTS_X \\
	& TLDS_X\ar[rrr]_-{\lambda_{\LL\TT}DS}&&& LTDS_X \ar@/_1.0pc/[ru]_-{L \lambda_{\DD\TT} S}&}}\]
where $\Gg^W$ is the nondegenerate $\X$-shaped graph of graphs in $\mathsf{Cor}_\boxtimes^{(\X)}$ obtained from $\Gg_{/W}$ as in (\ref{eq. GgW}).
\end{description}

So, $ \Gg_{\setminus W}(\X_{\setminus W}) = \Gg^W(\X)_{\setminus W}$ by (\ref{eq. deleted colims}). Hence, the diagram (\ref{YB}) commutes, and $(\lambda_{\LL\DD}, \lambda_{\LL\TT}, \lambda_{\DD\TT})$ describe an iterated distributive law for $\LL, \DD$ and $\TT$ on $\GSE$.
\end{proof}

It therefore remains to prove:
\begin{thm}\label{thm. iterated law}
The EM category of algebras for the composite monad $\LL \DD\TT$ on $\GSE$ is isomorphic to the category $\COE$ of circuit algebras in $\E$. 
\end{thm}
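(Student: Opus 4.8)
The plan is to identify the Eilenberg--Moore category $\GSE^{\LL\DD\TT}$ by unpacking the algebra structure forced by the three distributive laws of \cref{prop. iterated law}, and then matching it term by term with \cref{def. co}. First I would record the data carried by an algebra $(A,h)$ for $\LL\DD\TT$. By the classical theory of distributive laws \cite{Bec69} together with the iterated law \cite{Che11}, the innermost monad supplies a $\TT$-algebra structure on $A$, hence by \cref{prop. Talg} a nonunital modular operad $(A,\diamond,\zeta)$; the monad $\DD$ supplies a pointed graphical species structure, i.e.\ a unit-like morphism $\epsilon\colon A_\S\to A_\two$ together with a contracted unit $o\colon A_\S\to A_\nul$; and the monad $\LL$ supplies, by \cref{lem. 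L free monoid}, an external product $\boxtimes$. The whole proof then amounts to showing that $(A,\boxtimes,\zeta,\epsilon)$ is a circuit algebra, and that this assignment is an isomorphism of categories.

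For the forward direction I would exploit two groupings of the composite. Viewing $\LL\DD\TT = \LL(\DD\TT)$, the forgetful functor $\GSE^{\LL\DD\TT}\to\GSE^{\DD\TT}$ exhibits $(A,\diamond,\zeta,\epsilon)$ as a modular operad by \cref{thm CSM monad DT}; in particular $\epsilon$ is a unit for $\diamond$, which is precisely \eqref{eq. mult unit}. On the other hand, the iterated distributive law restricts to the sub-collection $\{\LL,\TT\}$, yielding the monad $\LL\TT=\TTk$ (\cref{lem. LT is Tk}) together with a monad morphism $L\eta^\DD T\colon \LL\TT\Rightarrow\LL\DD\TT$ inserting the $\DD$-unit (its monad-morphism property follows from the coherence of the iterated law, since $\lambda_{\LL\DD}$ is itself defined from $L\eta^\DD$ and $\eta^\DD T$ is a monad morphism $\TT\to\DD\TT$). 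The induced forgetful functor $\GSE^{\LL\DD\TT}\to\GSE^{\LL\TT}$ then equips $A$ with a nonunital circuit algebra structure, so by \cref{prop nonunital CA} axioms (C1)--(C3) of \cref{def. co} hold, and the forgetful functor $\GSE^{\LL\TT}\to\GSE^{\TT}$ induced by $\lambda_{\LL\TT}$ returns the same $\TT$-algebra $(A,\diamond,\zeta)$, so that the relation \eqref{eq. forget mult}, $\diamond^{x\ddagger y}_{X,Y} = \zeta^{x\ddagger y}\circ\boxtimes_{X,Y}$, holds for this very $\diamond$. Substituting this relation into \eqref{eq. mult unit} reproduces verbatim the circuit-algebra unit axiom \eqref{eq. conn unit}; hence $(A,\boxtimes,\zeta,\epsilon)$ is a circuit algebra.

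The converse reverses this. Given a circuit algebra $(A,\boxtimes,\zeta,\epsilon)$, the pair $(A,\boxtimes,\zeta)$ is a nonunital circuit algebra, hence an $\LL\TT$-algebra; its induced $\TT$-structure has $\diamond = \zeta\circ\boxtimes$, and \eqref{eq. conn unit} is literally \eqref{eq. mult unit} for this $\diamond$, so $(A,\diamond,\zeta,\epsilon)$ is a modular operad, i.e.\ a $\DD\TT$-algebra (the contracted unit $o=\zeta\epsilon$ being redundant data determined by $\epsilon$ and $\zeta$, matching the circuit-algebra datum $(\boxtimes,\zeta,\epsilon)$). These two structures share the common $\TT$-algebra $(A,\diamond,\zeta)$, and by the correspondence between composite-monad algebras and compatible families of sub-algebras \cite{Bec69, Che11}, to promote them to a single $\LL\DD\TT$-algebra I need only that the $\DD$- and $\LL$-structures interact through $\lambda_{\LL\DD}$, which holds because $\lambda_{\LL\DD}$ simply inserts units as $1$-tuples into external products. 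Checking that morphisms on each side are exactly those preserving $(\boxtimes,\zeta,\epsilon)$ is then routine, yielding $\GSE^{\LL\DD\TT}\cong\COE$.

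The main obstacle is the assembly step in the converse: verifying that the three substructures cohere into one algebra for the composite rather than merely coexisting. The delicate interaction is the one mediated by $\lambda_{\LL\DD}$ --- one must ensure that forming external products of components carrying the adjoined (contracted) units $\epsilon, o$ on degenerate line and wheel graphs $\Lk,\Wm$ yields the same result whether one contracts units first or multiplies first. This is exactly the commutativity of the Yang--Baxter diagram \eqref{YB} established in \cref{prop. iterated law}; once it is in hand, the identification reduces to the purely formal matching of \eqref{eq. forget mult} with the unit axioms carried out above, and every remaining compatibility is a diagram chase of the same flavour as the proofs of \cref{prop nonunital CA} and \cref{thm CSM monad DT}.
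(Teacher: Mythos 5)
Your proof is correct and follows essentially the same route as the paper: both directions rest on the sub-composites $\LL\TT$ and $\DD\TT$ (via \cref{lem. LT is Tk}, \cref{prop nonunital CA} and \cref{thm CSM monad DT}) together with the identity $\diamond = \zeta\circ\boxtimes$ relating the two unit axioms. The only cosmetic difference is in the converse, where the paper writes the composite structure map explicitly as $h^\LL\circ Lh^{\DD\TT}$ while you invoke the Beck--Cheng correspondence between composite-monad algebras and pairwise-compatible component structures and then check the easy $\LL$--$\DD$ compatibility; these amount to the same verification.
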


\begin{proof}
By \cite{Bec69}, for a composite monad is equipped with an algebra structure for each of the component monads.  Hence, an algebra $(A, h)$ for the composite monad $\LL \DD \TT$ on $\GSE $ has an $\LL \TT$-algebra structure $h^{\LL\TT} \colon LTA \to A$, and also a $\DD \TT$ algebra structure $h^{\DD\TT} \colon DTA \to A$. Each structure is given by the corresponding restriction $LTA \to LDT A \xrightarrow{h} A$, respectively $DTA \to LDT A \xrightarrow{h} A$. Therefore, by \cref{lem. LT is Tk}, $h$ induces an external product $\boxtimes$, and contraction $\zeta$ on $A$ such that $(A, \boxtimes, \zeta)$ is a nonunital circuit algebra. And, by \cref{thm CSM monad DT}, $h$ induces a unital multiplication $(\diamond, \epsilon)$ on $A$, such that 
\[\diamond_{X,Y}^{x \ddagger y}  = \zeta_{X \amalg Y}^{x \ddagger y}\boxtimes_{X, Y}\colon (A_X \times A_Y)^{x\ddagger y } \to A_{X \amalg Y \setminus \{x, y\}}.\] Hence 
\[  \xymatrix{ A_X \ar[rr]^-{ (id \times ch_x)\circ \Delta}&&( A_X \times A_\S )^{x \ddagger 2}\ar[rr] ^-{ id \times \epsilon} &&(A_X \times A_\two)^{x \ddagger 2} \ar[rr]^-{\zeta^{x \ddagger 2}_{X \amalg \two}\circ \boxtimes}&&
	A_X }\] is the identity on $A_X$. So, $(A, h)$ has a canonical circuit algebra structure.

Conversely, a circuit algebra $(S, \boxtimes, \zeta, \epsilon)$ in $\E$ has an underlying modular operad structure $(S, \diamond, \zeta, \epsilon)$, where $\diamond_{X,Y}^{x \ddagger y}  = \zeta_{X \amalg Y}^{x \ddagger y}\boxtimes_{X, Y}$ and hence $(S, \boxtimes, \zeta, \epsilon)$ defines a $\DD \TT$-algebra structure $h^{\DD\TT} \colon DTS \to S$ by \cref{thm CSM monad DT}. By \cref{lem. L free monoid}, $(S, \boxtimes)$ has an $\LL$-algebra structure where $h^\LL \colon LS \to S$ is given by $\boxtimes_{X,Y} \colon S_X \times S_Y$ on $S_X \times S_Y$. It follows immediately from the circuit algebra axioms that $h^\LL \circ L h^{\DD\TT} \colon LDT S \to S$ gives $S$ the structure of an $\LL\DD \TT$-algebra.
                                                                   
These maps are functorial by construction and extend to an equivalence $\COE \simeq \GSE^{\LL\DD\TT}$.\end{proof}

\begin{cor}\label{cor coloured algebras}
		If $\E = \Set$, then for each palette $(\CCC, \omega)$ in $\Set$, the subcategory $\CCO$ of $(\CCC, \omega)$-coloured circuit algebras  is the EM-category of algebras for the monad $\LL \DD \TT$ restricted to $\CGS$. By \cref{prop. CA CO}, this is equivalent to the category $\mathsf{Alg}_{\Set}(\CWD) = \CCA$ of $\Set$-valued algebras for the operad $\CWD$ of $(\CCC, \omega)$-coloured wiring diagrams defined in \cite[Section~5.2]{RayCA1}. 
\end{cor}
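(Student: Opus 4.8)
The plan is to realise $\CCO$ as the Eilenberg--Moore category of the restriction of $\LL\DD\TT$ to $\CGS$, and then to transport the equivalence of \cref{prop. CA CO} along a fixed palette. The essential observation, from which everything follows, is that each of the three monads acts trivially on the palette: by (\ref{eq. defn L}) we have $LS_\S = S_\S$, by \cref{ssec. D monad} we have $DS_\S = S_\S$ and $DS_\tau = S_\tau$, and by (\ref{eq. T def}) we have $TS_\S = S_\S$. Consequently $LDTS$ has palette $(S_\S, S_\tau)$ whenever $S$ does, and the units $\eta^\LL, \eta^\DD, \eta^\TT$ together with the multiplications $\mu^\LL, \mu^\DD, \mu^\TT$ all restrict to the identity on the $\S$-component. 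Hence the composite monad $\LL\DD\TT$ and its structure natural transformations preserve the subcategory of graphical species with palette $(\CCC,\omega)$ and palette-preserving maps, and descend to a monad $\LL\DD\TT|_{\CGS}$ on $\CGS$.

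Next I would identify the algebras. By \cref{thm. iterated law} (applied with $\E = \Set$) there is an isomorphism $\GS^{\LL\DD\TT}\cong \CO$. If $(S, h)$ is an algebra whose underlying graphical species lies in $\CGS$, so $(S_\S, S_\tau) = (\CCC, \omega)$, then the component of $h$ at $\S$ is forced to be $\mathrm{id}_{S_\S}$: this is precisely the algebra unit axiom $h \circ \eta S = \mathrm{id}_S$ combined with the fact that $\eta S$ is the identity on $S_\S$. Thus the structure map of every such algebra is automatically palette-preserving, and an algebra morphism between two of them is a morphism in $\CGS$ exactly when it is palette-preserving. This matches objects and morphisms on the nose, so the isomorphism $\GS^{\LL\DD\TT}\cong \CO$ restricts to an isomorphism between the Eilenberg--Moore category of $\LL\DD\TT|_{\CGS}$ and the subcategory $\CCO\subset\CO$ of $(\CCC,\omega)$-coloured circuit algebras.

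The final assertion is then a direct consequence of \cref{prop. CA CO}. The equivalence $\bigCA \simeq \CO$ induced by $\Sigma \hookrightarrow \fiso$ fixes the set of colours, so it restricts, for each palette, to an equivalence between the $(\CCC,\omega)$-coloured objects on either side, namely $\CCA \simeq \CCO$; and $\CCA = \mathsf{Alg}_{\Set}(\CWD)$ by the definition of the coloured wiring-diagram operad in \cite[Section~5.2]{RayCA1}.

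I expect the only delicate point to be bookkeeping rather than mathematics: one must verify both that the restriction of the monad is well defined --- which amounts to the palette-preservation of the units and multiplications noted above --- and that the Eilenberg--Moore category of the restricted monad is genuinely the palette-preserving subcategory $\CCO$, and not merely its class of objects. Both reduce to the observation that the unit axiom pins the $\S$-component of each structure map to the identity, so there is no additional analytic content to check.
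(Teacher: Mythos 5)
Your proof is correct and takes essentially the approach the paper leaves implicit: since $L$, $D$ and $T$ each act as the identity on the $\S$-component, the composite monad $\LL\DD\TT$ restricts to $\CGS$, the algebra unit axiom forces every structure map to be palette-preserving, and so the isomorphism $\GS^{\LL\DD\TT}\cong\CO$ of \cref{thm. iterated law} restricts to identify the EM category of the restricted monad with $\CCO$. The final identification with $\mathsf{Alg}_{\Set}(\CWD)$ is, as you say, just the palette-wise restriction of the equivalence of \cref{prop. CA CO}, so there is no gap.
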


\begin{ex}
	\label{ex wheeled prop monad} 
	 By \cref{def wp int} and \cref{thm. iterated law}, the category $\WPE$ of wheeled props in $\E$ is isomorphic to the slice category $\GSE^{\LL\DD\TT} \ov \DicommE$. In particular -- as in Examples \ref{ex nonunital WP} and \ref{ex Wheeled properads} -- the constructions of Sections \ref{ssec. LL} and \ref{sec. iterated} may be modified, in terms of directed graphs, to obtain oriented versions of the monads and distributive laws and a composite wheeled prop monad on $\oGSE$. 
	
\end{ex}

	\begin{rmk}
	\label{rmk operad dist}
	Like monads, operads encode algebraic structure. Indeed, for any operad in $\Set$, one may define a monad with the same algebras.  Markl \cite{Mar96} defined distributive laws for operads and proved that the operad obtained by composing Koszul operads via an \textit{operad distributive law} is Koszul. This result was used in \cite{KW24} to compose an operad (of downward wiring diagrams \cite[Section~5.1]{RayCA1}) associated to the restriction of $\TT$ to $\CGS[\{*\}]$ and an operad associated to the restriction of $\LL$ to $\CGS[\{*\}]$. So, to determine whether the operad (of wiring diagrams) for monochrome unital circuit algebras obtained from $\LL\DD\TT$ is Koszul, it should be sufficient to explore whether the 
	operad obtained by restricting $\DD$ is. This is not expected to be the case, and the question is not further explored here. 
\end{rmk}
 \section{A nerve theorem for circuit algebras}\label{sec. nerve}

 This section recalls the nerve theorem for $\Set$-valued modular operads \cite[Theorem~8.2]{Ray20} and gives a proof, by the same methods, of an analogous result for $\Set$-valued circuit algebras. 

\subsection{Overview of the method}
Let $\Grp \hookrightarrow \GSp$ be the full subcategory of connected graphs and pointed \'etale morphisms from \cref{ss. pointed}, that is obtained in the bo-ff factorisation of the functor $\Gr \to \GS \to \GSp$, and, as in \cref{ssec. Similar connected X-graphs}, let $\TTp$ be the monad on $\GSp$ whose EM category of algebras is equivalent to the category $\CSM$ of modular operads (in $\Set$). 

So, there is a commuting diagram of functors

\begin{equation} \label{eq: MO nerve big picture}
\xymatrix{ 
	&&\Klgr\ar@{^{(}->} [rr]_-{\text{f.f.}}						&& \CSM\ar@<2pt>[d]^-{\text{forget}^{\TTp}} \ar [rr]^-N				&&\pr{\Klgr}\ar[d]^{j^*}	\\
	\fisinvp \ar@{^{(}->} [rr]	^-{\text{}}_-{\text{ f.f.}}		&& \Grp \ar@{^{(}->} [rr] ^-{\text{}}_-{\text{ f.f.}} \ar[u]^{j}_{\text{b.o.}}			&& \GSp \ar@<2pt>[d]^-{\text{forget}^{\DD}} \ar@{^{(}->} [rr]^-{\text{   }}_-{\text{f.f.}}	 \ar@<2pt>[u]^-{\text{free}^{\TTp}}	&& \pr{\Grp}\ar[d] \\
	\fisinv\ar@{^{(}->} [rr]^-{\text{}}_-{\text{ f.f.}}\ar[urr]_-{\text{dense}}\ar[u]^{\text{b.o.}}	&& \Gr \ar@{^{(}->} [rr]^-{\text{}}_-{\text{ f.f.}}	 \ar[u]_{\text{b.o.}}	&& \GS \ar@{^{(}->} [rr]_-{\text{   }}_-{\text{f.f.}}	 \ar@<2pt>[u]^-{\text{free}^{\DD}}		&& \pr{\Gr}}
\end{equation}
where $\Klgr$ is the category, whose objects are indexed by connected graphs, obtained in bo-ff factorisation of the functor $\Gr \to \GS \to \CSM$. 

In \cite[Section~8]{Ray20}, it was shown (for $\E = \Set$) that, though the modular operad monad $\DD\TT$ on $\GS$ does not have arities, the monad $\TTp$ on $\GSp$ does, whereby the following theorem holds: 

\begin{thm}{\cite[Theorem~8.2]{Ray20}}\label{MO nerve}
The monad $\TTp$, and the category $\Grp \hookrightarrow \GSp$, satisfy the conditions of \cite{BMW12} ($\TTp$ has arities $\Grp$). Hence, the nerve $N\colon \CSM \to \pr{\Klgr} $ is fully faithful. 
Its essential image consists of those presheaves $P$ on $\Klgr$ such that 
$P(\G)  = \mathrm{lim}_{(\C,b) \in \elG} P(\C)$ for all graphs $ \G$.

\end{thm}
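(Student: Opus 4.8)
The plan is to deduce the theorem from Weber's abstract nerve machinery (\cref{ssec: Weber}, \cite{BMW12}), applied to the monad $\TTp$ on $\GSp$ with $\Grp$ as the proposed category of arities and $\Klgr = \Theta_{\TTp,\Grp}$ as the resulting category of operations --- exactly the top row of (\ref{eq: MO nerve big picture}). Once it is shown that $\Grp$ provides arities for $\TTp$, both the full faithfulness of $N$ and the Segal characterisation of its image follow formally from \cite{BMW12}, so the entire burden is the verification of the arities condition. The first, and essentially free, ingredient is density: since $\GSp = \pr{\fisinvp}$ is a presheaf category (\cref{lem: elp presheaves}) and $\Grp$ is a full subcategory containing the representables $\fisinvp$ (the edge and the corollas, via the fully faithful $\yetp$), the presheaf context of \cref{rmk. groth} shows at once that $\Grp$ is dense in $\GSp$, and hence that $N_{\Grp}\colon \GSp \to \pr{\Grp}$ is fully faithful. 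This fills in the lower rows of (\ref{eq: MO nerve big picture}).

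The heart of the argument is to check that $\TTp$ has arities $\Grp$, i.e.\ that the canonical comparison $\mathrm{Lan}_{\iota}(N_{\Grp}\,\Tp\,\iota) \Rightarrow N_{\Grp}\,\Tp$ is invertible, where $\iota\colon \Grp \hookrightarrow \GSp$; equivalently, that $N_{\Grp}\circ \Tp$ carries the canonical colimits presenting each pointed graphical species as a colimit of connected graphs to colimits. I would establish this through the parametric right adjoint (generic-factorisation) structure of $\Tp$. The presentation $\Tp S_X = \mathrm{colim}_{\X \in \XGrsimp} S(\X)$ of (\ref{eq. Tp}), together with the orthogonal factorisation system $(\Gretsimp, \Gret)$ on $\Gretp$ from \cref{prop. vertex deletion}, shows that every pointed \'etale morphism $\H \to \Tp S_*$ out of a connected graph factors essentially uniquely as a generic morphism $\H \to \Tp\G$ --- recording a nondegenerate $\H$-shaped graph of connected graphs whose colimit is $\G$, taken up to similarity --- followed by $\Tp$ applied to an inclusion $\G \to S_*$. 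This generic/\'etale factorisation is precisely what exhibits $\Tp$ as a local right adjoint and so supplies the arities $\Grp$ in the sense of \cite{BMW12}.

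The main obstacle lies in this verification, and specifically in the units and contracted units --- the ``problem of loops.'' On $\GS$ the modular operad monad $\DD\TT$ does \emph{not} have arities (\cref{ex nerve MO}); passing to pointed graphical species is exactly what repairs this, because adjoining a formal unit $\epsilon$ and contracted unit $o$ turns the bivalent line graphs $\Lk$ and wheel graphs $\Wm$ into objects carrying the extra similarity (vertex-deletion) morphisms needed for orthogonality. The delicate point is therefore to check that the colimits defining $\Tp$ over the enlarged categories $\XGrsimp$ interact correctly with the limits computing $S(\H)$ precisely at the degenerate pieces $\Lk$, $\Wm$ and $\C_\nul$, where vertex deletion fails to preserve ports; this is the analysis of \cref{ssec. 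Similar connected X-graphs} (see \cref{lem. terminal X}) and \cref{prop. vertex deletion}, and it is what forces the use of $\TTp$ in place of $\DD\TT$.

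Granting the arities condition, the remaining conclusions are formal. By \cite{BMW12} the fully faithful comparison $\Klgr \hookrightarrow \CSM$ is then dense, so the induced nerve $N\colon \CSM \to \pr{\Klgr}$ is fully faithful, and a presheaf $P$ on $\Klgr$ lies in its essential image if and only if its restriction $j^*P$ along $j\colon \Grp \to \Klgr$ is (the nerve of) a genuine pointed graphical species. In the presheaf context this is the Segal condition (\ref{eq general Segal}): $P(\G) \cong \mathrm{lim}_{(\C,b)\in\elpG}P(\C)$ for every connected graph $\G$, the limit being taken over the pointed elements. Finally I would invoke the finality of the inclusion $\elG \hookrightarrow \elpG$ recorded in \cref{ss. pointed} (with (\ref{eq: pointed sheaves})) to replace this by the limit over ordinary elements, yielding the stated form $P(\G) = \mathrm{lim}_{(\C,b)\in\elG}P(\C)$.
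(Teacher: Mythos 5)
Your overall skeleton --- reduce everything to ``$\TTp$ has arities $\Grp$,'' get density of $\Grp$ in $\GSp=\pr{\fisinvp}$ for free from the presheaf context, and recover the unpointed Segal condition at the end via finality of $\elG\hookrightarrow\elpG$ --- matches the paper (this theorem is \cite[Theorem~8.2]{Ray20}, whose proof is the one the paper replicates for circuit algebras in \cref{ssec factorisation}). The gap is in your middle step: you verify the arities condition by claiming that $\Tp$ is a parametric right adjoint, with generic factorisations ``essentially unique\dots taken up to similarity.'' Those two phrases are incompatible, and the first is false. Genericity in the sense of Weber/\cite{BMW12} demands uniqueness of the factorisation up to \emph{isomorphism} (an initial object in the category of factorisations, yielding unique fillers), whereas the units give only uniqueness up to \emph{similarity}, which is strictly weaker. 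Concretely, take $\beta\in\Tp S_\two$ to be the class of $((\shortmid),s)$ for $s\in S_\S$, equivalently of $(\Lk,S(u^k)s)$ for every $k\geq 1$. A generic factorisation $\C_\two\to\Tp B\to \Tp S_*$, with first map the class of a fixed pair $(\X_g,\phi)$, would need for each $k$ a filler $\delta_k\colon B\to\yetp\Lk$ with $\Tp\delta_k\circ\gamma$ equal to the class of $(\Lk,id)$. By \cref{lem: Klgr representatives} this forces a cospan whose leg out of $(\Lk,id)$ is a vertex deletion $g^2$ with $f^\bot g^2=id_{\Lk}$ for $f^\bot$ \'etale; by the orthogonal factorisation system $(\Gretsimp,\Gret)$ of \cref{prop. vertex deletion}, $g^2$ must then be an isomorphism, so $\delta_k\phi\colon\X_g\to\Lk$ must itself be a (label-preserving) similarity morphism. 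Since similarity morphisms never increase vertex count, this requires $|V(\X_g)|\geq k$ for all $k$, which is impossible for a finite graph. Hence $\beta$ has \emph{no} generic factorisation, $\Tp$ is not a local right adjoint, and the route you propose collapses exactly at the ``problem of loops'' you correctly flag as the delicate point. (The same example shows the (refinement, free) factorisation on $\Klgr$ is not an orthogonal factorisation system: the free morphism $u_*\in\Klgr(\C_\two,(\shortmid))$ factors through $\Lk$ for every $k$.)

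This failure is precisely why \cite{Ray20} (and \cref{ssec factorisation} here) does \emph{not} argue via parametric right adjointness, but instead invokes \cite[Proposition~2.5]{BMW12}: to conclude that $\TTp$ has arities $\Grp$ it suffices that, for each Kleisli morphism $\beta$, the factorisation category $\factcat$ (\cref{def. factcat}) --- whose objects are the refinement-plus-free representatives $(\Gg,\alpha)$ of $\beta$ and whose morphisms are pointed \'etale maps compatible with both legs --- is \emph{connected}. Connectedness is strictly weaker than having an initial object, and it does hold in the example above: all the representatives $(\Lk,S(u^k)s)$ admit vertex-deletion morphisms to $(\Lk[1],\epsilon\text{-decorated})$, so the category is connected (it even has a weakly terminal object) while having no initial one. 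So your proof becomes correct if the paragraph asserting local right adjointness is replaced by the definition of these factorisation categories and a proof of their connectedness, as in \cref{connected}; the density argument before it and the finality argument after it can stand as written. (The paper's Remarks~\ref{rmk patterns proof}~and~\ref{rmk bmw proof} do sketch strongly-cartesian-style alternatives, but these rely on transporting the unpointed, genuinely strongly cartesian monad along $\fisinv\hookrightarrow\fisinvp$; they are not a verification of generic factorisations for $\Tp$ on $\GSp$ itself, which, as the example shows, do not exist.)
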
 

 The circuit algebra monad $\LL \DD \TT$ on $ \GS$ also does not have arities. However, since $\GSp^{\TTp} \cong \GS^{\DD\TT}$ (\cref{prop. extension exists}) and $\LL$ lifts functorially to $\GS^{\DD\TT}$, the distributive law $\lambda_{\LL \TT}\colon TL \Rightarrow LT$ on $\GS$ extends to a distributive law $\Tp \widetilde L \Rightarrow \widetilde L \Tp$ for $\TTp$ and the lift $\widetilde \LL $ of $\LL $ to $\GSp$. By \cref{thm. iterated law}, the EM category of algebras for this composite monad $\TTpk$ on $\GSp$ is precisely the category $\CO$ of circuit algebras in $\Set$. 

By \cref{ss. pointed} (see also  \cite[Section~7.3]{Ray20}), the full subcategory $\Gretp \subset \GSp$ of all graphs and pointed morphisms induces a fully faithful inclusion $\GSp \hookrightarrow \pr{\Gretp}$. Hence, if $\Klgrt$ is the category obtained in the bo-ff factorisation of $\Gret \to \GS \to \CO$.  we may consider the following diagram of functors in which the left and middle squares commute, and the right squares commute up to natural isomorphism: 
\begin{equation} \label{eq: CO Gr diag picture}
\xymatrix{ 
	&&{\Klgrt}\ar@{^{(}->} [rr]_-{\text{f.f.}}						&& \CO\ar@<2pt>[d]^-{\text{forget}^{\widetilde \LL\TTp}} \ar [rr]^-{N_{{\Klgrt}}	}			&&\pr{{\Klgrt}}\ar[d]^{j^*}	\\
	\fisinvp \ar@{^{(}->} [rr]	^-{\text{}}_-{\text{ f.f.}}		&& \Gretp \ar@{^{(}->} [rr] ^-{\text{}}_-{\text{ f.f.}} \ar[u]^{j}_{\text{b.o.}}			&& \GSp \ar@<2pt>[d]^-{\text{forget}^{\DD}} \ar@{^{(}->} [rr]^-{\text{   }}_-{\text{f.f.}}	 \ar@<2pt>[u]^-{\text{free}^{\widetilde \LL\TTp}}	&& \pr{\Gretp}\ar[d] \\
	\fisinv\ar@{^{(}->} [rr]^-{\text{}}_-{\text{ f.f.}}\ar[urr]_-{\text{dense}}\ar[u]^{\text{b.o.}}	&& \Gret \ar@{^{(}->} [rr]^-{\text{}}_-{\text{ f.f.}}	 \ar[u]_{\text{b.o.}}	&& \GS \ar@{^{(}->} [rr]_-{\text{   }}_-{\text{f.f.}}	 \ar@<2pt>[u]^-{\text{free}^{\DD}}		&& \pr{\Gret}.}
\end{equation}

The remainder of this paper is dedicated to proving the following theorem using a modification of the proof of \cref{MO nerve} (\cite[Theorem~8.2]{Ray20}) to show that $\Gretp$ provides arities for $\TTpk$:

\begin{thm}\label{nerve theorem} 	\label{thm: CO nerve}The functor $N\colon\CO \to\pr{\Klgrt}$ is full and faithful. Its essential image consists of precisely those presheaves $P$ on $\Klgrt$ such that 
	\begin{equation}
	\label{eq. Segal nerve sec}P(\G)  = \mathrm{lim}_{(\C,b) \in \elG} P(\C) \text{ for all graphs } \G.
	\end{equation}
\end{thm}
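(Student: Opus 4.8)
The plan is to verify that the monad $\TTpk$ on $\GSp$ has arities $\Gretp$ in the sense of \cite{BMW12, Web07}, and then to extract the Segal condition (\ref{eq. Segal nerve sec}) exactly as in the modular operad case (\cref{MO nerve}). The strategy mirrors the structure of the proof of \cite[Theorem~8.2]{Ray20}, but now for the composite monad $\TTpk = \widetilde\LL\TTp$ obtained from the lifted external-product monad $\widetilde\LL$ and the extended contraction/multiplication monad $\TTp$ on $\GSp$, as set up in the discussion preceding the statement and in \cref{ex. combined arities}.

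First I would recall that, by \cref{prop. extension exists} and \cref{thm. iterated law}, the EM category of $\TTpk$-algebras on $\GSp$ is isomorphic to $\CO$, so the bo-ff factorisation $\Gretp \to \Klgrt \to \CO$ in (\ref{eq: CO Gr diag picture}) makes sense and $\Klgrt$ is the expected graphical category. The key step is to show that $\Gretp \hookrightarrow \GSp$ \emph{provides arities} for $\TTpk$. Following Weber \cite{Web07}, this amounts to checking two things: (a) that $\Gretp$ is dense in $\GSp$ (this is already recorded in \cref{ss. pointed}, via the fully faithful inclusion $\GSp \hookrightarrow \pr{\Gretp}$), and (b) that for every graph $\G \in \Gretp$, the free $\TTpk$-algebra on $\G$ is canonically a colimit, indexed by $\Gretp\ov \G$, of free algebras on the arities — equivalently, that the composite $\Gretp \hookrightarrow \GSp \xrightarrow{\text{free}} \CO \xrightarrow{N} \pr{\Gretp}$ coincides (up to iso) with the nerve restricted along $\Gretp$. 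Since $\TTpk$ is built from $\Tp$ (which has arities $\Grp$ by \cref{MO nerve}) and $\widetilde\LL$, I would argue that the arity condition for the composite reduces, through the distributive law $\Tp\widetilde L \Rightarrow \widetilde L\Tp$, to the already-established arity condition for $\Tp$ together with the fact that $\widetilde L$ only reassembles connected components as disjoint unions of graphs — an operation that is visibly compatible with the density of $\fisinvp$ in $\Gretp$ and with colimits over element categories. Concretely, one computes $\widetilde\LL\Tp S(\G)$ for a general (possibly disconnected) graph $\G$ as a product over the connected components of $\G$ of the values $\Tp S$ on those components, and matches this against $\mathrm{lim}_{(\C,b)\in\elG}(\widetilde\LL\Tp S)(\C)$; the crucial input is \cref{lem: essential cover} (every graph is the colimit of its elements) applied componentwise, exactly as disjoint union distributes over the element-category colimit.

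Once arities are established, the abstract machinery of \cite[Section~4]{Web07} gives immediately that the induced nerve $N_{\Klgrt}\colon \CO \to \pr{\Klgrt}$ is fully faithful, and that its essential image consists of presheaves $P$ on $\Klgrt$ whose restriction $j^*P$ along $j\colon \Gretp \to \Klgrt$ lies in the essential image of the nerve $\Gretp \hookrightarrow \GSp \hookrightarrow \pr{\Gretp}$. By density of $\fisinvp$ in $\Gretp$ (and finality of $\elG \hookrightarrow \elpG$, equation (\ref{eq: pointed sheaves})), the latter essential image is characterised precisely by the Segal/sheaf condition $P(\G) = \mathrm{lim}_{(\C,b)\in\elG}P(\C)$ for all graphs $\G$, which is (\ref{eq. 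Segal nerve sec}). This last deduction is the content of \cref{rmk Segal suff}: the presheaf Segal condition over $\fisinvp$ is exactly what upgrades the abstract arity-based characterisation into the explicit limit formula, and it uses that $\fisinvp \hookrightarrow \Gretp$ is dense.

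I expect the main obstacle to be step (b) — verifying the arity condition for the \emph{composite} monad $\widetilde\LL\Tp$ rather than for $\Tp$ alone. The subtlety is the interaction between the external product (disjoint unions of graphs, governed by $\widetilde L$) and the contracted-unit/``problem of loops'' combinatorics already absorbed into $\Tp$: one must check that passing to disjoint unions does not create new generic factorisations that would spoil the Nerve Theorem hypotheses, and in particular that the factorisation system controlling genericity on $\Gretp$ behaves well under $\amalg$. The cleanest route is probably to observe, as in \cref{ex. combined arities} and \cref{rmk BMW prop}, that $\Tp$ is strongly cartesian on $\GSp$ (it has arities $\Grp$) and that $\widetilde\LL$ is induced by a coproduct-completion-type construction compatible with the bijective-on-objects functor $k\colon\fisinv \hookrightarrow \fisinvp$ of \cref{rmk BMW prop}; then \cite[Proposition~4.4]{BMW12} transports the strongly cartesian (hence arity-providing) structure to the composite, and the disconnected arities $\Gretp$ arise as the extension of the connected arities $\Grp$ along disjoint union. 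I would present this as the principal verification, with the remaining identifications (density, finality, the explicit limit formula) following routinely from the cited results.
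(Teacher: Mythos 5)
Your high-level skeleton --- establish that $\TTpk$ has arities $\Gretp$ in $\GSp$, then let the machinery of \cite{BMW12, Web07} deliver full faithfulness, and extract the Segal condition (\ref{eq. Segal nerve sec}) from density of $\fisinvp$ and finality of $\elG \subset \elpG$ --- is exactly the paper's strategy, and your final deduction of the limit formula is correct. The gap is in the step you yourself flag as principal: your proposed verification of the arity condition does not verify it. The computation you call the ``crucial input'' --- matching $\widetilde L \Tp S(\G)$ against $\mathrm{lim}_{(\C,b)\in\elG}(\widetilde L \Tp S)(\C)$ --- is automatically true, because $\Tpk S_*$ is itself an object of $\GSp$, and every pointed graphical species, viewed in $\pr{\Gretp}$, satisfies that limit formula by (\ref{eq: pointed sheaves}). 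Having arities is a strictly stronger statement about \emph{colimits}, not limits: it requires that the nerve of a free algebra be the canonical colimit of nerves of free algebras on arities, equivalently (this is \cite[Proposition~2.5]{BMW12}, the form the paper uses) that every factorisation category $\factcat$ of a Kleisli morphism $\beta \in {\GSp}_{\TTpk}(\G, S_*)$ be connected. That is where the real combinatorics lives: one must show that any two representatives $(\Gg^1,\alpha^1)$, $(\Gg^2,\alpha^2)$ of the same $\beta$ --- a nondegenerate graph of graphs together with a decoration of its colimit --- are linked by a zigzag of pointed \'etale morphisms, which forces an analysis of the similarity categories $X\XGretsimp$ and the vertex-deletion and contracted-unit morphisms (the paper's \cref{lem: Klgr representatives} and \cref{connected}). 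Your proposal never engages with this, and no Segal-type evaluation of $\Tpk S_*$ on graphs can substitute for it.

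The two reduction principles you invoke are also unavailable as stated. First, there is no general principle that a composite monad inherits arities from its factors through a distributive law: the paper's own setting is the counterexample, since $\TT$ and $\TTk$ are strongly cartesian on $\GS$ yet the composites $\DD\TT$ and $\LL\DD\TT$ fail to have arities. So ``reducing through the distributive law to the already-established arity condition for $\Tp$'' is precisely what must be proved by hand, not cited. Second, your fallback via \cite[Proposition~4.4]{BMW12} misstates its hypotheses: that proposition transports a strongly cartesian monad along a bijective-on-objects functor of \emph{base} categories, so the correct input (as in \cref{rmk bmw proof}) is the monad $\TTk = \LL\TT$ on $\GS = \pr{\fisinv}$, which is strongly cartesian by its polynomial formula and satisfies $i^*\Tpk = \Tk i^*$ for the bijective-on-objects inclusion $i \colon \fisinv \hookrightarrow \fisinvp$; one then concludes that $\TTpk$ is strongly cartesian on $\GSp$ with canonical arities $\Gretp$. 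Starting instead from $\Tp$ on $\GSp$ and the lift $\widetilde \LL$, as you do, does not fit the proposition. Repaired in this way, your ``cleanest route'' does give a valid second proof (it is the paper's \cref{rmk bmw proof}); as written, neither of your two arguments for the arity condition goes through.
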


Recall that the category $\WPin = \GS^{\LL \DD \TT} \ov \Dicomm \cong \oGS^{\LL\DD\TT}$ is the category of wheeled props in $\Set$.   Hence, \cref{nerve theorem} implies an abstract nerve theorem for wheeled props. 
For, let $\mathsf O \Klgrt \defeq \Klgrt \ov \Dicomm$ be the induced oriented graphical category. 
Since $\pr{ \mathbf G}\ov \Dicomm \simeq \pr{\mathbf G \ov \Dicomm}$ for $\mathbf G = \Gret, \Gretp, \Klgrt$, it follows that: 

\begin{cor}\label{cor wheeled props nerve}
	
The induced functor $\mathsf O N\colon \WPin \to\pr{\mathsf O \Klgrt}$ is full and faithful. Its essential image consists of precisely those presheaves $P$ on $\mathsf O  \Klgrt$ that restrict to graphical species. 

\end{cor}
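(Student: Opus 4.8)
The plan is to deduce the statement from the circuit algebra nerve theorem \cref{thm: CO nerve} by slicing everything over the terminal oriented graphical species $\Dicomm$, regarded as a circuit algebra. By \cref{def wp int} and \cref{thm. iterated law} we have $\WPin = \CO \ov \Dicomm$, and by construction $\mathsf O \Klgrt = \Klgrt \ov \Dicomm$. The first task is to recognise this slice as a category of elements. Since the objects of $\Klgrt$ are the free circuit algebras $\OOO \yet \G$ on connected graphs, the free--forgetful adjunction together with the sheaf property of \cref{ex topology} give $N(\Dicomm)(\G) = \CO(\OOO\yet\G, \Dicomm) \cong \GS(\yet\G, \Dicomm) = \Dicomm(\G)$, the set of orientations of $\G$ (\cref{ex OGS on graphs}). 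Consequently the category of elements $\ElP{N(\Dicomm)}{\Klgrt}$ has directed connected graphs as objects and orientation-preserving maps as morphisms; that is, $\ElP{N(\Dicomm)}{\Klgrt} = \Klgrt \ov \Dicomm = \mathsf O \Klgrt$ on the nose. The standard slice equivalence $\pr{\mathcal A} \ov Q \simeq \pr{\ElP{Q}{\mathcal A}}$ then specialises, with $\mathcal A = \Klgrt$ and $Q = N(\Dicomm)$, to the asserted formula $\pr{\Klgrt}\ov N(\Dicomm) \simeq \pr{\mathsf O \Klgrt}$.

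Next I would invoke the elementary fact that a fully faithful functor induces fully faithful functors on slice categories. Applying this to $N \colon \CO \to \pr{\Klgrt}$, which is fully faithful by \cref{thm: CO nerve}, and to the object $\Dicomm$, produces a fully faithful functor $N_{\ov \Dicomm}\colon \CO \ov \Dicomm \to \pr{\Klgrt}\ov N(\Dicomm)$; under the equivalence of the previous paragraph this is precisely $\mathsf O N$. This already yields the fully faithful claim. For the essential image, an object $(P, \alpha \colon P \to N(\Dicomm))$ lies in the image of $N_{\ov \Dicomm}$ exactly when $P$ lies in the essential image of $N$: fullness of $N$ upgrades any such $\alpha$ to a wheeled prop structure $A \to \Dicomm$ whenever $P \cong N(A)$. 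By \cref{thm: CO nerve} this means precisely that $P$ satisfies the Segal condition $P(\G) = \mathrm{lim}_{(\C,b)\in\elG}P(\C)$.

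It then remains to transport the Segal condition across the slice equivalence. An object $(P,\alpha)$ corresponds to $Q \in \pr{\mathsf O \Klgrt}$ with $P(\G) = \coprod_{\phi \in \Dicomm(\G)} Q(\G, \phi)$ and $\alpha$ the projection to the orientation. Because $N(\Dicomm)$ is itself Segal, the orientations of any graph are determined by those of its elements, so the Segal limit defining $P$ decomposes fibrewise over orientations into $Q(\G,\phi) = \mathrm{lim}_{(\C,b)\in\oelG}Q(\C)$, i.e.\ $P$ is Segal iff $Q$ satisfies the oriented Segal condition on every directed graph. By \cref{ex OGS on graphs} each directed graph is canonically the colimit of its oriented element category $\oelG \subset \elG[\Dicomm]$, so this is equivalent to $Q$ being recovered from its restriction along $\elG[\Dicomm] \hookrightarrow \mathsf O \Klgrt$ as the right Kan extension thereof --- that is, to $Q$ restricting to an oriented graphical species ($\oGS = \pr{\elG[\Dicomm]}$, \cref{defn oriented GSE}) and being determined by that restriction. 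This is exactly the stated description of the essential image.

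I expect the fibrewise decomposition of the Segal limit in the final step to be the only genuinely technical point: one must verify that the coproduct over orientations commutes with the limit over $\elG$, which rests on $N(\Dicomm)$ being Segal and on the compatibility of the oriented element category $\oelG$ with $\elG$ under pullback of the orientation. The two slicing ingredients --- that fully faithful functors slice to fully faithful functors, and that $\pr{\mathcal A}\ov Q \simeq \pr{\ElP{Q}{\mathcal A}}$ --- are standard and require nothing beyond bookkeeping.
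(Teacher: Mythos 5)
Your proof is correct and takes essentially the same route as the paper, which deduces the corollary from \cref{thm: CO nerve} via the slice equivalence $\pr{\Klgrt}\ov\Dicomm \simeq \pr{\Klgrt \ov \Dicomm}$ (i.e.\ presheaves on the category of elements of $N(\Dicomm)$); your paragraphs on sliced fully-faithfulness and the fibrewise decomposition of the Segal limit simply make explicit what the paper leaves implicit. One minor correction: the objects of $\Klgrt$ are free circuit algebras on \emph{all} graphs, not just connected ones (connected graphs index the modular operad category $\Klgr$), though nothing in your argument depends on this slip.
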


Sections \ref{ssec LTp}-\ref{ssec factorisation} are devoted to constructing the graphical category $\Klgrt$ and giving a direct proof of \cref{nerve theorem}, in the style of \cref{MO nerve} (see also, \cite[Theorem~8.2]{Ray20}). Further proofs, using results from \cite{BMW12} and \cite{CH21} on strongly cartesian monads, are outlined in Remarks \ref{rmk bmw proof} and \ref{rmk patterns proof}.

\subsection {The monad $\widetilde \LL\TTp$ on $\GSEp$}\label{ssec LTp}

The first step is to understand the monad $\widetilde \LL\TTp$ on $\GSEp$. Here $\E$ is, as usual, an arbitrary category with sufficient limits and colimits. 

The lift $\widetilde{\LL}$ of $\LL$ to $\GSEp$ is given by $\widetilde L (S_*) = (LS, \eta^\LL \epsilon ,\eta^\LL o)$, for all pointed graphical species $S_* = (S, \epsilon, o)$ in $\E$ (where $\mu^{\widetilde \LL}, \eta^{\widetilde \LL}$ are induced by $\mu^\LL$ and $\eta^\LL$ in the obvious way.)

Let $X$ be a finite set. The category $\XGrsimp$ of similar connected $X$-graphs was defined in \cref{ssec. Similar connected X-graphs} and the monad $\TTp = (\Tp, \mu^{\TTp}, \eta^{\TTp})$ with  $\Tp S_X  = \mathrm{colim}_{\X \in \XGrsimp} S(\X)$ was discussed in \cref{ssec. Similar connected X-graphs}. 

Similarity morphisms preserve connected components. Hence the category $X\XGretsimp$ of \textit{all} $X$-graphs and similarity morphisms, may be defined by 
\begin{equation}\label{eq. XGretsimp} X\XGretsimp \defeq \mathrm{colim}_{(Y,f) \in \core{X\ov\fin} }
\coprod_{y \in Y} \left(f^{-1}(y)\right) \XGrsimp.\end{equation}

The endofunctor $\Tpk$ is described by the obvious quotient of $\Tk $: Let $S_* = (S, \epsilon, o)$ be a pointed graphical species in $\E$ and let $X$ be a finite set, then \begin{equation}\label{eq. lift LT}
\begin{array}{lll}{\Tpk} S_X& = &\mathrm{colim}_{(Y,f) \in \core{X\ov\fin} }
\left( \prod_{y \in Y}  \Tp S_{f^{-1}(y)} \right)\\
& = &\mathrm{colim}_{(Y,f) \in \core{X\ov\fin} } \left( \prod_{y \in Y}
\left(\mathrm{colim}_{(\G, \rho) \in \XGrsimp[(f^{-1}(y))]} S(\G)\right)\right)\\
& = &\mathrm{colim}_{\X \in X\XGretsimp} S(\X).
\end{array}\end{equation}
And, 
\[ \begin{array}{lll}
\Tp \widetilde L S_X& = &\mathrm{colim}_{ \X \in \XGrsimp}  \mathrm{lim}_{(\C,b) \in \elG[\X]} LS(\C)\\
&=& \mathrm{colim}_{ \X \in \XGrsimp}  \mathrm{lim}_{ \Gg \in\CorGg} S(\Gg(\X)).
\end{array}\] 

As $\GSEp^{\TTp}\cong \GSE^{\DD\TT}$ and $\LL$ lifts to $\GSE^{\DD\TT}$, there is a distributive law $\Tp \widetilde L \Rightarrow \Tpk$,  induced by the distributive law $\lambda_{\LL \TT} \colon TL \Rightarrow LT = \Tk$, which is compatible with similarity morphisms by construction: for each finite set $X$, a colimit indexed by pairs 
$(\X, \Gg)$ of a connected admissible $X$-graph $\X$, and $\Gg \in \CorGg$, is replaced by a 
colimit indexed over 
(in general not connected) admissible $X$-graphs $\Gg(\X)$.

\subsection{A graphical category for circuit algebras}\label{ss. graphical category}

For the remainder of the paper, $\E = \Set$.

The category $\Gretp$ obtained in the bo-ff factorisation of $\Gret \to \GS \to \GSp$ has been discussed in \cref{ss. pointed}. Let $\Klgrt$ be the category, whose objects are graphs, obtained in the bo-ff factorisation of $\Gret \to \GS \to \CO$ (and also of  $\Gretp \to \GSp \to \CO$). This is the full subcategory of the Kleisli category ${\GSp}_{\TTpk}$ of $\TTpk$ on free circuit algebras of the form $\Tpk \yetp \G = LDT \yet \G$ for graphs $\G \in \Gret$. 
By definition, morphisms $ \beta \in \Klgrt(\G, \H)$ are given by morphisms in $\GSp (\yetp \G, \Tpk \yetp \H)$.

To understand $\Klgrt$, the first step is therefore to describe $\Tpk \yetp \H$ for all graphs $\H$.

Since each connected component of $X\XGretsimp $ contains an (in general, not connected) admissible $X$-graph $\X$, it follows from (\ref{eq. lift LT}) that, for all finite sets $X$, elements of $ (\Tpk \yetp \H)_X \cong \Klgrt(\CX, \H) $ are represented by pairs $(\X, f)$ where $\X$ 
is an admissible $X$-graph and $f \in \Gretp(\X, \H)$. In particular, $f$ factors uniquely as a morphism $\X \to \X_{\setminus W_f}$ in $\Gretsimp$ followed by a morphism $f^\bot \in \Gret(\X_{\setminus W_f}, \H)$.
Pairs $(\X^1, f^1)$ and $(\X^1, f^2)$ represent the same element $[\X,f]_* \in  (\Tpk \yetp \H)_X  $ if and only if there is an object $\X_f \in X\XGretsimp$ and a commuting diagram 
\begin{equation}\label{connected graph factors}\xymatrix{ 
	\X^1 \ar[rr]^-{g^1} \ar[drr]_{f^1}& & \X^f  \ar[d]^{f^\bot }&&\ar[ll]_-{g^2} \X^2 \ar[dll]^{f^2} \\&&\H&&}\end{equation} in $\Gretp$ such that $g^j$ are similarity morphisms in $X\XGretsimp$ for $j = 1,2$, and $f^\bot \in \Gret (\X^f, \H)$ is an (unpointed) \'etale morphism.

In particular, for all $\H$, $ e \in E(\H)$ and $m \geq 1$, the following special case of (\ref{connected graph factors}) commutes in $\Gretp$:
\begin{equation}\label{0 connected} \xymatrix{ \C_\nul \ar[rr]^-{  z} \ar[drr]_{ch_e \circ z } & &(\shortmid)\ar[d]^{ch_e}&& \Wl \ar[ll]_{\kappa } \ar[dll]^-{ch_e \circ \kappa}\\
	&&\H.&&}\end{equation} 

Since $\Klgrt (\G, \H) \cong \Tpk \yetp \H (\G)   = \mathrm{lim}_{ (\C, b) \in \elG} \Tpk \yetp \H(\C)$, morphisms $\gamma \in \Klgrt(\G, \H)$ are represented by pairs $(\Gg, f)$, where $\Gg: \elG \to \Gret$ is a nondegenerate $\G$-shaped graph of graphs, and $f \in \Gretp(\Gg(\G), \H)$ is a morphism in $\Gretp$ (see \cite[Section~8.1]{Ray20}). 

The following lemma generalises \cite[Lemma~8.9]{Ray20}, and is proved in exactly the same manner.

 \begin{lem}\label{lem: Klgr representatives} Let $\G$ and $\H$ be graphs and, for $i = 1,2$, let $\Gg^i$ be a nondegenerate $\G$-shaped graph of graphs with colimit $\Gg^i(\G)$, and $f^i \in \Gretp(\Gg^i(\G), \H)$.
	Then $(\Gg^1, f^1), (\Gg^2, f^2)$ represent the same element $\gamma$ of $\Klgrt(\G, \H)$ if and only if there is a representative $(\Gg, f)$ of $\gamma$, and a commuting diagram in $\Gretp$ of the following form: 
	\begin{equation}\label{eq: well-defined kleisli}\xymatrix{\Gg^1(\G) \ar[rr] \ar[drr]_{f^1} && \Gg(\G)\ar[d]^{f} &&\ar[ll] \Gg^2(\G) \ar[dll]^{f^2}\\&&\H,&&}\end{equation} where the morphisms in the top row are vertex deletion morphisms (including $z \colon \C_\nul \to (\shortmid)$) in $\Gretsimp$, and $f \in \Gret (\Gg(\G), \H)$. 
\end{lem}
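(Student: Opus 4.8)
The statement is the direct analogue of \cite[Lemma~8.9]{Ray20}, and I would prove it by the same method, tracking how morphisms in $\Klgrt$ are built from representatives of elements of $\Tpk \yetp \H$ on the elements of $\G$. The starting point is the identification $\Klgrt (\G, \H) \cong \Tpk \yetp \H (\G) = \mathrm{lim}_{(\C,b) \in \elG} \Tpk \yetp \H(\C)$ from the discussion preceding the statement, together with the description (\ref{eq. lift LT}) of $\Tpk$ as a colimit over $X\XGretsimp$. Concretely, a morphism $\gamma \in \Klgrt(\G,\H)$ is a compatible family of elements $[\X_b, f_b]_* \in \Tpk\yetp\H(\C_{X_b})$ indexed by $(\C_{X_b}, b) \in \elG$, and such a family assembles into a pair $(\Gg, f)$ with $\Gg$ a nondegenerate $\G$-shaped graph of graphs and $f \in \Gretp(\Gg(\G), \H)$, as recorded just before the lemma.

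**The key steps.**
First I would establish the ``if'' direction, which is essentially formal: given a diagram of the shape (\ref{eq: well-defined kleisli}) with the top-row morphisms in $\Gretsimp$ (vertex deletions, including $z$) and $f \in \Gret(\Gg(\G),\H)$, I would restrict along each element $(\C, b) \in \elG$ and invoke the single-object equivalence relation (\ref{connected graph factors}) to conclude that $(\Gg^1, f^1)$ and $(\Gg^2, f^2)$ induce the same element of $\Tpk\yetp\H(\C)$ for every $b$; since $\Klgrt(\G,\H)$ is the limit over $\elG$, the two representatives then agree as morphisms $\gamma$. Second, for the ``only if'' direction, I would start from the hypothesis that $(\Gg^1, f^1)$ and $(\Gg^2, f^2)$ represent the same $\gamma$. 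Evaluated at each $(\C_{X_b}, b) \in \elG$, the restrictions $(\Gg^1(b), f^1 b_{\Gg^1})$ and $(\Gg^2(b), f^2 b_{\Gg^2})$ represent the same element of $(\Tpk\yetp\H)_{X_b}$, so by (\ref{connected graph factors}) there is, for each $b$, a common refinement $\X^f_b \in X_b\XGretsimp$ together with similarity morphisms $g^{1}_b, g^2_b$ and an unpointed \'etale $f^\bot_b \in \Gret(\X^f_b, \H)$ fitting into the local diagram. The main work is then to glue these local refinements over $\elG$ into a single nondegenerate $\G$-shaped graph of graphs $\Gg$ with colimit $\Gg(\G)$ and a single $f \in \Gret(\Gg(\G), \H)$, so that the vertex-deletion (similarity) morphisms $\Gg^i(\G) \to \Gg(\G)$ and the common $f$ realise (\ref{eq: well-defined kleisli}). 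Here I would use \cref{colimit exists} to pass from the local graphs of graphs to the global colimit, the orthogonal factorisation system $(\Gretsimp, \Gret)$ of \cref{prop. vertex deletion} to split each composite uniquely into a similarity part followed by an \'etale part, and the compatibility of the colimit description (\ref{eq. XGretsimp}) with disjoint unions to handle the possibly disconnected admissible graphs produced by $\widetilde\LL$.

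**The main obstacle.**
The delicate point is the gluing of the locally-chosen refinements $\X^f_b$ into a globally coherent $\Gg$ and $f$: the equivalence relation (\ref{connected graph factors}) guarantees a common refinement \emph{for each element separately}, but these choices need to be made compatible across all morphisms $ch_x \in \elG$ so that they descend to a genuine $\G$-shaped graph of graphs. The resolution, exactly as in \cite[Lemma~8.9]{Ray20}, is that the orthogonal factorisation system on $\Gretp$ makes the similarity-then-\'etale factorisation of each $f^i$ \emph{unique}, so the common refinement $\X^f_b$ is forced (up to canonical isomorphism) to be $\Gg^i(b)_{\setminus W}$ for the appropriate vertex set $W$; uniqueness removes the coherence ambiguity and lets the local diagrams be assembled functorially over $\elG$. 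The care needed with the special cases $z\colon \C_\nul \to (\shortmid)$ and $\kappa^m\colon \Wm \to (\shortmid)$ — where vertex deletion does not preserve ports, so the relevant factors collapse through $S_\S$ — is handled precisely by the special instance (\ref{0 connected}) of the equivalence relation, which I would cite to cover the $\nul$-graph and wheel contributions coming from the contracted unit $o$ and unit $\epsilon$ adjoined by $\DD$.
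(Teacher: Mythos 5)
Your proposal is correct and is essentially the paper's own argument: the paper proves this lemma only by noting it is ``proved in exactly the same manner'' as \cite[Lemma~8.9]{Ray20}, and that manner is precisely what you reconstruct --- localise along $\elG$ using $\Klgrt(\G,\H) \cong \mathrm{lim}_{(\C,b)\in\elG}\Tpk\yetp\H(\C)$, apply the corolla-level cospan criterion (\ref{connected graph factors}), and glue the local reductions into a global one via the uniqueness of the similarity-followed-by-\'etale factorisation supplied by \cref{prop. vertex deletion}. Your treatment of the collapsing cases via (\ref{0 connected}) is likewise exactly where the statement's parenthetical allowance of $z\colon\C_\nul\to(\shortmid)$ (and the wheel morphisms $\kappa^m$) enters the argument.
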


 The following terminology is from \cite{JK11}. 
\begin{defn}\label{free} A \emph{(pointed) free morphism in $\Klgrt$} is a morphism in the image of the inclusion $\Gretp \hookrightarrow \Klgrt$. An \emph{unpointed free morphism} in $\Klgrt$ is a morphism in the image of $\Gret \hookrightarrow \Gretp \hookrightarrow \Klgrt$. A \emph{refinement morphism $[\Gg]$ in $\Klgrt$} is a morphism in $\Klgrt(\G,\H)$ with a representative of the form $(\Gg, id_{\H})$. \end{defn}

So, a free morphism in $\Klgrt(\G, \H)$ has a representative of the form $(id, f)$ where $id \colon \elG \to \Gret$ is the trivial identity $\G$-shaped graph of graphs with colimit $\G$ and $f \in \Gretp(\G, \H)$. Refinements $[\Gg]$ in $\Klgrt(\G, \H)$ are described by nondegenerate $\G$-shaped graph of graphs $\Gg$ with colimit $\H$, and hence induce isomorphisms $E_0(\G) \xrightarrow{\cong} E_0(\H)$ on boundaries. 

In particular, morphisms in $\Klgrt(\G, \H)$ factor as $\G \xrightarrow{[\Gg]} \Gg(\G) \xrightarrow{\delW} \Gg(\G)_{\setminus W}\xrightarrow {f} \H$. Here $\Gg\colon \elG \to \Gret$ is a nondegenerate $\G$-shaped graph of graphs with colimit $\Gg(\G) \in \Gret$, 
and $f \in \Gret(\Gg(\G)_{\setminus W}, \H)$ is an (unpointed) \'etale morphism $\Gret$.

\subsection{Factorisation categories and the nerve theorem}\label{ssec factorisation}

Let ${\GSp}_{\TTpk}$ be the Kleisli category of $\TTpk$ on $\GSp$. By \cite[Proposition~2.5]{BMW12}, to show that the monad $\TTpk$, together with the category $\Gretp$ satisfy the conditions of the nerve theorem \cite[Theorem~1.10]{BMW12} (that is, $\TTpk$ has arities $\Gretp$) and therefore the nerve functor $N \colon \CO \to \pr{\Klgrt}$ is fully faithful, it is sufficient to show that certain categories, obtained by factoring morphisms in ${\GSp}_{\TTpk}$, are connected. 

To construct these categories, let $S_*$ be a pointed graphical species in $\Set$, $ \G$ a graph in $\Gret$, and let $\beta \in {\GSp}_{\TTpk}(\G, S_*) $ be a morphism in the Kleisli category of $\TTpk$. By the Yoneda lemma, \[{\GSp}_{\TTpk}(\G, S_*) \cong \GSp (\G, \Tpk S_*) \cong \Tpk S_* (\G) = \mathrm{\mathrm{colim}_{(\C,b) \in \elG}} \Tpk S_*(\C) \] where
\[ \Tpk S_*(\C)\cong \left \{ \begin{array}{ll}
S_\S & \C \cong (\shortmid),\\
\mathrm{colim}_{\X \in X\XGretsimp} S (\X) & \C \cong \CX.
\end{array}\right.\]

So, $\beta \in{\GSp}_{\TTp}(\G, S_*)$ is represented by pairs $(\Gg, \alpha)$ of
  \begin{itemize}
  	\item a nondegenerate $\G$-shaped graph of graphs $\Gg$ with colimit $\Gg(\G)$, viewed as a refinement $[\Gg] \in \Klgrt (\G, \Gg(\G))  = {\GSp}_{\TTp}(\G, \Gg(\G))$, 
  	\item an element $\alpha \in S(\Gg (\G))$ viewed, by the Yoneda lemma, as a morphism $\alpha \in {\GSp}_{\TTp}(\yetp\Gg(\G), S_*)$ 
  \end{itemize}
such that the induced composition $\G \xrightarrow {[\Gg]} \Gg(\G) \xrightarrow {\alpha}  S(\Gg(\G)) \twoheadrightarrow (\Tpk S_*)(\G)$ is precisely $\beta$.

\begin{defn}\label{def. factcat}
	Objects of the  \emph{factorisation category} $\factcat$ of $\beta$ are pairs $(\Gg, \alpha)$ that represent $\beta$ as above, and morphisms in $\factcat((\Gg^1, \alpha^1), (\Gg^2, \alpha^2))$ are commuting diagrams in ${\GSp}_{\TTpk}$
\begin{equation}\label{factcat mor}
\xymatrix{&& {[\Gg^1]}(\G)\ar[dd]_{g} \ar[drr]^{ \alpha^1}&&\\
	\G \ar[urr]\ar[drr]&&&& S_*\\
	&&{[\Gg^2]}(\G) \ar[urr]_{ \alpha^2}&&}
\end{equation} such that $g $ is a morphism in $ \Gretp\hookrightarrow {\GSp}_{\TTpk}$.

\end{defn}

\begin{lem}\label{connected} For all pointed graphical species $S_*$ in $\Set$, all graphs $\G \in \Gret$, and all $ \beta \in \GSp(\G, \Tpk S)$, the category $\factcat$ is connected.
\end{lem}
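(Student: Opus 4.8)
The plan is to prove connectedness by showing that any two objects of $\factcat[\beta]$ are joined by a zigzag of morphisms, and that the ambiguity between representatives of a single $\beta$ is generated \emph{precisely} by the vertex-deletion (similarity) morphisms that constitute the morphisms of $\factcat[\beta]$. The argument follows the same strategy as the corresponding modular operad statement in \cite[Section~8]{Ray20}, adapted to the present setting using the explicit description of morphisms in $\Klgrt$ from \cref{ssec factorisation} and \cref{lem: Klgr representatives}. First I would recall that an object $(\Gg, \alpha)$ is a nondegenerate $\G$-shaped graph of graphs $\Gg$ with colimit $\Gg(\G)$, together with $\alpha \in S(\Gg(\G))$, and that a morphism $g$ of $\factcat[\beta]$ is exactly a pointed morphism $g \in \Gretp(\Gg^1(\G), \Gg^2(\G))$ with $g \circ [\Gg^1] = [\Gg^2]$ and $S_*(g)(\alpha^2) = \alpha^1$. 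Since the factorisation system $(\Gretsimp, \Gret)$ on $\Gretp$ from \cref{prop. vertex deletion} splits any such $g$ into a vertex-deletion morphism followed by an unpointed \'etale morphism, and since $\alpha^1$ and $\alpha^2$ induce the same fixed refinement data of $\beta$, the \'etale part is forced to be an isomorphism on the relevant connected components. Thus it suffices to connect representatives by vertex-deletion morphisms and isomorphisms.

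Next I would carry out the local reduction. For each object $(\Gg, \alpha)$, let $W \subset V_2(\Gg(\G)) \amalg V_0(\Gg(\G))$ be the set of bivalent and isolated vertices $w$ at which $\alpha$ restricts, via the canonical map $\esv[w] \colon \Cv[w] \to \Gg(\G)$, to a distinguished unit or contracted-unit element (i.e.\ in the image of $\epsilon$ or of $o$). The vertex deletion $\delW \colon \Gg(\G) \to \Gg(\G)_{\setminus W}$ lies in $\Gretsimp \subset \Gretp$, and by the action of $S_*$ on similarity morphisms (\cref{ssec. D monad}, \cref{ss. pointed}) there is an induced structure $\alpha_{\setminus W} \in S(\Gg(\G)_{\setminus W})$ with $S_*(\delW)(\alpha_{\setminus W}) = \alpha$. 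Together with the compatibility of refinements under vertex deletion established in \cref{eq. GgW} and its surrounding discussion, this exhibits $\delW$ as a morphism $(\Gg,\alpha) \to (\Gg_{\setminus W}, \alpha_{\setminus W})$ in $\factcat[\beta]$, whose target is \emph{reduced}, i.e.\ carries no unit-decorated bivalent or isolated vertices. By \cref{lem. terminal X}, each similarity class contains a canonical terminal such reduced object.

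Finally I would glue the local data. Since $\beta \in \Tpk S_*(\G) = \mathrm{lim}_{(\C,b)\in\elG} \Tpk S_*(\C)$ is a single element, and each $\Tpk S_*(\CX) = \mathrm{colim}_{\X \in X\XGretsimp} S(\X)$ is a colimit whose generating identifications are exactly the similarity morphisms, the reduced representatives of $\beta$ obtained from $(\Gg^1,\alpha^1)$ and $(\Gg^2,\alpha^2)$ agree, element by element of $\G$, up to isomorphism; \cref{lem: Klgr representatives} then assembles these local isomorphisms into a global isomorphism of reduced objects. Chaining the two reductions with this isomorphism produces the required zigzag, so $\factcat[\beta]$ is connected, which is precisely the condition needed to apply \cite[Proposition~2.5]{BMW12}.

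The hard part will be the degenerate cases concentrated around the ``problem of loops'': when a connected component of some $\Gg^i(\G)$ is a wheel $\Wm$ fully decorated by units, or a $\nul$-graph involving the non-port-preserving morphisms $\kappa^m \colon \Wm \to (\shortmid)$ and $z \colon \C_\nul \to (\shortmid)$. Here the relevant vertex deletions fail to preserve ports, so the reduction does not land directly in a port-preserving similarity class and the objects must be linked by a genuine zigzag rather than a single morphism. Handling these requires the explicit case analysis of the connected components of $X\XGretsimp$ for $X \cong \nul$ and $X \cong \two$ from \cref{ssec. Similar connected X-graphs}, exactly as in the modular operad case; this is where the subtlety of contracted units, and hence of the whole lifted monad $\TTpk$, is localised and where the argument is most delicate.
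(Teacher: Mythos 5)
Your proposal takes a genuinely different route from the paper (a normal-form reduction rather than a direct colimit argument), but it has a genuine gap, and the gap sits exactly in the cases your final paragraph defers: those cases are not a loose end to be tidied by ``explicit case analysis'' — they are the whole content of the lemma, and your framework provably cannot handle them. The problem is that the reduction $(\Gg, \alpha) \mapsto (\Gg_{\setminus W}, \alpha_{\setminus W})$ exits $\factcat$ whenever an entire connected component of $\Gg(\G)$ is (contracted-)unit-decorated: deleting all of $W$ then creates a stick component, so $\Gg_{\setminus W}$ is a \emph{degenerate} graph of graphs and defines no object of $\factcat$. Concretely, take $\G = \C_\nul$, fix $s \in S_\S$, and let $\beta \in (\Tpk S_*)_\nul$ be the class of $((\shortmid), s)$. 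Then $(\C_\nul, o(s))$ and $(\Wm, S_*(\kappa^m)(s))$ for $m \geq 1$ are objects of $\factcat$ representing this same $\beta$, since $z \colon \C_\nul \to (\shortmid)$ and $\kappa^m \colon \Wm \to (\shortmid)$ identify all of them with $((\shortmid), s)$ in the colimit over $\nul\XGretsimp$ (cf.\ \cref{ssec. Similar connected X-graphs} and (\ref{0 connected})). Your reduction of either object would delete its last vertex and land on $(\shortmid)$, which is not admissible, so neither has a reduced form inside $\factcat$. Worse, every vertex deletion out of $\Wm$ that stays admissible lands in a smaller wheel, the only admissible graph with a vertex deletion onto $\C_\nul$ is $\C_\nul$ itself, and no wheel is isomorphic to $\C_\nul$; so there is \emph{no} zigzag of vertex-deletion morphisms and isomorphisms between admissible graphs joining $(\C_\nul, o(s))$ to the wheel representatives. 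Your claim that ``it suffices to connect representatives by vertex-deletion morphisms and isomorphisms'' is therefore false precisely at the problem of loops: any path between these objects must pass through the stick, which is invisible to a procedure confined to nondegenerate graphs of graphs. (Two smaller points: \cref{lem. terminal X} concerns $\X^\bot = \X_{/(V_2 \amalg V_0)}$, deletion of \emph{all} bivalent and isolated vertices, which carries no induced $S$-decoration, so it does not supply your ``canonical terminal reduced object''; and your assertion that the unpointed \'etale part of a morphism of $\factcat$ is forced to be an isomorphism is stated, not proved.)

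The paper's proof needs no normal forms and resolves the degenerate cases by working where the stick is available. For $\G \cong \CX$ it argues directly from the definition: $(\Tpk S_*)_X$ is a colimit of sets indexed by $X\XGretsimp$, a category that \emph{contains} $(\shortmid)$, so two representatives of the same element are automatically joined by a zigzag of decoration-compatible similarity morphisms, possibly through the stick — in the example above, $\C_\nul \xrightarrow{z} (\shortmid) \xleftarrow{\kappa^m} \Wm$. This gives the corolla case of the lemma, and the general case follows componentwise: colimits of graphs of graphs are computed per component, so $\G$ may be taken connected, and applying the corolla case at each $(\CX, b) \in \elG$ shows that $\Gg^1(\G)$ and $\Gg^2(\G)$ are similar in $\Gretp$. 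If you repair your argument by letting the connecting zigzags pass through sticks in the degenerate components, you have abandoned the normal-form idea and reproduced the paper's proof; the deferred case analysis is not an appendix to your argument but a replacement for it.
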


\begin{proof} 
If $\G \cong (\shortmid)$, then $\Gg$ is isomorphic to the identity $(\shortmid)$-shaped graph of graphs. 
 If $\G \cong \CX$ for some finite set $X$, and $\beta \in \Tpk S_X \cong  {\GSp}_{\TTp}(\G, S_*)$, then objects of $\factcat$ have the form $(\X, \alpha)$ where $\X\in X \Grisok$ is an admissible (in general not connected) $X$-graph, $\alpha \in S_*(\X)$ and $\beta$ is the image of $\alpha$ under the universal morphism $S_*(\X) \to (\Tpk S_* )_X$. Since $(\Tpk S_* )_X = \mathrm{colim}_{ \X' \in \XGretsimp}S(\X')$, if $(\X', \alpha')$ also represents $\beta$, then $\X$ and $\X'$ are connected in $\XGretsimp$ and hence they are similar in $\Gretp$. 
Hence, if $\G = \C$ is in the image of $\fisinvp \subset \Gretp$, then $\factcat$ is connected for all $\beta \in{\GSp}_{\TTp}(\C, S_*) \cong \Tpk S(\C)$.

For general $\G \in \Gretp$, and $\beta \in{\GSp}_{\TTp}(\G, S_*)$, observe first that, since colimits of graphs of graphs are computed componentwise, we may assume that $\G$ is connected. Assume also that $\G \not \cong \C_\nul$. Let $(\Gg^i, \alpha^i)$, $i = 1, 2$ be objects of $\factcat$. Then, for each $(\CX, b) \in \elG$, $\Gg^1(b)$ and $\Gg^2(b)$ are in the same connected component of $X \XGretsimp$, since the lemma holds when $\G \cong \CX$. Therefore, $\Gg^1(\G)$ and $\Gg^2(\G)$ are similar in $\Gretp$. 
\end{proof}

\cref{nerve theorem} now follows from \cite[Sections~1~\&~2]{BMW12}.

\begin{proof}[Proof of \cref{nerve theorem}]
The induced nerve functor $N\colon \CO \to \pr{\Klgrt}$ is fully faithful by \cref{connected}, \cite[Proposition~2.5]{BMW12}, and \cite[Propositions~ 1.5~\&~1.9]{BMW12}.

Moreover, by \cite[Theorem~1.10]{BMW12}, its essential image is the subcategory of those presheaves on $\Klgrt$ whose restriction to $\Gretp$ are in the image of the fully faithful embedding $\GSp \hookrightarrow \pr{\Gretp}$. So, a presheaf $P$ on $\Klgrt$ described the nerve of a circuit algebra if and only if 
$ P(\G)  \cong \mathrm{lim}_{(\C,b) \in \elpG} P(\C)$, and hence, by finality of $\elG \subset \elpG$, 
$ P(\G)  = \mathrm{lim}_{(\C,b) \in \elG} P(\C).$ Therefore, the Segal condition (\ref{eq. Segal nerve sec}) is satisfied and \cref{thm: CO nerve} is proved. 
\end{proof}

Following from Remarks \ref{rmk patterns} and \ref{rmk BMW prop}, \cref{thm: CO nerve} also admits proofs using extensions of strongly cartesian monads as described in \cite[Proposition~4.4]{BMW12}, and extendable algebraic patterns \cite{CH21}:
\begin{rmk}
	\label{rmk bmw proof} Recall \cref{rmk BMW prop}.
	By (\ref{eq. T def}), for all finite sets $X$, 
	\[	\Tk S_X =  \mathrm{colim}_{\X\in X{\Griso}}  S(\X) = \mathrm{colim}_{\X\in X{\Griso}}  \mathrm{lim}_{(\C,b) \in \elG[\X]} S(\C). \] So (e.g.,~by \cite[Section~1]{CH21}) $\TTk$ is strongly cartesian with canonical arities $\Gret \subset \GS$.
	
	By \cref{ssec. D monad}, the right adjoint functor $i^* \colon \GSp \to \GS$ associated to the monad $\DD$ on $\GS$ is just the pullback to presheaves of the bijective on objects inclusion $i \colon\fisinv \hookrightarrow \fisinvp$. And, by construction, $(LT) i^* = i^*(\Tpk)\colon \GSp \to \GS$ so the monad $\TTpk$ on $\GSp$ is the pushforward along $i$ of $\LL \TT = \TTk$ on $\GS$. Hence, by \cite[Proposition~4.4]{BMW12}, $\TTpk$ on $\GSp$ is strongly cartesian with canonical arities $\Gretp$.

\end{rmk}

\begin{rmk}
	\label{rmk patterns proof} Recall Remarks \ref{rmk strongly cartesian} and \ref{rmk patterns}.  
	By (\ref{eq. lift LT}), for all finite sets $X$,
	\[	\begin{array}{lll}{\Tpk} S_X & = &\mathrm{colim}_{\X \in X\XGretsimp} S(\X)\\
		& = &\mathrm{colim}_{\X \in X\XGretsimp} \mathrm{lim}_{(\C,b) \in \elG[\X]}S(\C)\end{array}\]
	And, by the previous discussion, a morphism in $\Klgrt$ factor as a refinement (induced by a graphs of graphs) followed by a $\Gretp$ morphism. 

	Hence (e.g.,~by \cite[Section~1]{CH21}) $\Klgrt$ describes an extendable algebraic pattern, and 
	$\TTpk$ is the associated strongly cartesian monad, with canonical arities $\Gretp$. 
	\cref{thm: CO nerve} then follows immediately from \cite{BMW12, Web07}. 

\end{rmk}

It remains, finally to remark on the homotopy theory of circuit algebras (see also the Introduction).

\begin{rmk}
	\label{rmk. No weak version}
Both the nerve theorem \cite[Theorem~8.2]{Ray20} for modular operads on which \cref{thm: CO nerve} is built, as well as Hackney, Robertson and Yau's nerve theorem \cite[3.6]{HRY19b}, lead to well-defined models of $(\infty, 1)$-modular operads in which the fibrant objects in the appropriate category of presheaves are precisely those that satisfy the weak Segal condition: in the case of \cite{Ray20} it followed from the results of \cite{CH15} that these were given by $P \colon \Klgr^{\mathrm{op}} \to \sSet$ such that, for all (connected) graphs $\G \in \Klgr$,
\[P(\G) \simeq \mathrm{lim}_{(\C,b) \in \elG} P(\C). \]

However, describing a similar model for circuit algebras is more challenging.  The presence of unconnected graphs means that, neither the method of \cite{HRY19b}, nor that of \cite[Corollary~8.14]{Ray20} may be directly extended to a model structure on $\prE[\sSet]{\Klgrt}$ (or some subcategory thereof) that gives a good description of weak circuit algebras. It is for later work to investigate the existence of a such a model.

\end{rmk}

\bibliography{Compactbib}{}
\bibliographystyle{plain}

\end{document}